\newtheorem{theorem}{Theorem}
\newtheorem{corollary}[theorem]{Corollary}
\newtheorem{sublemma}{Lemma}[theorem]
\newtheorem{lemma}[theorem]{Lemma}
\newtheorem{question}[theorem]{Question}
\newtheorem{questions}[theorem]{Questions}
\newtheorem{definition}[theorem]{Definition}
\newtheorem{example}[theorem]{Example}
\newcommand{\QED}{\end{proof}}
\def\proclaim[#1]{{\bf #1}}
\def\BF#1.{{\bf #1.}}
\newcommand{\url}[1]{{\tt #1}}
\newcommand{\Bukovsky}{Bukovsk\`y}
\newcommand{\Vopenka}{Vop\v{e}nka}
\newcommand{\Los}{\L o\'s}
\newcommand{\B}{{\mathbb B}}
\newcommand{\C}{{\mathbb C}}
\renewcommand{\P}{{\mathbb P}}
\newcommand{\Vbar}{{\overline{V}}}
\newcommand{\Vhat}{{\hat{V}}}
\renewcommand{\Ddot}{{\dot D}}% ordinarily, it means "double dot" or something
\newcommand{\id}{\mathop{\hbox{\small id}}}
\newcommand{\of}{\subseteq}
\newcommand{\ofnoteq}{\subsetneq}
\newcommand{\fo}{\supseteq}
\newcommand{\set}[1]{\{\,{#1}\,\}}
\newcommand{\singleton}[1]{\left\{{#1}\right\}}
\newcommand{\compose}{\circ}
\newcommand{\inverse}{{-1}}
\newcommand{\dom}{\mathop{\rm dom}}
\newcommand{\dirlim}{\mathop{\rm dir\,lim}}
\newcommand{\ran}{\mathop{\rm ran}}
\newcommand{\Add}{\mathop{\rm Add}}
\newcommand{\image}{\mathbin{\hbox{\tt\char'42}}}
\newcommand{\plus}{{+}}
\newcommand{\restrict}{\upharpoonright} % uses amssymb
\newcommand{\satisfies}{\models}
\newcommand{\forces}{\Vdash}
\newcommand{\cross}{\times}
\newcommand{\union}{\cup}
\newcommand{\Union}{\bigcup}
\newcommand{\intersect}{\cap}
\newcommand{\Intersect}{\bigcap}
\newcommand{\smalllt}{\mathrel{\mathchoice{\raise2pt\hbox{$\scriptstyle<$}}{\raise1pt\hbox{$\scriptstyle<$}}{\raise0pt\hbox{$\scriptscriptstyle<$}}{\scriptscriptstyle<}}}
\newcommand{\smallleq}{\mathrel{\mathchoice{\raise2pt\hbox{$\scriptstyle\leq$}}{\raise1pt\hbox{$\scriptstyle\leq$}}{\raise1pt\hbox{$\scriptscriptstyle\leq$}}{\scriptscriptstyle\leq}}}
\newcommand{\ltomega}{{{\smalllt}\omega}}
\newcommand{\ltkappa}{{{\smalllt}\kappa}}
\newcommand{\leqkappa}{{{\smallleq}\kappa}}
\newcommand{\ltdelta}{{{\smalllt}\delta}}
\newcommand{\boolval}[1]{\mathopen{\lbrack\!\lbrack}\,#1\,\mathclose{\rbrack\!\rbrack}}
\def\[#1]{\boolval{#1}}
\newcommand{\UnderTilde}[1]{{\setbox1=\hbox{$#1$}\baselineskip=0pt\vtop{\hbox{$#1$}\hbox to\wd1{\hfil$\sim$\hfil}}}{}}
\newcommand{\Undertilde}[1]{{\setbox1=\hbox{$#1$}\baselineskip=0pt\vtop{\hbox{$#1$}\hbox to\wd1{\hfil$\scriptstyle\sim$\hfil}}}{}}
\newcommand{\undertilde}[1]{{\setbox1=\hbox{$#1$}\baselineskip=0pt\vtop{\hbox{$#1$}\hbox to\wd1{\hfil$\scriptscriptstyle\sim$\hfil}}}{}}
\newcommand{\UnderdTilde}[1]{{\setbox1=\hbox{$#1$}\baselineskip=0pt\vtop{\hbox{$#1$}\hbox to\wd1{\hfil$\approx$\hfil}}}{}}
\newcommand{\Underdtilde}[1]{{\setbox1=\hbox{$#1$}\baselineskip=0pt\vtop{\hbox{$#1$}\hbox to\wd1{\hfil\scriptsize$\approx$\hfil}}}{}}
\newcommand{\st}{\mid}
\renewcommand{\implies}{\mathrel{\rightarrow}}
\renewcommand{\iff}{\mathrel{\leftrightarrow}}
\newcommand{\Iff}{\mathrel{\Leftrightarrow}}
\newcommand{\iso}{\cong}
\def\<#1>{\langle#1\rangle}
\newcommand{\ot}{\mathop{\rm ot}\nolimits}
\newcommand{\val}{\mathop{\rm val}\nolimits}
\newcommand{\cp}{\mathop{\rm cp}}
\newcommand{\RO}{\mathop{{\rm RO}}}
\newcommand{\ORD}{\mathop{{\rm Ord}}}
\newcommand{\ZFC}{{\rm ZFC}}
\newcommand{\CH}{{\rm CH}}
\newcommand{\cell}[1]{\boxit{\hbox to 17pt{\strut\hfil$#1$\hfil}}}
\newcommand{\head}[2]{\lower2pt\vbox{\hbox{\strut\footnotesize\it\hskip3pt#2}\boxit{\cell#1}}}
\newcommand{\boxit}[1]{\setbox4=\hbox{\kern2pt#1\kern2pt}\hbox{\vrule\vbox{\hrule\kern2pt\box4\kern2pt\hrule}\vrule}}
\newcommand{\Col}[3]{\hbox{\vbox{\baselineskip=0pt\parskip=0pt\cell#1\cell#2\cell#3}}}
\newcommand{\tapenames}{\raise 5pt\vbox to .7in{\hbox to .8in{\it\hfill input: \strut}\vfill\hbox to
.8in{\it\hfill scratch: \strut}\vfill\hbox to .8in{\it\hfill output: \strut}}}
\newcommand{\Head}[4]{\lower2pt\vbox{\hbox to25pt{\strut\footnotesize\it\hfill#4\hfill}\boxit{\Col#1#2#3}}}
\newcommand{\Dots}{\raise 5pt\vbox to .7in{\hbox{\ $\cdots$\strut}\vfill\hbox{\ $\cdots$\strut}\vfill\hbox{\
$\cdots$\strut}}}
\newcommand{\df}{\it} % use italic for definition terms. Idea: also use this to create an index of definitions, if MakeIndex is true.
\newcommand{\sqtimes}{\mathrel{\scriptstyle\boxtimes}}
\begin{document}
\author{Joel David Hamkins}
\address[J.D.H.]{Mathematics Program, The Graduate Center of The City University of New York, 365 Fifth Avenue, New York, NY 10016
 \& Mathematics, The College of Staten Island of CUNY, Staten Island, NY 10314}
\email{jhamkins@gc.cuny.edu, http://jdh.hamkins.org}
\author{Daniel Evan Seabold}
\address[D.E.S.]{103 Hofstra University, Hempstead, NY 11566}
\email{Daniel.E.Seabold@hofstra.edu}
\thanks{The research of the first author has been supported in part by grants from the CUNY Research Foundation,
        the US National Science Foundation DMS 0800762 and the Simons Foundation. In addition, much of this work was undertaken at the Universiteit van Amsterdam, Institute for Logic, Language and Computation, where he was visiting professor in 2007 and supported by grants \textit{NWO
Bezoekersbeurs} \textsf{B 62-612} and \textsf{B 62-619 2006/00782/IB} in earlier years. His research was also supported by a visiting fellowship of the Isaac Newton Institute of Mathematical Sciences, Cambridge, UK in spring 2012, when the project was completed. This paper was the topic of the first author's tutorial lecture series at the Young Set Theorists Workshop at the Hausdorff Center for Mathematics in K\"onigswinter near Bonn, Germany, March 2011. Commentary on this article can be made at http://jdh.hamkins.org/boolean-ultrapowers.}
%\subjclass[2000]{??} \keywords{Forcing, Boolean-valued models, Boolean ultrapower}

\begin{abstract}
Boolean ultrapowers extend the classical ultrapower construction to work with ultrafilters on any complete Boolean algebra, rather than
only on a power set algebra. When they are well-founded, the associated Boolean ultrapower embeddings exhibit a large cardinal
nature, and the Boolean ultrapower construction thereby unifies two central themes of set theory---forcing and large cardinals---by revealing
them to be two facets of a single underlying construction, the Boolean ultrapower.
\end{abstract}

\title{Well-founded Boolean ultrapowers as large cardinal embeddings}
\maketitle

\begin{quote}
 \tiny\tableofcontents
\end{quote}

Boolean ultrapowers generalize the classical ultrapower construction to work with ultrafilters on an arbitrary complete Boolean algebra, rather than
only on a power set algebra as is customary with ordinary ultrapowers. The method was introduced by \Vopenka\
\cite{Vopenka1965:OnNablaModelOfSetTheory} as a presentation of the method of forcing via Boolean-valued models,
%an approach that internalizes the forcing method and enables one sensibly to force over any model of \ZFC, rather than requiring one to reflect down to
%countable transitive models,
and Bell \cite{Bell1985:BooleanValuedModelsAndIndependenceProofs} remains an excellent exposition of this
approach, based on early notes of Dana Scott and Robert Solovay. Quite apart from forcing, Mansfield
\cite{Mansfield1970:TheoryOfBooleanUltrapowers} emphasized Boolean ultrapowers as a purely model-theoretic technique, applicable generally
to any kind of structure. In this article, we shall focus on the possibility of {\it well-founded} Boolean ultrapowers, whose corresponding
Boolean ultrapower embeddings exhibit a large cardinal nature. The Boolean ultrapower construction thereby unifies the two central concerns
of set theory---forcing and large cardinals---and reveals them to be two facets of a single underlying construction, the Boolean
ultrapower.

Our main concern is with the well-founded Boolean ultrapowers and their accompanying large cardinal Boolean ultrapower embeddings, and
this material begins in \S\ref{Section.WellFoundedBooleanUltrapowers}. Although the Boolean-valued model approach to forcing has been long known, our investigation depends on some of the subtle details of the method, which are not as well known, and so we provide an efficient but thorough summary of the topic in \S\ref{Section.QuickReviewOfForcing}, followed by our introduction of the Boolean ultrapower itself in \S\ref{Section.TransformingIntoActualModels} and \S\ref{Section.TheBooleanUltrapower}, and a summary of the naturalist account of forcing in \S\ref{Section.NaturalistAccountOfForcing}. In \S\ref{Section.AlgebraicApproach}, we give the connection between forcing and the
algebraic approach to the Boolean ultrapower, which gives rise to the extender-like characterization in \S\ref{Section.BooleanUltrapowersAsDirectLimits}. In \S\ref{Section.BooleanUltrapowersViaPartialOrders}, we explain the subtle issues for the Boolean ultrapower with respect to partial orders versus Boolean algebras. In sections \S\ref{Section.SubalgebrasEtc}, \ref{Section.Products} and \ref{Section.Ideals} we provide the basic interaction of the Boolean ultrapower with respect to subalgebras, quotients, products and ideals. In \S\ref{Section.BooleanUltrapowersVsClassicalUltrapowers} we characterize the circumstances under which a Boolean ultrapower is a classical ultrapower. And in   \S\ref{Section.BooleanUltrapowersAsLargeCardinalEmbeddings}, we begin to explore the extent to which large cardinal embeddings may be realized as Boolean ultrapowers.

\section{A quick review of forcing via Boolean-valued models}\label{Section.QuickReviewOfForcing}

The concept of a Boolean-valued model is a general model-theoretic notion, having nothing especially to do with set theory or forcing. For
example, one can have Boolean-valued groups, rings, graphs, partial orders and so on, using any first-order language. Suppose that $\B$ is
a complete Boolean algebra and $\mathcal{L}$ is any first-order language. A {\df $\B$-valued model} $\mathcal{M}$ in the language
$\mathcal{L}$ consists of an underlying set $M$, whose elements are called {\df names}, and an assignment of the atomic formulas $\[s=t]$,
$\[R(s_0,\ldots,s_n)]$ and $\[y=f(s_0,\ldots,s_n)]$, with parameters $s,t,s_0,\ldots,s_n\in M$, to elements of $\B$, providing the Boolean
value that this atomic assertion holds. These assignments must obey the laws of equality, so that
$$\begin{array}{rcl}
\[s=s]  &=& 1\\
\[s=t]  &=& \[t=s]\\
\[s=t]\wedge\[t=u] &\leq& \[s=u]\\
\bigwedge_{i<n}\[s_i=t_i]\wedge\[R(\vec s)] &\leq & \[R(\vec t)].\\
\end{array}$$
If the language includes functions symbols, then we also insist on:
$$\begin{array}{rcl}
\bigwedge_{i<n}\[s_i=t_i]\wedge\[y=f(\vec s)] &\leq& \[y=f(\vec t)]\\
\bigvee_{t\in M}\[t=f(\vec s)] &=& 1\\
\[t_0=f(\vec s)]\wedge\[t_1=f(\vec s)] &\leq& \[t_0=t_1].\\
\end{array}$$
These requirements assert that with Boolean value one, the equality axiom holds for functions and that the function takes on some unique value. Once
the Boolean values of atomic assertions are provided, then one extends the Boolean value assignment to all formulas by a simple, natural recursion:
$$\begin{array}{rcl}
\[\varphi\wedge\psi] &=& \[\varphi]\wedge\[\psi]\\
\[\neg\varphi]       &=& \neg\[\varphi]\\
\[\exists x\,\varphi(x,\vec s)] &=& \bigvee_{t\in M}\[\varphi(t,\vec s)]\\
\end{array}$$
The reader may check by induction on $\varphi$ that the general equality axiom now has Boolean value one.
 $$\[\vec s=\vec t\wedge \varphi(\vec s)\implies\varphi(\vec t)]=1$$
The Boolean-valued structure $\mathcal{M}$ is {\df full}
if for every formula $\varphi(x,\vec x)$ in $\mathcal{L}$ and $\vec s$ in $M$, there is some $t\in M$ such that $\[\exists
x\,\varphi(x,\vec s)]=\[\varphi(t,\vec s)]$.

An easy example is provided by the classical ultrapower construction, which has a Boolean-valued model underlying it, namely, if $M_i$ is
an $\mathcal{L}$-structure for each $i\in I$, then for the set of names we simply take all of the functions used in the ordinary ultrapower
construction, $f\in \Pi_i M_i$ and then define the Boolean values $\[\varphi(f_0,\ldots,f_n)]=\set{i\in I\st
M_i\satisfies\varphi(f_0(i),\ldots,f_n(i))]}$, which makes this a $\B$-valued structure for the power set $\B=P(I)$, a complete Boolean
algebra.

In this article, we focus on the Boolean-valued models arising in set theory and forcing, and so our basic language will be the language of
set theory $\set{\in}$, consisting of one binary relation symbol $\in$. Later, we will augment this language with a unary predicate for the
ground model.

Let us now describe how every complete Boolean algebra gives rise canonically to a Boolean-valued model of set theory. Suppose that $\B$ is
a complete Boolean algebra and let $V$ denote the universe of all sets. The class of $\B$-names, denoted $V^\B$, is defined by recursion:
$\tau$ is a {\df $\B$-name} if $\tau$ is a set of pairs $\<\sigma,b>$, where $\sigma$ is a $\B$-name and $b\in \B$. The atomic Boolean
values are defined by double recursion:
$$\begin{array}{rcl}
\[\tau\in\sigma] &=& \bigvee_{\<\eta,b>\in\sigma}\[\tau=\eta]\wedge b\\
\[\tau=\sigma]   &=& \[\tau\of\sigma]\wedge\[\sigma\of\tau]\\
\[\tau\of\sigma] &=& \bigwedge_{\eta\in\dom(\tau)}(\[\eta\in\tau]\implies\[\eta\in\sigma])\\
\end{array}$$
We leave it as an exercise for the reader to verify that these assignments satisfy the laws of equality. One then extends the Boolean value assignment
to all assertions $\varphi(\tau_0,\ldots,\tau_n)$ in the language of set theory by the general recursion given previously.

\begin{lemma}[Mixing lemma]\label{Lemma.MixingLemma}
If\/ $A\of\B$ is an antichain and $\<\tau_a\st a\in A>$ is any sequence of names indexed by $A$, then there is a name $\tau$ such that
$a\leq\[\tau=\tau_a]$ for each $a\in A$.
\end{lemma}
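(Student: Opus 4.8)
The plan is to construct the desired name $\tau$ by literally splicing the given names $\tau_a$ together, weighting the contribution of each $\tau_a$ by its antichain element $a$. Concretely, I would set
$$\tau = \set{\<\sigma, a\wedge b> \st a\in A \text{ and } \<\sigma,b>\in\tau_a},$$
which is a legitimate $\B$-name, since each $\sigma$ occurring is a name and each $a\wedge b$ lies in $\B$. The guiding intuition is that below the condition $a$ the name $\tau$ should be indistinguishable from $\tau_a$, and this definition arranges exactly that by meeting every pair drawn from $\tau_a$ with $a$, so that contributions coming from a rival condition $a'\neq a$ will later be annihilated.

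The crux is a single identity: for every name $\eta$ and every $a\in A$,
$$a\wedge\[\eta\in\tau] = a\wedge\[\eta\in\tau_a].$$
To establish it I would unfold the atomic recursion $\[\eta\in\tau]=\bigvee_{\<\sigma,c>\in\tau}\[\eta=\sigma]\wedge c$, substitute the definition of $\tau$ to rewrite this join as $\bigvee_{a'\in A}\bigvee_{\<\sigma,b>\in\tau_{a'}}\[\eta=\sigma]\wedge a'\wedge b$, and then meet the whole expression with $a$. Using the infinite distributivity of meet over join available in a complete Boolean algebra, the factor $a$ passes inside the join; and because $A$ is an antichain, $a\wedge a'=0$ whenever $a'\neq a$, so every summand with $a'\neq a$ collapses to $0$. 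What survives is $a\wedge\bigvee_{\<\sigma,b>\in\tau_a}\[\eta=\sigma]\wedge b = a\wedge\[\eta\in\tau_a]$, which is the claimed identity.

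With this identity in hand the lemma is immediate. Recall $\[\tau=\tau_a]=\[\tau\of\tau_a]\wedge\[\tau_a\of\tau]$ and $\[\tau\of\sigma]=\bigwedge_{\eta\in\dom(\tau)}(\[\eta\in\tau]\implies\[\eta\in\sigma])$, so using the elementary fact that $a\leq(x\implies y)$ iff $a\wedge x\leq y$, showing $a\leq\[\tau=\tau_a]$ reduces to verifying $a\wedge\[\eta\in\tau]\leq\[\eta\in\tau_a]$ for each $\eta\in\dom(\tau)$, together with $a\wedge\[\eta\in\tau_a]\leq\[\eta\in\tau]$ for each $\eta\in\dom(\tau_a)$. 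Both are instant consequences of the identity, since $a\wedge\[\eta\in\tau]=a\wedge\[\eta\in\tau_a]$ lies below each of $\[\eta\in\tau_a]$ and $\[\eta\in\tau]$.

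I expect the only real obstacle to be the bookkeeping in the middle step: one must take care that the distributive law is being applied to a genuinely arbitrary, possibly infinite, join, which is precisely where completeness of $\B$ enters, and that it is the antichain hypothesis, and nothing else, that collapses the double join to the single term indexed by $a$. Everything past that point is formal manipulation of the atomic Boolean-value recursion, with no further ingredients required.
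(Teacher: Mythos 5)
Your proof is correct and uses the same construction as the paper: the spliced name $\tau=\set{\<\sigma,b\wedge a>\st \<\sigma,b>\in\tau_a\And a\in A}$, whose verification the paper leaves to the reader. Your key identity $a\wedge\[\eta\in\tau]=a\wedge\[\eta\in\tau_a]$, obtained by infinite distributivity and the antichain hypothesis, is exactly the omitted calculation, carried out correctly.
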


\begin{proof} If $A\of\B$ is an antichain and $\tau_a$ is a name
for each $a\in A$, then let
 $$\tau=\set{\<\sigma,b\wedge a>\st \<\sigma,b>\in\tau_a\And a\in A},$$
which looks exactly like $\tau_a$, as far as Boolean values up to $a$ are concerned, and we leave the calculations verifying
$a\leq\[\tau=\tau_a]$ to the reader.\end{proof}

In the special case that $A$ is a maximal antichain and $\[\tau_a\neq\tau_b]=1$ for $a\neq b$ in $A$, then it is easy to see that
$a=\[\tau=\tau_a]$ for every $a\in A$, and we shall regard this extra fact as also part of the mixing lemma.

\begin{lemma}[Fullness principle]\label{FullnessPrinciple}
 The Boolean-valued model $V^\B$ is full.
\end{lemma}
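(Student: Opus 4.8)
The plan is to prove fullness using the Mixing Lemma (Lemma~\ref{Lemma.MixingLemma}) together with a maximal antichain argument. Fix a formula $\varphi(x,\vec\sigma)$ with name parameters $\vec\sigma$ from $V^\B$, and set $b=\[\exists x\,\varphi(x,\vec\sigma)]$, which by the defining recursion for existential assertions is exactly $\bigvee_{\eta\in V^\B}\[\varphi(\eta,\vec\sigma)]$. Since any single name $\tau$ already satisfies $\[\varphi(\tau,\vec\sigma)]\leq b$ automatically, the entire content of the lemma is to produce one name $\tau$ that \emph{attains} this supremum, that is, with $b\leq\[\varphi(\tau,\vec\sigma)]$.

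First I would assemble, via Zorn's Lemma, a maximal antichain $A\of\B$ each of whose elements $a\in A$ comes equipped with a witnessing name $\eta_a$ satisfying $a\leq\[\varphi(\eta_a,\vec\sigma)]$; here maximality means that no nonzero element disjoint from all of $A$ admits such a witness. Applying the Mixing Lemma to $A$ and the sequence $\<\eta_a\st a\in A>$ yields a single name $\tau$ with $a\leq\[\tau=\eta_a]$ for every $a\in A$. Because the general equality axiom has Boolean value one, we have $\[\tau=\eta_a]\wedge\[\varphi(\eta_a,\vec\sigma)]\leq\[\varphi(\tau,\vec\sigma)]$, and since $a$ lies below both conjuncts, $a\leq\[\varphi(\tau,\vec\sigma)]$. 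Taking the join over $a\in A$ gives $\bigvee A\leq\[\varphi(\tau,\vec\sigma)]$.

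It remains to check that $\bigvee A=b$, and this is where maximality does its work. If instead $c=b\wedge\neg(\bigvee A)$ were nonzero, then since $c\leq b=\bigvee_\eta\[\varphi(\eta,\vec\sigma)]$, some name $\eta$ would have $a'=\[\varphi(\eta,\vec\sigma)]\wedge c\neq 0$; this element $a'$ is disjoint from every member of $A$ yet admits the witness $\eta$, contradicting the maximality of $A$. Hence $\bigvee A=b$, so $b\leq\[\varphi(\tau,\vec\sigma)]$, and together with the trivial reverse inequality we conclude $\[\varphi(\tau,\vec\sigma)]=\[\exists x\,\varphi(x,\vec\sigma)]$, as desired.

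The one genuinely delicate point---what I expect to be the main obstacle---is the bookkeeping behind the choice of the witnesses $\eta_a$, since $V^\B$ is a proper class and a naive application of Zorn's Lemma to (antichain, witness-assignment) pairs would range over a proper class. I would resolve this by fixing the witnesses in advance: for each nonzero $a\in\B$ that admits some witness, use Scott's trick to isolate the set of witnesses of least rank and choose one from it, so that the candidate antichains $A\of\B$ then form an honest set and Zorn's Lemma applies cleanly. Everything else is a routine verification inside a complete Boolean algebra.
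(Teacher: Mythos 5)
Your proof is correct and follows essentially the same route as the paper's: both form a maximal antichain of conditions admitting witnesses below $b=\[\exists x\,\varphi(x,\vec\tau)]$, mix the chosen witnesses via the Mixing Lemma, and invoke the equality axiom to transfer $a\leq\[\varphi(\eta_a,\vec\tau)]$ to the mixed name. Your extra care about the Zorn's Lemma/proper-class bookkeeping via Scott's trick is a reasonable refinement of a point the paper leaves implicit.
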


\begin{proof} Consider the assertion $\exists
x\,\varphi(x,\vec\tau)$, where $\vec\tau$ is a finite sequence of names and $\varphi$ is a formula in the language of set theory. By
definition, the Boolean value $b=\[\exists x\,\varphi(x,\vec\tau)]$ is the join of the set $S=\set{\[\varphi(\sigma,\vec\tau)]\st \sigma\in
V^\B}$ of Boolean values that arise. Let $A$ be a maximal antichain below $b$, contained in the downward closure of $S$, so that every
element of $A$ is below an element of $S$ and $\vee A=b$. For each $a\in A$ choose $\sigma_a$ such that
$a\leq\[\varphi(\sigma_a,\vec\tau)]$. By the mixing lemma, find a name $\sigma$ such that $a\leq\[\sigma=\sigma_a]$ for all $a\in A$. It
follows by the equality axiom that $a\leq\[\varphi(\sigma,\vec\tau)]$ for each $a\in A$, and so $b=\vee A\leq\[\varphi(\sigma,\vec\tau)]$.
Since $\[\varphi(\sigma,\vec\tau)]\in S$, we must also have $\[\varphi(\sigma),\vec\tau)]\leq \vee S=b$, and so $\[\exists
x\,\varphi(x,\vec\tau)]=\[\varphi(\sigma,\vec\tau)]$, as desired.\end{proof}

\begin{theorem}[Fundamental theorem of forcing]\label{Theorem.[[ZFC]]=1}
 For any complete Boolean algebra $\B$, every axiom of \ZFC\ has Boolean value one in $V^\B$.
\end{theorem}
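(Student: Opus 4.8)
The plan is to verify the axioms of $\ZFC$ one at a time, in each case either reducing the Boolean value of the axiom to the already-computed atomic values, or, for the existential axioms, exhibiting an explicit $\B$-name that witnesses the assertion with Boolean value $1$. The essential tools are the recursive definition of $\boolval{\cdot}$ together with the Mixing Lemma and the Fullness Principle proved above. I will also use the canonical \emph{check-name} operation, defined by $\in$-recursion via $\check x=\set{\<\check y,1>\st y\in x}$, which embeds $V$ into $V^\B$ and satisfies $\boolval{\check x\in\check y}=1$ exactly when $x\in y$, and $\boolval{\check x=\check y}=1$ exactly when $x=y$.

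Several axioms come out almost immediately. For \emph{Extensionality}, observe that $\boolval{\forall z\,(z\in\tau\iff z\in\sigma)}$ is a meet taken over all names, so it lies below the partial meet $\bigwedge_{\eta\in\dom(\tau)}\big(\boolval{\eta\in\tau}\implies\boolval{\eta\in\sigma}\big)=\boolval{\tau\of\sigma}$, and symmetrically below $\boolval{\sigma\of\tau}$, hence below $\boolval{\tau=\sigma}$, which is precisely what Extensionality demands. For \emph{Pairing} of $\tau$ and $\sigma$ the name $\set{\<\tau,1>,\<\sigma,1>}$ works; for \emph{Union} of $\tau$ one takes $\set{\<\eta,b\meet c>\st\<\sigma,c>\in\tau \text{ and } \<\eta,b>\in\sigma}$; and for \emph{Separation} by $\varphi$ one relativizes the Boolean values, forming $\set{\<\eta,b\meet\boolval{\varphi(\eta,\vec\tau)}>\st\<\eta,b>\in\tau}$. \emph{Infinity} holds because $\check\omega$ names an inductive set with Boolean value $1$. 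In each of these the verification is a routine unwinding of the recursive definitions.

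The remaining axioms require more care. For \emph{Power Set} of $\tau$, the key point is that every name equals, below the Boolean value that it is a subset of $\tau$, a name supported on $\dom(\tau)$: given $\mu$, the name $\mu'=\set{\<\eta,\boolval{\eta\in\mu}>\st\eta\in\dom(\tau)}$ satisfies $\boolval{\mu\of\tau}\leq\boolval{\mu=\mu'}$. Since there are only set-many names $\sigma$ with $\sigma\of\dom(\tau)\cross\B$, the collection of all such $\sigma$ is a set $S$, and $\set{\<\sigma,1>\st\sigma\in S}$ serves as a power-set name. For \emph{Replacement}, suppose $\varphi$ defines a functional relation; the Fullness Principle supplies, for each $\eta\in\dom(\tau)$, a single name $\sigma_\eta$ with $\boolval{\exists y\,\varphi(\eta,y)}=\boolval{\varphi(\eta,\sigma_\eta)}$, and because $\dom(\tau)$ is a set, $\set{\<\sigma_\eta,1>\st\eta\in\dom(\tau)}$ is a bona fide name capturing the image. \emph{Foundation} follows by induction on name-rank, using that in $\boolval{\tau\in\sigma}=\bigvee_{\<\eta,b>\in\sigma}\big(\boolval{\tau=\eta}\meet b\big)$ the witnessing names $\eta$ have strictly lower rank than $\sigma$; and \emph{Choice} is obtained by well-ordering $\dom(\tau)$ in $V$ and using this enumeration, together with the Mixing Lemma, to build a name for a surjection of a check-ordinal onto $\tau$.

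I expect the main obstacle to be \emph{Replacement}, and with it \emph{Power Set}, because both hinge on showing that a mere \emph{set} of names suffices to witness an assertion whose natural witnesses range over the proper class $V^\B$. For Power Set this is the reduction to names supported on $\dom(\tau)$; for Replacement it is the appeal to Fullness, which collapses each class-sized join of potential witnesses to a single name, so that the witnesses can be indexed by the set $\dom(\tau)$. Lacking the Fullness Principle, one would instead need a reflection or rank-boundedness argument to cap the ranks of the witnessing names. Finally, \emph{Choice} is the one axiom whose proof makes an essential appeal to the axiom of choice holding in the ground model $V$.
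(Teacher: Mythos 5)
Your proof is correct, and on most axioms it runs parallel to the paper's sketch: the same handling of extensionality, the same separation name, and essentially the same device of replacing an arbitrary name $\mu$ by a name $\mu'$ supported on $\dom(\tau)\cross\B$ below the value $\[\mu\of\tau]$. The genuine divergence is Replacement. The paper verifies Collection by a rank-bounding argument: it takes the name $\set{\<\sigma,1>\st \sigma\in V_\alpha\intersect V^\B}$ for $\alpha$ large enough that every Boolean value realized by some witness is already realized by a witness in $V_\alpha$, appealing to Collection in $V$. You instead apply the Fullness Principle once per $\eta\in\dom(\tau)$ to obtain a single witnessing name $\sigma_\eta$ and collect $\set{\<\sigma_\eta,1>\st\eta\in\dom(\tau)}$; this is also correct, since the equality axiom gives $\[\nu=\eta]\wedge\[\varphi(\eta,\sigma_\eta)]\leq\[\varphi(\nu,\sigma_\eta)]$, so joining over $\eta\in\dom(\tau)$ shows the name catches all witnesses. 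Your route is arguably cleaner given that Fullness has already been established, while the paper's reflection argument avoids mixing and makes explicit that Collection in $V^\B$ is inherited from Collection in $V$. Two minor points. Your power-set name $\set{\<\sigma,1>\st\sigma\in S}$ witnesses only the weak form $\forall z\,(z\of\tau\implies z\in y)$, because a name $\sigma\of\dom(\tau)\cross\B$ need not satisfy $\[\sigma\of\tau]=1$; that is the official axiom, and Separation (which you prove) then recovers $P(\tau)$ exactly, but note that the paper's name $\set{\<\eta,\[\eta\of\tau]>\st\eta\of\dom(\tau)\cross\B}$ names the power set on the nose. Finally, you treat the Axiom of Choice explicitly, which the paper's sketch omits; your well-ordering-of-$\dom(\tau)$ argument is the standard one and correctly isolates where Choice in the ground model is used.
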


\begin{proof}[proof sketch] We merely sketch the proof, as the theorem is widely known. The theorem is actually a theorem
scheme, making a separate claim of each \ZFC\ axiom. The axiom of extensionality has Boolean value one because
$\[\tau=\sigma\iff\tau\of\sigma\text{ and }\sigma\of\tau]=1$ as a matter of definition. For the pairing, union, power set and infinity
axioms, it is an elementary exercise to create suitable names for the desired objects in each case.

To illustrate, we consider the power set axiom in more detail. Suppose that $\tau$ is any $\B$-name. We shall prove that
$\sigma=\set{\<\eta,b>\st \eta\of \dom(\tau)\times\B\And b=\[\eta\of\tau]}$ is a name for the power set of $\tau$. For any name $\nu$,
observe that $\[\nu\in\sigma]=\bigvee_{\<\eta,b>\in\sigma}\[\nu=\eta]\wedge
b=\bigvee_{\<\eta,b>\in\sigma}\[\nu=\eta{\And}\eta\of\tau]\leq\[\nu\of\tau]$, and consequently
$\[\forall\nu\,(\nu\in\sigma\implies\nu\of\tau)]=1$. Conversely, for any name $\nu$, consider the name $\nu'=\set{\<\eta,b>\st
\eta\in\dom(\tau)\And b=\[\eta\in\nu\intersect\tau]}$. The reader may check that $\[\nu'=\nu\intersect\tau]=1$, and consequently,
$\[\nu\of\tau\implies\nu=\nu']=1$. Since $\nu'\of \dom(\tau)\times\B$, it follows that $\nu'\in\dom(\sigma)$ and since
$\[\nu'\in\sigma]\geq\[\nu'\of\tau]$ by the definition of $\sigma$, we conclude $\[\nu\of\tau\implies\nu\in\sigma]=1$. So
$\[\sigma=P(\tau)]=1$, as desired.

For the separation axiom, if $\tau$ is any $\B$-name and $\varphi(x)$ is any formula in the forcing language, then $\set{\<\sigma,b>\st
\sigma\in\dom(\tau), b=\[\sigma\in\tau\And \varphi(\sigma)]}$ is a name for the corresponding subset of $\tau$ defined by $\varphi$. For
replacement, it suffices to verify collection instead, and it is easy to do so simply by using the name $\set{\<\tau,1>\st \tau\in
V_\alpha\intersect V^\B}$ for a sufficiently large ordinal $\alpha$, so that all possible Boolean values realized by witnesses are realized
by witnesses in $V_\alpha\intersect V^\B$, by appealing to collection in $V$. Finally, the foundation axiom can be verified, essentially by
choosing a minimal rank name for an element of a given name.\end{proof}

The map $\varphi\mapsto\[\varphi]$ is defined by induction on $\varphi$ in the metatheory, which allows us to verify the forcing relation
$p\forces\varphi$ in the ground model. Although the forcing slogan is, ``the forcing relation is definable in the ground model,'' this is
not strictly true when one wants to vary the formula. Rather, it is the restriction of the forcing relation to any fixed formula, thought
of as a relation on the forcing conditions and the names to be used as parameters in that formula, which is definable in the ground model.
By means of universal formulas, one can similarly get the forcing relation in the ground model for formulas of any fixed bounded
complexity. But the full forcing relation on all formulas cannot be definable, just as and for the same reasons that Tarski proved that the
full first-order satisfaction relation is not definable.

We now elucidate in greater detail the way in which the ground model $V$ sits inside the Boolean-valued model $V^\B$. For each set $x\in
V$, we recursively define the check names by $\check x=\set{\<\check y,1>\st y\in x}$, and we introduce the predicate $\check V$ into the
forcing language, defining $\[\tau\in\check V]=\bigvee_{x\in V}\[\tau=\check x]$ for any $\B$-name $\tau$. This definition arises naturally
by viewing $\check V$ as the class name $\set{\<\check x,1>\st x\in V}$, and the reader may verify that this predicate obeys the
corresponding equality axiom. Note that for any $x\in V$, we have $\[\check x\in\check V]=1$.

\begin{lemma}\label{Lemma.VandVcheck}
 $V\satisfies\varphi[x_0,\ldots,x_n]$ if and only if\/ $\[\varphi^{\check V}(\check x_0,\ldots,\check x_n)]=1$.
\end{lemma}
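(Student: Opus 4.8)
The plan is to prove the biconditional by induction on the structure of $\varphi$, taking $\neg$, $\wedge$, and $\exists$ as the primitive logical operations and writing $\vec x$ for $x_0,\ldots,x_n$. I would strengthen the induction hypothesis to a two-valued statement: that $\[\varphi^{\check V}(\check x_0,\ldots,\check x_n)]$ equals $1$ when $V\satisfies\varphi[\vec x]$ and equals $0$ otherwise. This two-valuedness is exactly what makes the negation step go through, and it is visible already in the atomic cases.

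First I would dispatch the atomic cases, which assert that the check names compute ground-model equality and membership faithfully: $\[\check x=\check y]=1$ if $x=y$ and $0$ otherwise, and $\[\check x\in\check y]=1$ if $x\in y$ and $0$ otherwise (relativization to $\check V$ does not affect atomic formulas). These two facts are proved together by a single transfinite induction mirroring the double recursion that defines the atomic Boolean values; the natural sum $\rank(x)\oplus\rank(y)$ serves as a measure that strictly decreases in every appeal. Unwinding the definitions, $\[\check x\in\check y]=\bigvee_{z\in y}\[\check x=\check z]$, which by the equality induction hypothesis is $1$ exactly when some $z\in y$ has $x=z$, i.e.\ when $x\in y$, and $0$ otherwise. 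Similarly, since $\[\check z\in\check x]=1$ for each $z\in x$, we get $\[\check x\of\check y]=\bigwedge_{z\in x}\[\check z\in\check y]$, which by the membership induction hypothesis is $1$ exactly when $x\of y$ and $0$ otherwise; combining this with $\[\check x=\check y]=\[\check x\of\check y]\wedge\[\check y\of\check x]$ gives $\[\check x=\check y]=1$ iff $x=y$.

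The propositional steps are then immediate from two-valuedness: $\[\neg\psi^{\check V}]=\neg\[\psi^{\check V}]$ is $1$ iff $\[\psi^{\check V}]=0$ iff $V\not\satisfies\psi$, and $\[\psi^{\check V}\wedge\chi^{\check V}]=\[\psi^{\check V}]\wedge\[\chi^{\check V}]$ is $1$ iff both conjuncts are $1$.

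The one genuinely delicate step is the existential quantifier, and here the relativization to $\check V$ is precisely what rescues the argument. Writing the relativized formula as $\exists x\,(x\in\check V\wedge\psi^{\check V})$, its Boolean value is $\bigvee_{\tau\in V^\B}\[\tau\in\check V]\wedge\[\psi^{\check V}(\tau,\check{\vec x})]$. For the forward direction, if $V\satisfies\psi[a,\vec x]$ for some $a\in V$, then the induction hypothesis gives $\[\psi^{\check V}(\check a,\check{\vec x})]=1$, and since $\[\check a\in\check V]=1$ the term at $\tau=\check a$ already forces the join to be $1$. For the converse I would suppose no $a\in V$ satisfies $\psi$ and show the whole join is $0$ by bounding each term: using $\[\tau\in\check V]=\bigvee_{a\in V}\[\tau=\check a]$ and distributivity, each term equals $\bigvee_{a}\[\tau=\check a]\wedge\[\psi^{\check V}(\tau,\check{\vec x})]$, and the equality axiom (valid with Boolean value one in $V^\B$) gives $\[\tau=\check a]\wedge\[\psi^{\check V}(\tau,\check{\vec x})]\leq\[\psi^{\check V}(\check a,\check{\vec x})]$, which is $0$ by the induction hypothesis. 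Hence every term vanishes and the join is $0$. The main obstacle is exactly this zero-case: one must convert the unrestricted join over all names $\tau\in V^\B$ into a join over the ground-model witnesses $\check a$, and it is the membership guard $\tau\in\check V$ together with the equality axiom that accomplishes this reduction.
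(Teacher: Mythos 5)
Your proposal is correct and follows essentially the same route as the paper's proof: induction on $\varphi$ with the (implicitly two-valued) hypothesis, the reduction $\[(\exists x\,\varphi)^{\check V}]=\bigvee_{x\in V}\[\varphi^{\check V}(\check x,\ldots)]$ via $\[\tau\in\check V]=\bigvee_{x\in V}\[\tau=\check x]$ together with the equality axiom, and the observation that each $\[\varphi^{\check V}(\check x,\ldots)]$ is $0$ or $1$. Your explicit treatment of the atomic case and your phrasing of the converse quantifier step as "all terms vanish" rather than "some positive term must equal $1$" are just more detailed renderings of what the paper leaves to the reader.
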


\begin{proof} First, let us explain more precisely what the lemma
means. The formula $\varphi^{\check V}$ is the relativization of $\varphi$ to the class $\check V$, obtained by bounding all quantifiers to
this class. Specifically, in the easy cases, $\varphi^{\check V}=\varphi$ for $\varphi$ atomic, $(\neg\varphi)^{\check
V}=\neg\varphi^{\check V}$ and $(\varphi\wedge\psi)^{\check V}=\varphi^{\check V}\wedge\psi^{\check V}$; for the quantifier case, let
$(\exists x\,\varphi)^{\check V}=\exists x\in\check V \,\varphi^{\check V}$, so that $\[(\exists x\,\varphi(x))^{\check
V}]=\bigvee_{\tau\in V^\B}\[\tau\in{\check V}\wedge\varphi^{\check V}(\tau)]=\bigvee_{x\in V}\[\varphi(\check x)]$, where we use the fact
that $\[\tau\in{\check V}]=\bigvee_{x\in V}\[\tau=\check x]$ to obtain the last equality.

The lemma is clear for atomic formulas, and the induction step for logical connectives is also easy. For the existential case, in the
forward direction, suppose that $V\satisfies\exists x\,\varphi(x,x_0,\ldots,x_n)$. Then $V\satisfies\varphi(x,x_0,\ldots,x_n)$ for some
specific $x$, and so $\[\varphi^{\check V}(\check x,\check x_0,\ldots,\check x_n)]=1$. Since $\[\check x\in{\check V}]=1$, this implies
that $\[(\exists x\,\varphi(x,\check x_0,\ldots,\check x_n))^{\check V}]=1$, as desired. Conversely, suppose that the Boolean value of
$\[(\exists x\,\varphi(x,\check x_0,\ldots,\check x_n))^{\check V}]$ is $1$. By the remarks of the previous paragraph, this is equivalent
to $\bigvee_{x\in V}\[\varphi(\check x,\check x_0,\ldots,\check x_n)]=1$. So there must be some $x\in V$ with $\[\varphi(\check x,\check
x_0,\ldots,\check x_n)]>0$. By induction, this Boolean value is either $0$ or $1$, and since it is not $0$, we conclude $\[\varphi(\check
x,\check x_0,\ldots,\check x_n)]=1$. Therefore, $V\satisfies\varphi[x,x_0,\ldots,x_n]$, and so $V\satisfies(\exists
x\,\varphi)[x_0,\ldots,x_n]$, as desired.\end{proof}

The ordinal rank of a set $x$ is defined recursively by $\rho(x)=\sup\set{\rho(y)+1\st y\in x}$; this is the same as the least ordinal
$\alpha$ such that $x\in V_{\alpha+1}$. It is easy to see in \ZFC\ that if $\beta$ is an ordinal, then $\rho(\beta)=\beta$.

\begin{lemma}\label{Lemma.NameRanks}
 If $\tau\in V^\B$ and $\rho(\tau)=\alpha$, then
$\[\rho(\tau)\leq\check\alpha]=1$.
\end{lemma}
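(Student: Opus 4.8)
The plan is to argue by transfinite induction on the ordinal $\alpha=\rho(\tau)$, reducing the inequality to the recursive characterization of rank. Since $V^\B$ satisfies $\ZFC$ with Boolean value one by Theorem~\ref{Theorem.[[ZFC]]=1}, the $\ZFC$-provable equivalence $\rho(x)\le\gamma\iff\forall y\in x\,(\rho(y)<\gamma)$ holds in $V^\B$ with Boolean value one; hence $\[\rho(\tau)\le\check\alpha]=\[\forall y\,(y\in\tau\implies\rho(y)<\check\alpha)]$, and it suffices to show this latter value is $1$.

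Before the induction I would record two facts. First, for ordinals $\beta<\alpha$ in $V$ we have $\[\check\beta\in\check\alpha]=1$ and hence $\[\check\beta<\check\alpha]=1$; this follows from Lemma~\ref{Lemma.VandVcheck} together with the absoluteness of the ordinals and their order, and it also shows that $V^\B$ regards $\check\alpha$ as an ordinal with Boolean value one, so that the inequality $\rho(\tau)\le\check\alpha$ is genuinely an assertion between ordinals. Second, the general equality axiom holds with Boolean value one, as established in the excerpt.

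Now fix $\tau$ with $\rho(\tau)=\alpha$ and assume the lemma for all names of smaller rank. For each pair $\langle\sigma,b\rangle\in\tau$ the name $\sigma$ sits below $\tau$ in the $\in$-relation, via $\sigma\in\{\sigma\}\in\langle\sigma,b\rangle\in\tau$, so $\rho(\sigma)=\beta_\sigma<\alpha$. By the induction hypothesis $\[\rho(\sigma)\le\check\beta_\sigma]=1$, and combined with $\[\check\beta_\sigma<\check\alpha]=1$ this gives $\[\rho(\sigma)<\check\alpha]=1$. Applying the equality axiom to the formula $\rho(x)<\check\alpha$ (with parameter $\check\alpha$) and using $\[\rho(\sigma)<\check\alpha]=1$ then yields $\[y=\sigma]\le\[\rho(y)<\check\alpha]$ for every name $y$. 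The core computation is now to bound $\[y\in\tau]$: from $\[y\in\tau]=\bigvee_{\langle\sigma,b\rangle\in\tau}\[y=\sigma]\wedge b$ and $\[y=\sigma]\wedge b\le\[y=\sigma]\le\[\rho(y)<\check\alpha]$ we obtain $\[y\in\tau]\le\[\rho(y)<\check\alpha]$. Taking the meet over all names $y$ gives $\[\forall y\,(y\in\tau\implies\rho(y)<\check\alpha)]=1$, which by the reduction of the first paragraph is exactly $\[\rho(\tau)\le\check\alpha]=1$.

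The step demanding the most care is the reduction in the first paragraph: one must be sure that the internally computed rank of $V^\B$ agrees with the external recursion on names, which is precisely where Theorem~\ref{Theorem.[[ZFC]]=1} is indispensable, since it guarantees that the recursive law for rank and the statement that $\check\alpha$ is an ordinal each hold with Boolean value one. The remaining manipulations are then routine Boolean-value bookkeeping built on the calculation above.
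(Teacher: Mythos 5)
Your proof is correct and follows essentially the same route as the paper's: induction on the rank of the name, using the atomic definition $\[y\in\tau]=\bigvee_{\<\sigma,b>\in\tau}\[y=\sigma]\wedge b$ together with the equality axiom to bound $\[y\in\tau]\le\[\rho(y)<\check\alpha]$, and then passing to the universal statement. You are merely more explicit than the paper about the final reduction step, namely that the internally computed rank obeys the recursion $\rho(x)\le\gamma\iff\forall y\in x\,(\rho(y)<\gamma)$ with Boolean value one.
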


\begin{proof} The lemma asserts that with Boolean value one, the
rank of a set is bounded by the rank of its name. That is, in the assumption $\rho(\tau)=\alpha$ is referring to the rank of the name
$\tau$, but in the conclusion, $\rho(\tau)\leq\alpha$ is referring inside the Boolean brackets to the rank of the set that $\tau$ names,
which could be less. Suppose that this is true for names of rank less than $\alpha$. In particular, if $\eta\in\dom(\tau)$, then $\eta$ has
lower rank than $\tau$ and so $\[\rho(\eta)<\check\alpha]=1$. Thus, $\bigwedge_{\eta\in\dom(\tau)}\[\rho(\eta)<\check\alpha]=1$. But
$\[\sigma\in\tau]=\bigvee_{\<\eta,b>\in\tau}\[\sigma=\eta]\wedge b$, and so $\[\sigma\in\tau]\leq
\[\rho(\sigma)<\check\alpha]$. It follows that
$\[\forall\sigma\in\tau\,(\rho(\sigma)<\check\alpha)]=1$
and so $\[\rho(\tau)\leq\check\alpha]=1$, as desired.\end{proof}

The converse implication can fail, because a small set can have a large name. Let us now further explore the nature of $\check V$ inside
$V^\B$.

\begin{lemma}\label{Lemma.VcheckTransitive}
 $\[\check V\text{ is a transitive class, containing all ordinals}]=1$.
\end{lemma}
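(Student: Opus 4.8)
The plan is to unwind each clause of the statement into a family of Boolean-value inequalities and verify those directly from the definitions, using the substitution form of the equality axiom, namely $\[\vec s=\vec t]\wedge\[\varphi(\vec s)]\leq\[\varphi(\vec t)]$, the infinite distributive law in $\B$, and—for the ordinal clause—Lemma~\ref{Lemma.NameRanks}. Throughout I would use the defining identity $\[\tau\in\check V]=\bigvee_{x\in V}\[\tau=\check x]$ together with $\[\sigma\in\check x]=\bigvee_{y\in x}\[\sigma=\check y]$, which comes straight from $\check x=\set{\<\check y,1>\st y\in x}$.

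For transitivity, I would observe that $\[\check V\text{ is transitive}]=1$ is equivalent to the assertion that $\[\sigma\in\tau]\wedge\[\tau\in\check V]\leq\[\sigma\in\check V]$ holds for all names $\sigma,\tau$. The infinite distributive law turns the left side into $\bigvee_{x\in V}\bigl(\[\sigma\in\tau]\wedge\[\tau=\check x]\bigr)$. The substitution axiom bounds each term by $\[\sigma\in\check x]$, which in turn equals $\bigvee_{y\in x}\[\sigma=\check y]\leq\bigvee_{z\in V}\[\sigma=\check z]=\[\sigma\in\check V]$. Taking the join over $x$ yields the required inequality, so transitivity holds with Boolean value one. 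This half is purely formal.

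For the clause that $\check V$ contains all ordinals, the statement reduces to showing $\[\tau\text{ is an ordinal}]\leq\[\tau\in\check V]$ for every name $\tau$. This is where I would invoke Lemma~\ref{Lemma.NameRanks}: setting $\alpha=\rho(\tau)$, it gives $\[\rho(\tau)\leq\check\alpha]=1$. Since ZFC proves that the rank of an ordinal is that ordinal itself, the Fundamental Theorem of Forcing (Theorem~\ref{Theorem.[[ZFC]]=1}) yields $\[\tau\text{ is an ordinal}\implies\rho(\tau)=\tau]=1$, so that $\[\tau\text{ is an ordinal}]\leq\[\tau\leq\check\alpha]$. Appealing once more to a ZFC theorem interpreted in $V^\B$, an ordinal that is $\leq\check\alpha$ either lies in $\check\alpha$ or equals $\check\alpha$; since $\[\tau\in\check\alpha]=\bigvee_{\beta<\alpha}\[\tau=\check\beta]\leq\[\tau\in\check V]$ and $\[\tau=\check\alpha]\leq\[\tau\in\check V]$ (using $\[\check\alpha\in\check V]=1$), I would conclude $\[\tau\text{ is an ordinal}]\leq\[\tau\in\check V]$, as needed.

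The only genuinely delicate step is this ordinal half, where one must carefully distinguish the rank $\rho(\tau)=\alpha$ of the \emph{name} $\tau$, computed in $V$, from the rank of the \emph{set} it denotes, computed inside $V^\B$; the bound $\[\rho(\tau)\leq\check\alpha]=1$ of Lemma~\ref{Lemma.NameRanks} is precisely the bridge between them, and once it is in place the remaining manipulation of joins is routine. I expect no obstacle in the transitivity clause at all.
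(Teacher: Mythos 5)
Your proposal is correct and follows essentially the same route as the paper: the transitivity clause is the identical distributivity-plus-equality-axiom computation, and the ordinal clause rests on Lemma~\ref{Lemma.NameRanks} exactly as in the paper's argument. The only cosmetic difference is that the paper chooses $\alpha>\rho(\tau)$ so that $\[\rho(\tau)<\check\alpha]=1$ and thereby avoids your extra case split between $\tau\in\check\alpha$ and $\tau=\check\alpha$, but both versions go through.
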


\begin{proof} To see that $\check V$ is transitive in $V^\B$,
consider any two $\B$-names $\sigma$ and $\tau$. By the definition of Boolean value in the atomic case, we first observe for any set $x\in
V$ that $\[\sigma\in\check x]=\bigvee_{y\in x}\[\sigma=\check y]$. Using this, compute
$$\begin{array}{rcl}
 \[\sigma\in\tau\in\check V] &=& \[\sigma\in\tau]\wedge\[\tau\in\check V]\\
                             &=& \[\sigma\in\tau]\wedge\bigvee_{x\in V}\[\tau=\check x]\\
                             &=& \bigvee_{x\in V}\[\sigma\in\tau\wedge\tau=\check x]\\
                             &\leq& \bigvee_{x\in V}\[\sigma\in\check x]\\
                             &=& \bigvee_{x\in V}\bigvee_{y\in x}\[\sigma=\check y]\\
                             &=& \bigvee_{y\in V}\[\sigma=\check y]\\
                             &=& \[\sigma\in\check V].\\
\end{array}$$
Thus, $\[\sigma\in\tau\in\check V\implies\sigma\in\check V]=1$, and consequently, $\[\check V\text{ is transitive}]=1$, as desired. It remains to
show that $\check V$ contains all the ordinals with Boolean value one. If $\tau$ is any name, let $\alpha$ be an ordinal larger than $\rho(\tau)$, so
that $\[\rho(\tau)<\check\alpha]=1$ by lemma \ref{Lemma.NameRanks}. Thus, $\[\tau\text{ is an ordinal }\implies\tau\in\check\alpha]=1$. By
transitivity, we have $\[\tau\in\check\alpha\implies\tau\in\check V]=1$, and so $\[\tau\text{ is an ordinal }\implies\tau\in\check V]=1$, as
desired.\end{proof}

If one wanted to build a Boolean-valued sub-model of $V^\B$ corresponding to the ground model, one might guess at first that the natural
choice would be to use $\set{\check x\st x\in V}$, which could be denoted $V^2$ for the complete Boolean subalgebra
$2=\singleton{0,1}\of\B$. After all, we have the atomic Boolean values $\[\tau=\sigma]$ and $\[\tau\in\sigma]$ induced from $V^\B$, and so
this is indeed a Boolean-valued model. Indeed, this model has \ZFC\ with Boolean value one, and it is full. But this is largely because all
the Boolean values in this model are $0$ or $1$, as a consequence of lemma \ref{Lemma.VandVcheck}. Specifically, it is easy to see by
induction that $\[\varphi(\check x_0,\ldots,\check x_n)]$ as computed in $\set{\check x\st x\in V}$ is the same as $\[\varphi^{\check
V}(\check x_0,\ldots,\check x_n)]$ in $V^\B$, which corresponds simply to the truth in $V$ of $\varphi(x_0,\ldots,x_n)$ by lemma
\ref{Lemma.VandVcheck}. So the Boolean-valued model arising with $\set{\check x\st x\in V}$ is actually a 2-valued model, an isomorphic
copy of $V$.

The more subtle and interesting Boolean-valued model corresponding to the ground model, rather, has domain
$\Vhat=\set{\tau\st\[\tau\in\check V]=1}$. (Please note the difference between this notation $\hat V$, pronounced ``$V$-hat'', and the
notation $\check V$, pronounced ``$V$-check''.) Of course every $\check x$ is in $\Vhat$, but in general, if $\B$ is nontrivial, then by
the mixing lemma there will be additional names $\tau$ in $\Vhat$ that are not $\check x$ for any particular $x$. Since names $\tau$ in
$\hat V$ have $\bigvee_{x\in V}\[\tau=\check x]=1$, they constitute a kind of quantum superposition of sets, which have Boolean value one
of being {\it some} $\check x$, without necessarily being fully committed to being any particular $\check x$. We may regard $\Vhat$ as a
Boolean $\B$-valued model using the same atomic values $\[\tau=\sigma]$ and $\[\tau\in\sigma]$ as in $V^\B$. And with this interpretation,
the model is full.

\begin{lemma} Suppose that $\vec\tau=\<\tau_0,\ldots,\tau_n>$,
where $\[\tau_i\in\check V]^\B=1$. Then:
\begin{enumerate}
 \item $\[\varphi(\vec\tau)]^{\Vhat}=\[\varphi^{\check V}(\vec\tau)]^{V^\B}$.
 \item As a $\B$-valued model, $\Vhat$ is full. That is, for any formula $\varphi$ with parameters $\vec\tau$ from $\Vhat$, there is a
     name $\sigma\in\Vhat$ such that $\[\exists x\, \varphi(x,\vec\tau)]^{\Vhat}=\[\varphi(\sigma,\vec\tau)]^{\Vhat}$.
\end{enumerate}
\end{lemma}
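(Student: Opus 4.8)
The plan is to establish (1) by induction on the complexity of $\varphi$, with the existential quantifier as the only substantive case, and then to read off (2) from (1) together with the fullness principle for $V^\B$ (Lemma~\ref{FullnessPrinciple}). The single device driving everything is a relocation trick: given any name $\sigma\in V^\B$, set $b=\[\sigma\in\check V]$ and apply the mixing lemma (Lemma~\ref{Lemma.MixingLemma}) to the antichain $\set{b,\neg b}$ with the sequence $\<\sigma,\check\emptyset>$ to obtain a name $\sigma'$ with $b\leq\[\sigma'=\sigma]$ and $\neg b\leq\[\sigma'=\check\emptyset]$; since $\[\check\emptyset\in\check V]=1$, these estimates force $\[\sigma'\in\check V]\geq b\vee\neg b=1$, so $\sigma'\in\Vhat$, and $\sigma'$ agrees with $\sigma$ below $b$.

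For the induction in (1), the atomic case is immediate because $\Vhat$ carries the very same atomic values $\[\tau=\sigma]$ and $\[\tau\in\sigma]$ as $V^\B$ and relativization leaves atomic formulas unchanged, while the connective cases follow since relativization and the recursive clauses for $\[\neg\varphi]$ and $\[\varphi\wedge\psi]$ commute. The crux is the existential step $\exists x\,\varphi(x,\vec\tau)$. Here the left side is $\bigvee_{\sigma\in\Vhat}\[\varphi(\sigma,\vec\tau)]^{\Vhat}$, which by the induction hypothesis equals $\bigvee_{\sigma\in\Vhat}\[\varphi^{\check V}(\sigma,\vec\tau)]^{V^\B}$, whereas unfolding the relativization gives $\[(\exists x\,\varphi)^{\check V}(\vec\tau)]^{V^\B}=\bigvee_{\sigma\in V^\B}\[\sigma\in\check V]\wedge\[\varphi^{\check V}(\sigma,\vec\tau)]^{V^\B}$. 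Since $\[\sigma\in\check V]=1$ for $\sigma\in\Vhat$, each left summand appears on the right, giving $\leq$. For $\geq$, I take an arbitrary $\sigma\in V^\B$, relocate it to $\sigma'\in\Vhat$ as above, and invoke the equality axiom (which has Boolean value one) to get $\[\sigma\in\check V]\wedge\[\varphi^{\check V}(\sigma,\vec\tau)]=b\wedge\[\varphi^{\check V}(\sigma,\vec\tau)]\leq\[\varphi^{\check V}(\sigma',\vec\tau)]$, a left summand because $\sigma'\in\Vhat$; joining over all $\sigma$ completes the step.

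For (2), I will combine (1) with fullness downstairs. Applying the fullness principle in $V^\B$ to the formula $x\in\check V\wedge\varphi^{\check V}(x,\vec\tau)$ produces a name $\sigma_0\in V^\B$ with $\[\exists x\,(x\in\check V\wedge\varphi^{\check V}(x,\vec\tau))]^{V^\B}=\[\sigma_0\in\check V\wedge\varphi^{\check V}(\sigma_0,\vec\tau)]^{V^\B}$. Relocating $\sigma_0$ to $\sigma\in\Vhat$ as before, the inequality $b\leq\[\sigma_0=\sigma]$ and the equality axiom give $\[\varphi^{\check V}(\sigma,\vec\tau)]\geq\[\sigma_0\in\check V\wedge\varphi^{\check V}(\sigma_0,\vec\tau)]=\[\exists x\in\check V\,\varphi^{\check V}(x,\vec\tau)]$, while $\[\sigma\in\check V]=1$ yields the reverse inequality; hence $\[\varphi^{\check V}(\sigma,\vec\tau)]^{V^\B}=\[(\exists x\,\varphi)^{\check V}(\vec\tau)]^{V^\B}$. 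Translating both sides through part (1) turns this into $\[\varphi(\sigma,\vec\tau)]^{\Vhat}=\[\exists x\,\varphi(x,\vec\tau)]^{\Vhat}$ with $\sigma\in\Vhat$, exactly the required fullness.

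The hard part is the relocation step, which is where all the real content sits: I must confirm that $\sigma'$ truly lands in $\Vhat$ and that it agrees with $\sigma$ below $b$ closely enough for the equality axiom to move the Boolean value of $\varphi^{\check V}$ from $\sigma$ to $\sigma'$. I would also handle the degenerate cases $b=0$ and $b=1$ of the mixing lemma directly, taking $\sigma'=\check\emptyset$ or $\sigma'=\sigma$ respectively, and I would note that the whole argument rests on the fact, already built into the definition of $\Vhat$, that it uses the atomic Boolean values inherited from $V^\B$.
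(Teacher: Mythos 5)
Your proof is correct and follows essentially the same route as the paper: induction on $\varphi$ for (1) with the existential clause as the only real work, and (2) obtained from the fullness of $V^\B$ together with the mixing-lemma relocation into $\Vhat$. The only cosmetic difference is that in the existential step of (1) the paper collapses both joins to $\bigvee_{x\in V}\[\varphi^{\check V}(\check x,\vec\tau)]$ via the identity $\[\tau\in\check V]=\bigvee_{x\in V}\[\tau=\check x]$, whereas you compare the join over $\Vhat$ with the join over $V^\B$ directly by relocating each $\sigma$; both are fine.
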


\begin{proof} Statement (1) is clear for atomic formulas, and the
inductive step is clear for logical connectives. For the quantifier case, $\[\exists
x\,\varphi(x,\vec\tau)]^{\Vhat}=\bigvee_{\tau\in\Vhat}\[\varphi(\tau,\vec\tau)]^\Vhat=\bigvee_{x\in V}\[\varphi^{\check V}(\check
x,\vec\tau)]=\[(\exists x\,\varphi(x,\vec\tau))^{\check V}]$, where the second $=$ relies on the fact that $\tau\in\Vhat$ exactly when
$\[\tau\in\check V]=\bigvee_{x\in V}\[\tau=\check x]=1$.

Now consider (2). By the fullness of $V^\B$, there is some name $\eta$ such that $\[\exists x\in\check V\, \varphi^{\check
V}(x,\vec\tau)]=\[\eta\in\check V\wedge\varphi^{\check V}(\eta,\vec\tau)]$. Let this Boolean value be $b$. By the mixing lemma, there is a
name $\sigma$ such that $b\leq\[\eta=\sigma]$ and $1-b\leq\[\sigma=\check\emptyset]$. Thus, $\[\sigma\in\check V]=1$, and also
$b\leq\[\varphi^{\check V}(\sigma,\vec\tau)]\leq b$, so $\[\exists x\in\check V\,\varphi^{\check V}(x,\vec\tau)]=\[\varphi^{\check
V}(\sigma,\vec\tau)]$, as desired.\end{proof}

We now show that the full model $V^\B$ believes that it is a forcing extension of the class $\check V$. The {\df canonical name for the
generic filter} is the name $\dot G=\set{\<\check b,b>\st b\in\B}$.

\begin{lemma}\label{Lemma.dotGischeckVgeneric}
 The following assertions have $\B$-value one in $V^\B$:
\begin{enumerate}
 \item $\dot G$ is an ultrafilter on $\check\B$.
 \item $\dot G$ is a $\check V$-generic filter on $\check\B$.
\end{enumerate}
\end{lemma}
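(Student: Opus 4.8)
The whole argument rests on one computation: $\[\check b\in\dot G]=b$ for every $b\in\B$. Unfolding the atomic value, $\[\check b\in\dot G]=\bigvee_{c\in\B}\[\check b=\check c]\wedge c$, and since by Lemma \ref{Lemma.VandVcheck} the value $\[\check b=\check c]$ is always $0$ or $1$, equalling $1$ exactly when $b=c$, only the term $c=b$ survives and we get $\[\check b\in\dot G]=b$. This identity, which says that $\dot G$ exactly records the ``truth value'' of each $\check b$, drives both parts. Alongside it I would establish a reduction principle: to prove $\[\forall x_1,\ldots,x_k\in\check\B\,\psi(\vec x)]=1$ it suffices to prove $\[\psi(\check b_1,\ldots,\check b_k)]=1$ for all $\vec b\in\B^k$. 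For names $\tau_i$ with $\[\tau_i\in\check\B]=1$, the values $a^i_b=\[\tau_i=\check b]$ form (for each $i$) a partition of unity, being pairwise disjoint since distinct check-names are incompatible and joining to $\[\tau_i\in\check\B]=1$; the substitution axiom gives $\bigwedge_i a^i_{b_i}\wedge\[\psi(\vec{\check b})]\le\[\psi(\vec\tau)]$, and so under the hypothesis $\[\psi(\vec\tau)]\ge\bigvee_{\vec b}\bigwedge_i a^i_{b_i}=1$, the final join being computed to $1$ by applying the law $a\wedge\bigvee_j x_j=\bigvee_j(a\wedge x_j)$ once per coordinate. Crucially only this one-sided distributive law is used, which holds in every complete Boolean algebra, and not the fully infinite law, which can fail.

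For (1), I would check each ultrafilter axiom on check-names and then apply the reduction principle. Containment $\[\dot G\subseteq\check\B]=1$ is immediate, since $\[\tau\in\dot G]=\bigvee_b\[\tau=\check b]\wedge b\le\bigvee_b\[\tau=\check b]=\[\tau\in\check\B]$. Everything else collapses to the identity above together with the fact that meet, complement and order on $\check\B$ track those on $\B$ (Lemma \ref{Lemma.VandVcheck}): properness is $\[\check 0\in\dot G]=0$ and nonemptiness is $\[\check 1\in\dot G]=1$; meet-closure follows from $\[\check a\in\dot G]\wedge\[\check b\in\dot G]=a\wedge b=\[\check{(a\wedge b)}\in\dot G]$; upward closure from the fact that $a\le b$ forces $\[\check a\in\dot G]=a\le b=\[\check b\in\dot G]$ while $a\not\le b$ makes $\[\check a\le_{\check\B}\check b]=0$; and the ultrafilter dichotomy from $\[\check b\in\dot G]\vee\[\check{(\neg b)}\in\dot G]=b\vee\neg b=1$. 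Each of these is $1$ on check-names, so the reduction principle yields the quantified statements with value $1$.

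For (2), the heart is that a dense $D\subseteq\B$ lying in $V$ satisfies $\bigvee_{d\in D}d=1$: were the join some $e\ne 1$, density would place a nonzero $d\in D$ below $\neg e\ne0$, contradicting $d\le e$. Consequently $\[\dot G\cap\check D\ne\emptyset]\ge\bigvee_{d\in D}(\[\check d\in\dot G]\wedge\[\check d\in\check D])=\bigvee_{d\in D}d=1$. To turn this into the object-level assertion ``every dense set of $\check V$ is met'', I would take a name $\delta$ with $\[\delta\in\check V]=1$ that is forced to be a dense subset of $\check\B$ as computed in $\check V$; the values $\[\delta=\check D]$ partition unity, and by Lemma \ref{Lemma.VandVcheck} every positive piece $\[\delta=\check D]>0$ forces $\check D$ dense and hence makes $D$ genuinely dense in $V$. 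On such a piece $\[\dot G\cap\delta\ne\emptyset]\ge\[\delta=\check D]\wedge\[\dot G\cap\check D\ne\emptyset]=\[\delta=\check D]$, and joining over the pieces gives $\[\dot G\cap\delta\ne\emptyset]=1$.

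The main obstacle is not any single calculation but the passage between the meta-level, where $b$ and $D$ are honest elements and subsets of $\B$ in $V$, and the object-level quantifiers, which range over the mixture-laden $\check\B$ and $\check V$. On check-names every required fact is transparent from $\[\check b\in\dot G]=b$; the genuine work is the mixing-and-distributivity bookkeeping that lifts these facts across the infinitely many intermediate names, while keeping that bookkeeping inside the single distributive law that complete Boolean algebras actually obey. The density-to-quantifier step in (2) similarly hinges on Lemma \ref{Lemma.VandVcheck} to guarantee that a name forced to be dense is, on each piece, a genuine dense set back in $V$.
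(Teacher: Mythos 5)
Your proof is correct and follows essentially the same route as the paper: the identity $\[\check b\in\dot G]=b$, the verification of the filter and ultrafilter axioms on check-names, and the computation $\[\check D\intersect\dot G\neq\emptyset]=\bigvee_{d\in D}d=1$ for dense $D\in V$ are exactly the paper's argument. The only difference is that you spell out the partition-of-unity bookkeeping that lifts these facts from check-names to the bounded quantifiers over $\check\B$ and $\check V$, a step the paper leaves implicit (and which, as stated, you should relax from $\[\tau_i\in\check\B]=1$ to arbitrary $\[\tau_i\in\check\B]=b$, the same argument giving $b\leq\[\psi(\vec\tau)]$).
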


\begin{proof} The reader is asked to verify that $\[\check
b\in\dot G]=b$ for every $b\in\B$. It is easy to see that $\[\dot G\of\check\B]=1$, and also $\[\check 0\notin\dot G\wedge\check 1\in \dot
G]=1$. If $b\leq c$ in $\B$, then $\[\check b\in\dot G]=b\leq c=\[\check c\in\dot G]$, so $\[\check b\in\dot G\wedge\check b\leq\check
c\implies \check c\in\dot G]=1$. Also, for any $b,c,\in\B$, we have $\[\check b\in\dot G\mathrel{\wedge}\check c\in\dot G]=b\wedge c=\[
(b\wedge c)^{\check{}}\in\dot G]$. So with full Boolean value, $\dot G$ is an ultrafilter on $\check\B$, establishing (1). For (2), observe
that if $D\in V$ is a dense subset of $\B$, then $\[\check D\intersect \dot G\neq\emptyset]=\[\exists x\, (x\in \check D\intersect\dot
G)]=\bigvee_{b\in D}\[\check b\in\dot G]=\bigvee_{b\in D} b=1$. So with Boolean value one, the filter $\dot G$ meets every dense subset of
$\check\B$ in $\check V$, and consequently $\[\forall D\in\check V\, (\dot G\intersect D\neq\emptyset)]=1$, as desired.\end{proof}

If $\tau$ is a $\B$-name and $F\of\B$ is any filter, the {\df value} of $\tau$ with respect to $F$ is defined recursively by
$\val(\tau,F)=\set{\val(\sigma,F)\st \<\sigma,b>\in\tau\text{ for some }b\in F}$.

\begin{lemma}\label{Lemma.V^BthinksItIsVcheck[dotG]}
 For any $\B$-name $\tau$, we have
$\[\tau=\val(\check\tau,\dot G)]=1$. In other words, $V^\B$ believes that the set named by $\tau$ is the value of the name $\tau$ by the
canonical generic filter. As a consequence, $V^\B$ believes with Boolean value one that every set is the interpretation of a
$\check\B$-name in $\check V$ by the filter $\dot G$. In short, $V^\B$ believes that it is the forcing extension $\check V[\dot G]$.
\end{lemma}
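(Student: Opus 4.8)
The plan is to establish the main equation $\[\val(\check\tau,\dot G)=\tau]=1$ by induction on the ordinal rank $\rho(\tau)$ of the name $\tau$; the consequences about $V^\B$ being the forcing extension $\check V[\dot G]$ will then follow by specializing to the witness $\check\tau$. Throughout I would lean on the key computation $\[\check b\in\dot G]=b$ established in the proof of Lemma \ref{Lemma.dotGischeckVgeneric}, together with the fact that the defining recursion for $\val$, being a theorem of \ZFC, holds with Boolean value one in $V^\B$ by Theorem \ref{Theorem.[[ZFC]]=1}.

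For the induction I would assume $\[\val(\check\sigma,\dot G)=\sigma]=1$ for every name $\sigma$ with $\rho(\sigma)<\rho(\tau)$; this applies in particular to each $\sigma$ occurring as a first coordinate of a pair $\<\sigma,b>\in\tau$, since such $\sigma$ has strictly smaller rank. By extensionality it suffices to show $\[\mu\in\val(\check\tau,\dot G)]=\[\mu\in\tau]$ for an arbitrary name $\mu$. I would first unwind the value operation inside $V^\B$: the elements of the check-name $\check\tau$ are exactly the pairs $\<\sigma,b>\in\tau$ (each realized in $V^\B$ as the value of the check name of $\<\sigma,b>$, whose second coordinate is $\check b$), so the recursion for $\val$ gives, with Boolean value one, that $\mu\in\val(\check\tau,\dot G)$ holds precisely when there is a pair $\<\sigma,b>\in\tau$ with $\check b\in\dot G$ and $\mu=\val(\check\sigma,\dot G)$. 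Taking Boolean values and converting this existential into a join over the ground-model pairs yields
$$\[\mu\in\val(\check\tau,\dot G)]=\bigvee_{\<\sigma,b>\in\tau}\[\check b\in\dot G]\wedge\[\mu=\val(\check\sigma,\dot G)].$$
The induction hypothesis now replaces $\[\mu=\val(\check\sigma,\dot G)]$ by $\[\mu=\sigma]$, and the key fact replaces $\[\check b\in\dot G]$ by $b$, so the right-hand side becomes $\bigvee_{\<\sigma,b>\in\tau}\[\mu=\sigma]\wedge b$, which is precisely $\[\mu\in\tau]$ by the atomic definition of Boolean value. This completes the induction.

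The delicate point, where I would spend the most care, is the unwinding in the previous paragraph: one must verify that the first coordinates $s$ ranging over the elements of $\check\tau$ contribute, through the induction hypothesis, exactly the terms $\val(\check\sigma,\dot G)$, and that applying $\val$ to the $\check V$-object named by $\check\sigma$ is literally $\val(\check\sigma,\dot G)$ so that the hypothesis is applicable. Keeping straight which symbols denote ground-model data (the pairs $\<\sigma,b>\in\tau$ and the Boolean values $b\in\B$) and which denote their check-images inside $V^\B$ is the crux. Finally, the stated consequences follow by applying the main equation with the witness $\check\mu$: for any name $\mu$, the object it denotes equals $\val(\check\mu,\dot G)$, while $\[\check\mu\in\check V]=1$, and since $V\satisfies$ ``$\mu$ is a $\B$-name'' Lemma \ref{Lemma.VandVcheck} shows that $\check V$ believes $\check\mu$ to be a $\check\B$-name. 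As $\mu$ ranges over all names this gives $\[\forall y\,\exists\,\check\B\text{-name }\nu\in\check V\,(y=\val(\nu,\dot G))]=1$, which is exactly the assertion that $V^\B$ is the forcing extension $\check V[\dot G]$.
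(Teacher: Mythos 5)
Your proposal is correct and follows essentially the same route as the paper: induction on the rank of $\tau$, unwinding $\val(\check\tau,\dot G)$ over the pairs $\<\sigma,b>\in\tau$, invoking $\[\check b\in\dot G]=b$ and the inductive identity $\[\sigma=\val(\check\sigma,\dot G)]=1$ to match the join defining $\[\mu\in\tau]$. Your extra care in justifying the conversion of the internal existential into a join over ground-model pairs is a point the paper passes over silently, but it is the same argument.
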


\begin{proof} To clarify, observe that although $\tau$ is the
name of a set, $\check\tau$ is the name of a name and $\dot G$ is the name of a filter, so the notation $\val(\check\tau,\dot G)$ is
perfectly sensible inside the Boolean brackets. The claim asserts that with Boolean value one, the set named by the name of $\tau$, when
using the generic ultrafilter $\dot G$, is exactly $\tau$. This is proved by induction on the rank of $\tau$. If $\<\sigma,b>\in\tau$, then
$\[\sigma=\val(\check\sigma, \dot G)]=1$ by induction. Thus, $\[\eta\in\val(\check\tau,\dot
G)]=\bigvee_{\<\sigma,b>\in\tau}\[\eta=\sigma]\wedge\[\check b\in\dot G]=\bigvee_{\<\sigma,b>\in\tau}\[\eta=\sigma]\wedge b$. Since this is
the same as $\[\eta\in\tau]$, it follows that with Boolean value one the sets $\tau$ and $\val(\check \tau,\dot G)$ have the same members
in $V^\B$, and so the lemma is proved.\end{proof}

\section{Transforming Boolean-valued models into actual models}\label{Section.TransformingIntoActualModels}

We now transform the Boolean-valued models into actual classical first-order models by means of the quotient with respect to an ultrafilter
on $\B$. This process works with any sort of Boolean-valued model, whether it is a group, a ring, a graph or a partial order, and it is
simply the Boolean ultrapower analogue of the quotient process for classical ultrapowers. Let us give the details in the case of the
Boolean-valued model of set theory $V^\B$ we have constructed above. Suppose that $U$ is an ultrafilter on $\B$.
 \medskip\begin{quotation}
  \it There is NO need for the ultrafilter $U$ to be generic in any sense.
 \end{quotation}\medskip
The construction works equally well for ultrafilters $U$ in the ground model $V$ as for those not in $V$. Define two relations on the class
of $\B$-names:
$$ \tau =_U\sigma\quad \Iff\quad \[\tau=\sigma]\in U$$
$$ \tau \in_U \sigma\; \quad\Iff\quad\, \[\tau\in\sigma]\in U,$$
%$$\begin{array}{rcl}
% \tau =_U\sigma &\Iff& \[\tau=\sigma]\in U\\
%\smallskip
% \tau \in_U \sigma &\Iff& \[\tau\in\sigma]\in U\\
%\end{array}$$
and also define the predicate for the ground model:
 $$\tau\in_U \check V_U\quad \Iff\quad \[\tau\in\check V]\in U.$$
We invite the reader to verify that $=_U$ is indeed an equivalence relation on $V^\B$, and it is congruence with respect to $\in_U$ and
$\check V_U$ as defined above, meaning that these relations are well defined on the corresponding equivalence classes.

For any $\tau\in V^\B$, let $[\tau]_U$ be the {\df restricted equivalence class} of $\tau$, namely, the set of all those names $\sigma$,
having minimal rank, such that $\tau=_U\sigma$. Using only the names of minimal rank is analogous to Scott's trick in the case of ordinary
ultrapowers, and ensures that the restricted equivalence class is a set, rather than a proper class. The quotient structure
$V^\B/U=\set{[\tau]_U\st\tau\in V^\B}$, is the class of restricted equivalence classes. Since the relation $=_U$ is a congruence with
respect to $\in_U$ and $\check V_U$, those relations are well-defined on the quotient structure, and we ambiguously use the notation
$V^\B/U$ to represent the full structure $\<V^\B/U,\in_U,\check V_U>$, as well as its domain.

\begin{lemma}[\Los\ lemma]\label{Lemma.BooleanValuedLos}
 If\/ $\B$ is a complete Boolean algebra and $U$ is
any ultrafilter on $\B$, then $V^\B/U\satisfies\varphi[\,[\tau_0]_U,\ldots,[\tau_n]_U]$ if and only if\/
$\[\varphi(\tau_0,\ldots,\tau_n)]\in U$.
\end{lemma}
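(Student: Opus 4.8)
The plan is to prove the \Los\ lemma by induction on the complexity of the formula $\varphi$. The base case consists of the atomic formulas $\tau \in_U \sigma$, $\tau =_U \sigma$, and the ground-model predicate $\tau \in_U \check V_U$. For each of these, the biconditional holds essentially by definition: for instance, $V^\B/U \satisfies [\tau]_U \in_U [\sigma]_U$ means precisely $\tau \in_U \sigma$, which unwinds to $\[\tau \in \sigma\] \in U$, and similarly for equality and for the $\check V_U$ predicate. One should note here that these are well defined on the restricted equivalence classes, which is exactly the congruence property the reader was invited to verify just before the statement, so the truth value does not depend on the choice of representatives.

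\smallskip\noindent For the inductive step, the logical connectives are where the ultrafilter hypothesis does the real work. For conjunction, I would use that $\[\varphi \wedge \psi\] = \[\varphi\] \wedge \[\psi\]$ together with the fact that an ultrafilter is closed under finite meets and upward closed, so $\[\varphi\] \wedge \[\psi\] \in U$ if and only if both $\[\varphi\] \in U$ and $\[\psi\] \in U$; combined with the induction hypothesis this gives the satisfaction clause for conjunction in the quotient. For negation, I would invoke that $\[\neg\varphi\] = \neg\[\varphi\]$ and that for an ultrafilter exactly one of $b$ and $\neg b$ lies in $U$, so $\[\neg\varphi\] \in U \iff \[\varphi\] \notin U$, matching $V^\B/U \not\satisfies \varphi$ by the induction hypothesis. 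These two cases together handle all the propositional connectives.

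\smallskip\noindent The existential quantifier is the step I expect to be the main obstacle, and it is precisely here that fullness (Lemma~\ref{FullnessPrinciple}) is indispensable. The forward direction is routine: if $V^\B/U \satisfies \exists x\, \varphi(x, [\vec\tau]_U)$, then there is a witness $[\sigma]_U$ with $V^\B/U \satisfies \varphi([\sigma]_U, [\vec\tau]_U)$, so by induction $\[\varphi(\sigma, \vec\tau)\] \in U$; since $\[\varphi(\sigma,\vec\tau)\] \leq \bigvee_{\eta \in V^\B}\[\varphi(\eta,\vec\tau)\] = \[\exists x\,\varphi(x,\vec\tau)\]$ and $U$ is upward closed, we get $\[\exists x\,\varphi(x,\vec\tau)\] \in U$. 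The converse is the delicate part: from $\[\exists x\,\varphi(x,\vec\tau)\] \in U$ one cannot directly extract a single name $\sigma$ whose Boolean value lies in $U$, because the Boolean value of the existential is a join over possibly many names, no one of which need individually be in $U$. This is exactly where I would apply the fullness principle: there is a name $\sigma$ with $\[\exists x\,\varphi(x,\vec\tau)\] = \[\varphi(\sigma,\vec\tau)\]$, so this single Boolean value lies in $U$, whence by induction $V^\B/U \satisfies \varphi([\sigma]_U, [\vec\tau]_U)$, giving the desired witness $[\sigma]_U$ for the existential in the quotient. Thus the fullness of $V^\B$, established earlier via the mixing lemma, is the essential ingredient that makes the quantifier case go through and distinguishes this argument from the trivial power-set case.
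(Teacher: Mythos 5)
Your proof is correct and follows exactly the route the paper takes in its own (much terser) proof sketch: induction on formulas, with the atomic case holding by definition of the quotient, the connectives handled by the ultrafilter properties of $U$, and the existential step reduced to the fullness principle. Your expanded treatment of the quantifier case correctly identifies fullness as the essential ingredient, which is precisely the point the paper's one-line remark ``the quantifier case amounts to the fullness principle'' is making.
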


\begin{proof} This is the classical \Los's lemma for this
context. For atomic formulas, it was arranged by definition in $V^\B/U$. For logical connectives, the inductive argument is
straightforward, and the quantifier case amounts to the fullness principle.\end{proof}

In particular, since the axioms of \ZFC\ hold with $\B$-value one by theorem \ref{Theorem.[[ZFC]]=1}, it follows that $V^\B/U$ is a
first-order model of \ZFC. In order to prove that a certain set-theoretic statement $\psi$ is consistent with \ZFC, therefore, it suffices
to find a Boolean algebra $\B$ such that $\[\psi]\neq 0$; for having done so, one then selects any ultrafilter $U$ on $\B$ with $\[\psi]\in
U$ and observes that $V^\B/U\satisfies\ZFC+\psi$ by lemma \ref{Lemma.BooleanValuedLos}. This method provides a way to prove relative
consistency results by forcing, without ever needing to consider countable transitive models or to construct filters that are in any way
generic or even psuedo-generic over a particular structure. (Indeed, we constructed $V^\B/U$ and proved all the fundamental facts about it
without even mentioning generic filters.) In this way, one can perform forcing over $V$, without ever leaving $V$, for the quotient
structure $V^\B/U$ is constructed inside $V$, and provides a (class) model of the desired theory. From any set model $M$ of \ZFC, one
constructs inside it the model $M^\B/U$, which satisfies $\ZFC+\psi$, and this is exactly what one seeks in a general method for
consistency and independence proofs.

We are getting closer to the Boolean ultrapower. To help us arrive there, let us investigate more thoroughly how the ground model class
$\check V$ interacts with the quotient procedure of $V^\B/U$. For any ultrafilter $U$ on $\B$, let $\check
V_U=\set{[\tau]_U\st\[\tau\in\check V]\in U}$. Although $[\check x]_U\in \check V_U$ for every $x\in V$, we emphasize that $\check V_U$ is
not necessarily the same as the class $\set{[\check x]_U\st x\in V}$, since in the case that the filter $U$ is not $V$-generic, there will
be some names $\tau$ that are mixtures of various $\check x$ via antichains not met by $U$. Indeed, this very point will be the key to
understanding the nature of the Boolean ultrapower in the next section. In light of the following lemma, $\check V_U$ is exactly identical
to the quotient of the Boolean-valued model $\hat V$ by $U$, and so we could use $\check V_U$ and $\hat V_U$ interchangeably.

\goodbreak
\begin{lemma}\label{Lemma.Vcheck/U}
 The following are equivalent:
 \begin{enumerate}
   \item $[\tau]_U\in \check V_U$; that is, $[\tau]_U=[\sigma]_U$ for some $\sigma$ with $\[\sigma\in\check V]\in U$.
   \item $\[\tau\in\check V]\in U$
   \item $[\tau]_U=[\sigma]_U$ for some $\sigma$ with $\[\sigma\in\check V]=1$.
 \end{enumerate}
\end{lemma}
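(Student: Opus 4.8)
The plan is to prove the cycle of implications $(3)\Rightarrow(1)\Rightarrow(2)\Rightarrow(3)$, with essentially all the content concentrated in the last implication; the first two are bookkeeping.

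First I would dispatch $(3)\Rightarrow(1)$, which is immediate: if $\[\sigma\in\check V]=1$ then $\[\sigma\in\check V]\in U$ since $1\in U$, so the witness $\sigma$ with $[\tau]_U=[\sigma]_U$ already exhibits $[\tau]_U\in\check V_U$ directly from the definition $\check V_U=\set{[\tau]_U\st\[\tau\in\check V]\in U}$. Next, for $(1)\Rightarrow(2)$, suppose $[\tau]_U=[\sigma]_U$ with $\[\sigma\in\check V]\in U$. Then $\[\tau=\sigma]\in U$ by the definition of $=_U$, and the equality axiom supplies $\[\tau=\sigma]\wedge\[\sigma\in\check V]\leq\[\tau\in\check V]$; since $U$ is a filter, hence closed under meets and upward closed, having both conjuncts in $U$ forces $\[\tau\in\check V]\in U$. (Alternatively one may just cite the already-asserted fact that the predicate $\in_U\check V_U$ is a congruence for $=_U$.)

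The main step is $(2)\Rightarrow(3)$. Set $b=\[\tau\in\check V]$ and assume $b\in U$; note $b\neq 0$ because $U$ is proper. The hypothesis only guarantees that $\tau$ lies in $\check V$ on the $U$-large part $b$ of $\B$, so the idea is to surgically modify $\tau$ off $b$ so that the resulting name provably lands in $\check V$ with full Boolean value one, without disturbing its behavior on $b$. I would apply the mixing lemma (Lemma~\ref{Lemma.MixingLemma}) to the antichain $\set{b,\neg b}$, assigning $\tau$ to $b$ and $\check\emptyset$ to $\neg b$, obtaining a name $\sigma$ with $b\leq\[\sigma=\tau]$ and $\neg b\leq\[\sigma=\check\emptyset]$. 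Then $\[\sigma=\tau]\geq b\in U$ gives $[\sigma]_U=[\tau]_U$. To check $\[\sigma\in\check V]=1$, I would argue on each piece: on $b$ the equality axiom yields $b\leq\[\sigma=\tau]\wedge\[\tau\in\check V]\leq\[\sigma\in\check V]$ (using $b\leq\[\tau\in\check V]$), while on $\neg b$ we have $\neg b\leq\[\sigma=\check\emptyset]\leq\[\sigma\in\check V]$ since $\[\check\emptyset\in\check V]=1$. Joining, $\[\sigma\in\check V]\geq b\vee\neg b=1$, so $\sigma$ witnesses $(3)$.

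The hard part is precisely this last implication, and the difficulty it resolves is the gap between ``$U$-large membership'' in $\check V$ and ``Boolean value exactly one'' membership: condition $(2)$ places $\tau$ inside $\check V$ only on a $U$-large set, whereas $(3)$ demands a genuine representative of $[\tau]_U$ lying in $\check V$ with value one. The mixing lemma is the exact instrument that repairs the name on the complement of the $U$-large set without altering the $U$-class, and the only degenerate case, $b=1$, is trivial since then $\sigma=\tau$ already works.
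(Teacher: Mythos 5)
Your proof is correct and follows essentially the same route as the paper: the key implication $(2)\Rightarrow(3)$ is handled exactly as in the paper's proof, by mixing $\tau$ with $\check\emptyset$ along the antichain $\set{b,\neg b}$ where $b=\[\tau\in\check V]$, and the remaining implications are the same routine applications of the equality axiom and filter closure. Your write-up merely makes explicit a couple of small steps (such as $b\leq\[\sigma=\tau]\wedge\[\tau\in\check V]\leq\[\sigma\in\check V]$) that the paper leaves to the reader.
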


\begin{proof} For $(1)\implies(2)$, suppose $[\tau]_U\in\check
V_U$. By definition, there is a name $\sigma$ such that $\[\sigma\in\check V]\in U$ and $[\sigma]_U=[\tau]_U$. By the equality axiom,
therefore, $\[\tau\in\check V]\in U$ as well. For $(2)\implies(3)$, suppose that $\[\tau\in\check V]\in U$. Let $b=\[\tau\in\check V]$, and
apply the mixing lemma to build a name $\sigma$ such that $\[\sigma=\tau]\geq b$ and $\[\sigma=\emptyset]\geq \neg b$. Thus,
$\[\sigma\in\check V]\geq b\vee\neg b=1$. But since $b\in U$, we have $\sigma=_U\tau$ and so $[\tau]_U=[\sigma]_U$, as desired. The
converse implications $(3)\implies(2)\implies(1)$ are clear.\end{proof}

\begin{lemma}\label{Lemma.TruthInVcheck/U}
 $\check V_U\satisfies\varphi[[\tau_0]_U,\ldots,[\tau_n]_U]$ if and only if\/ $\[\varphi^{\check V}(\tau_0,\ldots,\tau_n)]\in U$.
\end{lemma}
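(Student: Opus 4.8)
The plan is to recognize this as \Los's lemma for the full Boolean-valued submodel $\Vhat$, transported through the relativization $\varphi\mapsto\varphi^{\check V}$. Since $\check V_U$ is exactly the quotient $\Vhat_U$ of the Boolean-valued model $\Vhat$ by $U$, and $\Vhat$ is full, the induction underlying Lemma \ref{Lemma.BooleanValuedLos} should carry over with essentially no change once the parameters are normalized to lie in $\Vhat$.

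First I would reduce to the case in which each parameter already lies in $\Vhat$, i.e.\ $\[\tau_i\in\check V]=1$. Given $[\tau_i]_U\in\check V_U$, Lemma \ref{Lemma.Vcheck/U} supplies a name $\sigma_i$ with $\[\sigma_i\in\check V]=1$ and $[\sigma_i]_U=[\tau_i]_U$. Replacing each $\tau_i$ by $\sigma_i$ leaves the left-hand side unchanged, since $[\tau_i]_U=[\sigma_i]_U$ and $\in_U,\check V_U$ are congruences on the quotient; it also preserves membership of the right-hand side in $U$, because from $\tau_i=_U\sigma_i$ we get $\bigwedge_i\[\tau_i=\sigma_i]\in U$, and the equality axiom (holding with Boolean value one) gives $\bigwedge_i\[\tau_i=\sigma_i]\wedge\[\varphi^{\check V}(\vec\tau)]\leq\[\varphi^{\check V}(\vec\sigma)]$ and symmetrically, so $\[\varphi^{\check V}(\vec\tau)]\in U$ iff $\[\varphi^{\check V}(\vec\sigma)]\in U$.

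With the parameters now in $\Vhat$, part (1) of the lemma computing Boolean values in $\Vhat$ rewrites the right side as $\[\varphi^{\check V}(\vec\tau)]^{V^\B}=\[\varphi(\vec\tau)]^{\Vhat}$. The claim thus reduces to the assertion that $\Vhat_U\satisfies\varphi[[\vec\tau]_U]$ iff $\[\varphi(\vec\tau)]^{\Vhat}\in U$, where $\Vhat_U=\check V_U$. This is precisely the Boolean \Los\ lemma (Lemma \ref{Lemma.BooleanValuedLos}) with $\Vhat$ in place of $V^\B$, and its proof uses nothing beyond fullness of the ambient Boolean-valued model—which part (2) of the preceding lemma supplies for $\Vhat$.

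The main obstacle is the quantifier step of this induction, exactly as in the original \Los\ lemma. One must show that whenever $\[\exists x\,\varphi(x,\vec\tau)]^{\Vhat}\in U$, a single name $\sigma\in\Vhat$ can be chosen with $\[\varphi(\sigma,\vec\tau)]^{\Vhat}\in U$, so that $[\sigma]_U$ witnesses the existential \emph{inside} $\check V_U$. This is delicate precisely because the existential quantifier in $\check V_U$ ranges over $\check V_U=\Vhat_U$ rather than over all of $V^\B/U$; the fullness of $\Vhat$ is what guarantees such a witness exists in $\Vhat$, and it is the one place where the restriction to the ground-model submodel genuinely matters. The atomic case holds by the definition of $\in_U$ and $\check V_U$ on the quotient, and the connective cases are routine.
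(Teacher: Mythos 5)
Your proof is correct, but it is organized differently from the paper's. The paper proves the lemma by one direct induction on $\varphi$ with the relativization $\varphi^{\check V}$ carried along throughout: the atomic case is definitional, the connectives and the forward quantifier direction are routine, and in the delicate converse direction of the quantifier case one applies fullness of $V^\B$ to the bounded existential $\exists x\in\check V\,\varphi^{\check V}(x,\vec\tau)$, obtaining a single witness $\tau$ with $\[\tau\in\check V\wedge\varphi^{\check V}(\tau,\vec\tau)]\in U$, whence $[\tau]_U\in\check V_U$ and the induction hypothesis closes the argument. You instead factor the statement into three reusable pieces: (i) normalize the parameters into $\Vhat$ via lemma \ref{Lemma.Vcheck/U} together with the equality axiom, (ii) translate $\[\varphi^{\check V}(\vec\tau)]^{V^\B}$ into $\[\varphi(\vec\tau)]^{\Vhat}$ using part (1) of the lemma on the Boolean-valued structure $\Vhat$, and (iii) invoke the \Los\ lemma (lemma \ref{Lemma.BooleanValuedLos}) for the full Boolean-valued model $\Vhat$, whose fullness is part (2) of that same lemma. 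The two arguments rest on exactly the same mechanism---fullness plus mixing supplies the existential witness inside the ground-model submodel, which you correctly identify as the only delicate point---but your decomposition treats the established properties of $\Vhat$ as black boxes, at the cost of an extra normalization step, while the paper's version is self-contained and never needs to assume $\[\tau_i\in\check V]=1$, since the relativized Boolean value is meaningful for arbitrary names. Both are sound; yours has the virtue of making explicit that the lemma is literally \Los's lemma applied to the Boolean-valued structure $\Vhat$.
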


\begin{proof} This is true for atomic $\varphi$ as a matter of
definition, and the induction proceeds smoothly through logical connectives and the forward implication of the quantifier case. For the
converse implication, suppose $\[(\exists x\,\varphi(x,\tau_0,\ldots,\tau_n))^{\check V}]\in U$. By fullness, there is a name $\tau$ such
that this Boolean value is exactly $\[\tau\in\check V\wedge\varphi^{\check V}(\tau,\tau_0,\ldots,\tau_n)]\in U$. So $[\tau]_U\in\check V_U$
and $\check V_U\satisfies\varphi([\tau]_U,[\tau_0]_U,\ldots,[\tau_n]_U)$ by induction.\end{proof}

\begin{lemma}\label{Lemma.Vcheck/UtransitiveEtc}\
\begin{enumerate}
 \item $\check V_U$ is transitive in $V^\B/U$. Namely, if\/ $[\tau]_U\in_U[\sigma]_U\in\check V_U$, then $[\tau]_U{\in}\check V_U$.
 \item If\/ $G=[\dot G]_U$ in $V^\B/U$, then $G$ is $\check V_U$-generic for $[\check \B]_U$.
 \item $V^\B/U$ is the forcing extension of\/ $\check V_U$ by $G$. That is, $V^\B/U=\check V_U[G]$.
\end{enumerate}
\end{lemma}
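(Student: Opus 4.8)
The plan is to prove the three parts in order, leveraging the \Los\ lemma for $V^\B/U$ (Lemma~\ref{Lemma.BooleanValuedLos}) and its relativized counterpart for $\check V_U$ (Lemma~\ref{Lemma.TruthInVcheck/U}), together with the facts already established about $\dot G$ inside $V^\B$ (Lemma~\ref{Lemma.dotGischeckVgeneric} and Lemma~\ref{Lemma.V^BthinksItIsVcheck[dotG]}). The guiding principle throughout is that a statement holds in $V^\B/U$ exactly when its Boolean value lies in $U$, and a statement holds in $\check V_U$ exactly when the Boolean value of its $\check V$-relativization lies in $U$; so each assertion about the quotient structures reduces to a Boolean-value computation inside $V^\B$, and for those computations I can freely cite the lemmas above.

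For part (1), transitivity of $\check V_U$: I would start from the hypothesis $[\tau]_U\in_U[\sigma]_U$ with $[\sigma]_U\in\check V_U$, translate these via the definitions into $\[\tau\in\sigma]\in U$ and $\[\sigma\in\check V]\in U$, and then invoke Lemma~\ref{Lemma.VcheckTransitive}, which gives $\[\sigma\in\check V\text{ is transitive}]=1$, i.e.\ $\[\tau\in\sigma\wedge\sigma\in\check V\implies\tau\in\check V]=1$. Since $U$ is a filter containing both $\[\tau\in\sigma]$ and $\[\sigma\in\check V]$, their meet is in $U$, hence $\[\tau\in\check V]\in U$, which by Lemma~\ref{Lemma.Vcheck/U} says precisely $[\tau]_U\in\check V_U$. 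For part (2), $\check V_U$-genericity of $G=[\dot G]_U$: the key observation is that Lemma~\ref{Lemma.dotGischeckVgeneric} already establishes, with Boolean value one, that $\dot G$ is a $\check V$-generic ultrafilter on $\check\B$. I would then simply transfer each relevant assertion through the quotient. For instance, ``$G$ is an ultrafilter on $[\check\B]_U$'' and ``$G$ meets every dense subset of $[\check\B]_U$ lying in $\check V_U$'' are each the image under $[\,\cdot\,]_U$ of statements whose $\check V$-relativized Boolean value is one, and hence in $U$; applying Lemma~\ref{Lemma.TruthInVcheck/U} (for the density-meeting, which is a statement quantifying over $\check V$) and Lemma~\ref{Lemma.BooleanValuedLos} (for the ambient ultrafilter properties) yields the claim.

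Part (3) is the substantive one and where the real work lies: I must show $V^\B/U=\check V_U[G]$, meaning that $V^\B/U$ is exactly the forcing extension of $\check V_U$ by the generic $G$. The containment $\check V_U[G]\of V^\B/U$ is routine since $\check V_U\of V^\B/U$ and $G\in V^\B/U$. The reverse containment---that every element of $V^\B/U$ is captured as an interpretation of a $[\check\B]_U$-name from $\check V_U$ via $G$---is the heart of the matter, and this is exactly what Lemma~\ref{Lemma.V^BthinksItIsVcheck[dotG]} is designed to deliver. That lemma states $\[\tau=\val(\check\tau,\dot G)]=1$ for every name $\tau$, and more strongly that with Boolean value one every set is the interpretation of a $\check\B$-name in $\check V$ by $\dot G$. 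Passing this through the quotient: for an arbitrary $[\tau]_U\in V^\B/U$, the Boolean value $\[\tau=\val(\check\tau,\dot G)]=1$ lies in $U$, so by \Los\ lemma $[\tau]_U=[\val(\check\tau,\dot G)]_U$, exhibiting $[\tau]_U$ as the $G$-value of the name $[\check\tau]_U$, which lies in $\check V_U$ since $\[\check\tau\in\check V]=1\in U$.

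The main obstacle I anticipate is not any single Boolean computation but rather the bookkeeping of quantifier complexity when transferring statements between the three structures. The genericity and forcing-extension assertions are $\Pi_1$ or $\Pi_2$ over $\check V$ (e.g.\ ``for all dense $D$ in $\check V$\dots'' and ``for every set there exists a name\dots''), so I must be careful to relativize correctly and to apply Lemma~\ref{Lemma.TruthInVcheck/U} rather than the unrelativized \Los\ lemma whenever the quantifiers are meant to range over $\check V_U$ rather than over all of $V^\B/U$. In particular, for part (3) I should make explicit that $\val(\check\tau,\dot G)$ is being read inside the Boolean brackets as the value of a name \emph{in $\check V$}, so that the resulting object genuinely lands in $\check V_U$; this is the crux that makes the identification $V^\B/U=\check V_U[G]$ precise rather than merely suggestive, and it is where I would spend the most care in writing out the argument.
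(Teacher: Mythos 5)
Your proposal is correct and follows essentially the same route as the paper's proof: each part is obtained by quotienting the corresponding Boolean-valued fact (Lemmas~\ref{Lemma.VcheckTransitive}, \ref{Lemma.dotGischeckVgeneric}, and \ref{Lemma.V^BthinksItIsVcheck[dotG]}) through $U$ via the \Los\ lemma and its $\check V$-relativized form. Your added attention to which version of \Los\ applies when quantifiers range over $\check V_U$ is a sensible elaboration of what the paper leaves implicit.
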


\begin{proof} These facts follow from lemmas
\ref{Lemma.VcheckTransitive}, \ref{Lemma.dotGischeckVgeneric} and \ref{Lemma.V^BthinksItIsVcheck[dotG]} by taking the quotient and applying
lemmas \ref{Lemma.VandVcheck} and \ref{Lemma.BooleanValuedLos}. Specifically, for (1), if $[\tau]_U\in_U[\sigma]_U\in\check V_U$, then
$\[\tau\in\sigma\wedge\sigma\in\check V]\in U$ and so $\[\tau\in\check V]\in U$ by lemma \ref{Lemma.VcheckTransitive}. Thus,
$[\tau]_U\in\check V_U$, as desired. For (2), recall that $\dot G$ is the canonical name of the generic filter. We know from lemma
\ref{Lemma.VandVcheck} that $\[\check\B\text{ is a complete Boolean algebra in }\check V]=1$ and from lemma \ref{Lemma.dotGischeckVgeneric}
that $\[\dot G\text{ is a }\check V\text{-generic ultrafilter for }\check\B]=1$. It follows by lemma \ref{Lemma.BooleanValuedLos} that
$G=[\dot G]_U$ is a $(\check V_U)$-generic ultrafilter for $[\check\B]_U$ in $V^\B/U$. Finally, (3) follows directly from lemma
\ref{Lemma.V^BthinksItIsVcheck[dotG]}, since $V^\B/U$ knows that every object $[\tau]_U$ is the same as the interpretation of the name
$[\check\tau]_U$ by $G$; that is, $[\tau]_U=\val([\check\tau]_U,G)$.\end{proof}

\section{The Boolean Ultrapower}\label{Section.TheBooleanUltrapower}

We are now ready to define the Boolean ultrapower.

\begin{definition}\rm
The {\df Boolean ultrapower} of the universe $V$ by the ultrafilter $U$ on the complete Boolean algebra $\B$ is the structure $\check
V_U=\set{[\tau]_U\st
\[\tau\in\check V]\in U}$, under the relation $\in_U$, with
the accompanying Boolean ultrapower map
$$j_U:V\to \check V_U$$
defined by $j_U:x\mapsto [\check x]_U$. The Boolean
ultrapower is also accompanied by the full Boolean
extension $\check V_U\of \check V_U[G]=V^\B/U$. The Boolean
ultrapower of any other structure $M$ is simply the
restriction of the map to that structure, namely,
$j_U\restrict M:M\to j_U(M)$.
\end{definition}

\begin{theorem}
For any complete Boolean algebra $\B$ and any ultrafilter $U$ on $\B$, the Boolean ultrapower embedding $j_U:V\to \check V_U$ is an
elementary embedding.
\end{theorem}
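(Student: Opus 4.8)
The plan is to verify the defining equivalence of elementarity directly: for every formula $\varphi$ and all $x_0,\ldots,x_n\in V$,
$$V\satisfies\varphi[x_0,\ldots,x_n]\quad\Iff\quad \check V_U\satisfies\varphi[j_U(x_0),\ldots,j_U(x_n)],$$
where by definition $j_U(x_i)=[\check x_i]_U$. The strategy is to route both sides through the single Boolean value $\boolval{\varphi^{\check V}(\check x_0,\ldots,\check x_n)}$, using the two truth lemmas already established, and then to exploit the fact that this particular value is two-valued.

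First I would apply Lemma \ref{Lemma.TruthInVcheck/U} with the names $\tau_i=\check x_i$. Since $\boolval{\check x_i\in\check V}=1\in U$, each $[\check x_i]_U$ lies in $\check V_U$, and the lemma yields
$$\check V_U\satisfies\varphi[[\check x_0]_U,\ldots,[\check x_n]_U]\quad\Iff\quad \boolval{\varphi^{\check V}(\check x_0,\ldots,\check x_n)}\in U.$$
Next I would invoke Lemma \ref{Lemma.VandVcheck}, which identifies ground-model truth with this same Boolean value being maximal:
$$V\satisfies\varphi[x_0,\ldots,x_n]\quad\Iff\quad \boolval{\varphi^{\check V}(\check x_0,\ldots,\check x_n)}=1.$$
Comparing the two equivalences, elementarity of $j_U$ reduces to the bookkeeping claim that $\boolval{\varphi^{\check V}(\check x_0,\ldots,\check x_n)}\in U$ if and only if $\boolval{\varphi^{\check V}(\check x_0,\ldots,\check x_n)}=1$.

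The crux---and the only place the argument does any real work---is the observation that the value $\boolval{\varphi^{\check V}(\check x_0,\ldots,\check x_n)}$ is always either $0$ or $1$, never a proper intermediate element of $\B$. Indeed, applying Lemma \ref{Lemma.VandVcheck} to $\varphi$ shows the value is $1$ whenever $V\satisfies\varphi[\vec x]$, while applying it to $\neg\varphi$ (and using $(\neg\varphi)^{\check V}=\neg\varphi^{\check V}$) forces the value to $0$ otherwise; relativization to $\check V$ thus collapses every such Boolean value into the trivial subalgebra $\{0,1\}$. Granting this, the desired equivalence is immediate: if the value is $1$ it lies in $U$ since $1\in U$ always, and conversely if it lies in $U$ then, being $0$ or $1$ and not $0$ (as $0\notin U$), it must equal $1$. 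This is precisely why the Boolean ultrapower map is elementary for \emph{every} ultrafilter $U$, generic or not---membership in $U$ cannot discriminate among two-valued quantities. I expect no genuine obstacle beyond isolating this two-valuedness, since the substantive content has already been packaged into Lemmas \ref{Lemma.VandVcheck} and \ref{Lemma.TruthInVcheck/U}.
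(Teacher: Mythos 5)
Your proof is correct and follows essentially the same route as the paper: both arguments chain Lemma \ref{Lemma.VandVcheck} with Lemma \ref{Lemma.TruthInVcheck/U} through the Boolean value $\boolval{\varphi^{\check V}(\check x_0,\ldots,\check x_n)}$. The paper states only the forward implication (which suffices, since it applies to $\neg\varphi$ as well), while you make the two-valuedness of relativized Boolean values explicit to get the biconditional directly; this is a harmless elaboration of the same idea.
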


\begin{proof} If $V\satisfies\varphi(x_0,\ldots,x_n)$, then by
lemma \ref{Lemma.VandVcheck} we know $\[\varphi(\check x_0,\ldots,\check x_n)^{\check V}]=1$. By lemma \ref{Lemma.TruthInVcheck/U}, this
implies $\check V_U\satisfies\varphi([\check x_0]_U,\ldots,[\check x_n]_U)$, which means $\check
V_U\satisfies\varphi(j_U(x_0),\ldots,j_U(x_n))$, and so $j_U$ is elementary.\end{proof}

\begin{theorem}\label{Theorem.GenericIffOnto}
Suppose that $U$ is an ultrafilter on the complete Boolean algebra $\B$ (with $U$ not necessarily in $V$). Then the following are
equivalent:
\begin{enumerate}
 \item $U$ is $V$-generic.
% \item The Boolean ultrapower map $j_U$ is surjective. That is, $\check V_U = \set{[\check x]_U\st x\in V}$.
 \item The Boolean ultrapower $j_U$ is trivial, an isomorphism of $V$ with $\check V_U$.
\end{enumerate}
\end{theorem}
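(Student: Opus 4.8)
The plan is to reduce triviality of $j_U$ to surjectivity and then to read surjectivity off the behavior of $U$ on a family of maximal antichains determined by names. Since $j_U$ is an elementary embedding by the preceding theorem, it is automatically injective: distinct $x,y$ satisfy $V\satisfies x\neq y$, hence $\check V_U\satisfies j_U(x)\neq j_U(y)$. Because elementarity already transports $\in$ in both directions, $j_U$ is an isomorphism of $\<V,\in>$ with $\<\check V_U,\in_U>$ exactly when it is surjective. So the whole theorem comes down to equating $V$-genericity of $U$ with the statement that every element of $\check V_U$ has the form $[\check x]_U$.

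The key is a dictionary between names and antichains. By Lemma~\ref{Lemma.Vcheck/U}, every element of $\check V_U$ is $[\tau]_U$ for some name $\tau$ with $\[\tau\in\check V]=1$, i.e.\ $\bigvee_{x\in V}\[\tau=\check x]=1$. For such a $\tau$ the Boolean values $\[\tau=\check x]$ are pairwise disjoint, since $\[\tau=\check x]\wedge\[\tau=\check y]\leq\[\check x=\check y]=0$ for $x\neq y$ (using Lemma~\ref{Lemma.VandVcheck}), and they join to $1$; hence $A_\tau=\set{\[\tau=\check x]\st x\in V}\minus\set{0}$ is a maximal antichain in $\B$. Crucially, $A_\tau$ lies in $V$, being definable from the name $\tau$ and $\B$, both of which are in $V$ (only $U$ need be outside $V$). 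As $U$ is an ultrafilter, it contains at most one member of $A_\tau$, and $[\tau]_U=[\check x]_U$ holds precisely when $\[\tau=\check x]\in U$. Therefore $[\tau]_U$ lies in the range of $j_U$ if and only if $U$ meets $A_\tau$.

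For $(1)\implies(2)$, assume $U$ is $V$-generic. Given any $[\tau]_U\in\check V_U$, choose $\tau$ with $\[\tau\in\check V]=1$ as above; then $A_\tau$ is a maximal antichain belonging to $V$, so $U$ meets it, and hence $[\tau]_U=j_U(x)$ for the corresponding $x$. Thus $j_U$ is surjective, and so an isomorphism.

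For $(2)\implies(1)$, the work is to manufacture, from an arbitrary maximal antichain $A\in V$, a name whose associated partition is exactly $A$; this is where the mixing lemma does the essential job and is the step I expect to be most delicate. Fix a maximal antichain $A=\set{a_i\st i\in I}\in V$ and, since $I$ is a set, choose distinct sets $x_i\in V$, one per $i$. Applying the strong form of the mixing lemma (Lemma~\ref{Lemma.MixingLemma}) to $A$ together with the names $\check x_i$, which satisfy $\[\check x_i\neq\check x_j]=1$ for $i\neq j$, yields a name $\tau$ with $a_i=\[\tau=\check x_i]$ for every $i$. Then $\bigvee_i\[\tau=\check x_i]=\bigvee_i a_i=1$, so $\[\tau\in\check V]=1$ and $[\tau]_U\in\check V_U$. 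By surjectivity $[\tau]_U=[\check y]_U$ for some $y\in V$, i.e.\ $\[\tau=\check y]\in U$; but by disjointness and maximality $\[\tau=\check y]=0$ unless $y=x_i$ for some $i$, whence $a_i=\[\tau=\check y]\in U$. Thus $U$ meets $A$, and as $A$ was arbitrary, $U$ is $V$-generic. The only points needing care are the verification that $A_\tau$ is a genuine maximal antichain lying in $V$ and the bookkeeping in the mixing-lemma construction; everything else is the elementary dictionary of the second paragraph.
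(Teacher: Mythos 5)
Your proposal is correct and follows essentially the same route as the paper's proof: the forward direction reads surjectivity off the maximal antichain $\set{\[\tau=\check x]\st x\in V}$ lying in $V$, and the converse uses the strong form of the mixing lemma to build a name whose partition into Boolean values realizes a given maximal antichain (the paper mixes the names $\check a$ for $a\in A$ directly rather than auxiliary distinct $x_i$, a cosmetic difference). No gaps.
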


\begin{proof} Suppose that $U$ is a $V$-generic filter on $\B$,
and that $[\tau]_U\in\check V_U$. Thus, the Boolean value $b=\[\tau\in\check V]$ is in $U$. Since $b=\bigvee_{x\in V}\[\tau=\check x]$, and
these Boolean values are incompatible, the set $\set{\[\tau=\check x]\st x\in V}$ is a maximal antichain below $b$ in $V$. By the
genericity of $U$, there must be some $x_0\in V$ such that $\[\tau=\check x_0]\in U$, and so $[\tau]_U=[\check x_0]_U=j_U(x_0)$. Thus,
$j_U$ is surjective from $V$ to $\check V_U$. Since it is also injective and $\in$-preserving, $j_U$ is an isomorphism of $V$ with $\check
V_U$. Conversely, suppose that $j_U$ is an isomorphism and $A\of\B$ is a maximal antichain in $V$. By mixing, let $\tau$ be a name such
that $a=\[\tau=\check a]$ for every $a\in A$. Thus, $\[\tau\in\check A]=\bigvee_{a\in A}\[\tau=\check a]=\vee A=1$. In particular,
$[\tau]_U\in \check V_U$ and consequently, since we assumed $j_U$ is surjective, $[\tau]_U=[\check a]_U$ for some $a\in V$. Necessarily,
$a\in A$ since $\[\tau\in\check A]=1$. It follows that $a=\[\tau=\check a]\in U$, and so $U$ meets $A$. Thus, $U$ is $V$-generic, as
desired.
\end{proof}

Set theorists customarily view the principal ultrafilters, on the one hand, and the generic ultrafilters, on the other, as extreme opposite
kinds of ultrafilter, for the principal ultrafilters are completely trivial and the generic ultrafilters are (usually) highly nontrivial.
For the purposes of the Boolean ultrapower, however, theorem \ref{Theorem.GenericIffOnto} brings these two extremes together: the principal
ultrafilters and the generic ultrafilters both are exactly the ultrafilters with a trivial ultrapower. Thus, the generic ultrafilters become the
trivial case. This may be surprising at first, but it should not be too surprising, because in the power set case we already knew that the
kinds of ultrafilters were identical, for in any atomic Boolean algebra, the generic filters are precisely the principal filters. Another way to view it
is that non-principality for an ultrafilter exactly means that it does not meet the maximal antichain of atoms, and therefore it is
explicitly an assertion of non-genericity. Since theorem \ref{Theorem.GenericIffOnto} shows more generally that for any Boolean algebra it
is exactly the non-generic ultrafilters that give rise to nontrivial embeddings, the summary conclusion is that:

\medskip
\centerline{{\it non-generic} is the
right generalization of {\it non-principal}.}
\bigskip

We define that the {\df critical point} of $j_U$ is the least ordinal $\kappa$ such that $j_U$ is not an isomorphism of the predecessors of $\kappa$ to the $\in_U$ predecessors of $j_U(\kappa)$. This is the same as the least ordinal $\kappa$ such that $j_U(\kappa)$ does not have
order type $\kappa$ under $\in_U$, so either $j(\kappa)$ is a larger well-order than $\kappa$ or it is not well ordered at all.

\begin{theorem}\label{Theorem.CriticalPoint}
 The critical point of the Boolean ultrapower $j_U:V\to \check V_U$ is the cardinality of the smallest maximal antichain in
$V$ not met by $U$, if either exists.
\end{theorem}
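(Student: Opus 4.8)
The plan is to unwind the definition of the critical point into a concrete statement about the $\in_U$-predecessors of $[\check\kappa]_U$, and then to match non-standard predecessors against maximal antichains via the mixing lemma. First I would observe that, since $j_U$ is elementary, the map $\alpha\mapsto[\check\alpha]_U$ is injective and order-preserving on ordinals, so $\{[\check\alpha]_U\st\alpha<\kappa\}$ always sits inside the $\in_U$-predecessors of $[\check\kappa]_U$ as a segment of order type exactly $\kappa$. Consequently $j_U\restrict\kappa$ fails to be an isomorphism onto the $\in_U$-predecessors of $j_U(\kappa)$ precisely when some name $\tau$ with $\boolval{\tau\in\check\kappa}\in U$ is \emph{non-standard}, in the sense that $\boolval{\tau=\check\alpha}\notin U$ for every $\alpha<\kappa$. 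Thus the critical point is the least $\kappa$ admitting such a non-standard predecessor, and the whole theorem reduces to showing that this least $\kappa$ equals the least cardinality $\lambda$ of a maximal antichain in $V$ not met by $U$.

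For the inequality $\mathrm{crit}(j_U)\leq\lambda$, I would take a maximal antichain $A=\{a_\xi\st\xi<\lambda\}$ of size $\lambda$ not met by $U$ and apply the strengthened mixing lemma (Lemma~\ref{Lemma.MixingLemma}, using that the $\check\xi$ are pairwise distinct ordinals and that $A$ is maximal) to obtain a name $\tau$ with $a_\xi=\boolval{\tau=\check\xi}$ for each $\xi$. Then $\boolval{\tau\in\check\lambda}=\bigvee_{\xi<\lambda}a_\xi=\bigvee A=1\in U$, while $\boolval{\tau=\check\alpha}=a_\alpha\notin U$ for $\alpha<\lambda$ and $\boolval{\tau=\check\alpha}=0$ for $\alpha\geq\lambda$ (since $\boolval{\tau<\check\lambda}=1$ kills those values). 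So $[\tau]_U$ is a non-standard predecessor of $[\check\lambda]_U$, witnessing that $j_U\restrict\lambda$ is not onto and hence that $\mathrm{crit}(j_U)\leq\lambda$.

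For the reverse inequality $\mathrm{crit}(j_U)\geq\lambda$, I would fix $\kappa<\lambda$ and an arbitrary predecessor $[\tau]_U\in_U[\check\kappa]_U$. Writing $b_\alpha=\boolval{\tau=\check\alpha}$ for $\alpha<\kappa$, these are pairwise incompatible with join $b=\boolval{\tau\in\check\kappa}\in U$, so $\{b_\alpha\st b_\alpha\neq0\}\cup\{\neg b\}$ is a maximal antichain. Its cardinality is below $\lambda$: since an ultrafilter meets every finite maximal antichain, $\lambda$ is necessarily infinite, and then the antichain has size at most $|\kappa|<\lambda$. By the minimality of $\lambda$ this antichain is met by $U$, and since $b\in U$ forces $\neg b\notin U$, some $b_\alpha\in U$, whence $[\tau]_U=[\check\alpha]_U=j_U(\alpha)$ is standard. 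Thus every predecessor of $[\check\kappa]_U$ is standard, $j_U\restrict\kappa$ is an isomorphism, and $\mathrm{crit}(j_U)\geq\lambda$.

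Finally, the clause ``if either exists'' is settled by Theorem~\ref{Theorem.GenericIffOnto}: a non-generic $U$ is exactly one admitting a maximal antichain it fails to meet, which is precisely when $j_U$ is nontrivial and so has a critical point, whereas for generic $U$ neither object exists. The step I expect to be the main obstacle is this reverse inequality, specifically the passage from the bare partition $\{b_\alpha\}$---which need not be maximal---to an honest maximal antichain whose cardinality is still strictly below $\lambda$, so that minimality of $\lambda$ applies; the finite edge case must be dispatched separately by noting that every ultrafilter meets every finite maximal antichain, forcing $\lambda\geq\omega$.
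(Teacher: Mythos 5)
Your proof is correct and follows essentially the same route as the paper's: the forward inequality mixes a name along the unmet antichain to produce a nonstandard predecessor of $j_U(\kappa)$, and the reverse inequality extracts the maximal antichain $\set{\boolval{\tau=\check\alpha}\st\alpha<\kappa}\union\singleton{\neg\boolval{\tau\in\check\kappa}}$ from a putative nonstandard predecessor (you state this as the contrapositive, the paper as a direct implication). Your extra care with the finite case and with discarding zero Boolean values is a harmless refinement of the same argument.
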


\begin{proof} Suppose that $U$ does not meet the maximal
antichain $A\of\B$. Enumerate $A=\set{a_\alpha\st\alpha<\kappa}$, where $\kappa=|A|$. By the mixing lemma, let $\tau$ be a name such that
$a_\alpha=\[\tau=\check\alpha]$ for every $\alpha<\kappa$. Since $U\intersect A=\emptyset$, we have $[\tau]_U\neq[\check\alpha]_U$ for all
$\alpha<\kappa$. But $\[\tau\in\check\kappa]=\bigvee_{\alpha<\kappa}\[\tau=\check\alpha]=\vee A=1$, and so
$[\tau]_U\in_U[\check\kappa]_U=j_U(\kappa)$. So $j_U\restrict\kappa$ is not onto the $\in_U$ predecessors of $j_U(\kappa)$, and
consequently, $j_U$ has critical point $\kappa$ or smaller.

Conversely, suppose that $j_U$ has critical point $\kappa$. Thus, the $\in_U$ predecessors of $j_U(\kappa)$ are not exhausted by
$[\check\alpha]_U$ for $\alpha<\kappa$. So there is a name $\tau$ such that $[\tau]_U\in_U j(\kappa)$ but $[\tau]_U\neq[\check\alpha]_U$
for all $\alpha<\kappa$. This means that $\[\tau\in\check\kappa]\in U$, but $\[\tau=\check\alpha]\notin U$ for all $\alpha<\kappa$. Since
$\[\tau\in\check\kappa]=\bigvee_{\alpha<\kappa}\[\tau=\check\alpha]$, this means that $U$ does not meet the maximal antichain
$\set{\[\tau=\check\alpha]\st\alpha<\kappa}\union\singleton{\neg\[\tau\in\check\kappa]}$, which has size $\kappa$.\end{proof}

We define the {\df degree of genericity} of an ultrafilter $U$ on a complete Boolean algebra $\B$ to be the cardinality of the smallest
maximal antichain in $\B$ not met by $U$, if such an antichain exists, and $\ORD$ otherwise. Thus, the generic filters are exactly those
with the largest possible degree of genericity. One can see from theorem \ref{Theorem.CriticalPoint} that the Boolean ultrapower $\check
V_U$, for an ultrafilter $U$ on a complete Boolean algebra $\B$, is well-founded up to the degree of genericity of $U$. Theorem
\ref{Theorem.WellFoundedEquivalents} improves upon this, for the case $U\in V$, by showing that if the Boolean ultrapower $\check V_U$ has
a standard $\omega$, then it is fully well-founded.

\begin{corollary}
The degree of genericity of a non-generic ultrafilter $U$ in $V$ on a complete Boolean algebra $\B$ is either $\aleph_0$ or a measurable
cardinal.
%If\/ $\B$ is a complete Boolean algebra and $U$ is a non-generic ultrafilter on $\B$ in $V$, then the size of the smallest maximal
%antichain on $\B$ not met by $U$ is either $\aleph_0$ or a measurable cardinal.
\end{corollary}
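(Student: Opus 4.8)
The plan is to read the degree of genericity $\kappa$ as the critical point of the Boolean ultrapower $j_U\colon V\to\check V_U$ (Theorem~\ref{Theorem.CriticalPoint}) and to extract from $U$ a $\kappa$-complete nonprincipal ultrafilter on $\kappa$ lying in $V$, exactly as one derives a measure from a large cardinal embedding. First I would fix a maximal antichain $A=\set{a_\alpha\st\alpha<\kappa}$ of least size $\kappa$ not met by $U$, which exists because $U$ is non-generic. A preliminary observation disposes of the finite case: since $U$ is an ultrafilter, it is a prime filter, so from any finite partition of unity $a_0\vee\cdots\vee a_{n-1}=1\in U$ one of the $a_i$ lies in $U$; hence $U$ meets every finite maximal antichain and $\kappa\geq\aleph_0$. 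If $\kappa=\aleph_0$ we are in the first case, so I assume $\kappa$ is uncountable and aim to show it is measurable.

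The heart of the argument is the map $h\colon P(\kappa)\to\B$ defined by $h(X)=\bigvee_{\xi\in X}a_\xi$. Because $\set{a_\alpha\st\alpha<\kappa}$ is a partition of unity, $h$ is a complete Boolean homomorphism: it preserves arbitrary joins, sends $\kappa$ to $1$ and $\emptyset$ to $0$, and sends complements to complements, since $h(X)$ and $h(\kappa\minus X)$ are disjoint with join $1$. I then set $W=h^{-1}[U]=\set{X\of\kappa\st h(X)\in U}$, which lies in $V$ because $U\in V$. (Equivalently, if $\tau$ is the mixing-lemma name from the proof of Theorem~\ref{Theorem.CriticalPoint}, with $\[\tau=\check\alpha]=a_\alpha$, then $h(X)=\[\tau\in\check X]$ and $W$ is the ultrafilter on $\kappa$ derived from the embedding $j_U$ using the seed $[\tau]_U\in_U j_U(\kappa)$.) Since the preimage of an ultrafilter under a Boolean homomorphism is again an ultrafilter, $W$ is an ultrafilter on $\kappa$, and it is nonprincipal because $\singleton{\alpha}\in W$ would say $a_\alpha\in U$, contrary to $U\intersect A=\emptyset$.

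It remains to check $\kappa$-completeness, and this is the step where the minimality of $\kappa$ is essential; I expect it to be the crux of the proof. Suppose toward a contradiction that $W$ fails to be $\kappa$-complete, so that $\kappa$ partitions as $\bigsqcup_{\delta<\gamma}P_\delta$ for some $\gamma<\kappa$ with no piece $P_\delta$ in $W$. Applying $h$, the elements $c_\delta=h(P_\delta)$ are pairwise disjoint with $\bigvee_{\delta<\gamma}c_\delta=h(\kappa)=1$, so after discarding any zeros they form a maximal antichain of size at most $\gamma<\kappa$. No $c_\delta$ lies in $U$, precisely because no $P_\delta$ lies in $W$, so this antichain is not met by $U$, contradicting the choice of $\kappa$ as the least size of such an antichain. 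Hence $W$ is $\kappa$-complete, and being a $\kappa$-complete nonprincipal ultrafilter on the uncountable cardinal $\kappa$, it witnesses that $\kappa$ is measurable, completing the dichotomy.
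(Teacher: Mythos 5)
Your proof is correct and follows essentially the same route as the paper's own argument---specifically its second, ``alternative'' version, which defines the measure by $X\in\mu\iff\bigvee_{\alpha\in X}a_\alpha\in U$ on a minimal-size unmet antichain and derives $\kappa$-completeness from that minimality; your parenthetical remark about the seed $[\tau]_U$ recovers the paper's first version as well. You have simply filled in the details the paper declares ``easy to see,'' and they check out.
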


\begin{proof} Let $\kappa$ be the degree of genericity of $U$, that is, the size of the smallest maximal
antichain not met by $U$. This is the same as the critical point of the Boolean ultrapower $j_U:V\to\check V_U$. Pick any $a<j_U(\kappa)$
with $j_U(\alpha)<a$ for all $\alpha<\kappa$, and define a measure $\mu$ on $\kappa$ by $X\in\mu\Iff a\in j_U(X)$. It is easy to see that
$\mu$ is a $\kappa$-complete ultrafilter on $\kappa$, and so $\kappa$ is either $\aleph_0$ or a measurable cardinal.

For an alternative argument, suppose that $A=\set{a_\alpha\st\alpha<\kappa}\of\B$ has minimal size $\kappa$, such that it is not met by
$U$. Define $X\in\mu\Iff X\of\kappa$ and $\bigvee_{\alpha\in X}a_\alpha\in U$; using the minimality of $\kappa$, it is easy to see that
$\mu$ is a $\kappa$-complete nonprincipal ultrafilter on $P(\kappa)$, and so $\kappa$ is a measurable cardinal, or $\omega$.\end{proof}

\begin{theorem}\label{Theorem.CriticalPointKappa}
 Suppose that $\B$ is a complete Boolean algebra in $V$ and $A\of\B$ is a maximal antichain of size $\kappa$, a regular
cardinal in $V$. In a forcing extension of\/ $V$, there is an ultrafilter $U\of\B$ not meeting $A$, but meeting all smaller maximal
antichains, of size less than $\kappa$. The corresponding Boolean ultrapower $j_U:V\to \check V_U$, therefore, has critical point $\kappa$.
\end{theorem}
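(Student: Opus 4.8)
The plan is to realize $U$ in a forcing extension $V[\bar U]$, where $\bar U$ is a $V$-generic ultrafilter on a carefully chosen quotient Boolean algebra $\B/I$, and then to recover the critical point from Theorem \ref{Theorem.CriticalPoint}. The ideal $I$ must be large enough that each $a_\alpha\in A$ becomes zero in the quotient, which will force $U$ to avoid $A$, yet small enough that every maximal antichain of $\B$ lying in $V$ of size less than $\kappa$ survives as a predense set, so that $V$-genericity of $\bar U$ forces $U$ to meet it. The right choice is the $\ltkappa$-complete ideal generated by $A$, namely $I=\set{b\in\B\st b\leq\bigvee A' \text{ for some } A'\of A \text{ with } \Card{A'}<\kappa}$.

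First I would check that $I$ is a proper, $\ltkappa$-complete ideal. Downward closure is immediate, and for closure under joins of fewer than $\kappa$ elements, if $b_i\leq\bigvee A_i'$ with $\Card{A_i'}<\kappa$ for $i<\lambda<\kappa$, then $\bigvee_{i<\lambda}b_i\leq\bigvee(\Union_{i<\lambda}A_i')$, where $\Card{\Union_{i<\lambda}A_i'}<\kappa$ precisely because $\kappa$ is regular; this is the one place the regularity hypothesis is used. Properness, that is $1\notin I$, follows from the maximality of $A$: if $\bigvee A'=1$ for some $A'\of A$ with $\Card{A'}<\kappa=\Card{A}$, then $A'\neq A$, and any $a_\beta\in A\minus A'$ would satisfy $a_\beta=a_\beta\wedge\bigvee A'=\bigvee_{a\in A'}(a_\beta\wedge a)=0$ by disjointness, contradicting $a_\beta\neq 0$.

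Next I would force over $V$ with the poset of nonzero elements of $\B/I$, obtaining a $V$-generic ultrafilter $\bar U$ on $\B/I$, and set $U=\set{b\in\B\st [b]_I\in\bar U}$. As the preimage of an ultrafilter under the quotient homomorphism $b\mapsto[b]_I$, the set $U$ is an ultrafilter on $\B$ in $V[\bar U]$. Since each $a_\alpha\in I$, we have $[a_\alpha]_I=0\notin\bar U$, so $U$ meets no element of $A$, giving degree of genericity at most $\kappa$. The crux is to show that $U$ meets every maximal antichain $W\in V$ with $\Card{W}<\kappa$. I claim the image $\set{[w]_I\st w\in W}$ is predense in $\B/I$: given $b\notin I$, if $b\wedge w\in I$ for every $w\in W$, then, using completeness of $\B$ together with $\bigvee W=1$, we would have $b=b\wedge\bigvee W=\bigvee_{w\in W}(b\wedge w)$, a join of fewer than $\kappa$ members of $I$, whence $b\in I$ by $\ltkappa$-completeness, a contradiction. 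Because this predense set lies in $V$, genericity of $\bar U$ yields some $w\in W$ with $[w]_I\in\bar U$, that is, $w\in U$.

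Consequently the smallest maximal antichain in $V$ not met by $U$ has size exactly $\kappa$, since all those of smaller size are met while $A$ itself is not, so $j_U\colon V\to\check V_U$ has critical point $\kappa$ by Theorem \ref{Theorem.CriticalPoint}. The main obstacle is exactly this predensity step: the naive finitely generated ideal generated by $A$ would leave $b$ only below a join of fewer-than-$\kappa$ (but possibly infinitely many) elements of $A$, which need not itself lie in that ideal, so a small antichain could fail to be predense and the pulled-back $U$ could miss it. Passing to the $\ltkappa$-complete ideal repairs this, and it is precisely the regularity of $\kappa$ that keeps the larger ideal closed under the relevant joins, while the maximality of $A$ keeps it proper.
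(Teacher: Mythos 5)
Your proof is correct and follows essentially the same route as the paper: you form the same $\ltkappa$-complete ``small'' ideal $I$ generated by $A$, force with the quotient $\B/I$, and use predensity of the images of small maximal antichains (via regularity of $\kappa$ and $\ltkappa$-completeness of $I$) to conclude that the induced ultrafilter misses $A$ but meets every smaller maximal antichain, then invoke Theorem \ref{Theorem.CriticalPoint}. The added verification that $I$ is proper is a nice touch but the argument is the paper's own.
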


\begin{proof} For the purposes of this proof, let us regard a set
as {\df small} if it is in $V$ and has size less than $\kappa$ in $V$. Fix the maximal antichain $A$ of size $\kappa$, a regular cardinal
in $V$. Let $I$ be the ideal of all elements of $\B$ that are below the join of a small subset of $A$. Using the regularity of $\kappa$, it
is easy to see that $I$ is $\kappa$-closed, meaning that $I$ contains the join of any subset of $I$ of size less than $\kappa$. Let
$G\of\B/I$ be $V$-generic for this quotient forcing, and let $U=\union G$ be the corresponding ultrafilter on $\B$. Since $A\of I$, it
follows that $U$ does not meet $A$. But if $B\of \B$ is any small antichain in $\B$, then $\set{[b]_I\st b\in B\setminus I}$ is a maximal
antichain in $\B/I$ (one uses the facts that $|B|<\kappa$ and $I$ is $\ltkappa$-complete to see that this antichain in maximal). Thus,
$U\of\B$ is an ultrafilter in $\B$ that meets all small antichains in $\B$, but misses $A$. Consequently, the Boolean ultrapower $V\to
\check V_U$ has critical point $\kappa$, as desired.\end{proof}

Theorem \ref{Theorem.CriticalPointKappaIllfounded} will show that the ultrapower in the proof of theorem \ref{Theorem.CriticalPointKappa}
is necessarily ill-founded below $j_U(\kappa)$. Let us close this section with the following observation.

\begin{theorem}
Suppose that $j:V\to \Vbar\of \Vbar[G]$ is the Boolean ultrapower by an ultrafilter $U\of\B$. Then $\Vbar[G]=\ran(j)[G]$, because every
element of\/ $\Vbar[G]$ has the form $\val(j(\tau),G)$, the interpretation of a name in $\ran(j)$ by $G$.
\end{theorem}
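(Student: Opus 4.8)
The plan is to read the conclusion off the value-map characterization of the quotient structure, noticing that the only names actually required are those already in $\ran(j)$. First I would recall the relevant identifications: writing $\Vbar=\check V_U$, Lemma~\ref{Lemma.Vcheck/UtransitiveEtc}(3) gives $\Vbar[G]=V^\B/U$ with $G=[\dot G]_U$, and by the definition of the Boolean ultrapower map we have $j(\tau)=j_U(\tau)=[\check\tau]_U$ for every $\tau\in V$, in particular for every $\B$-name $\tau\in V^\B$. By elementarity of $j:V\to\check V_U$, such a $j(\tau)$ is recognized by $\ran(j)$ as a name for $j(\B)=[\check\B]_U$, since $V\satisfies\text{``}\tau\text{ is a }\B\text{-name''}$.

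Next, the heart of the matter is the value-map identity. Lemma~\ref{Lemma.V^BthinksItIsVcheck[dotG]} gives $\[\tau=\val(\check\tau,\dot G)]=1$, and passing to the quotient by $U$ (exactly as in the proof of Lemma~\ref{Lemma.Vcheck/UtransitiveEtc}(3)) yields $[\tau]_U=\val([\check\tau]_U,G)=\val(j(\tau),G)$ for every $\tau\in V^\B$. Thus every element $[\tau]_U$ of $\Vbar[G]=V^\B/U$ is exhibited as the interpretation by $G$ of the name $j(\tau)\in\ran(j)$, which is precisely the asserted form. Since the $[\tau]_U$ exhaust $V^\B/U$, this already establishes $\Vbar[G]\of\ran(j)[G]$.

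To make $\ran(j)[G]$ meaningful and to obtain the reverse inclusion, I would check two routine points by elementarity. First, that $G$ is generic over $\ran(j)$: a dense subset of $j(\B)$ lying in $\ran(j)$ has the form $j(D_0)$ for $D_0$ dense in $\B$ in $V$, and elementarity of $j$ into $\check V_U$ makes $j(D_0)$ dense in $[\check\B]_U$ as computed in $\check V_U$, so the $\check V_U$-generic $G$ of Lemma~\ref{Lemma.Vcheck/UtransitiveEtc}(2) meets it. Second, that the names $\ran(j)$ recognizes for $j(\B)$ are exactly the $j(\tau)$ with $\tau\in V^\B$. Together these give $\ran(j)[G]=\set{\val(j(\tau),G)\st\tau\in V^\B}$, and since $\ran(j)\of\Vbar$ while $\val(j(\tau),G)=[\tau]_U\in\Vbar[G]$, we get $\ran(j)[G]\of\Vbar[G]$, hence equality.

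The main obstacle is conceptual rather than computational: $\ran(j)$ is only isomorphic to $V$, not transitive inside $\check V_U$, so one cannot argue by absoluteness and must route both the genericity of $G$ over $\ran(j)$ and the identification of its names through the elementarity of $j$. Once that bookkeeping is in place, the quotiented value-map identity does all the work, and no genuine calculation remains.
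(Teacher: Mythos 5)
Your proposal is correct and takes essentially the same route as the paper: both rest on Lemma~\ref{Lemma.V^BthinksItIsVcheck[dotG]}, passing the Boolean identity $\[\tau=\val(\check\tau,\dot G)]=1$ through the quotient by $U$ to obtain $[\tau]_U=\val([\check\tau]_U,[\dot G]_U)=\val(j(\tau),G)$. Your additional verification of the reverse inclusion and of the genericity of $G$ over $\ran(j)$ is harmless bookkeeping that the paper leaves implicit.
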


\begin{proof}
Lemma \ref{Lemma.V^BthinksItIsVcheck[dotG]}, the extension $\Vbar[G]$, which is the same as $V^\B/U$, believes that
$[\tau]_U=\val([\check\tau]_U,[\dot G]_U)$, which is the same as $\val(j(\tau),G)$. Thus, everything in $\Vbar[G]$ is the value of a name
in $\ran(j)$ by the filter $G$, and so $\Vbar[G]=\ran(j)[G]$.
\end{proof}

This theorem is exactly analogous to the normal form theorem for classical ultrapowers $j:V\to V^I/U$, for which every element of the
ultrapower is represented as $[f]_U$, which is equal to $j(f)([\id]_U)$, an observation that is the basis of seed theory for ultrapowers.
In particular, the classical ultrapower is the Skolem hull of $\ran(j)$ and $[\id]_U$. A refinement of this idea will resurface in theorem
\ref{Theorem.ExtenderRepresentation}, providing the extender representation for Boolean ultrapowers.

\section{The naturalist account of forcing}\label{Section.NaturalistAccountOfForcing}

We now present what we call the naturalist account of forcing, first in a syntactic form, and then in a semantic form.

\begin{theorem}[Naturalist account of forcing]\label{Theorem.NaturalistAccount}
If\/ $V$ is the universe of set theory and $\B$ is a notion of forcing, then there is in $V$ a definable class model of the theory
expressing what it means to be a forcing extension of\/ $V$. Specifically, in the forcing language with $\in$, constant symbols $\check x$ for
every element $x\in V$, a predicate symbol $\check V$ to represent $V$ as a ground model, and constant symbol $\dot G$, the theory asserts:
\begin{enumerate}
 \item The full elementary diagram of\/ $V$, relativized to the predicate $\check V$, using the constant symbols for elements of\/ $V$.
 \item The assertion that $\check V$ is a transitive proper class in the (new) universe.
 \item The assertion that $\dot G$ is a $\check V$-generic ultrafilter on $\check\B$.
 \item The assertion that the (new) universe is $\check V[\dot G]$, and \ZFC\ holds there.
\end{enumerate}
\end{theorem}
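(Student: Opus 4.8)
The plan is to produce the required definable class model concretely as a Boolean ultrapower. Using the axiom of choice, fix any ultrafilter $U$ on $\B$ with $U\in V$ --- the ultrafilter lemma guarantees one exists, and, as the whole naturalist viewpoint demands, there is no requirement that $U$ be generic. I would then take the model to be the quotient structure $V^\B/U=\langle V^\B/U,\in_U,\check V_U,G\rangle$, where $G=[\dot G]_U$, which is definable in $V$ from the parameters $\B$ and $U$: the predicate $\check V$ is interpreted as $\check V_U$, each constant $\check x$ as $[\check x]_U$, and $\dot G$ as $G$. The single observation organizing the whole argument is that every assertion occurring in the target theory already has Boolean value one in $V^\B$; since $1\in U$, the \Los\ lemma (lemma~\ref{Lemma.BooleanValuedLos}) transfers each of them to the quotient, and this works for \emph{any} choice of $U$ whatsoever.

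With this in hand, I would read items (2)--(4) directly off the quotient lemmas of the previous section. Item (4) splits into two parts: that \ZFC\ holds in the new universe follows from theorem~\ref{Theorem.[[ZFC]]=1} together with \Los's lemma, while the assertion that the universe equals $\check V_U[G]$ is precisely lemma~\ref{Lemma.Vcheck/UtransitiveEtc}(3). Item (3), that $G$ is a $\check V_U$-generic ultrafilter on $[\check\B]_U$, is lemma~\ref{Lemma.Vcheck/UtransitiveEtc}(2). For item (2), transitivity of $\check V_U$ inside $V^\B/U$ is lemma~\ref{Lemma.Vcheck/UtransitiveEtc}(1); that the class is proper follows because $\boolval{\check V\text{ contains all the ordinals}}=1$ by lemma~\ref{Lemma.VcheckTransitive}, so \Los's lemma forces $\check V_U$ to contain every ordinal of $V^\B/U$ and hence to fail to be a set there.

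Item (1), the full elementary diagram of $V$ relativized to $\check V$, is both the crux and the one place I would tread carefully. Its verification is immediate: for each formula $\varphi$ and each tuple $x_0,\ldots,x_n\in V$ with $V\satisfies\varphi[x_0,\ldots,x_n]$, lemma~\ref{Lemma.VandVcheck} gives $\boolval{\varphi^{\check V}(\check x_0,\ldots,\check x_n)}=1$, whence $V^\B/U\satisfies\varphi^{\check V}([\check x_0]_U,\ldots,[\check x_n]_U)$ by \Los's lemma --- this is just another face of the elementarity of the Boolean ultrapower embedding $j_U\colon V\to\check V_U$. The genuine subtlety, as already anticipated in the remarks following theorem~\ref{Theorem.[[ZFC]]=1}, is that by Tarski's theorem the satisfaction relation of the proper-class structure $V^\B/U$ is not definable over $V$, so the phrase ``full elementary diagram'' cannot be packaged as a single internal statement. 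Accordingly I would present the result as a \emph{theorem scheme}: for each fixed formula $\varphi$ the matching axiom of the theory is verified in the definable model $V^\B/U$ by its own uniform instance of the argument above, and it is in exactly this schematic sense that $V^\B/U$ is, definably in $V$, a model of the theory asserting that it is a forcing extension of $V$.
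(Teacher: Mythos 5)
Your proposal is correct and follows essentially the same route as the paper: take the quotient model $V^\B/U$ for an arbitrary ultrafilter $U\in V$ on $\B$, observe that every axiom of the target theory has Boolean value one in $V^\B$ by the lemmas of the earlier sections, transfer via \Los's lemma, and note that Tarski's theorem forces the result to be read as a theorem scheme. Your version simply spells out in more detail which lemma delivers which clause, which the paper leaves implicit.
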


%\begin{theorem}\label{Theorem.NaturalistForcing}
%Suppose that $M$ is a model of \ZFC\ and $\B$ is a complete Boolean algebra in $M$. Then the following theory is consistent, expressed in
%the language with $\in$, a predicate $\bar M$, constants for every element of $M$ and a constant symbol $G$.
%\begin{enumerate}
% \item Every axiom of \ZFC\ holds.
% \item $\bar M$ is a transitive class model of \ZFC, containing all ordinals.
% \item $\varphi(a_0,\ldots,a_n)^{\bar M}$ holds, if the original model $M$ satisfies $\varphi(a_0,\ldots,a_n)$.
% \item $G$ is an $\bar M$-generic ultrafilter on $\B$.
% \item Every set is the interpretation of a name in $\bar M$ by the filter $G$. That is, the universe is $\bar M[G]$.
%\end{enumerate}
%\end{theorem}

\begin{proof}
This is really a theorem scheme, since $V$ cannot have access to its own elementary diagram by Tarski's theorem on the non-definability of
truth. Rather, in the theorem we define a particular class model, and then claim as a scheme that this class satisfies all the desired
properties. The class model is simply the quotient model $V^\B/U$, as constructed in section \ref{Section.TransformingIntoActualModels}.
The point is that the results of that section show that theory mentioned in the theorem holds with Boolean value one in $V^\B$, using the
check names as the constant symbols and using $\check V$ as the predicate for the ground model, and consequently actually hold in the
quotient model $V^\B/U$.
\end{proof}

The theorem legitimizes actual set-theoretic forcing practice. The typical set-theorist is working in a set-theoretic universe, called $V$
and thought of as the entire universe, but then for whatever reason he or she wants to move to a forcing extension of this universe and so
utters the phrase
\begin{quote}
\it ``Let $G$ be $V$-generic for $\B$. Argue in $V[G]$\ldots''
\end{quote}
The effect is to invoke theorem \ref{Theorem.NaturalistAccount}, for one now works precisely in the theory of theorem
\ref{Theorem.NaturalistAccount}, dropping the decorative accents: all previous claims about $V$ in the argument become claims about $V$ as
the ground model of the new model $V[G]$, which is a forcing extension of $V$, and new claims about $V[G]$ are available as a result of $G$
being $V$-generic and every set in $V[G]$ having the form $\val(\tau,G)$ for some name $\tau$ in $V$. One may then iteratively invoke
theorem \ref{Theorem.NaturalistAccount} by proceeding to further forcing extensions, and so on. In this way, the naturalist account of
forcing shows that this common informal usage of forcing is completely legitimate and fully rigorous, and there is no need to introduce
supplemental meta-theoretic explanations involving the reflection theorem or countable transitive models and so on, to explain what is ``really'' going on.  Indeed, in the use
of the theorem---as opposed to its proof---there is no need to mention Boolean-valued models or the Boolean ultrapower or any other technical
foundation of forcing; rather, the theorem allows one simply to assume that $G$ is a $V$-generic filter and that $V[G]$ is the new larger
universe and then to proceed naturally in $V[G]$ to manipulate $G$ in some useful or interesting way.

The naturalist account of forcing has the following semantic form:

\begin{theorem} For any notion of forcing $\B$, a complete Boolean algebra, the set-theoretic universe $V$ has an elementary extension to
a structure $\<\Vbar,\bar\in>$, a definable class in $V$, for which there is in $V$ a $\Vbar$-generic filter $G$ for $\bar\B$ (the image
of\/ $\B$).
$$V\precsim\Vbar\of\Vbar[G]$$
In particular, the entire extension $\Vbar[G]$ and embedding is a definable class in $V$.
\end{theorem}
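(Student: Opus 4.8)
The plan is to take $\Vbar$ to be the Boolean ultrapower $\check V_U$ for a judiciously chosen ultrafilter $U\in V$ on $\B$, and then to read off every clause of the theorem directly from the structural facts already established for the quotient $V^\B/U$. First I would apply the ultrafilter lemma inside $V$ (a consequence of the axiom of choice, available in $\ZFC$) to fix some ultrafilter $U$ on $\B$ with $U\in V$. This choice is the decisive one: having $U$ as an actual \emph{set} of $V$ is exactly what makes the relations $=_U$ and $\in_U$, and hence the quotient $V^\B/U$, its subclass $\check V_U$, and the map $j_U$, into classes \emph{definable in $V$} from the parameter $U$, rather than objects requiring passage to any outer model.

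With that fixed, I would set $\Vbar=\check V_U$ and $\bar\in\, =\, \in_U$. By the theorem that the Boolean ultrapower map is an elementary embedding, $j_U\colon V\to\check V_U$ is elementary; since $j_U$ is moreover injective and preserves and reflects $\in$ (elementarity on the atomic formula $x\in y$), it is an isomorphism of $V$ onto its image $\ran(j_U)=\set{[\check x]_U\st x\in V}\of\check V_U$. The usual identification of $V$ with $\ran(j_U)$ therefore turns $\<\Vbar,\bar\in>$ into a literal elementary extension of $V$, giving $V\precsim\Vbar$. For the generic filter I would put $G=[\dot G]_U$ and $\bar\B=[\check\B]_U=j_U(\B)$. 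Lemma~\ref{Lemma.Vcheck/UtransitiveEtc} then supplies precisely the remaining content: $\check V_U$ is transitive in $V^\B/U$, the object $G$ is $\check V_U$-generic for $[\check\B]_U$, and $V^\B/U=\check V_U[G]$. Hence $\Vbar\of\Vbar[G]$ with $G$ a $\Vbar$-generic filter for $\bar\B$, and, because $U\in V$, every object in play---$\Vbar$, $\bar\B$, $G$, and the embedding $j_U$---is a class definable in $V$.

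The one genuinely delicate point, and indeed the whole thrust of the theorem, is that $G$ lives in $V$ even though no $V$-generic filter for a nontrivial $\B$ can exist in $V$. There is no tension, because $G$ is only claimed to be generic over $\Vbar=\check V_U$, which is a \emph{strictly smaller} class than $V$ whenever $U$ is non-generic and non-principal: by Theorem~\ref{Theorem.GenericIffOnto} such a $U$---for instance any ultrafilter of $V$ on an atomless $\B$---yields a properly nontrivial $j_U$, so $\check V_U$ omits many sets of $V$, and $G$ can be generic over $\check V_U$ while remaining an ordinary set of $V$. I expect no real obstacle beyond bookkeeping: verifying the isomorphism of $V$ with $\ran(j_U)$ that legitimizes the identification, noting that, as a subset of $\bar\B$, the filter $G$ consists exactly of the $[\check b]_U$ with $b\in U$ (since $\boolval{\check b\in\dot G}=b$), and keeping straight throughout which assertions concern the inner model $\check V_U$ and which concern the full quotient $V^\B/U=\check V_U[G]$.
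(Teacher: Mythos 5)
Your proposal is correct and follows essentially the same route as the paper: take any ultrafilter $U\in V$ on $\B$, let $\Vbar=\check V_U$ with $\bar\in=\in_U$ and $G=[\dot G]_U$, and invoke the elementarity of $j_U$ together with lemma~\ref{Lemma.Vcheck/UtransitiveEtc} to get $V\precsim\Vbar\of\Vbar[G]=V^\B/U$, all definable in $V$ from the parameter $U$. Your additional remarks---the explicit appeal to the ultrafilter lemma, the identification of $V$ with $\ran(j_U)$, and the resolution of the apparent tension about $G$ living in $V$---are accurate elaborations of points the paper makes in its surrounding discussion rather than a different argument.
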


\begin{proof} This is exactly what the Boolean ultrapower
provides. For any ultrafilter $U$ on $\B$ in $V$, let $\Vbar=\check V_U$ be the corresponding Boolean ultrapower, with relation
$\bar\in=\in_U$. The corresponding Boolean ultrapower embedding $j:V\to\Vbar$ is a definable class in $V$ (using parameter $U$), and by
lemma \ref{Lemma.Vcheck/UtransitiveEtc}, the full extension $V^\B/U$, which is also a definable class in $V$, is exactly the forcing
extension $\Vbar[G]$, where $G=[\dot G]_U$ is $\Vbar$-generic for $\bar\B=j(\B)$, as desired.\end{proof}

This theorem provides a way for a model of set theory to view forcing extensions of itself as actual first-order structures. Of course, we
know it is impossible for $V$ to build an actual $V$-generic filter for nontrivial forcing, since the complement of the generic filter
would be a dense set that is missed. This method slips by that obstacle by replacing the ground model with an elementary extension, so that
the complement of the filter $G$ is not in $\Vbar$, even though it is in $V$. This account therefore relies on the dual nature of the
relationship between $V$ and $\Vbar$, by which $V$ is smaller than $\Vbar$, in the sense that the embedding $V\precsim\Vbar$ places a copy
of the former in the latter, but $V$ is larger than $\Vbar$, in the sense that $\Vbar$ is a definable class of $V$. This same dichotomy is
often exploited with large cardinal embeddings $j:V\to M$, where $V$ is smaller than $M$ in that it is isomorphic to $\ran(j)\of M$, but
larger than $M$ in that $M$ is a definable class of $V$.

\section{Well-founded Boolean ultrapowers}\label{Section.WellFoundedBooleanUltrapowers}

We have finally arrived at one of the main goals of our investigation, namely, the possibility that the Boolean ultrapower $\check V_U$ may
be well-founded. In this case, the corresponding Boolean ultrapower embedding $j_U$ would be a large cardinal embedding, whose nature we
should like to study. We shall begin with several characterizations of well-foundedness. Let us say that an ultrafilter $U$ on a complete
Boolean algebra $\B$ is well-founded if the corresponding Boolean ultrapower $\check V_U$ is well-founded.

Theorem \ref{Theorem.GenericIffOnto} showed that the Boolean ultrapower $\check V_U$ by a $V$-generic ultrafilter $U$ is well-founded,
since it is isomorphic to $V$, and so we may view well-foundedness as a weak form of genericity. It is not difficult to see that if
$G\of\B$ is a $V$-generic ultrafilter, then it is $V$-complete, that is, $\kappa$-complete over $V$ for all $\kappa$, meaning that for any
sequence $\<a_\alpha\st\alpha<\kappa>\in V$ of any length $\kappa$, with $a_\alpha\in G$ for all $\alpha<\kappa$, we have $\bigwedge_\alpha
a_\alpha\in G$. (The reason is that the set of $b\in\B$ that are either incompatible with some $a_\alpha$ or below $\bigwedge_\alpha
a_\alpha$ is dense.) In particular, this meet is not zero. It is not difficult to prove conversely that this $V$-completeness property
characterizes genericity. A slight but natural change of terminology back in the early days of forcing, therefore, would now have us all
referring to {\it $V$-complete} ultrafilters instead of {\it $V$-generic} ultrafilters in our forcing arguments. Since well-foundedness, of
course, has to do with countable completeness, we arrive at the following characterization of the well-founded ultrafilters in $V$ as
exactly the ultrafilters meeting all the countable maximal antichains in $V$, an appealing kind of semi-genericity.

\begin{theorem}\label{Theorem.WellFoundedEquivalents}
If\/ $U$ is an ultrafilter in $V$ on the complete Boolean algebra $\B$, then the following are equivalent:
\begin{enumerate}
 \item The Boolean ultrapower $\check V_U$ is well-founded.
 \item The Boolean ultrapower $\check V_U$ is an $\omega$-model; that is, it has only standard natural numbers.
 \item $U$ meets all countable maximal antichains of\/ $\B$ in $V$. That is, if\/ $A\of\B$ is a countable maximal antichain in $V$,
     then $U\intersect A\neq\emptyset$.
 \item $U$ is countably complete over $V$. That is, if $a_n\in U$ for all $n<\omega$ and $\<a_n\st n<\omega>\in V$, then $\bigwedge_n
     a_n\in U$.
 \item $U$ is weakly countably complete over $V$. That is, if $a_n\in U$ for all $n<\omega$ and $\<a_n\st n<\omega>\in V$, then
     $\bigwedge_n a_n\neq 0$.
\end{enumerate}
If the ultrafilter $U$ is not in $V$, but in some larger set-theoretic universe $\Vbar\fo V$, then nevertheless statements $2$ through $5$
remain equivalent, and each of them is implied by statement $1$.
\end{theorem}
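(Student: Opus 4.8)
The plan is to establish the cycle $(1)\implies(2)\implies(3)\implies(4)\implies(1)$ together with the side-equivalence $(4)\iff(5)$, while carefully isolating the single implication---namely $(4)\implies(1)$---that genuinely requires $U\in V$. Conditions $(2)$ through $(5)$ are all combinatorial statements about how $U$ interacts with ground-model antichains and sequences, and their mutual equivalence will go through verbatim whether or not $U\in V$; only the passage from countable completeness back to full well-foundedness needs the ultrafilter to lie in the model performing the construction. This is exactly what the final sentence of the statement records.

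For the combinatorial implications: $(1)\implies(2)$ is immediate, since the standard naturals of $\check V_U$ are the $[\check n]_U=j_U(n)$, and a well-founded structure admits no others below $j_U(\omega)$. For $(2)\implies(3)$ I would argue the contrapositive, which is Theorem~\ref{Theorem.CriticalPoint} with $\kappa=\omega$: if $A=\set{a_n\st n<\omega}\in V$ is a countable maximal antichain missed by $U$, use the mixing lemma~\ref{Lemma.MixingLemma} to pick a name $\tau$ with $a_n=\[\tau=\check n]$ for each $n$, so that $\[\tau\in\check\omega]=\bigvee_n a_n=1$ gives $[\tau]_U\in_U j_U(\omega)$, while $a_n\notin U$ gives $[\tau]_U\neq j_U(n)$ for every standard $n$; hence $[\tau]_U$ is a nonstandard natural number, contradicting $(2)$. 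For $(3)\implies(4)$, given $\<a_n\st n<\omega>\in V$ with each $a_n\in U$, set $a=\bigwedge_n a_n$ and $d_n=(\bigwedge_{m<n}a_m)\wedge\neg a_n$; a short induction on $n$ using De Morgan in the complete algebra shows $\bigvee_n d_n=\neg a$, so $\set{a}\union\set{d_n\st d_n\neq 0}$ is a countable maximal antichain, each $d_n\leq\neg a_n\notin U$, whence $U$ must meet it at $a$, giving $a\in U$. The implication $(4)\implies(5)$ is trivial, and $(5)\implies(3)$ is the dual computation: if $U$ missed a countable maximal antichain $\set{a_n}$, then $\neg a_n\in U$ for all $n$, so weak countable completeness would force $0\neq\bigwedge_n\neg a_n=\neg\bigvee_n a_n=\neg 1=0$, absurd.

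The heart of the matter is $(4)\implies(1)$, where $U\in V$ is indispensable. Since $\check V_U$ with $\in_U$ is a definable class of $V$, ill-foundedness means there is \emph{in $V$} an infinite descending sequence $[\tau_0]_U\ni_U[\tau_1]_U\ni_U\cdots$; choosing representatives by the axiom of choice in $V$ yields $\<\tau_n\st n<\omega>\in V$ with $a_n:=\[\tau_{n+1}\in\tau_n]\in U$ for all $n$. Countable completeness $(4)$ then gives $b:=\bigwedge_n a_n\in U$, so $b\neq 0$. Forming in $V$ a name $\dot s$ for the function $n\mapsto\tau_n$ on $\check\omega$, we get $\[\dot s(\check n\plus 1)\in\dot s(\check n)]=a_n$ for each $n$, and hence $\[\forall n\in\check\omega\,(\dot s(n\plus 1)\in\dot s(n))]\geq\bigwedge_n a_n=b>0$. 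This contradicts the fact that Foundation holds with Boolean value one in $V^\B$ by Theorem~\ref{Theorem.[[ZFC]]=1}, so no such descending sequence exists and $\check V_U$ is well-founded.

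Finally, for $U\notin V$ living in some $\Vbar\fo V$: the arguments for $(2)\implies(3)\implies(4)\implies(5)\implies(3)$ used only that $U$ is an ultrafilter on $\B$ and that the relevant antichains and sequences lie in $V$, so they remain valid and yield the equivalence of $(2)$--$(5)$, while $(1)\implies(2)$ is trivial as before. The main obstacle---and precisely the reason full equivalence can break down when $U\notin V$---is the step of pulling the witnessing descending sequence into $V$ within $(4)\implies(1)$: when $U\notin V$ the sequence $\<\tau_n>$ need only exist in $\Vbar$, so countable completeness of $U$ \emph{over $V$} no longer traps the meet $b$ below an antichain computed in $V$, and $\check V_U$ may well be $\omega$-standard yet ill-founded.
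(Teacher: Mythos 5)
Your treatment of the case $U\in V$ is correct and is essentially the paper's own proof: $(1)\Rightarrow(2)$ is immediate, $(2)\Rightarrow(3)$ goes by mixing exactly as in the paper, your difference antichain for $(3)\Rightarrow(4)$ is the paper's $\set{\neg b_0}\union\set{b_n-b_{n+1}\st n<\omega}\union\set{\bigwedge_n a_n}$ in different notation, and your foundation argument for $(4)\Rightarrow(1)$ is the paper's (the paper labels that step $(5)\Rightarrow(1)$ but in fact invokes countable completeness, i.e.\ $(4)$, just as you do). The genuine gap is in the final sentence of the theorem, the case $U\notin V$. You claim that the chain $(2)\Rightarrow(3)\Rightarrow(4)\Rightarrow(5)\Rightarrow(3)$ ``yields the equivalence of $(2)$--$(5)$.'' It does not: it proves $(3)\Leftrightarrow(4)\Leftrightarrow(5)$ and that $(2)$ implies each of these, but it provides no implication back into $(2)$. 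In your architecture the only route into $(2)$ is $(4)\Rightarrow(1)\Rightarrow(2)$, which is precisely the step you have (correctly) identified as unavailable when $U\notin V$. So for external ultrafilters the equivalence of $(2)$ with $(3)$--$(5)$ is left unproved.

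The repair is short and uses nothing about where $U$ lives. Either prove $(3)\Rightarrow(2)$ directly: if $[\tau]_U$ were a nonstandard element of $[\check\omega]_U$, arrange by mixing that $\boolval{\tau\in\check\omega}=1$; then $\set{\boolval{\tau=\check n}\st n<\omega}$ is a countable maximal antichain in $V$, which $U$ meets by $(3)$, forcing $[\tau]_U=[\check n]_U$ for some $n$, a contradiction. Or do what the paper does and replace your $(5)\Rightarrow(3)$ with $(5)\Rightarrow(2)$: a nonstandard natural yields $a_n=\boolval{\tau=\check n}$ with $\neg a_n\in U$ for every $n$, while $\bigwedge_n\neg a_n=\neg\bigvee_n a_n=0$, contradicting weak countable completeness over $V$. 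Either choice closes a cycle $(2)\Rightarrow(3)\Rightarrow(4)\Rightarrow(5)\Rightarrow(2)$ among statements $(2)$--$(5)$ that never passes through $(1)$, which is exactly what the final sentence of the theorem requires.
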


\begin{proof} First, we shall prove that $2$ through $5$ are
equivalent, and implied by $1$, without assuming that $U\in V$.

($1\implies 2$) Immediate.

($2\implies 3$) Suppose that $\check V_U$ is an $\omega$-model, and consider any countable maximal antichain $A=\set{a_n\st n<\omega}\of\B$
in $V$. By the mixing lemma, there is a name $\tau$ such that $\[\tau=\check n]=a_n$, and consequently $\[\tau\in\check\omega]=1$. Thus,
$[\tau]_U\in[\check\omega]_U$. Since $\check V_U$ is an $\omega$-model, all $\in_U$-elements of\/ $[\check\omega]_U$ have the form $[\check
n]_U$ for some $n\in\omega$. Thus, $[\tau]_U=[\check n]_U$ for some $n<\omega$. Consequently, $a_n=\[\tau=\check n]\in U$, and so $U$ meets
$A$.

($3\implies 4$) Suppose that $U$ meets all countable maximal antichains in $V$ and that $a_n\in U$ for all $n<\omega$, with $\<a_n\st
n<\omega>\in V$, but $a_\omega=\bigwedge_n a_n\notin U$. Let $b_n=a_0\wedge\cdots\wedge a_n$, so that $b_n\in U$ for all $n$, but also
$b_0\geq b_1\geq\cdots$ is descending and $\bigwedge_n b_n=\bigwedge_n a_n=a_\omega\notin U$. It follows that $\singleton{\neg
b_0}\union\set{b_n-b_{n+1}\st n<\omega}\union\singleton{a_\omega}$ is a countable maximal antichain. But $\neg b_0\notin U$ since $b_0\in
U$, and $b_n-b_{n+1}\notin U$ since $b_{n+1}\in U$. Since $U$ meets the antichain, the only remaining possibility is $a_\omega\in U$, as
desired.

($4\implies 5$) Immediate.

($5\implies 2$) Suppose that $\check V_U$ is not an $\omega$-model. Thus, $[\check\omega]_U$ has nonstandard $\in_U$-elements, and so there
is some name $\tau$ such that $[\tau]_U\in[\check\omega]_U$, but $[\tau]_U\neq[\check n]_U$ for any $n<\omega$. By mixing if necessary, we
may assume without loss of generality that $\[\tau\in\check\omega]=1$. Let $a_n=\[\tau=\check n]$, and observe that $\neg a_n\in U$ for
each $n<\omega$, and also $\<a_n\st n<\omega>\in V$. Since $\bigvee_n a_n=\[\tau\in\check\omega]=1$, it follows that $\bigwedge_n \neg
a_n=0$, contrary to $U$ being weakly countably complete over $V$.

Lastly, we prove that ($5\implies 1$) under the assumption that $U\in V$. Suppose that $U\in V$ is countably complete, but $\check V_U$ is
not well-founded. Thus, there is a sequence of names $\tau_n$ such that $[\tau_{n+1}]_U\in_U [\tau_n]_U$. Thus, the Boolean value
$a_n=\[\tau_{n+1}\in\tau_n]$ is in $U$ for every $n$. By countable completeness, we know that $\bigwedge_n a_n\in U$ as well. Let $\sigma$
be the name assembling the sets $\tau_n$ into an $\omega$-sequence, so that $\[\sigma\text{ is an }\check\omega\text{-sequence}]=1$ and
$\[\sigma(\check n)=\tau_n]=1$ for every $n<\omega$. Observe that the Boolean value $\[\forall n<\check\omega\,\, \sigma(n+\check
1)\in\sigma(n)]=\bigwedge_n a_n \in U$. This contradicts that the axiom of foundation holds in $\check V_U$.\end{proof}

In the general case that $U\notin V$, theorem \ref{Theorem.CriticalPointKappaIllfounded} will show that statement $1$ of theorem
\ref{Theorem.WellFoundedEquivalents} is no longer necessarily equivalent to statements $2$ through $5$. Nevertheless, we now provide in
theorem \ref{Theorem.BooleanUltrapowerWellFounded} an additional critical characterization of well-foundedness, which applies whether $U\in
V$ or not and which will form the basis of many of our subsequent constructions. Before this, a clarification of the connection between
$\check V_U$ and $V^\B/U$ will be useful.

\begin{lemma}\label{Lemma.MostowskiCollapseOfV^B/U}
If\/ $\check V_U$ is well-founded, then so is $V^\B/U$. In this case, the Mostowski collapse of\/ $V^\B/U$ is the forcing extension
$\Vbar[G]$ of the transitive model $\Vbar\satisfies\ZFC$ arising as the collapse of\/ $\check V_U$, as in the following commutative
diagram,
\begin{diagram}[width=3em]
 \check V_U                         &  \of  & V^\B/U \\
 \dTo^{\pi\restrict\check V_U}      &        & \dTo_{\pi} \\
 \Vbar                                  &  \of  & \Vbar[G]
\end{diagram}
where $G=\pi([\dot G]_U)$ and $\pi$ is the Mostowski collapse of\/ $V^\B/U$, which agrees with the Mostowski collapse of\/ $\check V_U$.
\end{lemma}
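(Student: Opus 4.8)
The plan is to derive the well-foundedness of $V^\B/U$ from that of $\check V_U$ by an internal-rank argument, and then to obtain the diagram by applying the Mostowski collapse to $V^\B/U$ and transporting through it the forcing facts recorded in Lemma~\ref{Lemma.Vcheck/UtransitiveEtc}.

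\textbf{Well-foundedness transfers.} I would start from the fact that $V^\B/U$ is a model of $\ZFC$ (Theorem~\ref{Theorem.[[ZFC]]=1} and the \Los\ lemma, Lemma~\ref{Lemma.BooleanValuedLos}), so it carries an internal rank function $\rho$ and an internal class of ordinals. Quotienting Lemma~\ref{Lemma.VcheckTransitive} through \Los, the model believes $\check V_U$ is transitive and contains all ordinals, so every ordinal of $V^\B/U$ belongs to the class $\check V_U$. The key move is then: given a hypothetical $\in_U$-descending chain $a_{n+1}\in_U a_n$, set $\rho_n=\rho(a_n)$; since $a_{n+1}\in_U a_n$ is exactly $V^\B/U\satisfies a_{n+1}\in a_n$, the rank property forces $\rho_{n+1}\in_U\rho_n$. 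Each $\rho_n$ is an ordinal and hence lies in $\check V_U$, so $\<\rho_n\st n<\omega>$ would be an infinite $\in_U$-descending chain inside $\check V_U$, contradicting its well-foundedness. This yields well-foundedness of all of $V^\B/U$.

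\textbf{The collapse.} Being well-founded and extensional, $V^\B/U$ admits a Mostowski collapse $\pi:V^\B/U\to N$ onto a transitive class $N$. Because $\check V_U$ is $\in_U$-transitive in $V^\B/U$ (Lemma~\ref{Lemma.Vcheck/UtransitiveEtc}(1)), the collapse recursion $\pi(a)=\set{\pi(b)\st b\in_U a}$ stays within $\check V_U$ for $a\in\check V_U$; so $\pi\restrict\check V_U$ obeys the defining recursion of the collapse of $\<\check V_U,\in_U>$ and, by uniqueness of the transitive collapse, equals it, which settles the final clause of the lemma. I would set $\Vbar=\pi\image\check V_U$, a transitive class, noting that $\check V_U\models\ZFC$ (it is elementarily equivalent to $V$ via $j_U$) and hence $\Vbar\models\ZFC$.

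\textbf{Transporting the forcing facts.} Putting $G=\pi([\dot G]_U)$, I would transport Lemma~\ref{Lemma.Vcheck/UtransitiveEtc}(2) through the isomorphism $\pi$ to conclude that $G$ is $\Vbar$-generic for $\pi([\check\B]_U)$, the image of $\B$ under $\pi\compose j_U$. From Lemma~\ref{Lemma.Vcheck/UtransitiveEtc}(3) and Lemma~\ref{Lemma.V^BthinksItIsVcheck[dotG]}, every object of $V^\B/U$ is $\val([\check\tau]_U,[\dot G]_U)$ with $[\check\tau]_U\in\check V_U$; because $\val$ is an absolute recursion and $\pi$ is an isomorphism onto a transitive class, this transports to the statement that every element of $N$ equals $\val(\sigma,G)$ for some $\sigma\in\Vbar$, giving $N=\Vbar[G]$. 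The square commutes by construction, with the horizontal arrows the inclusions $\check V_U\of V^\B/U$ and $\Vbar\of\Vbar[G]$, the right vertical arrow $\pi$, and the left vertical arrow $\pi\restrict\check V_U$. The main obstacle is the first step---confirming that ill-foundedness anywhere in $V^\B/U$ reflects down through internal ranks into the ordinals, and hence into $\check V_U$; everything after the collapse is a routine transport along $\pi$.
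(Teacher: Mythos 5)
Your proposal is correct and follows essentially the same route as the paper's proof: the paper likewise transfers well-foundedness by noting that $\check V_U$ and $V^\B/U$ have the same ordinals (your internal-rank argument is just the standard justification, spelled out, of why that suffices for a \ZFC-model), identifies the restricted collapse via the transitivity of $\check V_U$ in $V^\B/U$, and transports lemma \ref{Lemma.Vcheck/UtransitiveEtc}(3) through $\pi$ to get $N=\Vbar[G]$. No gaps.
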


\begin{proof} Since $\[\check V\text{ contains all ordinals}]=1$,
it follows that $\check V_U$ has the same ordinals as $V^\B/U$, and so if either of these models is well-founded, so is the other. Since
$\check V_U$ is a transitive subclass of $V^\B/U$ by lemma \ref{Lemma.Vcheck/UtransitiveEtc}, it follows that the restriction of the
Mostowski collapse of $V^\B/U$ to $\check V_U$ is the same as the Mostowski collapse of $\check V_U$. Since lemma
\ref{Lemma.Vcheck/UtransitiveEtc} also shows that $V^\B/U$ is the forcing extension $(\check V_U)[[\dot G]_U]$, this carries over via the
isomorphism $\pi$ to show that the Mostowski collapse of $V^\B/U$ is the same as $\Vbar[G]$.\end{proof}

\begin{theorem}\label{Theorem.BooleanUltrapowerWellFounded}
If\/ $U$ is an ultrafilter (not necessarily in $V$) on a complete Boolean algebra $\B$, then the following are equivalent.
\begin{enumerate}
 \item The Boolean ultrapower $\check V_U$ is well-founded.
 \item There is an elementary embedding $j:V\to M$ into a transitive class $M$, and there is an $M$-generic filter $G\of j(\B)$ such
     that $j\image U\of G$.
\end{enumerate}
\end{theorem}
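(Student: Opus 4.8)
The plan is to prove the two implications separately: the direction $(1)\implies(2)$ from the Mostowski-collapse analysis already packaged in Lemma~\ref{Lemma.MostowskiCollapseOfV^B/U}, and the direction $(2)\implies(1)$ by embedding the quotient into a genuine, hence transitive, forcing extension $M[G]$.

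For $(1)\implies(2)$, assume $\check V_U$ is well-founded. Then Lemma~\ref{Lemma.MostowskiCollapseOfV^B/U} tells me that $V^\B/U$ is well-founded and that its Mostowski collapse $\pi$ restricts to the collapse of $\check V_U$, producing a transitive $M$ (the collapse of $\check V_U$, satisfying \ZFC\ by elementarity) together with an $M$-generic filter $G=\pi([\dot G]_U)$ for the image algebra. I would take this $M$, set $j=\pi\compose j_U$, and observe that $j$ is elementary because $j_U$ is and $\pi$ is an isomorphism onto its transitive image. Since $j_U(\B)=[\check\B]_U$, we have $j(\B)=\pi([\check\B]_U)$, the complete Boolean algebra in $M$ for which $G$ is generic. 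The last point, $j\image U\of G$, reduces for $b\in U$ to checking $\pi([\check b]_U)\in\pi([\dot G]_U)$, i.e.\ $[\check b]_U\in_U[\dot G]_U$, i.e.\ $\[\check b\in\dot G]\in U$; and since $\[\check b\in\dot G]=b$ (recorded in the proof of Lemma~\ref{Lemma.dotGischeckVgeneric}), this holds exactly because $b\in U$.

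For $(2)\implies(1)$, suppose I am given an elementary $j:V\to M$ with $M$ transitive and an $M$-generic $G\of j(\B)$ with $j\image U\of G$. Because $G$ is truly generic, $M[G]$ is transitive and hence well-founded. I would define $e\colon V^\B/U\to M[G]$ by $e([\tau]_U)=\val(j(\tau),G)$ and show it is an $\in$-isomorphism onto its range; as $\check V_U\of V^\B/U$, this immediately yields that $\check V_U$ is well-founded. Everything about $e$---well-definedness, injectivity, and preservation of $\in$ in both directions---follows, via the \Los\ lemma (Lemma~\ref{Lemma.BooleanValuedLos}) applied to the atomic formulas $x_0=x_1$ and $x_0\in x_1$, from the single biconditional
$$\[\varphi(\vec\tau)]\in U\quad\Iff\quad M[G]\satisfies\varphi(\val(j(\vec\tau)),G),$$
asserted for every formula $\varphi$ and every tuple of names $\vec\tau$ in $V^\B$.

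The main obstacle is establishing this biconditional. For a fixed $\varphi$ the map $\vec\tau\mapsto\[\varphi(\vec\tau)]$ is a class function definable in $V$ (this is the definability of the forcing relation for a single formula), so by elementarity $j$ sends it to the corresponding Boolean-value operation computed in $M$, giving $j(\[\varphi(\vec\tau)])=\[\varphi(j(\vec\tau))]^{M}$; one has to be careful here that $j$ genuinely commutes with this internally-definable but metatheoretically-fixed operation. Granting that, if $\[\varphi(\vec\tau)]\in U$ then $j(\[\varphi(\vec\tau)])\in j\image U\of G$, so $\[\varphi(j(\vec\tau))]^{M}\in G$, and the fundamental theorem of forcing applied inside $M$ (Theorem~\ref{Theorem.[[ZFC]]=1} relativized to $M$, together with the truth lemma for $\val(\cdot,G)$) delivers $M[G]\satisfies\varphi(\val(j(\vec\tau)),G)$. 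The reverse implication uses that $U$ is an ultrafilter: if $\[\varphi(\vec\tau)]\notin U$ then $\[\neg\varphi(\vec\tau)]\in U$, so the forward direction gives $M[G]\satisfies\neg\varphi(\val(j(\vec\tau)),G)$, and the biconditional follows. Once it is in hand, $e$ realizes $\check V_U$ as a substructure of the transitive model $M[G]$, and well-foundedness is immediate.
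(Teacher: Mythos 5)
Your proposal is correct and follows essentially the same route as the paper: the forward direction invokes Lemma~\ref{Lemma.MostowskiCollapseOfV^B/U} and sets $j=\pi\compose j_U$ exactly as the paper does, and the converse is built on the very map $[\tau]_U\mapsto\val(j(\tau),G)$ that the paper uses, with the same chain $\[\varphi(\vec\tau)]\in U\Iff \[\varphi(j(\vec\tau))]^{j(\B)}\in G\Iff M[G]\satisfies\varphi(\val(j(\vec\tau),G))$ appearing verbatim in Corollary~\ref{Corollary.BooleanUltrapowerFactor}. The only cosmetic difference is that you embed all of $V^\B/U$ into $M[G]$ and deduce well-foundedness of the transitive subclass $\check V_U$, whereas the theorem's proof restricts the map to $\check V_U\to M$ directly; both are present in the paper.
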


\begin{proof} Suppose that $U$ is well-founded, with the
corresponding Boolean ultrapower map $j_U:V\to\check V_U$. By lemma \ref{Lemma.MostowskiCollapseOfV^B/U}, the Mostowski collapse of\/
$V^\B/U$ has the form $\pi:V^\B\iso M[G]$, where $\pi\restrict \check V_U\iso M$ is the Mostowski collapse of\/ $\check V_U$ and
$G=\pi([\dot G]_U)$ is $M$-generic. Let $j=\pi\compose j_U$, so that $j:V\to M$ is an elementary embedding of\/ $V$ into the transitive
class $M$. Since $\[\check b\in\dot G]=b$, it follows that if $b\in U$, then $[\check b]_U\in_U[\dot G]_U$ and hence $j(b)\in G$. Thus,
$j\image U\of G$, as desired.

Conversely, suppose that $j:V\to M$ is an elementary embedding of $V$ into a transitive class $M$, for which there is an $M$-generic filter
$G$ on $j(\B)$ such that $j\image U\of G$. We want to show that the Boolean ultrapower $\check V_U$ is well-founded. We will do this by
mapping it into $M$. The typical element of $\check V_U$ has the form $[\tau]_U$, where $\[\tau\in\check V]=1$. Define $k:\check V_U\to M$
by $k:([\tau]_U)\mapsto \val(j(\tau),G)$. This makes sense, because $j(\tau)$ is a $j(\B)$-name in $M$, and so it has a value by the filter
$G$ in $M[G]$. The map is well defined, because if $[\tau]_U=[\sigma]_U$, then $\[\tau=\sigma]\in U$ and so $\[j(\tau)=j(\sigma)]^{j(\B)}$,
leading to $\val(j(\tau),G)=\val(j(\sigma),G)$. Although at first glance it appears that $k$ is mapping $\check V_U$ into $M[G]$, actually
the situation is that since $\[\tau\in\check V]=1$, we have $\[j(\tau)\in\check M]^{j(\B)}=1$, and so $\val(j(\tau),G)\in M$. Furthermore,
the map $k$ is elementary because if $\check V_U\satisfies\varphi([\tau]_U)$, then $\[\varphi^{\check V}(\tau)]\in U$ by lemma
\ref{Lemma.Vcheck/U}, and so $\[\varphi^{\check M}(j(\tau))]^{j(\B)}=1$ by the elementarity of $j$, leading to
$M[G]\satisfies\varphi^M(\val(j(\tau),G))$ and consequently $M\satisfies\varphi(\val(j(\tau),G))$. And notice also that
$k(j_U(x))=k([\check x]_U)=\val(j(\check x),G)=j(x)$, and so $k\circ j_U=j$. So $j_U$ is a factor of $j$. In particular, $\check V_U$ maps
elementarily into $M$ and is therefore well-founded.\end{proof}

The proof establishes the following.

\begin{corollary}\label{Corollary.BooleanUltrapowerFactor}
If $j:V\to M$ is an elementary embedding into a transitive class $M$ and there is an $M$-generic filter $G\of j(\B)$ for which $j\image
U\of G$, then the Boolean ultrapower embedding $j_U$ is a factor of $j$ as follows:
%\begin{diagram}[width=5em]
%V & & \\
%\dTo^{j_U} & \rdTo^{j} \\
%\check V_U & \rTo^k & M \\
%\end{diagram}
\begin{diagram}[height=2.5em,width=3.5em]
 V & & \rTo^{j} & & M  & \of & M[G] \\
 &\rdTo_{j_{U}}&  & \ruTo^{k\restrict \check V_U}    & & \ruTo_k \\
 & & \check V_U & \of \hskip2.7em & \check V_U[[\dot G]_U]=V^\B/U\\
\end{diagram}
\end{corollary}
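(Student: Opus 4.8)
This is really just a repackaging of the construction carried out in the proof of Theorem~\ref{Theorem.BooleanUltrapowerWellFounded}, so I would begin by recalling exactly what that proof produced. There the map $k\colon\check V_U\to M$ was defined on the typical element $[\tau]_U$ of $\check V_U$ (so $\boolval{\tau\in\check V}=1$) by $k([\tau]_U)=\val(j(\tau),G)$; it was shown to be well defined, to land in $M$ rather than merely $M[G]$ (using that $\boolval{\tau\in\check V}=1$ forces $\boolval{j(\tau)\in\check M}^{j(\B)}=1$ by elementarity of $j$), to be elementary, and to satisfy $k\circ j_U=j$. The only additional content of the corollary is that the very same formula extends $k$ to all of $V^\B/U$, now landing in $M[G]$, that this extension is elementary, and that under it the canonical generic $[\dot G]_U$ maps to $G$; granting these, every triangle and square of the displayed diagram commutes.

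The plan for the extension is to set $k([\tau]_U)=\val(j(\tau),G)$ for an \emph{arbitrary} name $\tau$, whose value now lies in $M[G]$. Well-definedness runs exactly as before: if $[\tau]_U=[\sigma]_U$ then $\boolval{\tau=\sigma}\in U$, so by elementarity of $j$ and the hypothesis $j\image U\of G$ we get $\boolval{j(\tau)=j(\sigma)}^{j(\B)}\in G$, whence $\val(j(\tau),G)=\val(j(\sigma),G)$. For elementarity I would invoke the \Los\ lemma (Lemma~\ref{Lemma.BooleanValuedLos}) for the full quotient rather than routing through $\check V_U$: if $V^\B/U\satisfies\varphi([\tau_0]_U,\ldots,[\tau_n]_U)$, then $\boolval{\varphi(\tau_0,\ldots,\tau_n)}\in U$, so $\boolval{\varphi(j(\tau_0),\ldots,j(\tau_n))}^{j(\B)}\in G$ by elementarity of $j$ together with $j\image U\of G$, and then the truth lemma for the $M$-generic filter $G$ yields $M[G]\satisfies\varphi(\val(j(\tau_0),G),\ldots,\val(j(\tau_n),G))$. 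Thus $k\colon V^\B/U\to M[G]$ is an elementary embedding extending the previously constructed $k\restrict\check V_U\colon\check V_U\to M$; since the inclusion $\check V_U\of V^\B/U$ is Lemma~\ref{Lemma.Vcheck/UtransitiveEtc} and $M\of M[G]$ is trivial, the left-hand triangle is exactly what the theorem already gave, namely $k\circ j_U=j$.

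It then remains only to verify that $k$ sends the generic object to $G$, i.e.\ $k([\dot G]_U)=G$. Here $k([\dot G]_U)=\val(j(\dot G),G)$, and since $\dot G$ is the canonical $\B$-name for the generic filter, $j(\dot G)$ is the canonical $j(\B)$-name for the generic filter on $j(\B)$ by elementarity; the value of the canonical generic name by $G$ is $G$ itself, so $k([\dot G]_U)=G$. Combined with $\check V_U[[\dot G]_U]=V^\B/U$ (Lemma~\ref{Lemma.Vcheck/UtransitiveEtc}) and $M\of M[G]$, this shows the right-hand square commutes and that $[\dot G]_U\mapsto G$, completing the factorization. The only genuinely substantive point in the whole argument---that for $\tau$ with $\boolval{\tau\in\check V}=1$ the value $\val(j(\tau),G)$ actually lands in $M$ rather than in $M[G]$---was already disposed of in the proof of Theorem~\ref{Theorem.BooleanUltrapowerWellFounded}; the extra verifications needed here (extending $k$ to the full quotient and the identity $k([\dot G]_U)=G$) are routine consequences of the \Los\ lemma and the forcing truth lemma, so I anticipate no real obstacle beyond this bookkeeping.
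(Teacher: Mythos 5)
Your proposal is correct and follows essentially the same route as the paper: extend the map $k:[\tau]_U\mapsto\val(j(\tau),G)$ from $\check V_U$ to all of $V^\B/U$, and verify well-definedness and elementarity via the chain $V^\B/U\satisfies\varphi([\tau]_U)\iff\boolval{\varphi(\tau)}\in U\iff\boolval{\varphi(j(\tau))}^{j(\B)}\in G\iff M[G]\satisfies\varphi(\val(j(\tau),G))$, with the restriction to $\check V_U$ handled as in Theorem~\ref{Theorem.BooleanUltrapowerWellFounded}. Your explicit check that $k([\dot G]_U)=G$ is a small, worthwhile addition the paper leaves implicit.
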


\begin{proof}
As in theorem \ref{Theorem.BooleanUltrapowerWellFounded}, define $k:V^\B/U\to M[G]$ by $k:[\tau]_U\mapsto \val(j(\tau),G)$. This is well
defined and elementary on the forcing extension because $V^\B/U\satisfies\varphi([\tau]_U)\iff V\satisfies\boolval{\varphi(\tau)}\in U\iff
M\satisfies\boolval{\varphi(j(\tau))}^{j(\B)}\in G\iff M[G]\satisfies\varphi(\val(j(\tau),G))$. The restriction is elementary from $\check
V_U$ to $M$ as in theorem \ref{Theorem.BooleanUltrapowerWellFounded}.
\end{proof}

It turns out that one needs $G$ only to be $\ran(j)$-generic, rather than fully $M$-generic, for much of the previous, and this is established in theorem \ref{Theorem.RudinKeislerIff}.

\begin{theorem} Every infinite complete Boolean algebra $\B$ admits non-well-founded ultrafilters.
\end{theorem}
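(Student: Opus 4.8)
The plan is to exploit the well-foundedness criterion of Theorem \ref{Theorem.WellFoundedEquivalents}: for an ultrafilter $U\in V$, the Boolean ultrapower $\check V_U$ is well-founded if and only if $U$ meets every countable maximal antichain of $\B$ in $V$. Thus it suffices to exhibit a single countable maximal antichain $A\of\B$ together with an ultrafilter $U$ on $\B$ that misses $A$; any such $U$ is then automatically non-well-founded.

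First I would produce the antichain. Since $\B$ is infinite, a routine recursion yields a countably infinite family $\langle d_n\st n<\omega\rangle$ of pairwise disjoint nonzero elements: at each stage one has a relative algebra $\B\restrict c$ that is infinite, hence $c$ is not an atom, so $c=d\vee(c-d)$ splits into two nonzero pieces; since $\B\restrict c\iso(\B\restrict d)\times(\B\restrict(c-d))$ and a product of two finite algebras is finite, at least one factor is infinite, and one takes a piece lying outside the infinite factor as $d_n$ while continuing the recursion inside that infinite factor. Using completeness, set $d=\bigvee_n d_n$ and let $A=\set{d_n\st n<\omega}$ if $d=1$, and otherwise $A=\set{d_n\st n<\omega}\union\singleton{\neg d}$. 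This $A$ is a partition of unity into pairwise disjoint nonzero pieces, and since the one-sided infinite distributive law $x\wedge\bigvee A=\bigvee_{a\in A}(x\wedge a)$ holds in every complete Boolean algebra, $A$ is maximal; it is countable by construction.

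Next I would build an ultrafilter avoiding $A$ by means of a tail filter. Put $b_n=\bigvee_{m\geq n}d_m$, so that $b_0\geq b_1\geq\cdots$ is a descending chain of nonzero elements with the finite intersection property, and let $U$ be any ultrafilter on $\B$ extending the filter they generate (such $U$ exists in $V$ by the ultrafilter lemma). Then $U$ misses $A$: for each $n$ we have $d_n\wedge b_{n+1}=0$, so $d_n$ is incompatible with $b_{n+1}\in U$ and hence $d_n\notin U$; and when $\neg d\in A$, we have $\neg d\wedge b_0=\neg d\wedge d=0$, so $\neg d\notin U$ likewise. Therefore $U$ fails condition $(3)$ of Theorem \ref{Theorem.WellFoundedEquivalents}, and since $U\in V$ all five conditions there are equivalent, so $\check V_U$ is non-well-founded, as desired.

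The main obstacle is not any single hard computation but rather identifying the right target: the criterion of Theorem \ref{Theorem.WellFoundedEquivalents} reduces everything to producing a countably infinite maximal antichain and then systematically dodging each of its pieces, and the tail filter accomplishes exactly this, since $b_{n+1}\in U$ forces $d_n\notin U$ and $b_0\in U$ forces $\neg d\notin U$. The remaining care is the standard verification that an infinite Boolean algebra carries an infinite pairwise-disjoint family and that a partition of unity is a maximal antichain, the latter using the infinite distributivity available in complete algebras.
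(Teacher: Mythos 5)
Your proof is correct and follows essentially the same route as the paper: produce a countably infinite maximal antichain, extend a filter avoiding it (you use the tail filter generated by $b_n=\bigvee_{m\geq n}d_m$, the paper uses the filter generated by $\set{\neg a\st a\in A}$, which amounts to the same thing) to an ultrafilter $U$ missing the antichain, and conclude non-well-foundedness from condition $(3)$ of theorem \ref{Theorem.WellFoundedEquivalents}. The only difference is that you spell out the standard construction of the countable maximal antichain, which the paper simply asserts.
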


\begin{proof} Every infinite Boolean algebra has a countably
infinite maximal antichain $A$. For any infinite maximal antichain $A$, the set $\set{\neg a\st a\in A}$ has the finite intersection
property, and so it may be extended to an ultrafilter $U$ on $\B$. Since $U$ misses $A$, a countable maximal antichain, it follows by
theorem \ref{Theorem.WellFoundedEquivalents} that $\check V_U$ is not well-founded.\end{proof}

When the Boolean ultrapower is well-founded, we shall simplify notation by combining the Boolean ultrapower embedding with the Mostowski collapse, in effect identifying the Boolean ultrapower $\check V_U$ and the full extension $V^\B/U$ with the transitive collapses of these structures, just as classical well-founded ultrapowers are commonly replaced with their collapses. Thus, we shall refer to the well-founded Boolean ultrapower $j: V\to M\of M[G] \of V$, where $M$ is the collapse of $\check V_U$ and $M[G]$ is the collapse of the full Boolean extension $V^\B/U$, and where $G=[\dot G]_U$.

Next, we generalize to well-founded Boolean ultrapowers one of the standard and useful facts about the well-founded classical ultrapowers,
namely, that they always exhibit closure up to their critical points. It is well known, for example, that if $j:V\to M$ is the ultrapower
by a measure $\mu$ on a measurable cardinal cardinal $\kappa$, then ${}^\kappa M\subset M$. More generally, it is also true that if $j :
V\to  M$ is the ultrapower by any countably complete ultrafilter $\mu$ on any set $Z$ and $\kappa=\cp(j)$, then ${}^\kappa M\subset M$. An
even greater degree of closure, of course, is the defining characteristic of supercompactness embeddings. The general phenomenon is that if
$j : V\to  M$ is a classical ultrapower by a measure $\mu$ on a set $Z$, then ${}^\theta M\subset M$ if and only if $j\image \theta \in M$.
This equivalence does not hold generally for non-ultrapower embeddings, however, such as various extender embeddings, including the $\omega$-iteration of a normal measure on a measurable cardinal. For Boolean ultrapowers, which includes this $\omega$-iteration case, what we prove is not that ${}^\theta M\of M$, but rather ${}^\theta M[G]\of M[G]$, where $M[G]=V^\B/U$ is the full Boolean extension. This does generalize the
power set ultrapower case, on the technicality that if $\B$ is a power set and hence trivial as a forcing notion, then $M=M[G]$. After the
theorem, corollary \ref{Corollary.ClosureOfBooleanUltrapowers} shows how to obtain ${}^\theta M\of M$, provided that $\B$ is sufficiently
distributive. (And if $\B$ is a power set, then it is trivially $\delta$-distributive for every $\delta$.)

\begin{theorem}\label{Theorem.ClosureOfBooleanUltrapowers}
Suppose that $j:V\to M\of M[G]\of V$ is the well-founded Boolean ultrapower by an ultrafilter $U\of\B$ in $V$. Then for any ordinal $\theta$ we have ${}^\theta M[G]\of M[G]$ if and only if $j\image\theta\in M[G]$.
In particular, if $\kappa=\cp(j)$, then definitely ${}^\kappa M[G]\of M[G]$.
\end{theorem}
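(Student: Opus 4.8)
The plan is to follow the seed-theoretic template used for classical ultrapowers, with the generic filter $G$ playing the role of the seed. The central tool is the normal-form representation established above: every element of $M[G]=V^\B/U$ has the form $\val(j(\tau),G)$ for some $\B$-name $\tau\in V$, so that $M[G]=\ran(j)[G]$. I would also use at the outset that $M$, being a transitive proper-class model of $\ZFC$, contains all the ordinals, and hence so does $M[G]\fo M$.

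First I would dispatch the forward direction. Assuming ${}^\theta M[G]\of M[G]$, observe that $j\restrict\theta=\<j(\alpha)\st\alpha<\theta>$ is a set in $V$ (the class function $j$ restricted to the set $\theta$, by replacement in $V$), and each $j(\alpha)$ is an ordinal, hence lies in $M\of M[G]$. Thus this is a $\theta$-sequence of elements of $M[G]$ lying in $V$, so by the closure hypothesis it belongs to $M[G]$, and its range $j\image\theta$ then belongs to $M[G]$ by replacement in $M[G]$. Note this already yields the stronger $j\restrict\theta\in M[G]$, which I want for the converse.

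For the converse I would first upgrade the hypothesis $j\image\theta\in M[G]$ to $j\restrict\theta\in M[G]$. Since $j$ is order-preserving on the ordinals, $j\restrict\theta$ is precisely the increasing enumeration of the set of ordinals $j\image\theta$, whose order type is $\theta$; as $M[G]$ is a transitive model of $\ZFC$ containing $\theta$ and $j\image\theta$, it can compute this enumeration, giving $j\restrict\theta\in M[G]$. Now take any $\<a_\alpha\st\alpha<\theta>$ with each $a_\alpha\in M[G]$; this sequence is itself an element of $V$. Working in $V$ with the axiom of choice, I select for each $\alpha$ a $\B$-name $\tau_\alpha$ with $a_\alpha=\val(j(\tau_\alpha),G)$ and collect them into a single function $F\in V$ with $F(\alpha)=\tau_\alpha$. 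Applying $j$ and using elementarity, $j(F)\in M$ satisfies $j(F)(j(\alpha))=j(\tau_\alpha)$ for every $\alpha<\theta$. Finally, inside $M[G]$—which contains $j(F)$, the filter $G$, and $j\restrict\theta$—I would form the sequence $\<\val\bigl(j(F)((j\restrict\theta)(\alpha)),G\bigr)\st\alpha<\theta>$. This computation is legitimate in $M[G]$ by replacement, and it equals $\<a_\alpha\st\alpha<\theta>$, which therefore lies in $M[G]$. The ``in particular'' clause is then immediate: if $\kappa=\cp(j)$ then $j\restrict\kappa$ is the identity, so $j\image\kappa=\kappa$, an ordinal lying in $M\of M[G]$, and the equivalence yields ${}^\kappa M[G]\of M[G]$.

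The main obstacle I anticipate is not any single deep step but the careful bookkeeping of where each object lives: the names $\tau_\alpha$ and the assembling function $F$ are formed in $V$, their $j$-images reside in $M$, while the reconstituting valuation must be carried out entirely within $M[G]$. The two facts that make this bookkeeping go through are the normal-form representation (which supplies the names $\tau_\alpha$ in $V$) and the passage from $j\image\theta$ to $j\restrict\theta$ (which supplies the re-indexing needed to recover the original $\theta$-sequence rather than a $j\image\theta$-indexed variant). A minor point worth checking is that $j\restrict\theta$ really is the increasing enumeration of $j\image\theta$, which reduces to the order-preservation of $j$ on ordinals.
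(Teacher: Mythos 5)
Your proof is correct and follows essentially the same route as the paper's: represent each entry of the sequence by a $\B$-name in $V$, assemble the names into a single $V$-object, push it through $j$, and use $j\image\theta\in M[G]$ to re-index back to a $\theta$-indexed sequence inside $M[G]$. The only cosmetic difference is that you carry the names as a $V$-function $F$ and evaluate $\val(j(F)(j(\alpha)),G)$ inside $M[G]$, whereas the paper amalgamates them into one name $\sigma$ for a $\check\theta$-sequence and restricts $[\sigma]_U$ to $j\image\theta$; these are interchangeable, and your explicit treatment of the forward direction and of upgrading $j\image\theta$ to $j\restrict\theta$ is a correct spelling-out of steps the paper leaves implicit.
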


\begin{proof} As explained above, we mean that $M$ is the transitive collapse of $\check V_U$ and $M[G]$ is the transitive collapse of $V^\B/U$. The theorem is referring to $\theta$-sequences in these classes in V. The forward implication is immediate. Conversely, suppose that $j\image\theta
\in M[G]$. Fix any $\theta$-sequence $\<z_\alpha \st \alpha<\theta >\in {}^\theta M[G]$. Thus, $z_\alpha = [\tau_\alpha]_U$ for some name
$\tau_\alpha$. It is an easy exercise in name manipulation to build a $\B$-name $\sigma$ amalgamating these names into a sequence, so that
$\[\sigma\text{ is a }\check\theta\text{-sequence}] = 1$ and for each $\alpha <\theta$ we have $\[\sigma(\check\alpha ) = \tau_\alpha] =
1$. If $s = [\sigma]_U$, then $s$ is a sequence of length $[\check\theta ]_U = j(\theta)$ in $M[G]$, and $s(j(\alpha )) = z_\alpha$. Since
$j\image \theta \in M[G]$, the sequence $s \restrict j\image\theta$ is isomorphic in $M$ to $\<z_\alpha\st \alpha < \theta >$, by simply
collapsing the domain $j\image\theta$ to $\theta$. Thus, $\<z_\alpha\st \alpha  < \theta >\in M[G]$, as desired. For the final claim, if
$\kappa=\cp(j)$, then of course $j\image\kappa=\kappa$, and so we conclude ${}^\kappa M\subset M$, as desired.\end{proof}

\begin{corollary}\label{Corollary.ClosureOfBooleanUltrapowers}
If\/ $\B$ is $\ltdelta$-distributive and $j:V\to M\of M[G]\of V$ is the well-founded Boolean ultrapower by $U\of\B$ in $V$, then for any
$\theta< j(\delta)$, we have ${}^\theta M\of M$ if and only if $j\image\theta\in M[G]$. In particular, if $\kappa=\cp(j)$ and $\B$ is
$\ltkappa$-distributive, then ${}^\kappa M\of M$.
\end{corollary}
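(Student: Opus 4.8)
The plan is to leverage Theorem \ref{Theorem.ClosureOfBooleanUltrapowers}, which already establishes closure of the full extension $M[G]$, and then to bridge from $M[G]$ down to $M$ using the distributivity of the forcing. The forward implication is immediate: if ${}^\theta M\of M$, then since $j\image\theta=\<j(\alpha)\st\alpha<\theta>$ is itself a $\theta$-sequence of elements of $M$, it lies in $M$ and hence in $M[G]$.

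For the converse, I would suppose $j\image\theta\in M[G]$ and apply Theorem \ref{Theorem.ClosureOfBooleanUltrapowers} to conclude ${}^\theta M[G]\of M[G]$. Fixing any $\theta$-sequence $\vec z=\<z_\alpha\st\alpha<\theta>$ with each $z_\alpha\in M$, and regarding $\vec z$ as a sequence of elements of $M[G]$, the closure just obtained gives $\vec z\in M[G]$. The remaining task is to push $\vec z$ down into $M$, and this is precisely where distributivity enters.

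The key point is that $M[G]$ is a genuine forcing extension of $M$ by the $M$-generic filter $G\of j(\B)$, as identified via Lemma \ref{Lemma.MostowskiCollapseOfV^B/U}. Since $\B$ is $\ltdelta$-distributive in $V$, elementarity of $j$ yields that $j(\B)$ is ${<}j(\delta)$-distributive in $M$, and by definition such a forcing adds no new sequences of length $<j(\delta)$ of ground-model elements. As $\theta<j(\delta)$ by hypothesis and every $z_\alpha$ lies in $M$, the sequence $\vec z$, although computed in $M[G]$, is already an element of $M$. This gives ${}^\theta M\of M$ and completes the equivalence. For the final claim, I would take $\delta=\kappa=\cp(j)$: since $\kappa$ is the critical point, $j\image\kappa=\kappa\in M\of M[G]$, and $\kappa<j(\kappa)=j(\delta)$, so the equivalence applies with $\theta=\kappa$ and yields ${}^\kappa M\of M$.

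The main obstacle is the correct handling of the distributivity transfer. One must verify that the internal assertion ``$j(\B)$ is ${<}j(\delta)$-distributive'' in $M$, combined with $G$ being fully $M$-generic, genuinely prevents new short sequences of \emph{arbitrary} elements of $M$---not merely sequences of ordinals---from appearing in $M[G]$. This is the standard equivalent characterization of distributivity, but it relies essentially on the genericity of $G$ over $M$, which the identification of $M[G]$ as the Mostowski collapse of $V^\B/U$ supplies.
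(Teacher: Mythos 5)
Your proposal is correct and follows essentially the same route as the paper: apply Theorem \ref{Theorem.ClosureOfBooleanUltrapowers} to get ${}^\theta M\of M[G]$, then use the (elementarity-transferred) ${<}j(\delta)$-distributivity of $j(\B)$ over $M$ to pull the sequence down into $M$, with the final claim following from $j\image\kappa=\kappa\in M$ and $\kappa<j(\kappa)$. Your added remark about distributivity applying to sequences of arbitrary $M$-elements (not just ordinals) is a correct and worthwhile clarification of a step the paper leaves implicit.
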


\begin{proof} The forward direction again is immediate.
Conversely, suppose that $j\image\theta\in M[G]$. By theorem \ref{Theorem.ClosureOfBooleanUltrapowers}, it follows that ${}^\theta M[G]\of
M[G]$, and consequently ${}^\theta M\of M[G]$. But since $\theta\leq j(\delta)$, we know that the forcing $G\of j(\B)$ adds no new
$\theta$-sequences over $M$, and so ${}^\theta M\of M$, as desired. The final claim is a special case, since $\kappa<j(\kappa)$ and
$j\image\kappa=\kappa\in M$.\end{proof}

Perhaps one should view theorem \ref{Theorem.ClosureOfBooleanUltrapowers} as similar to the proof of theorem
\ref{Theorem.WellFoundedEquivalents}, where we argued that if the Boolean ultrapower $\check V_U$ is an $\omega$-model, then it is
well-founded. The reason is that if it is an $\omega$-model, then $j_U\image\omega$ is represented in the Boolean ultrapower, and so the
Boolean ultrapower is closed under $\omega$-sequences. It follows that it is well-founded. Later, in theorem
\ref{Theorem.ClosureWithDisjointification}, we will prove a better closure theorem for the generic Boolean ultrapower arising via the
quotient by an ideal with the disjointifying property.

%  this is an interesting question for the future:
%
%Now, let's suppose that $j : V\to  M$ is the ultrapower by $U$ on $\B$ and that $j\image \theta \in M$. It follows that $j \image\theta =
%\pi_{A,\infty}(s)$ for some $s\in V^A/U_A$. And I think it is easy to see that $s = j_A\image \theta$. Thus, $j_A$ is already a
%$\theta$-supercompactness embedding. And the converse is also almost true. Namely, if $j_A\image\theta \in V^A/U_A$, then
%$\pi_{A,\infty}(j_A\image\theta)$ is almost $j\image\theta$. These will be equal if $\cp(\pi_{A,\infty}) > \theta$. ...So we should try to
%understand in general what is $\cp(\pi_{A,\infty})$. It is the critical point of the quotient forcing $\B/{U_A}$.
%
Before continuing our investigation of the well-founded Boolean ultrapowers, we shall first develop in the next several sections a
sufficient general theory of Boolean ultrapowers.

\section{A purely algebraic construction of the Boolean ultrapower}\label{Section.AlgebraicApproach}

The Boolean ultrapower construction admits of a purely algebraic or model-theoretic presentation, without any reference to forcing or
names, in a manner naturally generalizing the usual power set ultrapower construction. Such a presentation is used in
\cite{Mansfield1970:TheoryOfBooleanUltrapowers}, \cite{Canjar1987:CompleteBooleanUltraproducts} and
\cite{OuwehandRose1998:FiltralPowersOfStructures}, as opposed to the forcing-based approach of \cite{Vopenka1965:OnNablaModelOfSetTheory}
and \cite{Bell1985:BooleanValuedModelsAndIndependenceProofs}. In this section, we prove the two approaches are equivalent.

Let us give the model-theoretic account. If $\B$ is a complete Boolean algebra, $U$ is an ultrafilter on $\B$ and $\mathcal{M}$ is a
structure in a first-order language, then the {\df functional presentation} of the Boolean ultrapower of $\mathcal{M}$ by $U$ on $\B$ will
consist of certain equivalence classes of functions $f:A\to M$, where $A$ is a maximal antichain in $\B$. We refer to such functions as
{\df spanning functions}, and denote by $M^{\downarrow\B}$ the collection of all spanning functions, using any maximal antichain. If $A$
and $B$ are maximal antichains in $\B$, then we say $B$ {\df refines} $A$ if for every element $b\in B$ there is some $a\in A$ with $b\leq
a$ (note that this $a$ is unique). In this case, if $f:A\to M$ is a function, then the {\df reduction} of $f$ to $B$ is the function
$f\downarrow B:B\to M$ such that $(f\downarrow B)(b)=f(a)$ when $b\leq a$. Note that any two maximal antichains have a common refinement.
We compare two spanning functions $f:A\to M$ and $g:B\to M$ by finding a common refinement $C$ of $A$ and $B$ and comparing $f\downarrow C$
and $g\downarrow C$. Specifically, we say $f\equiv_U g$ if and only if $\bigvee\set{c\in C\st (f\downarrow C)(c)=(g\downarrow C)(c)}\in U$.
This is an equivalence relation on $M^{\downarrow \B}$, and it does not depend on the choice of $C$. The {\df functional presentation} of
the Boolean ultrapower is the quotient $\mathcal{M}^{\downarrow\B}_U$, consisting of all equivalence classes $[f]_U=\set{g\in
M^{\downarrow\B}\st f\equiv_U g}$. For any relation symbol $R$ of $\mathcal{M}$, we define $R([f_0]_U,\ldots,[f_n]_U)$ in
$\mathcal{M}^{\downarrow\B}_U$ if $\bigvee\set{c\st \mathcal{M}\satisfies R((f_0\downarrow C)(c),\ldots,(f_n\downarrow C)(c))}\in U$, where
$C$ is any common refinement of $\dom(f_0),\ldots,\dom(f_n)$. Similarly, for any function symbol $r$, we define
$r([f_0]_U,\ldots,[f_n]_U)=[f]_U$ in $\mathcal{M}^{\downarrow\B}_U$, where again $C$ is a suitable common refinement and
$f(c)=r((f_0\downarrow C)(c),\ldots,(f_n\downarrow C)(c))$. One can easily establish the \Los\ theorem for this presentation, so that
$$\mathcal{M}^{\downarrow\B}_U\satisfies\varphi\bigl[[f_0]_U,\ldots,[f_n]_U\bigr]$$
if and only if there is a common refinement $C$ of $\dom(f_0),\ldots,\dom(f_n)$ such that
$$\vee\set{c\in C\st \mathcal{M}\satisfies\varphi[(f_0\downarrow C)(c),\ldots,(f_n\downarrow C)(c)]}\in U.$$ It follows that the map $j:x\mapsto [c_x]_U$, where
$c_x:1\mapsto x$ is the constant function with domain $\singleton{1}$, is an elementary embedding $j:\mathcal{M}\to
\mathcal{M}^{\downarrow\B}_U$.

\begin{theorem}\label{Theorem.TwoPresentationsIsomorphic}
The two presentations of the Boolean ultrapower of the universe are isomorphic. Specifically, there is an isomorphism
$\pi:V^{\downarrow\B}_U\iso \check V_U$ making the following diagram commute.
\begin{diagram}[width=2em]
 & & V & & \\
 & \ldTo^j & & \rdTo^{j_U} \\
V^{\downarrow\B}_U & &\rTo^\pi~{\iso} & &\check V_U \\
\end{diagram}
\end{theorem}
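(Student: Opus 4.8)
The plan is to define $\pi$ by turning each spanning function into a name via the mixing lemma and then checking that this assignment descends to a bijection that exactly preserves membership. Given a spanning function $f\colon A\to V$ on a maximal antichain $A$, the mixing lemma (Lemma \ref{Lemma.MixingLemma}) yields a name $\tau_f$ with $a\leq\[\tau_f=\check{f(a)}]$ for every $a\in A$. Since $A$ is maximal and each such value lies below $\[\tau_f\in\check V]$, summing gives $\[\tau_f\in\check V]=1$, so $[\tau_f]_U\in\check V_U$. I would set $\pi([f]_U)=[\tau_f]_U$. The particular witness $\tau_f$ does not matter: any two solutions agree with Boolean value one on all of $A$, hence are equal with Boolean value one.

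The whole argument rests on a single identity. For a common refinement $C$ of $\dom(f)$ and $\dom(g)$, I claim
$$\[\tau_f\in\tau_g]=\bigvee\set{c\in C\st V\satisfies (f{\downarrow}C)(c)\in (g{\downarrow}C)(c)},$$
and likewise with $=$ in place of $\in$. To verify this, fix $c\in C$, say $c\leq a\in\dom(f)$ and $c\leq b\in\dom(g)$, so that $c\leq\[\tau_f=\check{f(a)}]\wedge\[\tau_g=\check{g(b)}]$. By Lemma \ref{Lemma.VandVcheck} the value $\[\check{f(a)}\in\check{g(b)}]$ is $1$ when $f(a)\in g(b)$ and $0$ otherwise; the congruence laws for $\in$ then force $c\leq\[\tau_f\in\tau_g]$ in the first case and $c\wedge\[\tau_f\in\tau_g]\leq 0$, hence $c\leq\neg\[\tau_f\in\tau_g]$, in the second. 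Because $\bigvee C=1$, we have $\[\tau_f\in\tau_g]=\bigvee_{c\in C}\bigl(c\wedge\[\tau_f\in\tau_g]\bigr)$, and the displayed identity drops out; the $=$-version is identical.

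With this identity everything else is formal. Well-definedness and injectivity come from the $=$-version together with the definition of $\equiv_U$: indeed $f\equiv_U g$ means $\bigvee\set{c\st (f{\downarrow}C)(c)=(g{\downarrow}C)(c)}\in U$, which by the identity is exactly $\[\tau_f=\tau_g]\in U$, i.e.\ $\tau_f=_U\tau_g$. Membership is preserved in both directions by the $\in$-version read modulo $U$: $V^{\downarrow\B}_U\satisfies [f]_U\in[g]_U$ iff the displayed join lies in $U$ iff $\[\tau_f\in\tau_g]\in U$ iff $[\tau_f]_U\in_U[\tau_g]_U$, so $\pi$ is an isomorphism of $\in$-structures (full first-order elementarity also follows, since both sides obey \Los's lemma). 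For surjectivity I take any $[\tau]_U\in\check V_U$; by Lemma \ref{Lemma.Vcheck/U} I may assume $\[\tau\in\check V]=1$, whence the nonzero values among $\set{\[\tau=\check x]\st x\in V}$ form a maximal antichain $A$ (pairwise incompatible by Lemma \ref{Lemma.VandVcheck}, joining to $1$), and the spanning function sending each $a=\[\tau=\check x]$ to $x$ satisfies $\pi([f]_U)=[\tau]_U$. Finally the diagram commutes: for $x\in V$ the constant function $c_x$ on the trivial antichain $\singleton{1}$ gives $\tau_{c_x}=_U\check x$, so $\pi(j(x))=\pi([c_x]_U)=[\check x]_U=j_U(x)$. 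I expect the only real obstacle to be the careful common-refinement bookkeeping in the central identity; once that is in place, the rest is a direct consequence of it and of $\check V$ being a faithful two-valued copy of $V$.
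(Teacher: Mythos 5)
Your proof is correct and follows essentially the same route as the paper's: mixing a spanning function $f$ into a name $\tau_f$, checking that $f\mapsto\tau_f$ respects $=_U$ and $\in_U$, and obtaining surjectivity by reading off a spanning function from the maximal antichain $\set{\[\tau=\check x]\st x\in V}\setminus\singleton{0}$. The only difference is that you carefully verify the central identity $\[\tau_f\in\tau_g]=\bigvee\set{c\in C\st (f{\downarrow}C)(c)\in (g{\downarrow}C)(c)}$ (and its $=$-analogue), which the paper leaves to the reader; your verification is sound.
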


\begin{proof} The point is that the use of spanning functions
$f:A\to V$ corresponds exactly to an application of the mixing lemma to the values in $\ran(f)$ using the antichain $A$. Specifically, for
any spanning function $f:A\to V$, we may apply the mixing lemma to produce a $\B$-name $\tau_f$ such that that $a\leq\[\tau_f=f(a)\check\
]$ for every $a\in A$. If $B$ refines $A$, then $\[\tau_f=\tau_{f\downarrow B}]=1$ because $\tau_{f\downarrow B}$ mixes the same value
$f(a)$ on the portion of $B$ below $a$ that $\tau_f$ has with value $a$. It follows that $f \equiv_U g$ if and only if
$\[\tau_f=\tau_g]\in U$, which is to say, if and only if $[\tau_f]_U=[\tau_g]_U$. A similar argument shows that $f\in_U g$ in
$V^{\downarrow\B}_U$ if and only if $\[\tau_f\in\tau_g]\in U$. So let us define $\pi:V^{\downarrow\B}_U\to \check V_U$ by $\pi:[f]_U\mapsto
[\tau_f]_U$. To see that this is onto, we use that every element of $\check V_U$ has the form $[\tau]_U$, where $\[\tau\in\check V]=1$. For
such a $\tau$, consider the values $x$ for which $\[\tau=\check x]\neq 0$. The corresponding set $A$ of these Boolean values is a maximal
antichain. If $f:A\to V$ is the function mapping $\[\tau=\check x]$ to $x$, it follows that $\[\tau=\tau_f]=\vee A=1$, and so
$[\tau]_U=\pi([f]_U)$. So $\pi$ is an isomorphism of $V^{\downarrow\B}_U$ with $\check V_U$. The diagram commutes because $\tau_{c_x}$ is a
name such that $\[\tau_{c_x}=\check x]=1$, and so $\pi\compose j(x)=\pi([c_x]_U)=[\tau_{c_x}]_U=[\check x]_U=j_U(x)$.\end{proof}

Thus, the two accounts of the Boolean ultrapower are equivalent. The equivalence is true generally, for any first-order structure
$\mathcal{M}$, but the forcing and name presentation seems to be the most illuminating in the case of a Boolean ultrapower of the
set-theoretic universe $V$. As we see it, the advantage of the forcing and name viewpoint, for those who know forcing, is that it places
the Boolean ultrapower $\check V_U$ into the broader context of the full extension $V^\B/U$, where the existence of the canonical generic object can be illuminating. This broader context will be helpful in our later investigation of Boolean ultrapowers as large cardinal embeddings.
%We also believe that the forcing and name presentation of the Boolean ultrapower deepens one's understanding of the Boolean-valued model $V^\B$ and
%of forcing in general.
Next, we prove that Boolean ultrapowers include all instances of the classical ultrapower by an ultrafilter on a set.

\begin{theorem}\label{Theorem.BooleanUltrapowersGeneralizeUsualUltrapowers}
Boolean ultrapowers generalize the usual ultrapowers. Specifically, if\/ $U$ is an ultrafilter on a set $Z$, meaning that it is an
ultrafilter on the power set Boolean algebra $\B=P(Z)$, then the usual ultrapower by $U$ is the same as (isomorphic to) the Boolean
ultrapower by $U$.
\end{theorem}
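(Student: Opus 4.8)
The plan is to route through the functional presentation $V^{\downarrow\B}_U$ of the Boolean ultrapower, which Theorem~\ref{Theorem.TwoPresentationsIsomorphic} identifies with $\check V_U$ in a way commuting with the embeddings. It therefore suffices to produce an isomorphism of $V^{\downarrow\B}_U$ with the classical ultrapower $V^Z/U$ that commutes with the two canonical embeddings. The whole argument rests on understanding the maximal antichains of the power set algebra $\B=P(Z)$.

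First I would observe that a maximal antichain in $P(Z)$ is precisely a partition of $Z$ into nonempty pieces, and that the partition into singletons, $A_0=\set{\singleton{z}\st z\in Z}$, is the finest such antichain: it is an antichain, it is maximal (every nonempty $S\of Z$ meets some $\singleton{z}$), and it refines every partition $A$, since each $z$ lies in a unique piece $a\in A$ with $\singleton{z}\leq a$. Thus $A_0$ is a common refinement of \emph{all} maximal antichains, and it is canonically identified with $Z$ via $\singleton z\leftrightarrow z$.

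Next, using that reduction to a refinement never changes the $\equiv_U$-class (comparing $f$ with $f\downarrow B$ on the common refinement $B$ yields the full join, hence an element of $U$), every spanning function $f:A\to V$ has within its class the representative $f\downarrow A_0$, which under the identification $\singleton z\leftrightarrow z$ is just the function $\hat f:Z\to V$ given by $\hat f(z)=f(a)$ for the piece $a\ni z$. I would then define $\Phi:V^{\downarrow\B}_U\to V^Z/U$ by $\Phi([f]_U)=[\hat f]_U$. Because $\equiv_U$ may be computed using $A_0$ as the common refinement, and because over $A_0$ the Boolean join $\bigvee\set{\singleton z\st \hat f(z)=\hat g(z)}$ is literally the set $\set{z\in Z\st \hat f(z)=\hat g(z)}$, the condition $f\equiv_U g$ becomes exactly $\set{z\st\hat f(z)=\hat g(z)}\in U$, the classical equivalence. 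Hence $\Phi$ is well defined and injective; it is surjective because any $g:Z\to V$ is already a spanning function on $A_0$ with $\hat g=g$. The same join-to-set translation shows $\Phi$ respects $\in_U$ and every relation and function symbol, so $\Phi$ is an isomorphism, as one also reads off directly from the two $\Los$ theorems. Finally, the Boolean embedding sends $x$ to the class of the constant function $c_x$ on the trivial antichain $\set{1}=\set{Z}$, whose reduction to $A_0$ is the constant function with value $x$, i.e.\ the classical diagonal embedding; so $\Phi$ commutes with the embeddings, and composing with the isomorphism of Theorem~\ref{Theorem.TwoPresentationsIsomorphic} yields $\check V_U\iso V^Z/U$ over $V$.

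There is no deep obstacle here; the content is entirely in the first observation that the singleton partition $A_0$ serves as a universal common refinement, after which the Boolean join over $A_0$ collapses to ordinary membership in $U$. The only points demanding care are confirming that reduction to $A_0$ preserves $\equiv_U$-classes (so that $\Phi$ is well defined on equivalence classes) and that one is entitled to use $A_0$ as the common refinement when evaluating $\equiv_U$, both of which follow from the refinement-independence of $\equiv_U$ built into the functional presentation.
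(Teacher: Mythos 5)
Your proposal is correct and follows essentially the same route as the paper: the paper's proof also passes to the functional presentation, observes that every spanning function reduces to the antichain of atoms $\set{\singleton{z}\st z\in Z}$, and identifies functions on that antichain with functions on $Z$ to obtain the isomorphism with $V^Z/U$ respecting the constant-function embeddings. You have merely spelled out the verification of well-definedness and the translation of joins into sets in more detail than the paper does.
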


\begin{proof} In the case of the power set Boolean algebra $\B=P(Z)$, every spanning function $f:A\to V$ has a maximal refinement down to the
maximal antichain consisting of the atoms $\mathcal{A}=\set{\singleton{z}\st z\in Z}$. Thus, for the purposes of the Boolean ultrapower
$V^{\downarrow\B}_U$, it suffices to consider functions $f:\mathcal{A}\to V$. Any such function naturally corresponds to a function $f:Z\to
V$, by stripping off one layer of set braces, and this correspondence is an isomorphism of $V^{\downarrow\B}_U$ with $V^Z/U$. Furthermore,
this correspondence respects constant functions, and so the ultrapower embeddings themselves are also isomorphic.\end{proof}

We now investigate a method to limit the size of the antichains needed to represent spanning functions in the Boolean ultrapower. Of
course, the chain condition of $\B$ provides a crude upper bound, for if $\B$ is $\delta$-c.c., then all maximal antichains have size less
than $\delta$. The concept of {\df descents}, however, refines this idea. Specifically, for any ultrafilter $U$ on a complete Boolean
algebra $\B$, a {\df descent} through $U$ is a continuous descending sequence from $1$ through $U$ with meet $0$, that is, a sequence
$\<b_\alpha\st \alpha<\kappa>$ beginning with $b_0=1$, having $b_\alpha\in U$ for all $\alpha$, descending $\alpha<\beta\implies
b_\alpha\geq b_\beta$, continuous $b_\lambda=\bigwedge_{\alpha<\lambda} b_\alpha$ at limit ordinals $\lambda$, and with meet zero
$\bigwedge_{\alpha<\kappa} b_\alpha=0$. The descent is {\df strict} if $b_{\alpha}>b_{\alpha+1}$ for all $\alpha<\kappa$. The {\df descent
spectrum} of $U$ is the collection of all $\kappa$ for which there is a descent through $U$ of order type $\kappa$. For any such descent,
the corresponding {\df difference antichain} has elements $d_\alpha=b_\alpha-b_{\alpha+1}$, and these form a maximal antichain
$\set{d_\alpha\st\alpha<\kappa}$ (allowing $0$ in the non-strict case). Every element $b_\alpha$ in the descent is simply the join of all
subsequent differences $b_\alpha=\bigvee\set{d_\beta\st \beta\geq\alpha}$. If $U$ is $\kappa$-complete but not $\kappa^\plus$-complete,
then of course there is a descent of order type $\kappa$, and this is the critical point of the corresponding Boolean ultrapower. More
generally, we have:

\begin{theorem}\label{Theorem.DescentIffDiscontinuityPoint}
An ultrafilter $U$ on a complete Boolean algebra admits a descent of order type $\kappa$ if and only if the Boolean ultrapower $j_U$ is
discontinuous at $\kappa$.
\end{theorem}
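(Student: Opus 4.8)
The plan is to read ``discontinuous at $\kappa$'' as the assertion that $j_U\image\kappa$ fails to be cofinal in $j_U(\kappa)=[\check\kappa]_U$, i.e.\ that there is a name $\tau$ with $j_U(\alpha)<_U[\tau]_U<_U j_U(\kappa)$ for every $\alpha<\kappa$. The whole proof then rests on a single correspondence, implemented by the mixing lemma (Lemma~\ref{Lemma.MixingLemma}): names $\tau$ forced to be an ordinal below $\check\kappa$ correspond to descending families of Boolean values $b_\alpha=\boolval{\check\alpha\leq\tau}$, whose difference antichain $d_\alpha=b_\alpha-b_{\alpha+1}$ is precisely $\boolval{\tau=\check\alpha}$. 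Since $\kappa$ must be a limit ordinal for either side to be possible (at a successor $\gamma+1$ the meet of a descending chain through $U$ is $b_\gamma\in U\neq0$, and successor ordinals are automatically continuity points of $j_U$), I will assume throughout that $\kappa$ is a limit; the non-limit case makes both sides trivially false.

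For the forward direction, suppose $\<b_\alpha\st\alpha<\kappa>$ is a descent through $U$. I will pass to its difference antichain $\set{d_\alpha\st\alpha<\kappa}$, which is maximal, and apply the mixing lemma to obtain a name $\tau$ with $d_\alpha\leq\boolval{\tau=\check\alpha}$ for all $\alpha$. Maximality gives $\boolval{\tau\in\check\kappa}=\bigvee_\alpha\boolval{\tau=\check\alpha}\geq\bigvee_\alpha d_\alpha=1$, so $[\tau]_U<_U j_U(\kappa)$. Using the identity $b_{\alpha+1}=\bigvee\set{d_\beta\st\beta\geq\alpha+1}$ noted just before the theorem, I get $\boolval{\check\alpha<\tau}\geq\bigvee_{\beta>\alpha}d_\beta=b_{\alpha+1}\in U$, so $j_U(\alpha)<_U[\tau]_U$ for each $\alpha<\kappa$. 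Thus $[\tau]_U$ is an intermediate ordinal witnessing discontinuity at $\kappa$.

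For the converse, suppose $j_U$ is discontinuous at $\kappa$, witnessed by some $[\tau]_U$ with $j_U(\alpha)<_U[\tau]_U<_U j_U(\kappa)$ for all $\alpha<\kappa$. By mixing I may replace $\tau$ by an equivalent name with $\boolval{\tau\in\check\kappa}=1$, so that $\tau$ is forced to be an ordinal below $\check\kappa$, and I then set $b_\alpha=\boolval{\check\alpha\leq\tau}$. That this is a descent is mostly routine: $b_0=1$ since $\tau$ is forced to be an ordinal; the family is descending because $\check\beta\leq\tau$ forces $\check\alpha\leq\tau$ when $\alpha<\beta$; and each $b_\alpha\in U$ because $\boolval{\check\alpha<\tau}\leq b_\alpha$ while the former lies in $U$ by hypothesis.

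The step I expect to carry the real content is the verification of continuity at limits and of the meet-zero condition, both of which reduce to commuting a bounded universal quantifier with a Boolean meet. Concretely, for a limit $\lambda\leq\kappa$ one has the \ZFC-validity that an ordinal is $\geq\lambda$ exactly when it is $\geq\alpha$ for every $\alpha<\lambda$, so, using $\check\lambda=\set{\<\check\alpha,1>\st\alpha<\lambda}$ together with the standard bounded-quantifier computation $\boolval{\forall x\in\check\lambda\,\psi(x)}=\bigwedge_{\alpha<\lambda}\boolval{\psi(\check\alpha)}$, I obtain $b_\lambda=\boolval{\check\lambda\leq\tau}=\bigwedge_{\alpha<\lambda}\boolval{\check\alpha\leq\tau}=\bigwedge_{\alpha<\lambda}b_\alpha$, which is continuity at limits below $\kappa$. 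The same computation at $\lambda=\kappa$ yields $\bigwedge_{\alpha<\kappa}b_\alpha=\boolval{\check\kappa\leq\tau}=\neg\boolval{\tau\in\check\kappa}=\neg1=0$, the required meet-zero condition. The only real bookkeeping is the careful tracking of $\leq$ versus $<$, needed to keep every $b_\alpha$ in $U$ while still forcing the final meet to $0$; and it is worth recording that the two constructions are mutually inverse, since the difference antichain of $b_\alpha=\boolval{\check\alpha\leq\tau}$ is exactly $d_\alpha=\boolval{\tau=\check\alpha}$, so mixing $\check\alpha$ on $d_\alpha$ recovers $\tau$ up to $=_U$.
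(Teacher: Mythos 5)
Your proposal is correct and follows essentially the same route as the paper's proof: the forward direction mixes $\check\alpha$ along the difference antichain of the descent to produce a name $\tau$ with $\sup j_U\image\kappa\leq[\tau]_U<j_U(\kappa)$, and the converse reads off the descent as $b_\alpha=\boolval{\check\alpha\leq\tau}$ from a name witnessing discontinuity. Your additional remarks (that $\kappa$ must be a limit, and the explicit verification of continuity at limits via the bounded-quantifier computation) are correct refinements of details the paper leaves implicit, but they do not change the argument.
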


\begin{proof} Suppose that $\<b_\alpha\st\alpha<\kappa>$ is a
descent through $U$, with corresponding difference antichain consisting of $d_\alpha=b_\alpha-b_{\alpha+1}$. By the mixing lemma, there is
a name $\tau$ with $\[\tau=\check\alpha]=d_\alpha$. It follows that
$\[\tau<\check\beta]=\bigvee_{\alpha<\beta}\[\tau=\check\alpha]=\bigvee_{\alpha<\beta}d_\alpha$. If $\beta<\kappa$, then this is disjoint
from $b_\beta$, which is in $U$, and so $j_U(\beta)\leq[\tau]_U$. But $[\tau]_U<j_U(\kappa)$ since $\[\tau<\check\kappa]=1$, and thus
$\sup(j_U\image\kappa)\leq[\tau]_U<j_U(\kappa)$, and so $j_U$ is discontinuous at $\kappa$.

Conversely, suppose that $j_U$ is discontinuous at $\kappa$, so there is some $[\tau]_U$ with $j_U(\alpha)<[\tau]_U<j_U(\kappa)$ for all
$\alpha<\kappa$. We may choose such a name $\tau$ such that also $\[\tau<\check\kappa]=1$. Let $b_\alpha=\[\tau\geq\check\alpha]$ and
observe that $b_0\geq b_1\geq\cdots$ is descending through $U$, and $\bigwedge_{\alpha<\kappa}b_\alpha=0$ because $\[\tau<\check\kappa]=1$.
Thus, $U$ admits a descent of order type $\kappa$.\end{proof}

Principal ultrafilters, of course, have no descents. More generally, $V$-generic ultrafilters have no descents in $V$, by the discussion
before theorem \ref{Theorem.WellFoundedEquivalents}. In general, however, the descent spectrum of an ultrafilter can be interesting. To
give one example, if $\mu$ is an ultrafilter on $\omega$ and $\nu$ is a measure on a measurable cardinal $\kappa$, then the usual product
measure $\mu\times\nu$ is an ultrafilter on $P(\omega\cross\kappa)$, whose spectrum consists exactly of the ordinals of cofinality $\omega$
or $\kappa$.

\begin{lemma}\label{Lemma.RepresentationOnDifferenceAntichain}
Every element of the Boolean ultrapower $V^{\downarrow\B}_U$ is either in the range of $j_U$ or has the form $[f]_U$ for some one-to-one
spanning function $f:A\to V$ on a maximal antichain arising from the differences in a strict descent, whose order type is a cardinal.
\end{lemma}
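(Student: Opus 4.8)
The plan is to reduce an arbitrary element to a one-to-one spanning function of \emph{minimal} domain size, and then to show that the tails of that domain, enumerated by the cardinal $\kappa=|A|$, already constitute the desired strict descent. First I would dispose of the range case: given any element, write it as $[f_0]_U$ for a spanning function $f_0\colon A_0\to V$, and collapse $f_0$ to a one-to-one function exactly as in the surjectivity argument of Theorem~\ref{Theorem.GenericIffOnto}---for each value $v\in\ran(f_0)$ set $a_v=\bigvee\set{a\in A_0\st f_0(a)=v}$, so that $\set{a_v}$ is a maximal antichain, $f(a_v)=v$ is one-to-one, and $[f]_U=[f_0]_U$ because $f\downarrow A_0=f_0$. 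If some single element $a$ of the resulting antichain lies in $U$, then $[f]_U=j_U(f(a))$ and the element is in the range of $j_U$, the first alternative. So I assume the element is not in the range of $j_U$, and among all one-to-one spanning functions representing it I choose one $f\colon A\to V$ with $\kappa=|A|$ as small as possible.

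The key step is to observe that minimality forces every proper initial piece of $A$ out of $U$: no subset $S\of A$ with $|S|<\kappa$ has $\bigvee S\in U$. Otherwise, putting $e=\neg\bigvee S$, the maximal antichain $S\cup\set{e}$ (dropping $e$ if it is $0$) carries the one-to-one function agreeing with $f$ on $S$ and taking a fresh value at $e$; this agrees with $f$ on $\bigvee S\in U$, so it represents the same element, yet has size at most $|S|+1<\kappa$, contradicting minimality. (The same construction shows $\kappa$ is infinite, since a finite maximal antichain meets the prime filter $U$.) Now I enumerate $A=\<a_\alpha\st\alpha<\kappa>$ by the cardinal $\kappa$ and set $b_\alpha=\bigvee\set{a_\beta\st\beta\geq\alpha}$. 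Each initial join $\bigvee\set{a_\beta\st\beta<\alpha}$ has size $|\alpha|<\kappa$, so by the claim it is not in $U$; being the complement of $b_\alpha$, this gives $b_\alpha\in U$ for every $\alpha<\kappa$.

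It remains to verify that $\<b_\alpha\st\alpha<\kappa>$ is a strict continuous descent with meet $0$, and this is elementary. It is descending with $b_0=1$, and strict because $b_\alpha-b_{\alpha+1}=a_\alpha\neq 0$; hence its difference antichain is exactly $A$, of order type the cardinal $\kappa$, on which $f$ is the required one-to-one function. For continuity and meet zero I argue directly: setting $c=\bigwedge_{\beta<\lambda}b_\beta$ for a limit $\lambda\leq\kappa$, for each $\gamma<\lambda$ we have $c\leq b_{\gamma+1}$, which is disjoint from $a_\gamma$, so $c\wedge a_\gamma=0$; thus $c$ is disjoint from $\bigvee\set{a_\gamma\st\gamma<\lambda}=\neg b_\lambda$, whence $c\leq b_\lambda$ and so $c=b_\lambda$, while for $\lambda=\kappa$ the $a_\gamma$ exhaust $1$ and this reads $c=0$. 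The one genuine obstacle is the minimality argument of the second paragraph: everything hinges on the tails remaining inside $U$ all the way up to $\kappa$, and it is precisely the choice of a smallest one-to-one representative that prevents an initial segment of the antichain from slipping into $U$ prematurely. Once that is secured the descent and its continuity are automatic, and one could alternatively read off the discontinuity of $j_U$ at $\kappa$ from Theorem~\ref{Theorem.DescentIffDiscontinuityPoint}.
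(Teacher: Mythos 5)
Your proof is correct and follows essentially the same route as the paper's: minimize the cardinality of a one-to-one representing antichain, use minimality to show every proper initial join lies outside $U$ so that the tails $b_\alpha=\bigvee_{\beta\geq\alpha}a_\beta$ form a strict descent through $U$ whose difference antichain is the domain of $f$. You simply make explicit two steps the paper leaves to the reader---the competitor antichain $S\cup\{\neg\bigvee S\}$ witnessing the minimality claim, and the verification of continuity and meet zero---which is a fine elaboration rather than a different argument.
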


\begin{proof} Suppose that $f:A\to V$ is any spanning function,
where $A$ has minimal cardinality among all spanning functions $U$-equivalent to $f$. If $A$ is finite, then $U$ must concentrate on a
member of $A$, so by minimality $A=\singleton{1}$ and $f$ is constant, which puts $[f]_U$ into the range of $j_U$. Otherwise, $A$ is
infinite, and we enumerate it $A=\set{d_\alpha\st\alpha<\kappa}$, where $\kappa=|A|$. By amalgamating together elements of $A$ giving the
same value via $f$, we easily find an equivalent one-to-one function, on a smaller antichain; so we may assume without loss of generality
that $f$ is one-to-one. By the minimality of $\kappa$, it follows that $\bigvee_{\alpha<\beta} d_\alpha$ is not in $U$ for any
$\beta<\kappa$. Consequently, the negation $b_\beta=\bigvee_{\alpha\geq\beta} d_\alpha$ is in $U$. Furthermore, it is easy to see that
$b_0=1$, that $\alpha<\beta\implies b_\alpha>b_\beta$, that $b_\lambda=\bigwedge_{\alpha<\lambda}b_\alpha$ and that
$\bigwedge_{\alpha<\kappa}b_\alpha=0$. That is, $\<b_\alpha\st\alpha<\kappa>$ is a strict descent through $U$ of order type $\kappa$, and
$A$ is the difference antichain, as desired.\end{proof}

A modification to the functional presentation avoids the need to consider refinements of maximal antichains or indeed, maximal antichains
at all, and this will sometimes be convenient. Specifically, let us define that an {\df open dense spanning function} is a function $f:D\to
V$ such that $D\of\B$ is an open dense subset of $\B$, such that $b\leq c\in D$ implies $f(b)=f(c)$. If $f:A\to V$ is a spanning function
on a maximal antichain $A\of\B$, then the corresponding open dense spanning function $\tilde f:D\to V$ is obtained by taking $D$ to be all
elements $b$ below an element $a$ in $A$, and defining $\tilde f(b)=f(a)$. This is the same as taking the union $\tilde
f=\Union\set{f\downarrow B\st B\text{ refines }A}$ of all reductions of $f$ to finer antichains. For two open dense spanning functions
$f:D\to V$ and $g:D'\to V$, one defines $f=_U g$ if $\bigvee\set{b\in D\intersect D'\st f(b)=g(b)}\in U$. By associating elements $[f]_U\in
V^{\downarrow\B}_U$, for a spanning function $f:A\to V$, with the corresponding equivalence class $[\tilde f]_U$ for the open dense
spanning functions, one arrives at an isomorphic representation of the Boolean ultrapower.

\section{Direct limits and an extender-like presentation}\label{Section.BooleanUltrapowersAsDirectLimits}

Generalizing the previous section, we shall now prove that every Boolean ultrapower is the direct limit of a certain induced directed
system of classical power set ultrapowers. Suppose that $\B$ is a complete Boolean algebra. If $U$ is an ultrafilter on $\B$ and $A\of\B$
is a maximal antichain, we define $U_A$ to be the corresponding ultrafilter on the power set of $A$, induced by $X\in U_A$ if and only if
$X\of A$ and $\vee X\in U$. Let $j_{U_A}:V\to V^A/U_A$ be the corresponding ultrapower by $U_A$.

\begin{lemma}\label{Lemma.UltrapowerFactor}
Suppose that $\B$ is a complete Boolean algebra with ultrafilter $U$ and $A$ is a maximal antichain in $\B$. Then the Boolean ultrapower
$j_U$ factors through $j_{U_A}$. There is a natural map $\pi_A:V^A/U_A\to \check V_U$ making the following diagram commute.
\begin{diagram}[width=6em]
 V              &             & \\
& \rdTo(1,2)^{j_{U_A}} \rdTo(3,1)^{j_U} &  & {\check V_U} \\
 & V^A/U_A   &     \ruTo(2,1)^{\pi_A} &  \\
\end{diagram}
\end{lemma}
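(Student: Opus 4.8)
The plan is to route everything through the functional presentation $V^{\downarrow\B}_U$, which by Theorem \ref{Theorem.TwoPresentationsIsomorphic} is isomorphic to $\check V_U$ via the map $\pi$. The key observation is that a function $f\colon A\to V$ on the index set $A$ is literally the same object as a spanning function on the maximal antichain $A$. Accordingly I would define $\iota\colon V^A/U_A\to V^{\downarrow\B}_U$ by $[f]_{U_A}\mapsto[f]_U$ and then set $\pi_A=\pi\compose\iota\colon V^A/U_A\to\check V_U$.

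First I would check that $\iota$ is well defined and elementary, and here the only thing to notice is that for two functions $f,g\colon A\to V$ the antichain $A$ is itself a common refinement of their domains. Taking $C=A$ in the definition of $\equiv_U$, the functional-presentation equivalence collapses to $f\equiv_U g\iff\bigvee\set{a\in A\st f(a)=g(a)}\in U$, which is exactly the condition $f\equiv_{U_A}g$ defining $V^A/U_A$; hence $\iota$ is well defined and injective. The identical remark applies to the atomic relations: with $C=A$, the functional \Los\ theorem reduces the relevant join to $\bigvee\set{a\in A\st V\satisfies R(f_0(a),\ldots,f_n(a))}\in U$, which is precisely the classical \Los\ condition for $U_A$. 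Thus $\iota$ preserves and reflects equality and $\in_U$, so it is elementary, and composing with the isomorphism $\pi$ shows $\pi_A$ is elementary as well.

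Finally I would verify the commutativity $\pi_A\compose j_{U_A}=j_U$. Writing $c_x^A\colon A\to V$ for the constant function with value $x$ and $c_x\colon\singleton{1}\to V$ for the one-point constant, we have $c_x^A=c_x\downarrow A$ since $A$ refines $\singleton{1}$, and so $[c_x^A]_U=[c_x]_U$ in $V^{\downarrow\B}_U$. Because $j_{U_A}(x)=[c_x^A]_{U_A}$ and $j(x)=[c_x]_U$, this gives $\iota(j_{U_A}(x))=j(x)$; applying $\pi$ and using the commuting triangle $\pi\compose j=j_U$ of Theorem \ref{Theorem.TwoPresentationsIsomorphic} yields $\pi_A(j_{U_A}(x))=j_U(x)$, as desired.

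I do not expect a genuine obstacle: the whole content is the recognition that functions on the index set $A$ coincide with spanning functions on the antichain $A$, and that choosing $C=A$ as the common refinement makes the two equivalence relations, the two membership relations, and the two \Los\ conditions literally identical. The one point deserving an explicit line is the legitimacy of this choice of common refinement, namely that $A$ refines itself; the fact that $\pi_A$ lands inside $\check V_U$ is then automatic because $\pi$ is surjective onto $\check V_U$.
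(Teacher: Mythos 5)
Your proof is correct and is essentially the paper's own argument: the paper likewise defines the map $[f]_{U_A}\mapsto[f]_U$ into the functional presentation $V^{\downarrow\B}_U$, checks well-definedness from $f\equiv_{U_A}g\Rightarrow f\equiv_U g$, gets commutativity from constant functions, obtains elementarity by matching the classical \Los\ theorem for $U_A$ with the functional-presentation \Los\ theorem, and finishes by composing with the isomorphism of Theorem \ref{Theorem.TwoPresentationsIsomorphic}. One small correction: conclude elementarity by running your \Los\ computation for an arbitrary formula $\varphi$ (exactly the same calculation you wrote for an atomic $R$), since preserving and reflecting atomic relations alone gives only an embedding, not an elementary one.
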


\begin{proof} Every element of $V^A/U_A$ has the form $[f]_{U_A}$
for some function $f:A\to V$. Define $\pi_A:V^A/U\to V^{\downarrow\B}_U$ by $\pi_A:[f]_{U_A}\mapsto [f]_U$. This is clearly well defined,
since $f\equiv_{U_A} g$ implies $f\equiv_U g$ in $V^{\downarrow\B}_U$. The diagram commutes, because $\pi_A$ maps the equivalence class of
a constant function to the equivalence class of a constant function. The map is elementary, because $V^A/U_A\satisfies\varphi([f]_{U_A})$
if and only if $\set{a\in A\st V\satisfies\varphi[f(a)]}\in U_A$ by \Los's theorem, and this holds if and only if $\bigvee\set{a\in A\st
V\satisfies\varphi[f(a)]}\in U$ by the definition of $U_A$, and this holds if and only if $V^{\downarrow\B}_U\satisfies\varphi([f]_U)$ by
\Los's theorem for Boolean ultrapowers in the functional presentation. To complete the proof, we simply replace $V^{\downarrow\B}_U$ with
its isomorphic copy $\check V_U$ via theorem \ref{Theorem.TwoPresentationsIsomorphic}.\end{proof}

\begin{lemma}\label{Lemma.RefinementDiagram}
Suppose that $\B$ is a complete Boolean algebra with ultrafilter $U$ and maximal antichains $A$ and $B$. If\/ $B$ refines $A$, then there
is an elementary embedding $\pi_{A,B}:V^A/U_A\to V^B/U_B$ making the following diagram commute.
\begin{diagram}[height=2.5em,width=3.6em]
 V                     \\
            & \rdTo(4,2)_{j_{U_B}}   \rdTo(8,2)^{j_U} \rdTo(2,4)^{j_{U_A}}\\
   &   & &                         &     V^B/U_B   &   & \rTo^{\pi_B}    &  & \check V_U      \\
   &   &          &  \ruTo^{\pi_{A,B}}      &               &   &                    & \ruTo(6,2)_{\pi_A} \\
 & &  V^A/U_A               \\
\end{diagram}
\end{lemma}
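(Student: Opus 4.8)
The plan is to define $\pi_{A,B}$ by reducing representing functions and then to verify the two triangles, reusing the apparatus already developed for $\pi_A$ in Lemma \ref{Lemma.UltrapowerFactor}. Every element of $V^A/U_A$ has the form $[f]_{U_A}$ for some $f:A\to V$, and I set
$$\pi_{A,B}:[f]_{U_A}\mapsto[f\downarrow B]_{U_B},$$
where $f\downarrow B:B\to V$ is the reduction defined in \S\ref{Section.AlgebraicApproach}, namely $(f\downarrow B)(b)=f(a)$ for the unique $a\in A$ with $b\leq a$. Writing $\rho:B\to A$ for the map carrying each $b$ to this unique $a$, we have $f\downarrow B=f\compose\rho$, so $\pi_{A,B}$ is precisely the Rudin--Keisler factor map determined by $\rho$.

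The crux, which I would isolate first, is that $\rho$ transports $U_B$ onto $U_A$. Since $B$ is a maximal antichain refining $A$, the elements of $B$ lying below a fixed $a\in A$ join exactly to $a$: their join is $\leq a$, and were the remainder $r=a-\bigvee\set{b\in B\st b\leq a}$ nonzero, then from $\bigvee B=1$ some $b\in B$ would have $r\wedge b\neq 0$, yet $b$ lies below a unique element of $A$, forcing $r\wedge b=0$ whether that element is $a$ or not---a contradiction. Hence $\bigvee\rho^{-1}(X)=\bigvee X$ for every $X\of A$, and therefore
$$X\in U_A\iff\bigvee X\in U\iff\bigvee\rho^{-1}(X)\in U\iff\rho^{-1}(X)\in U_B.$$
This join-preservation is the only nonroutine point; everything afterward is bookkeeping through \Los's theorem, in close parallel to Lemma \ref{Lemma.UltrapowerFactor}.

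Well-definedness and elementarity then come as a package. For a formula $\varphi$ and representatives $f_0,\ldots,f_n:A\to V$, the set $\set{b\in B\st V\satisfies\varphi[(f_0\downarrow B)(b),\ldots,(f_n\downarrow B)(b)]}$ is exactly $\rho^{-1}$ of $\set{a\in A\st V\satisfies\varphi[f_0(a),\ldots,f_n(a)]}$, so by join-preservation the former lies in $U_B$ iff the latter lies in $U_A$. Applying \Los's theorem to the two power set ultrapowers, this yields $V^A/U_A\satisfies\varphi([f_0]_{U_A},\ldots)$ iff $V^B/U_B\satisfies\varphi([f_0\downarrow B]_{U_B},\ldots)$; taking $\varphi$ to be equality gives that $\pi_{A,B}$ is well defined (and injective), and the general case gives elementarity.

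It remains to check the diagram. For the lower-left triangle, $j_{U_A}(x)=[c_x]_{U_A}$ for the constant function $c_x:A\to V$, and the reduction of a constant function is again constant, so $\pi_{A,B}([c_x]_{U_A})=[c_x]_{U_B}=j_{U_B}(x)$. For the right portion, recall from Lemma \ref{Lemma.UltrapowerFactor} that $\pi_A:[f]_{U_A}\mapsto[f]_U$ and $\pi_B:[g]_{U_B}\mapsto[g]_U$ into $V^{\downarrow\B}_U$, identified with $\check V_U$ by Theorem \ref{Theorem.TwoPresentationsIsomorphic}. Thus $\pi_B(\pi_{A,B}([f]_{U_A}))=[f\downarrow B]_U$, and it suffices to observe $[f\downarrow B]_U=[f]_U$; this holds because $f$ and $f\downarrow B$ are $U$-equivalent, taking $B$ itself as the common refinement so that they agree on all of $B$ with $\bigvee B=1\in U$ (this is the equivalence already noted in the proof of Theorem \ref{Theorem.TwoPresentationsIsomorphic}). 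Hence $\pi_B\compose\pi_{A,B}=\pi_A$, and the diagram commutes.
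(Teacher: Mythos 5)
Your proof is correct and follows essentially the same route as the paper: the same map $\pi_{A,B}:[f]_{U_A}\mapsto[f\downarrow B]_{U_B}$, with well-definedness and elementarity via \L o\'s's theorem resting on the fact that each $a\in A$ is the join of the elements of $B$ below it. Your write-up is merely more explicit, in isolating the join-preservation claim and in actually verifying the commutativity of the two triangles, which the paper's proof leaves implicit.
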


\begin{proof} We simply define $\pi_{A,B}:[f]_{U_A}\mapsto
[f\downarrow B]_{U_B}$. This is well defined, because if $f\equiv_{U_A} g$, then $f\downarrow B\equiv_{U_B} g\downarrow B$. By \Los's
theorem, the map $\pi_{A,B}$ is easily seen to be elementary, using the fact that $\bigvee\set{a\in A\st
V\satisfies\varphi[f(a)]}=\bigvee\set{b\in B\st V\satisfies\varphi[(f\downarrow B)(b)]}$, which holds since every element of $A$ is the
join of the elements of $B$ refining it.\end{proof}

We now assemble these facts into the general conclusion that the Boolean ultrapower $j_U$ is the direct limit of these induced power set
ultrapowers $j_{U_A}$. Let $\mathcal{I}$ be the collection of maximal antichains $A\of \B$, ordered under refinement. This is a (downwards)
directed order, because any two maximal antichains have a common refinement. Lemmas \ref{Lemma.UltrapowerFactor} and
\ref{Lemma.RefinementDiagram} produce a commutative system of embeddings from $V$ to the various $V^A/U_A$ for $A\in\mathcal{I}$. Let us
denote the direct limit of this system by $\dirlim\<V^A/U_A\st j_{U_A},\pi_{A,B}>$. Elements of this direct limit consist of {\df threads},
or equivalence classes formed by identifying any element $x\in V^A/U_A$ with all its images $\pi_{A,B}(x)$ whenever $B$ refines $A$. For
every $A\in\mathcal{I}$, there is a natural map $\pi_{A,\infty}:V^A/U_A\to\dirlim\<V^A/U_A\st j_{U_A},\pi_{A,B}>$ defined by mapping any
$x\in V^A/U_A$ to its thread, and these maps commute with the maps in the diagram above, so that
$\pi_{A,\infty}=\pi_{B,\infty}\compose\pi_{A,B}$. Similar direct limits were considered in
\cite{OuwehandRose1998:FiltralPowersOfStructures}.

\begin{theorem}\label{Theorem.BooleanUltrapowerAsDirectLimit}
The Boolean ultrapower $j_U:V\to \check V_U$, where $U$ is any ultrafilter on a complete Boolean algebra $\B$, is isomorphic to
$\dirlim\<V^A/U_A\st j_{U_A},\pi_{A,B}>$, the direct limit of the induced commutative system of power set ultrapowers $j_{U_A}:V\to
V^A/U_A$, indexed by maximal antichains in $\B$ ordered by refinement.
\end{theorem}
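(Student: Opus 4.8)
The plan is to exhibit the isomorphism as the canonical factor map out of the direct limit provided by its universal property, and then to verify that this map is surjective and elementary, hence an isomorphism. First I would note that the maps $\pi_A\colon V^A/U_A\to\check V_U$ from Lemma~\ref{Lemma.UltrapowerFactor} form a cocone compatible with the transition maps of the system: this is exactly the commutativity $\pi_A=\pi_B\compose\pi_{A,B}$ for $B$ refining $A$ established in Lemma~\ref{Lemma.RefinementDiagram}. Consequently there is a canonical map $\Phi\colon\dirlim\<V^A/U_A\st j_{U_A},\pi_{A,B}>\to\check V_U$, defined on threads by sending the thread of an element $x\in V^A/U_A$ to $\pi_A(x)$. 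This is well defined because two elements determine the same thread precisely when they agree after reduction to a common refinement, and the cocone identity guarantees that the $\pi_A$ then assign them the same value. By construction $\Phi\compose\pi_{A,\infty}=\pi_A$ for every $A$, and since $\pi_A\compose j_{U_A}=j_U$, the map $\Phi$ commutes with the embeddings of $V$ into the two structures.

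To see that $\Phi$ is onto I would pass through the functional presentation. By Theorem~\ref{Theorem.TwoPresentationsIsomorphic} every element of $\check V_U$ is the image of some $[f]_U\in V^{\downarrow\B}_U$ for a spanning function $f\colon A\to V$ on a maximal antichain $A$, and Lemma~\ref{Lemma.UltrapowerFactor} identifies this value as $\pi_A([f]_{U_A})=\Phi(\pi_{A,\infty}([f]_{U_A}))$. Hence every element of $\check V_U$ already lies in $\ran(\Phi)$.

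Finally, for elementarity I would use that a directed colimit of elementary embeddings along elementary transition maps is itself a structure into which every canonical map $\pi_{A,\infty}$ embeds elementarily, the transition maps $\pi_{A,B}$ being elementary by Lemma~\ref{Lemma.RefinementDiagram}. Given a formula $\varphi$ and finitely many threads, the crucial point is that any finite family of maximal antichains has a common refinement $C$, so all the threads can be represented simultaneously as $t_i=\pi_{C,\infty}(x_i)$ for elements $x_0,\ldots,x_n\in V^C/U_C$. Then the limit satisfies $\varphi$ at $t_0,\ldots,t_n$ iff $V^C/U_C\satisfies\varphi(x_0,\ldots,x_n)$ (elementarity of $\pi_{C,\infty}$) iff $\check V_U\satisfies\varphi(\pi_C(x_0),\ldots,\pi_C(x_n))$ (elementarity of $\pi_C$, Lemma~\ref{Lemma.UltrapowerFactor}) iff $\check V_U\satisfies\varphi(\Phi(t_0),\ldots,\Phi(t_n))$. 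In particular the atomic case shows $\Phi$ respects $\in_U$, so $\Phi$ is an elementary, hence injective, map, and being also surjective it is the desired isomorphism commuting with $j_U$ and the embedding of $V$ into the limit. The hard part, such as it is, is not a single delicate step but rather the correct assembly of three ingredients: the cocone identity of Lemma~\ref{Lemma.RefinementDiagram}, the directedness of refinement (which lets every finite configuration descend to a single antichain $C$ and is exactly what makes the colimit well behaved), and the representability of every element of $\check V_U$ by a spanning function.
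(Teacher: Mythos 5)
Your proposal is correct and follows essentially the same route as the paper's proof: the map on threads is induced by the cocone of factor maps $\pi_A$ from Lemma~\ref{Lemma.UltrapowerFactor}, well-definedness comes from the commutativity in Lemma~\ref{Lemma.RefinementDiagram}, and surjectivity comes from the functional presentation via Theorem~\ref{Theorem.TwoPresentationsIsomorphic}. Your treatment of elementarity (reducing finitely many threads to a common refinement and invoking the elementary directed-system fact) is somewhat more explicit than the paper's brief remark that the association is onto and preserves $\in$, but it is the same argument.
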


\begin{proof} Let us consider $V^{\downarrow\B}_U$ in place of $\check V_U$, as these are isomorphic. Theorem
\ref{Theorem.TwoPresentationsIsomorphic} shows that every element of $V^{\downarrow\B}_U$ has the form $[f]_U=\pi_A([f]_{U_A})$ for some
$A\in\mathcal{I}$, and so the maps $\pi_A$ are collectively onto $V^{\downarrow\B}_U$. Since the diagram of lemma
\ref{Lemma.RefinementDiagram} commutes, the association of $[f]_{U_A}\in V^A/U_A$ with $[f]_U$ is well defined on the threads. Since this
association is also onto (and preserves $\in$), it provides an isomorphism of the Boolean ultrapower with the direct limit.\end{proof}

Two degenerate instances of this direct limit phenomenon help to shed light on its nature. First, in the case that $U$ is $V$-generic, then
we know by theorem \ref{Theorem.GenericIffOnto} that the Boolean ultrapower $j_U$ is (isomorphic to) the identity function. This conforms
with theorem \ref{Theorem.BooleanUltrapowerAsDirectLimit}, because in this case we know $U$ meets every maximal antichain $A$, and so every
$U_A$ is a principal ultrafilter. Thus, in the case of generic filters, what we have essentially is the identity map as a direct limit of a
directed system of identity maps. Second, if the Boolean algebra $\B$ is a power set algebra, the power set of some set $Z$, then there is
a terminal node in the directed system $\mathcal{I}$, corresponding to the antichain of singletons $A=\set{\singleton{a}\st a\in Z}$, the
atoms of $\B$, which cannot be further refined. All threads therefore terminate in $V^A/U_A$ and the direct limit $j_U$ is consequently the
same as $j_{U_A}$, just as in theorem \ref{Theorem.BooleanUltrapowersGeneralizeUsualUltrapowers}.

Let us conclude this section by explaining how the direct limit provides an extender-like representation of the Boolean ultrapower.

\begin{theorem}\label{Theorem.ExtenderRepresentation}
If $j:V\to\Vbar$ is the Boolean ultrapower by the ultrafilter $U$ on the complete Boolean algebra $\B$, then
$\Vbar=\set{j(f)(b)\st f:\B\to V, b\in j(\B)}$. More precisely, $\Vbar=\set{j(f)(b_A)\st A\of\B\text{ maximal antichain}, f:A\to V}$, where
$b_A$ is the unique member of $j(A)$ in the $\Vbar$-generic filter $G$ on $j(\B)$ arising from $[\dot G]_U$.
\end{theorem}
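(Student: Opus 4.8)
The plan is to prove the extender-like representation $\Vbar=\set{j(f)(b_A)\st A\of\B\text{ maximal antichain}, f:A\to V}$ by leveraging the direct-limit presentation established in Theorem~\ref{Theorem.BooleanUltrapowerAsDirectLimit}, combined with the seed-theory normal form for each of the constituent classical power-set ultrapowers $j_{U_A}:V\to V^A/U_A$. The key observation is that the generic filter $G=[\dot G]_U$ on $j(\B)$ selects, for each maximal antichain $A\of\B$, a canonical seed $b_A\in j(A)$, namely the unique element of $j(A)$ lying in $G$; and that applying $j$ to a spanning function $f:A\to V$ and evaluating at this seed recovers exactly the element $\pi_A([f]_{U_A})\in\Vbar$ represented by $f$ in the direct limit.

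First I would establish the classical seed representation for each factor: for a fixed maximal antichain $A$, the power-set ultrapower $j_{U_A}:V\to V^A/U_A$ satisfies the normal form $[f]_{U_A}=j_{U_A}(f)([\id]_{U_A})$, where $[\id]_{U_A}$ is the seed represented by the identity function on $A$. Transporting this along the factor map $\pi_A$ of Lemma~\ref{Lemma.UltrapowerFactor} and identifying $\Vbar$ with $\check V_U$ (equivalently $V^{\downarrow\B}_U$), I would argue that $\pi_A$ carries $[\id]_{U_A}$ to the image of the seed under $j$ evaluated at the distinguished member of $j(A)$. The crucial identification is that the seed $\pi_A([\id]_{U_A})$ corresponds precisely to $b_A$, the unique element of $j(A)$ in the generic filter $G$. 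This should follow from the definition of $G=[\dot G]_U$ together with the fact that $\[\check b\in\dot G]=b$ for each $b\in\B$: for elements $a\in A$, exactly one $j(a)$ lands in $G$ at the level of the Boolean ultrapower, and this is what $[\id]_{U_A}$ names.

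Next I would verify that the two displayed formulations agree. The more general-looking $\set{j(f)(b)\st f:\B\to V, b\in j(\B)}$ and the precise $\set{j(f)(b_A)\st A\of\B\text{ maximal antichain}, f:A\to V}$ coincide because every $b\in j(\B)$ in the relevant generic filter is of the form $b_A$ for the antichain it determines, and conversely any $f:\B\to V$ can be replaced by its restriction to a maximal antichain on which it is (via reduction) constant below each element, matching the spanning-function framework of Section~\ref{Section.AlgebraicApproach}. Here I would use the reduction and refinement machinery already developed, noting that coherence of the seeds $b_A$ across refinements $B$ refining $A$ is exactly the commutativity in Lemma~\ref{Lemma.RefinementDiagram}, so that the whole system of seeds threads together into the generic filter $G$.

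The main obstacle I expect is the careful bookkeeping connecting the abstract seed $[\id]_{U_A}$ in the classical ultrapower to the concrete generic element $b_A=\val(j(A),G)\cap G$ inside $M[G]=V^\B/U$, and verifying that $\val(j(f),G)$ applied through the collapse genuinely equals $j(f)(b_A)$ rather than merely being isomorphic to it. Making this precise requires tracking the interpretation of names under $\val(\cdot,G)$ and confirming compatibility with the factor maps $\pi_A$ and the direct-limit maps $\pi_{A,\infty}$; the computation $k([\tau]_U)=\val(j(\tau),G)$ from Theorem~\ref{Theorem.BooleanUltrapowerWellFounded} and the normal-form theorem preceding this section are the right tools, but lining up the two indexings (seeds of classical ultrapowers versus generic members of $j(A)$) demands care. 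Once that correspondence is pinned down, the representation follows immediately from surjectivity of the collective maps $\pi_A$ onto $\Vbar$, which was already shown in Theorem~\ref{Theorem.BooleanUltrapowerAsDirectLimit}.
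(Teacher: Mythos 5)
Your proposal is correct and follows essentially the same route as the paper: the seed normal form $[f]_{U_A}=j_{U_A}(f)([\id]_{U_A})$ for each classical factor, pushed through the direct-limit maps so that $b_A=\pi_{A,\infty}([\id]_{U_A})$, with $b_A$ identified as the unique member of $j(A)\intersect G$ via the name $\tau_A$ satisfying $\[\tau_A=\check a]=a$ and $\[\check a\in\dot G]=a$. One small correction to your justification: when $U$ misses $A$, no element of the form $j(a)$ for $a\in A$ lies in $G$, so your phrase ``exactly one $j(a)$ lands in $G$'' should read ``exactly one element of $j(A)$ lands in $G$''---the selected element $b_A$ is in general a nontrivial mix not lying in $\ran(j)$.
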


\begin{proof} Every element of the classical ultrapower
$V^A/U_A$, of course, has the form $[f]_{U_A}$, where $f:A\to V$ is a function on a maximal antichain $A\of\B$, and furthermore this is the
same as $j_{U_A}(f)([\id]_{U_A})$. Thus, every element of $\check V_U$ has the form
$\pi_{A,\infty}(j_{U_A}(f)([\id]_{U_A}))=\pi_{A,\infty}(j_{U_A}(f))(\pi_{A,\infty}([\id]_{U_A})=j(f)(b)$, where
$b=\pi_{A,\infty}([\id]_{U_A})$, which is in $j(\B)$ because $[\id]_{U_A}$ is an element of $j_{U_A}(A)$, which is a subset of
$j_{U_A}(\B)$. Note that $b$ depends only on $A$, and not on $f$, so we may call it $b_A$. The identity function $\id$ on $A$ corresponds
via theorem \ref{Theorem.TwoPresentationsIsomorphic} to the name $\tau_A$ such that $\[\tau_A=\check a]=a$ for all $a\in A$. If $\dot G$ is
the canonical name of the generic filter, then $\[\check a\in\dot G]=a$, and so $\[\check A\intersect\dot G=\singleton{\tau_A}]=1$. Thus,
$j(A)\intersect G=\singleton{b_A}$ in $\Vbar[G]$, as desired.\end{proof}

We also get a converse characterization.

\begin{theorem}\label{Theorem.FilterSeedCharacterizationOfBooleanUltrapower}
Suppose that $\B$ is a complete Boolean algebra and $j:V\to\Vbar$ is an embedding, such that there is a filter $F\of j(\B)$ that is
$\ran(j)$-generic and $\Vbar=\set{j(f)(b_A)\st A\of\B\text{ maximal antichain},f:A\to V}$, where $b_A$ is the unique member of\/
$F\intersect j(A)$. Then $j:V\to\Vbar$ is isomorphic to the Boolean ultrapower by $U=j^\inverse F\of\B$. In this case, $F$ is actually
$\Vbar$-generic and $\Vbar[F]$ is isomorphic to $V^\B/U$.
\end{theorem}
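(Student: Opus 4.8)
The plan is to build a single isomorphism $\bar k\colon V^\B/U\iso\Vbar[F]$, where $U=j^\inverse F$, and then read off both conclusions from it: its restriction to $\check V_U$ will be the claimed isomorphism between the embeddings $j$ and $j_U$, while $\bar k$ itself witnesses $\Vbar[F]\iso V^\B/U$. I would define $\bar k$ by $[\tau]_U\mapsto\val(j(\tau),F)$, exactly as the factor map in Corollary \ref{Corollary.BooleanUltrapowerFactor} (which I cannot invoke directly, since $\Vbar$ need not be transitive). The real work lies in the three preliminary facts that make this map sensible: that $U$ is an ultrafilter, that $F$ is genuinely $\Vbar$-generic and not merely $\ran(j)$-generic, and that $\bar k$ is onto.

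First I would check that $U=j^\inverse F=\set{b\in\B\st j(b)\in F}$ is an ultrafilter. Upward closure and closure under meets are immediate from $F$ being a filter and $j$ a homomorphism. For the ultra property, given $b\in\B$ the pair $\set{b,\neg b}$ is a maximal antichain of $\B$, so its image $\set{j(b),\neg j(b)}$ is a maximal antichain of $j(\B)$ lying in $\ran(j)$; by $\ran(j)$-genericity $F$ meets it, so $b\in U$ or $\neg b\in U$. More generally, for any maximal antichain $A\of\B$ the image $j(A)$ is a maximal antichain of $j(\B)$ in $\ran(j)$, so $F\intersect j(A)$ is the single element $b_A$ named in the hypothesis, and $b\in U\iff j(b)\in F$ yields $j\image U\of F$.

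The main obstacle is to promote $\ran(j)$-genericity to full $\Vbar$-genericity, and this is precisely where the seed representation $\Vbar=\set{j(f)(b_A)}$ does essential work. Let $D\in\Vbar$ be a maximal antichain of $j(\B)$. Writing $D=j(h)(b_A)$ and passing through the functional \Los\ theorem, I may assume each value $h(a)$ is itself a maximal antichain of $\B$ in $V$. I would then form the common refinement $A^\ast=\set{a\wedge d\st a\in A,\ d\in h(a),\ a\wedge d\neq 0}$, a maximal antichain refining $A$, together with the map $g\colon A^\ast\to\B$ sending $a\wedge d$ to $d$. A short compatibility argument shows $b_{A^\ast}\leq b_A$ (the members $b_{A^\ast},b_A$ of $F$ are compatible, while $b_{A^\ast}$ sits below a unique member of $j(A)$, forcing that member to be $b_A$); elementarity applied to the $V$-facts ``$a^\ast\leq g(a^\ast)\in h(a)$'' then gives $b_{A^\ast}\leq j(g)(b_{A^\ast})\in j(h)(b_A)=D$. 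Since $b_{A^\ast}\in F$ and $F$ is upward closed, $j(g)(b_{A^\ast})\in F\intersect D$, so $F$ meets $D$. As $D$ was arbitrary, $F$ is $\Vbar$-generic and $\Vbar[F]$ is a bona fide forcing extension.

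With $\Vbar$-genericity in hand, the map $\bar k\colon[\tau]_U\mapsto\val(j(\tau),F)$ is well defined and elementary by the chain
$$V^\B/U\satisfies\varphi([\tau]_U)\iff\boolval{\varphi(\tau)}^\B\in U\iff\boolval{\varphi(j(\tau))}^{j(\B)}\in F\iff\Vbar[F]\satisfies\varphi(\val(j(\tau),F)),$$
using the \Los\ lemma (Lemma \ref{Lemma.BooleanValuedLos}), the definition $U=j^\inverse F$ together with elementarity of $j$ on the definable Boolean-value operation $\tau\mapsto\boolval{\varphi(\tau)}$, and the forcing theorem in $\Vbar$ (valid since $\Vbar\satisfies\ZFC$ and $F$ is $\Vbar$-generic). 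For surjectivity I would take an arbitrary $\val(\mu,F)\in\Vbar[F]$, write $\mu=j(h)(b_A)$ with each $h(a)$ a $\B$-name, mix the names $h(a)$ across $A$ into a single $\B$-name $\tau$ with $a\leq\boolval{\tau=h(a)}$, and observe that elementarity gives $b_A\leq\boolval{j(\tau)=\mu}^{j(\B)}$; since $b_A\in F$ this yields $\val(j(\tau),F)=\val(\mu,F)=\bar k([\tau]_U)$. Hence $\bar k$ is an isomorphism of $V^\B/U$ onto $\Vbar[F]$. Finally I would extract the two conclusions: the restriction of $\bar k$ to $\check V_U=\set{[\tau]_U\st\boolval{\tau\in\check V}\in U}$ lands inside $\Vbar$ and is onto it (every $j(f)(b_A)$ equals $\val(j(\tau_f),F)$ by mixing), and since $\bar k(j_U(x))=\val(\check{j(x)},F)=j(x)$ this restriction is the isomorphism witnessing that $j$ is isomorphic to the Boolean ultrapower $j_U$; while $\bar k([\dot G]_U)=\val(j(\dot G),F)=F$ identifies the generic filter, so that $\bar k$ itself witnesses $\Vbar[F]\iso V^\B/U$.
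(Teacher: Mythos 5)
Your proposal is correct and follows essentially the same route as the paper: the same seed/refinement argument upgrades $\ran(j)$-genericity to $\Vbar$-genericity (you phrase it with maximal antichains and an explicit common refinement $A^*$ where the paper uses dense sets and the union $\bar D=\bigcup_a D_a\intersect\B_{\leq a}$, but these are interchangeable), and the same map $[\tau]_U\mapsto\val(j(\tau),F)$ with the same mixing argument gives surjectivity. The only organizational difference is that you build the single isomorphism $V^\B/U\iso\Vbar[F]$ and restrict, whereas the paper first constructs $V^{\downarrow\B}_U\iso\Vbar$ on the functional presentation; your appeals to ``functional \Los'' for the representation $j(h)(b_A)$ are justified, since the needed reflection facts (e.g.\ that $h$ may be corrected so each $h(a)$ is a maximal antichain without changing $j(h)(b_A)$) follow directly from elementarity of $j$ together with $b_A\in F$.
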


\begin{proof} Notice that $U=j^\inverse F$ is an ultrafilter on
$\B$, since $F$ meets the maximal antichain $\singleton{j(b),j(\neg b)}$. We define an isomorphism $\pi:V^{\downarrow\B}_U\cong\Vbar$ by
$\pi:[f]_U\mapsto j(f)(b_A)$ for any spanning function $f:A\to V$. This is well defined, since if $[f]_U=[g]_U$ for two spanning functions
$f:A\to V$ and $g:B\to V$, then we may find a common refinement $C\leq A,B$ such that $\bigvee\set{c\in C\st (f\downarrow C)(c)=(g\downarrow
C)(c)}\in U$, which means that $j$ of this value is in $F$. Since $F$ is $\ran(j)$-generic, it selects a unique element $b_C$ from $j(C)$,
and so this means that $j(f\downarrow C)(b_C)=j(g\downarrow C)(b_C)$. Since $C$ refines $A$ and $B$, it follows that $b_C$ must be below
$b_A$ and $b_B$, so this implies $j(f)(b_A)=j(g)(b_B)$, as desired. By replacing $=$ with $\in$ in this argument, we see that $\pi$ is also
$\in$-preserving. Since $\pi$ is surjective by assumption, it is an isomorphism. Observe that $j=\pi\compose j_U$, since for constant
functions $c_x:A\to V$ we have $\pi:[c_x]_U\mapsto j(c_x)(b_A)=j(x)$. So $j$ is isomorphic to the Boolean ultrapower.

We claim next that $F$ is actually $\Vbar$-generic. Suppose that $D\of j(\B)$ is a dense subset of $j(\B)$ in $\Vbar$. Thus, $D=j(\vec
D)(b_A)$ for some spanning function $\vec D=\<D_a\st a\in A>$, and we may as well assume that $D_a\of\B$ is dense for each $a\in A$. Let
$\bar D=\union_{a\in A} D_a\intersect\B_{\smallleq a}$, where $\B_{\smallleq a}=\set{b\in\B\st b\leq a}$. This is clearly dense, since any
$b\in\B$ can be refined below such an $a\in A$, and then into $D_a$, which is dense. Since $F$ is $\ran(j)$-generic, it follows that there
is some $b\in F\intersect j(\bar D)$. Necessarily, $b\leq b_A$, since every element of $\bar D$ is below an element of $A$. Thus, by
elementarity and the definition of $\bar D$, it follows that $b\in j(\vec D)(b_A)=D$. So $F\intersect D\neq\emptyset$, and so $F$ is
$\Vbar$-generic, and consequently also an ultrafilter on $j(\B)$.

Finally, we argue that $V^\B/U\cong \Vbar[F]$. To see this, let $\pi:[\tau]_U\mapsto \val(j(\tau),F)$. This is well defined, because if
$[\tau]_U=[\sigma]_U$, then $\[\tau=\sigma]^\B\in U$, so $\[j(\tau)=j(\sigma)]^{j(\B)}\in F$, which implies
$\val(j(\tau),F)=\val(j(\sigma),F)$. By replacing $=$ with $\in$, we see that $\pi$ preserves $\in$. And finally, every element of
$\Vbar[F]$ has the form $\val(\tau^*,F)$ for some $\tau^*\in\Vbar$. But $\tau^*=j(\vec\tau)(b_A)$ for some spanning function
$\vec\tau=\<\tau_a\st a\in A>$. By the mixing lemma, there is a name $\tau$ such that $\[\tau=\tau_a]^\B\geq a$. Applying $j$, we conclude
that $\[j(\tau)=j(\vec\tau)(b_A)]^{j(\B)}\geq b_A$, which means $\[j(\tau)=\tau^*]^{j(\B)}\in F$. Thus, $\val(\tau^*,F)=\val(j(\tau),F)$,
and so $\pi$ is surjective. Thus, $\pi$ is an isomorphism, as desired. Since $\pi:[\check x]_U\to \val(j(x\check),F)=j(x)$, this
isomorphism respects the Boolean ultrapower map. So $j:V\to\Vbar\of\Vbar[F]$ is isomorphic to the Boolean ultrapower by $U$.\end{proof}

Similarly, we may weaken the genericity hypothesis on $G$ in theorem \ref{Theorem.BooleanUltrapowerWellFounded} to mere $\ran(j)$-genericity.

\begin{theorem}\label{Theorem.RudinKeislerIff}
Suppose that $U\of\B$ is an ultrafilter on a complete Boolean algebra $\B$ and $j:V\to M$ is an elementary embedding, with $M$ not
necessarily well-founded. Then the following are equivalent:
\begin{enumerate}
 \item $j_U$ is an elementary factor of $j$, as in the diagram below.
    \begin{diagram}[width=5em]
     V \\
     \dTo^{j_U} & \rdTo^j \\
     \check V_U & \rTo^k & M \\
    \end{diagram}
 \item There is a $\ran(j)$-generic filter $H\of j(\B)$ extending $j\image U$. That is, $j\image U\of H$ and $H\intersect
     j(A)\neq\emptyset$ for all maximal antichains $A\of\B$.
\end{enumerate}
\end{theorem}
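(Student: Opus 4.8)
The plan is to prove both implications through the seed/functional presentation of the Boolean ultrapower, reusing the machinery of Theorems \ref{Theorem.FilterSeedCharacterizationOfBooleanUltrapower} and \ref{Theorem.ExtenderRepresentation}. The common device is the \emph{seed}: given a filter $H\of j(\B)$ that meets $j(A)$ for every maximal antichain $A\of\B$, the set $H\intersect j(A)$ is a singleton (it is nonempty by $\ran(j)$-genericity, and has at most one element because $j(A)$ is an antichain in $M$ by elementarity while $H$ is a filter); write $b_A$ for its unique member. If $C$ refines $A$, then $j(C)$ refines $j(A)$, so the unique element of $j(C)\intersect H$ lies below the unique element of $j(A)\intersect H$, that is, $b_C\leq b_A$.

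For $(2)\implies(1)$, I would define $k\colon V^{\downarrow\B}_U\to M$ by $k([f]_U)=j(f)(b_A)$ for a spanning function $f\colon A\to V$, landing in $M$ since $b_A\in j(\B)\of M$ and $j(f)\in M$, and then transport $k$ to $\check V_U$ via the isomorphism of Theorem \ref{Theorem.TwoPresentationsIsomorphic}. The verifications that $k$ is well defined, $\in$-preserving, and elementary are exactly those carried out in the proof of Theorem \ref{Theorem.FilterSeedCharacterizationOfBooleanUltrapower}: to compare spanning functions one passes to a common refinement $C$, notes that the relevant agreement (or membership, or satisfaction) set has join in $U=j^{\inverse}H$, and uses that the seed $b_C$ is then below $j$ of that join since $H$ meets $j(C)$ exactly at $b_C$. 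Those arguments used only that $H$ is $\ran(j)$-generic and extends $j\image U$, never that $M$ is generated by the seeds; dropping the surjectivity hypothesis of Theorem \ref{Theorem.FilterSeedCharacterizationOfBooleanUltrapower} therefore leaves $k$ merely an elementary embedding rather than an isomorphism. Finally $k\compose j_U=j$, because a constant function $c_x\colon A\to V$ gives $k([c_x]_U)=j(c_x)(b_A)=j(x)$, so $j_U$ is an elementary factor of $j$.

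For $(1)\implies(2)$, suppose $k\colon\check V_U\to M$ is elementary with $k\compose j_U=j$. For each maximal antichain $A\of\B$ let $\tau_A$ be the name with $\[\tau_A=\check a]=a$ for all $a\in A$ (as in Theorem \ref{Theorem.ExtenderRepresentation}); then $\[\tau_A\in\check V]=1$, so $[\tau_A]_U\in\check V_U$, and I set $b_A=k([\tau_A]_U)$. Elementarity of $k$ gives $M\satisfies b_A\in j(A)$, and since $\[\tau_C\leq\tau_A]=1$ whenever $C$ refines $A$ (both name elements of $\B$ with $c\leq a$), it gives $b_C\leq b_A$ in $j(\B)$. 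Let $H$ be the upward closure in $j(\B)$ of $\set{b_A\st A\text{ a maximal antichain}}$. This is a proper filter: any $b_A,b_{A'}$ have the common lower bound $b_C\in H$ for $C$ a common refinement of $A$ and $A'$, and each $b_A\neq 0$ since $[\tau_A]_U\neq 0$ in $\check V_U$ and $k$ is elementary. It is $\ran(j)$-generic because $b_A\in H\intersect j(A)$ for every $A$. Finally, for $e\in U$ with $e\neq 0,1$ take the antichain $A=\singleton{e,\neg e}$; then $\[\tau_A=\check e]=e\in U$ forces $[\tau_A]_U=[\check e]_U=j_U(e)$, whence $b_A=k(j_U(e))=j(e)\in H$, while the cases $e=0,1$ are immediate, so $j\image U\of H$, as required.

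The routine but delicate step is the elementarity of $k$ in $(2)\implies(1)$: one must check that a first-order fact holding on a set of antichain-pieces whose join lies in $U$ is inherited by $M$ at the seed $b_C$, using that $b_C$ is compatible with, and hence (being in the antichain $j(C)$) below, $j$ of that join. Because this is a purely first-order, \Los-style computation, the possible ill-foundedness of $M$ causes no difficulty; the entire content is the bookkeeping already present in Theorem \ref{Theorem.FilterSeedCharacterizationOfBooleanUltrapower}, with the surjectivity clause simply omitted.
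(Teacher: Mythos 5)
Your proof is correct and follows essentially the same route as the paper: for $(2)\implies(1)$ you define $k([f]_U)=j(f)(b_A)$ on the functional presentation exactly as the paper does, and for $(1)\implies(2)$ your filter generated by the seeds $b_A=k([\tau_A]_U)$ is just the paper's construction in disguise, since $[\tau_A]_U$ is precisely the unique element of the canonical generic $[\dot G]_U$ in $j_U(A)$, so generating $H$ from these selectors amounts to the paper's ``filter generated by $k\image [\dot G]_U$.'' The extra details you supply (properness of $H$ via common refinements, and $j\image U\of H$ via the antichains $\singleton{e,\neg e}$) are correct and fill in steps the paper leaves implicit.
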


\begin{proof} Suppose that $j_U$ is a factor of $j$ for some
elementary embedding $k$ as in the diagram. Let $G=[\dot G]_U$ be the canonical $\check V_U$-generic filter determined by $U$, which is
$\check V_U$-generic. Let $H$ be the filter generated by $k\image G$. If $A\of\B$ is any maximal antichain in $V$, then $G\intersect
j_U(A)\neq\emptyset$, and so $H\intersect j(A)\neq\emptyset$, as desired for (2).

Conversely, suppose that there is a filter $H$ as in (2). For each maximal antichain $A\of\B$, let $b_A$ be the unique element of $j(A)$
selected by $H$. Note that if a maximal antichain $C$ refines $A$, then $b_C\leq b_A$. To define $k$, it will be convenient to use the
functional presentation of the Boolean ultrapower. Specifically, define $k:V^{\downarrow\B}_U\to M$ by $k:[f]_U\mapsto j(f)(b_A)$ for any
spanning function $f:A\to V$. This is well defined, since if $f=_U g$ for two spanning functions $f:A\to V$ and $g:B\to V$, then for any
common refinement $C$ of $A$ and $B$ we have $\bigvee\set{c\in C\st (f\downarrow C)(c)=(g\downarrow C)(c)}\in U$, which implies that $j$ of
this join is in $j\image U$, and consequently that $\bigvee\set{c\in j(C)\st (j(f)\downarrow j(C))(c)=(j(g)\downarrow j(C))(c)}\in H$.
Since $H\intersect j(C)=\singleton{b_C}$, this implies $(j(f)\downarrow j(C))(b_C)=(j(g)\downarrow j(C))(b_C)$. Since $j(C)$ refines
$j(A)$, the unique element of $j(A)$ above $b_C$ is $b_A$, as these are both in $H$ and hence compatible, while all members of $j(A)$ not
above $b_C$ will be incompatible with $b_C$. By the corresponding fact with $j(B)$, it follows that $j(f)(b_A)=j(g)(b_A)$, and so $k$ is
well defined. A similar argument now shows that $k$ is elementary. To illustrate, if $V^{\downarrow\B}_U\satisfies\varphi[[f]_U]$ for a
spanning function $f:A\to V$, then $\bigvee\set{a\in A\st V\satisfies\varphi[f(a)]}\in U$, which implies $\bigvee\set{a\in j(A)\st
M\satisfies\varphi[j(f)(a)]}\in j\image U\of H$. Since $H$ selects exactly one member $b_A$ from $j(A)$, this means
$M\satisfies\varphi[j(f)(b_A)]$, and so $k$ is elementary, as desired.\end{proof}

\section{Boolean ultrapowers via partial orders}\label{Section.BooleanUltrapowersViaPartialOrders}

Set theorists often prefer to understand forcing in terms of partial orders or incomplete Boolean algebras, rather than complete Boolean
algebras, and this works fine when the ultrafilters being considered are generic. When the focus is as it is here on non-generic
ultrafilters, however, then subtle issues arise. The principal initial difficulty is that without a certain degree of genericity, an
ultrafilter on a dense subset of a Boolean algebra simply may not generate an ultrafilter on the whole Boolean algebra.

If $\P$ is a partial order, then let us denote by $\B=\B(\P)$ the Boolean algebraic completion of $\P$, which is isomorphic to the
collection of regular open subsets of $\P$, the sets that are the interior of their closure. It is easy to see that $\B$ is a complete
Boolean algebra, and $\P$ maps in an order preserving manner to a dense subset of $\B$. In the case that $\P$ is separative, we may regard
$\P\of\B$ as a dense set itself. It is well known that forcing with $\P$ is equivalent to forcing with $\B$, because every $V$-generic
filter on $\P$ generates a $V$-generic filter on $\B$ and vice versa. Without genericity, however, the connection breaks down:

\begin{theorem} If\/ $\P$ is an incomplete Boolean algebra, then there is an ultrafilter $U\of\P$ that does not generate an ultrafilter on
the completion $\B=\B(\P)$.
\end{theorem}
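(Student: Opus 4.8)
The plan is to exploit the single feature distinguishing $\P$ from its completion $\B:=\B(\P)$, namely the presence of a supremum-gap. Since $\P$ is incomplete while $\B$ is complete and $\P$ embeds densely into $\B$, we have $\P\ofneq\B$; fix once and for all an element $b\in\B\minus\P$, and note that $b\ne 0$ and $b\ne 1$ (since $0,1\in\P$). A filter $U$ on $\P$ generates an ultrafilter on $\B$ precisely when, for every $c\in\B$, some element of $U$ lies below $c$ or below $\neg c$. Thus it suffices to produce an ultrafilter $U$ on $\P$ that \emph{fails to decide} $b$, in the sense that no $p\in U$ satisfies $p\le b$ and no $p\in U$ satisfies $p\le\neg b$; for then the generated filter $\hat U=\set{c\in\B\st \exists p\in U\ p\le c}$ contains neither $b$ nor $\neg b$, and so is a proper filter that is not an ultrafilter.

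To build such a $U$, let $D=\set{p\in\P\st p\le b\text{ or }p\le\neg b}$ be the set of conditions in $\P$ that already decide $b$ inside $\B$. I would force $U$ to avoid $D$ by demanding $\neg p\in U$ for every $p\in D$: concretely, extend the filter generated by $\set{\neg p\st p\in D}$ to an ultrafilter $U$ on $\P$ (possible by the ultrafilter lemma, provided this set has the finite intersection property). Any such $U$ then contains $\neg p$, hence omits $p$, for each $p\in D$, so no element of $U$ lies below $b$ or below $\neg b$, exactly as required.

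The one genuine step is verifying the finite intersection property of $\set{\neg p\st p\in D}$, which amounts to checking that no finite join of elements of $D$ equals $1$; this is the point where incompleteness of $\P$ is essential. Given $p_1,\ldots,p_k\in D$, group them and set $P=\bigvee\set{p_i\st p_i\le b}$ and $Q=\bigvee\set{p_i\st p_i\le\neg b}$, both computed in $\P$, so that $\bigvee_i p_i=P\vee Q$ with $P\le b$ and $Q\le\neg b$. Because $P\in\P$ while $b\notin\P$, we have $P\ne b$, hence $P<b$ and $b\wedge\neg P\ne 0$. Since $Q\le\neg b$ gives $b\le\neg Q$, the nonzero element $b\wedge\neg P$ lies below $\neg P\wedge\neg Q=\neg(P\vee Q)$, so $P\vee Q\ne 1$, as needed. (The same computation covers the degenerate cases $P=0$ or $Q=0$, using $b\ne 0$ and $b\ne 1$.) This establishes the finite intersection property, produces the desired ultrafilter $U$, and completes the proof. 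The conceptual content is that the finite intersection property can fail only if some finite join of conditions reaches $1$---that is, only if the relevant supremum is attained inside $\P$---so it is exactly the gap witnessing incompleteness that allows $U$ to remain undecided about $b$.
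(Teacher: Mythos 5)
Your proof is correct and is essentially the paper's own argument: the set $\set{\neg p\st p\in D}$ you extend is exactly the paper's filter base $F=\set{p\in\P\st b\leq p\text{ or }\neg b\leq p}$, and your finite-intersection computation (using $P<b$ strictly because $P\in\P$ while $b\notin\P$) is the dual form of the paper's verification that $p\wedge q>0$. No gaps.
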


\begin{proof} We may regard $\P\of\B$. Fix any
$a\in\B\setminus\P$. Thus also $\neg a\in\B\setminus\P$. Let $F=\set{p\in\P\st a\leq p\text{ or }\neg a\leq p}$. We use the fact that
$a\notin\P$ to see that $F$ has the finite intersection property: if $a\leq p_1,\ldots,p_n$ and $\neg a\leq q_1,\ldots,q_m$, then let
$p=p_1\wedge\cdots\wedge p_n$ and $q=q_1\wedge\cdots\wedge q_m$ and observe that because $a\notin\P$, it must be that $a<p$ and $\neg a<q$.
Thus, the difference $p-a=p\wedge\neg a$ is strictly above $0$. Since $\neg a<q$, this implies $0<p\wedge q$, and so $F$ has the finite
intersection property. Thus, there is an ultrafilter $U\of\P$ with $F\of U$. Note that $U$ can contain no elements below $a$, since $p<a$
implies $\neg a<\neg p$ and so $\neg p\in F\of U$. Similarly, $U$ contains no elements below $\neg a$ since $q<\neg a$ implies $a<\neg q$
and so $\neg q\in F\of U$. Therefore, the filter generated by $U$ in $\B$, namely $\bar U=\set{b\in\B\st \exists x\in U\, x\leq b}$,
contains neither $a$ nor $\neg a$, and consequently is not an ultrafilter.\end{proof}

Nevertheless, a mild genericity condition on $U$ will ensure that the generated filter $\bar U$ is an ultrafilter. We define that a {\df
$2$-split maximal antichain} is a partition of a maximal antichain $A=A_0\sqcup A_1$ into two disjoint nonempty pieces. A filter $U$ in
$\P$ {\df weakly decides} this split antichain if there is some condition $p\in U$ such that $p\perp A_0$ or $p\perp A_1$, meaning that
either $p$ is incompatible with every element of $A_0$ or $p$ is incompatible with every element of $A_1$. This is a weak form of
genericity, because if $U$ actually contains an element of $A$, then this element will of course weakly decide the antichain, since it is
incompatible with all other elements of $A$.

\begin{theorem}\label{Theorem.FilterOnPosetGeneratesUltrafilter}
Suppose that $U$ is an ultrafilter on a separative partial order $\P$, regarded as a suborder of\/ $\B$, the regular open algebra of\/
$\P$. Then the following are equivalent:
\begin{enumerate}
 \item The filter $\bar U$ generated by $U$ in $\B$ is an ultrafilter.
 \item $U$ weakly decides all $2$-split maximal antichains in $\P$.
\end{enumerate}
\end{theorem}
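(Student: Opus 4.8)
The plan is to show that $\bar U$, which is automatically a filter (being the upward closure in $\B$ of the filter $U$), is an \emph{ultra}filter precisely when condition (2) holds. The entire argument runs through a single dictionary between the combinatorics of $\P$ and the Boolean operations of $\B$: since $\P$ is separative and dense in $\B$, two conditions $p,q\in\P$ are incompatible in $\P$ exactly when $p\wedge q=0$ in $\B$, and a maximal antichain in $\P$ is also a maximal antichain (hence has join $1$) in $\B$. First I would record these two translation facts, since they are invoked repeatedly in both directions.

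For $(1)\implies(2)$: given a $2$-split maximal antichain $A=A_0\sqcup A_1$, set $a_0=\bigvee A_0$ and $a_1=\bigvee A_1$ in $\B$. By the dictionary $a_0\vee a_1=\bigvee A=1$, while distributivity together with the incompatibility of each member of $A_0$ with each member of $A_1$ gives $a_0\wedge a_1=0$; hence $a_1=\neg a_0$. Since $\bar U$ is an ultrafilter it contains one of $a_0,a_1$, say $a_0$ (the other case is symmetric), so some $p\in U$ has $p\leq a_0$. Then $p\wedge a_1=0$, which by the dictionary means $p$ is incompatible with every member of $A_1$, that is, $p\perp A_1$; thus $U$ weakly decides the split.

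For $(2)\implies(1)$: fix $b\in\B$ and aim to show $b\in\bar U$ or $\neg b\in\bar U$. The degenerate cases $b=0$ and $b=1$ are immediate, so I would assume $0<b<1$. I would then take $A_0$ to be a maximal antichain of $\P$ among the conditions below $b$, and $A_1$ a maximal antichain among the conditions below $\neg b$; both are nonempty by density, and a short predensity argument shows $\bigvee A_0=b$ and $\bigvee A_1=\neg b$. Their union $A=A_0\sqcup A_1$ is a $2$-split maximal antichain in $\P$, so the weak-decision hypothesis furnishes $p\in U$ with $p\perp A_0$ or $p\perp A_1$; translating incompatibility back into meets gives $p\leq\neg b$ or $p\leq b$ respectively, whence $\neg b\in\bar U$ or $b\in\bar U$.

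Essentially all the bookkeeping sits in the translation dictionary, so the step that demands the most care is verifying $\bigvee A_0=b$ in the reverse direction: one must check that a maximal antichain among conditions below $b$ really joins up to $b$ rather than to something strictly smaller. This is exactly where separativity and density are genuinely used, for a nonzero $b-\bigvee A_0$ would, by density, contain a condition of $\P$ incompatible with all of $A_0$, contradicting the maximality of $A_0$ below $b$. Everything else is a direct application of the two translation facts and the definition of $\bar U$.
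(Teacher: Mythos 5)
Your proof is correct and follows essentially the same route as the paper's: both arguments rest on the correspondence between an element $b\in\B$ and the $2$-split maximal antichain $A_0\sqcup A_1$ obtained from maximal antichains of $\P$ below $b$ and below $\neg b$ (equivalently, recovering $b=\vee A_0$ from a given split), together with the translation $p\perp A_i\iff p\wedge\vee A_i=0$. The point you flag as needing care, that $\vee A_0=b$ by density and maximality, is exactly the content the paper's proof also relies on, so there is nothing further to add.
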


\begin{proof} Let $\bar U=\set{b\in\B\st \exists p\in U\, b\geq
p}$ be the filter generated by $U$ in $\B$. For any element $a\in\B$, let $A_0\of\P$ be a maximal antichain in $\P$ below $a$, and let
$A_1\of\P$ be a maximal antichain in $\P$ below $\neg a$. It follows that $A=A_0\sqcup A_1$ is a $2$-split maximal antichain in $\P$. If
$a\in\bar U$, then there is some $p\leq a$ with $p\in U$. It follows that $p\perp\neg a$ and consequently $p\perp A_1$. Conversely, if
$p\perp A_1$, then in $\B$ it must be that $p\perp \vee A_1=\neg a$, and so $p\leq a$. Thus, $a\in\bar U$ if and only if there is some
$p\in U$ with $p\perp A_1$. It follows that $\bar U$ contains $a$ or $\neg a$ if and only if $U$ weakly decides $A=A_0\sqcup
A_1$.\end{proof}

A generalization of this idea provides a characterization of well-foundedness. If $\P$ is a partial order, then we define that a {\df
countably split maximal antichain} is a partition $A=\bigsqcup_n A_n$ of a maximal antichain $A$ into countably many sub-antichains,
allowing $A_n=\emptyset$ to handle finite partitions. A filter $U$ {\df weakly decides} such a split antichain, if there is a condition
$p\in U$ and a natural number $n$ such that $p\perp A_k$ for all $k\neq n$. Thus, the condition $p$ rules out all the other partitions
except $A_n$.

\begin{theorem} Suppose that $\P$ is a partial order and $U$ is an ultrafilter on $\P$ in $V$. Then the following are equivalent:
 \begin{enumerate}
  \item The filter $\bar U$ generated by $U$ in $\B=\RO(\P)$ is a well-founded ultrafilter.
  \item $U$ weakly decides all countably split maximal antichains in $\P$.
 \end{enumerate}
\end{theorem}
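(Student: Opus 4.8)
The plan is to set up an exact correspondence between the countable maximal antichains of $\B=\RO(\P)$ and the countably split maximal antichains of $\P$, and then to read off the equivalence from two facts already established: the well-foundedness criterion of Theorem~\ref{Theorem.WellFoundedEquivalents} (for an ultrafilter lying in $V$, well-foundedness is equivalent to meeting every countable maximal antichain of $\B$ in $V$) and the ultrafilter-generation criterion of Theorem~\ref{Theorem.FilterOnPosetGeneratesUltrafilter}. Since incompatibility, ultrafilters, and regular open completions all pass through the separative quotient unchanged, I may assume without loss of generality that $\P$ is separative and regard $\P\of\B$ as a dense suborder, so that $p\perp q$ in $\P$ holds exactly when $p\wedge q=0$ in $\B$.

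First I would record that condition (2) forces $\bar U$ to be an ultrafilter at all: a $2$-split maximal antichain $A=A_0\sqcup A_1$ is the special case of a countably split antichain with $A_n=\emptyset$ for $n\geq 2$, and weakly deciding it in the countable sense yields a $p\in U$ and an index $n$ with $p\perp A_k$ for all $k\neq n$; maximality of $A$ rules out $n\geq 2$, so $p\perp A_1$ or $p\perp A_0$, which is precisely weak decision in the $2$-split sense. Hence Theorem~\ref{Theorem.FilterOnPosetGeneratesUltrafilter} applies and $\bar U$ is an ultrafilter, making Theorem~\ref{Theorem.WellFoundedEquivalents} available to it.

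The core of the argument is the translation. Given a countably split maximal antichain $A=\bigsqcup_n A_n$ in $\P$, set $a_n=\bigvee A_n$ in $\B$. Using the infinite distributive law $x\wedge\bigvee_i y_i=\bigvee_i(x\wedge y_i)$ in the complete algebra $\B$ together with the fact that elements of distinct pieces $A_k,A_j$ are incompatible, one checks that $a_k\wedge a_j=0$ for $k\neq j$ and $\bigvee_n a_n=\bigvee A=1$; thus $\set{a_n\st a_n\neq 0}$ is a countable maximal antichain of $\B$. The key computation is that $\bar U$ meets this antichain if and only if $U$ weakly decides $A$: if $a_n\in\bar U$ then some $p\in U$ has $p\leq a_n$, whence $p\wedge a_k=0$ and so $p\perp A_k$ for every $k\neq n$; conversely if $p\in U$ witnesses weak decision at $n$, then $p\wedge a_k=p\wedge\bigvee A_k=0$ for $k\neq n$, so $p\leq\neg\bigvee_{k\neq n}a_k=a_n$ (the $a_k$ partitioning $1$), placing $a_n\in\bar U$. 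Running this translation in reverse---starting from a countable maximal antichain $\set{a_n}$ of $\B$ and choosing a maximal antichain $A_n\of\P$ below each $a_n$, whose disjoint union is then a countably split maximal antichain of $\P$---supplies the other half of the correspondence.

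With the correspondence in hand the theorem falls out. For $(1)\implies(2)$, a well-founded $\bar U$ meets every countable maximal antichain of $\B$ by Theorem~\ref{Theorem.WellFoundedEquivalents}, and the forward direction of the key computation converts ``$\bar U$ meets $\set{a_n}$'' into weak decision of the given split antichain $A$. For $(2)\implies(1)$, condition (2) makes $\bar U$ an ultrafilter as above, and, applied to the split antichain $\bigsqcup_n A_n$ manufactured from an arbitrary countable maximal antichain $\set{a_n}$, the reverse direction of the key computation shows $\bar U$ meets $\set{a_n}$, so $\bar U$ is well-founded by Theorem~\ref{Theorem.WellFoundedEquivalents}. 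I expect the main obstacle to be purely the bookkeeping: verifying via the distributive law that the joins $a_n=\bigvee A_n$ genuinely form a maximal antichain, and conversely that the antichains $A_n$ chosen below a given $\set{a_n}$ assemble into a genuine maximal antichain of $\P$. In each case it is the separativity reduction together with the density of $\P$ in $\B$ that make the incompatibility relations in $\P$ and the disjointness relations in $\B$ match up exactly.
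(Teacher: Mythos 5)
Your proposal is correct and follows essentially the same route as the paper: translate each countably split maximal antichain $A=\bigsqcup_n A_n$ of $\P$ into the countable maximal antichain $\set{\vee A_n}$ of $\B$ (and conversely decompose a countable maximal antichain of $\B$ into antichains of $\P$ by density), observe that $\bar U$ meeting the one is equivalent to $U$ weakly deciding the other, and then invoke Theorem~\ref{Theorem.FilterOnPosetGeneratesUltrafilter} and the countable-antichain criterion of Theorem~\ref{Theorem.WellFoundedEquivalents}. Your explicit checks that $2$-split antichains are subsumed by the countably split case and that the separative quotient justifies regarding $\P\of\B$ are details the paper passes over more quickly, but the argument is the same.
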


\begin{proof} Suppose that the filter $\bar U$ generated by $U$
in $\B$ is an ultrafilter and well-founded, and suppose $A=\bigsqcup_n A_n$ is a countably split maximal antichain in $\P$. In $\B$, let
$a_n=\vee A_n$. If $k\neq n$, then every element of $A_k$ is incompatible with every element of $A_n$, and so $a_k\wedge a_n=0$. Observe
that $\bigvee_n a_n=\vee (\Union_n A_n)=\vee A=1$, and so $\set{a_n\st n<\omega}$ is a countable maximal antichain in $\B$. Since $\bar U$
is well-founded, it follows by theorem \ref{Theorem.WellFoundedEquivalents} that $a_n\in\bar U$ for some $n<\omega$. Thus, there is some
$p\in U$ with $p\leq a_n$. If $k\neq n$, then since $a_k\perp a_n$, it follows that $p\perp A_k$, and so $U$ weakly decides the countably
split maximal antichain, as desired.

Conversely, suppose that $U$ weakly decides all countably split maximal antichains. In particular, this means that $U$ weakly decides all
$2$-split maximal antichains, and so by theorem \ref{Theorem.FilterOnPosetGeneratesUltrafilter} the filter $\bar U$ generated by $U$ in
$\B$ is an ultrafilter. To see that $\bar U$ is well-founded, it suffices by theorem \ref{Theorem.WellFoundedEquivalents} to verify that
$\bar U$ meets all countable maximal antichains in $\B$. If $\set{a_n\st n<\omega}\of\B$ is such a countable maximal antichain, then since
$\P$ is dense in $\B$ there are antichains $A_n\of\P$ such that $a_n=\vee A_n$ in $\B$. (That is, every element of $\B$ is the join of an
antichain in $\P$.) Thus, $A=\bigsqcup_n A_n$ is a partition as in (2). Thus, there is some $p\in U$ and $n<\omega$ such that $p\perp A_k$
for all $k\neq n$. It follows that $p\wedge a_k=0$ for all $k\neq n$, and so $p\leq a_n$. Thus, $a_n\in \bar U$, and so $\bar U$ meets the
antichain, as desired.\end{proof}
%
%
% Question: can we give a version of the previous theorem that does not assume $U\in V$?
% A: we don't expect this, since the condition is just meeting countable antichains, which
% is equivalent only to giving an omega-model, not full wf, as in the main theorem on this.

Set theorists are quite used to a generic filter $G\of\B$ being generated by a linearly ordered set. This occurs, for example, when forcing
with a tree, a common situation, for in this case the generic filter is determined by a path through this tree. Nevertheless, we prove next
that nonprincipal ultrafilters in the ground model can never be generated in this way by a linearly ordered set.

\begin{lemma}\label{Lemma.NoUltrafilterIsLinearlyGenerated} If\/ $U\of\B$ is a nonprincipal ultrafilter on a
complete Boolean algebra, then $U$ is not generated by any linearly ordered collection of elements. In particular, no nonprincipal
ultrafilter on a complete Boolean algebra is generated by a countable set.
\end{lemma}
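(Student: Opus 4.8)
The plan is to prove the statement directly: assuming $U$ is generated by a linearly ordered set $L\of U$, I would show that $U$ must be principal, contradicting the hypothesis. First I would dispose of the ``in particular'' clause by linearizing. If $U$ is generated by a countable set $\set{c_n\st n<\omega}$, then the finite meets $b_n=c_0\wedge\cdots\wedge c_n$ lie in $U$, form a descending (hence linearly ordered) chain, and generate the same filter as the $c_n$, since $b_n\leq c_n$ and every $x\in U$ lies above some $c_n\geq b_n$. Thus the countable case reduces to the linear case.

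For the main claim, let $L\of U$ be linearly ordered and generate $U$, and split into two cases according to whether $L$ has a least element. If $L$ has a minimum $m$, then $U=\set{b\in\B\st b\geq m}$ is the principal filter on $m$; since $U$ is an ultrafilter, $m$ must be an atom (if $0<c<m$ then $c\notin U$, which forces $\neg c\in U$, whence $m\leq\neg c$ and so $c\wedge m=0$, contradicting $0<c\leq m$), and so $U$ is principal.

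If instead $L$ has no least element, I would extract a strictly descending coinitial sequence $\<a_\xi\st\xi<\lambda>$ in $L$, where $\lambda$ is the (necessarily infinite) coinitiality of $L$; this subsequence still generates $U$. Set $d_\xi=a_\xi-a_{\xi+1}$. Each $d_\xi\neq 0$ by strictness, and the $d_\xi$ are pairwise disjoint, since for $\eta<\xi$ one has $d_\xi\leq a_\xi\leq a_{\eta+1}$ while $d_\eta\leq\neg a_{\eta+1}$. Using completeness of\/ $\B$, put $c=\bigvee\set{d_\xi\st\xi\text{ even}}$, so that every even $d_\xi$ lies below $c$ while, by disjointness, every odd $d_\xi$ lies below $\neg c$. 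I claim neither $c$ nor $\neg c$ is in $U$. Indeed, take any $s\in L$; by coinitiality there is $\xi$ with $a_\xi\leq s$, and then both $d_\xi\leq a_\xi\leq s$ and $d_{\xi+1}\leq a_{\xi+1}\leq s$. Since $\xi$ and $\xi+1$ have opposite parities, one of $d_\xi,d_{\xi+1}$ lies below $c$ and the other below $\neg c$; as both are nonzero, $s\wedge c\neq 0$ and $s\wedge\neg c\neq 0$. Hence no $s\in L$ satisfies $s\leq c$ and none satisfies $s\leq\neg c$, so neither $c$ nor $\neg c$ is generated by $L$, contradicting that $U$ is an ultrafilter.

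The only genuinely delicate points, which I expect to be the main obstacle to \emph{state} cleanly rather than to prove, are the extraction of a strictly descending coinitial sequence of limit length from an arbitrary linearly ordered set with no minimum (so that $\lambda$ is infinite and every successor index $\xi+1$ remains below $\lambda$, which is what makes the parity argument go through) together with the verification that such a coinitial subsequence still generates $U$. Both are routine order-theoretic facts. I would emphasize that the meet $\bigwedge_\xi a_\xi$ never needs to be computed and that no completeness or countable completeness of $U$ itself is used---completeness of\/ $\B$ enters only to form the single join $c$.
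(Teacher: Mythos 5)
Your proof is correct and follows essentially the same route as the paper: pass to a descending chain generating $U$, form the successive differences $d_\xi$, split them by parity, and observe that every generator meets both the even join and its complement, so neither can be in the filter. The only cosmetic difference is that the paper normalizes the chain (starting at $1$, continuous at limits, with meet $0$ by nonprincipality) so that the differences form a maximal antichain, whereas you handle the least-element case separately and argue via coinitiality without ever computing the meet.
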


\begin{proof} This lemma refers to $U$ in $V$ on a complete
Boolean algebra $\B$ in $V$. If $U$ is generated by a linearly ordered subset $L\of U$, in the sense that $b\in U\iff\exists c{\in}L\,
c\leq b$, then we may choose a cofinal descending sequence $\<c_\alpha\st \alpha<\gamma>$ such that $c_0\geq c_1\geq\cdots\geq
c_\alpha\geq\cdots$ and $U$ is generated by $\set{c_\alpha\st \alpha<\gamma}$. We may assume without loss of generality that $c_0=1$ and
$c_\lambda=\bigwedge_{\alpha<\lambda}c_\alpha$ for limit ordinals $\lambda<\gamma$. Since $U$ is nonprincipal, it follows that
$\bigwedge_{\alpha<\gamma}c_\alpha=0$. The difference elements $d_\alpha=c_\alpha-c_{\alpha+1}$, therefore, constitute a maximal antichain
$\set{d_\alpha\st\alpha<\gamma}$. Let $x=\bigvee\set{d_\alpha\st \alpha\text{ is even}}$, the join of the even differences. It follows
that the negation $\neg x=\bigvee\set{d_\alpha\st \alpha\text{ is odd}}$ is the join of the odd differences. Note that every $c_\alpha$ has
nonzero meet with both $x$ and $\neg x$, since $c_\alpha=\bigvee_{\alpha\leq \beta}d_\beta$, which includes both even and odd differences
beyond $\alpha$. Thus, no $c_\alpha$ is below $x$ or $\neg x$, and so the filter generated by the $c_\alpha$ includes neither $x$ or $\neg
x$, a contradiction. So $U$ cannot have been generated by a linearly ordered set. This implies, in particular, that $U$ cannot be generated
by a countable set $\set{b_n\st n<\omega}$, for then it would be generated by the linearly ordered set $b_0\geq (b_0\wedge b_1)\geq
(b_0\wedge b_1\wedge b_2)\geq\cdots$, which is impossible.
\end{proof}

If one forces to add a $V$-generic ultrafilter $G\of\B$ that is generated by a linearly ordered set in the extension, which as we said is
often the case, then this lemma shows that $G$ does not generate an ultrafilter on the {\it new} completion of $\B$, meaning the regular
open algebra of $\B$ as computed in $V[G]$.

%(Question: can we show that in all cases, if $G\of\B$ is $V$-generic for nontrivial forcing, then $G$ does not generate an ultrafilter on
%$\RO(\B)$ in $V[G]$? The previous lemma handles the case that $\B$ has a dense set that is a tree. But with very fat posets, e.g.
%$\Add(\omega,\omega_1)$, it seems impossible to reduce to the tree case. But a $G$ for that poset will add new reals, hence new maximal
%antichains, not decided by $G$. So I don't think $G$ generates an ultrafilter in the new completion of $\B$.)
%
We conclude this section with an intriguing example that illustrates some of the subtleties of working with partial orders rather than
complete Boolean algebras.

\begin{example}\rm Consider the forcing $\Add(\omega_1,1)$, the complete Boolean algebra corresponding to the the partial order
$\P=2^{\ltomega_1}$, which adds a subset to $\omega_1$ by initial segments. This forcing $\P$ is a tree. Suppose that $U$ is an ultrafilter
on $\P$ in $V$, meaning that $U$ is a maximal filter. It is easy to see that $U$ is generated by the branch $b=\Union U$, which has order
type $\omega_1$. Every countable maximal antichain $A$ in $\P$ is bounded and therefore refined by a maximal antichain consisting of any
level of the tree above this bound. Since $b$ passes through all the levels of $\P$, it follows that $U$ meets $A$. So we have established
that $U$ meets every countable maximal antichain in $\P$. A naive reading of theorem \ref{Theorem.WellFoundedEquivalents} would suggest,
consequently, that the Boolean ultrapower by $U$ must be well-founded. But this is absurd, since the critical point of the ultrapower would
therefore have to be a measurable cardinal, even though we made no large cardinal assumption, and anyway the size of this forcing is far
below the least measurable cardinal. So it is impossible for this Boolean ultrapower to be well-founded. What is going on? The answer is
that although $U$ is an ultrafilter on $\P$, it follows by lemma \ref{Lemma.NoUltrafilterIsLinearlyGenerated} that it does not generate an
ultrafilter on the corresponding Boolean algebra, so there is no such thing as the Boolean ultrapower by $U$. Although one can extend $U$
to an ultrafilter on the Boolean algebra, it is not possible to do this in such a way so as to meet all the countable maximal antichains
there. The Boolean algebra simply has many more countable maximal antichains than the partial order does, and many of these do not arise
from countable maximal antichains in $\P$.
\end{example}

\section{Subalgebras, iterations and quotients}\label{Section.SubalgebrasEtc}

In this section, we investigate the interaction of the Boolean ultrapowers arising from complete subalgebras, iterations and the
corresponding quotients of complete Boolean algebras. A Boolean algebra $\B$ is a {\df subalgebra} of another $\C$, if $\B\of\C$ and the
Boolean operations $\wedge,\vee,\neg$ for elements in $\B$ are computed the same in $\B$ as they are in $\C$. This subalgebra is {\df
complete} if the infinitary meets and joins are also computed the same in $\B$ as in $\C$. It follows that any maximal antichain in $\B$
remains maximal in $\C$ (and this property alone is sufficient for a subalgebra to be complete). In order to avoid confusion when comparing
the Boolean ultrapowers by different Boolean algebras, we introduce the notation $\check V_{(\B)}$ and $\check V_{(\C)}$ to indicate the
respective Boolean-valued ground models of $V^\B$ and $V^\C$, built from $\B$-names and $\C$-names, respectively.

\begin{theorem}\label{Theorem.CompleteSubalgebraElementary}
Suppose that $\B$ is a complete subalgebra of\/ $\C$. Then:
 \begin{enumerate}
  \item The ground model $\check V_{(\B)}$ is a Boolean elementary submodel of\/ $\check V_{(\C)}$, meaning for any formula $\varphi$
      and $\B$-names $\tau_i$ with $\[\tau_i\in\check V]^\B=1$ that
    $$\[\varphi^{\check V}(\tau_0,\ldots,\tau_n)]^\B=\[\varphi^{\check V}(\tau_0,\ldots,\tau_n)]^\C.$$
 \item The forcing extension $V^\B$ is a Boolean elementary submodel of the extension $\check V[\dot G\intersect\check\B]$ in $V^\C$,
     meaning for any formula $\varphi$ and $\B$-names $\tau_i$ that
     $$\[\varphi(\tau_0,\ldots,\tau_n)]^\B=\[\varphi^{\check V[\dot G\intersect\check\B]}(\tau_0,\ldots,\tau_n)]^\C.$$
%  \item Omit? The model $V^\B$ is a Boolean $\Delta_0$-elementary substructure of\/ $V^\C$, meaning for any $\Delta_0$ formula $\varphi$ and $\B$-names $\tau_i$ that
%    $$\[\varphi(\tau_0,\ldots,\tau_n)]^\B=\[\varphi(\tau_0,\ldots,\tau_n)]^\C.$$
 \item The forcing extension $V^\C$ believes with Boolean value one that it is the forcing extension of\/ $\check V[\dot
     G\intersect\check\B]$ by the quotient forcing $\dot G\of\check\C/(\dot G\intersect\check\B)$.
\end{enumerate}
\end{theorem}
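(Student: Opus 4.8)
The plan is to prove all three statements by a single underlying strategy, exploiting two facts: every $\B$-name is automatically a $\C$-name (since $\B\of\C$), and because $\B$ is a complete subalgebra, every join or meet indexed over elements of $\B$ is computed identically in $\B$ and in $\C$. For statement (1), I would induct on $\varphi$. The atomic base case---that for $\B$-names $\tau,\sigma$ the values $\boolval{\tau=\sigma}$ and $\boolval{\tau\in\sigma}$ agree in $V^\B$ and $V^\C$---is itself a subsidiary induction on name rank, where each recursion step is a join $\bigvee_{\langle\eta,b\rangle\in\sigma}$ or a meet $\bigwedge_{\eta\in\dom(\tau)}$ over elements of $\B$ and so is preserved by completeness of the subalgebra. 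The connective cases are immediate. The quantifier case is where relativization to $\check V$ does the real work: exactly as in the proof of Lemma \ref{Lemma.VandVcheck}, one has $\boolval{(\exists x\,\varphi)^{\check V}(\vec\tau)}=\bigvee_{x\in V}\boolval{\varphi^{\check V}(\check x,\vec\tau)}$ in either algebra, since the bounded quantifier ranges effectively over the check-names $\check x$, which do not depend on the ambient algebra. By induction each summand agrees in $\B$ and $\C$, and as these are $\B$-elements the join itself agrees, which gives (1).

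For statement (2), I would induct on $\varphi$ again, now comparing the full (unrelativized) recursion in $V^\B$ against the recursion relativized to the intermediate class $\check V[\dot G\intersect\check\B]$ inside $V^\C$. Two preliminary facts are needed. First, by (1) together with Lemma \ref{Lemma.VandVcheck}, the true statement ``$\B$ is a complete subalgebra of $\C$'' passes into $V^\C$ as $\boolval{\check\B\text{ is a complete subalgebra of }\check\C}^\C=1$; arguing inside $V^\C$ exactly as in Lemma \ref{Lemma.dotGischeckVgeneric} then shows $\dot G\intersect\check\B$ is $\check V$-generic on $\check\B$, so that $\check V[\dot G\intersect\check\B]$ is a genuine intermediate forcing extension. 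Second, since a $\B$-name mentions only elements of $\B$ in its hereditary structure, its interpretation by the full generic agrees with its interpretation by the $\B$-part, so a $\B$-name used as a parameter denotes the same object whether read in $V^\C$ or in the intermediate model. With these in hand, the atomic case follows from the agreement $\boolval{\tau=\sigma}^\B=\boolval{\tau=\sigma}^\C$ from (1) plus absoluteness of $\in$ and $=$ between the transitive class $\check V[\dot G\intersect\check\B]$ and $V^\C$, and the connectives are routine. The quantifier case is the crux and the main obstacle: one must show that the witnesses $\sigma\in\check V[\dot G\intersect\check\B]$ for an existential are exactly the interpretations $\val(\check\eta,\dot G\intersect\check\B)$ of check-names of $\B$-names $\eta$, so that $\boolval{(\exists x\,\varphi(x,\vec\tau))^{\check V[\dot G\intersect\check\B]}}^\C=\bigvee_{\eta\in V^\B}\boolval{\varphi(\eta,\vec\tau)}^\B=\boolval{\exists x\,\varphi(x,\vec\tau)}^\B$, the induction hypothesis collapsing each summand. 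This matching of witnesses is the Boolean-valued form of the assertion that $V^\B$ \emph{is} the intermediate model, and it rests on fullness together with the two preliminary facts above.

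For statement (3), I would work with Boolean value one inside $V^\C$ and invoke the standard two-step factoring theorem of forcing, which is available because \ZFC\ holds in $V^\C$ with Boolean value one by Theorem \ref{Theorem.[[ZFC]]=1}. By Lemmas \ref{Lemma.dotGischeckVgeneric} and \ref{Lemma.V^BthinksItIsVcheck[dotG]}, $V^\C$ believes it is $\check V[\dot G]$ with $\dot G$ a $\check V$-generic ultrafilter on $\check\C$; having established in (2) that $\check\B$ is a complete subalgebra of $\check\C$ in $\check V$ and that $\dot G\intersect\check\B$ is $\check V$-generic on $\check\B$, the internal factoring lemma yields that the image of $\dot G$ in the quotient $\check\C/(\dot G\intersect\check\B)$ is $\check V[\dot G\intersect\check\B]$-generic and that $\check V[\dot G]=\check V[\dot G\intersect\check\B][\dot G]$. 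Thus $V^\C$ believes with Boolean value one that it is the quotient forcing extension of the intermediate model, as claimed. The only labor here is verifying that the hypotheses of the factoring lemma hold with Boolean value one, and this reduces entirely to the facts assembled for (2).
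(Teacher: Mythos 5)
Your proposal is correct and follows essentially the same route as the paper's proof: the same double induction for (1) with the subalgebra-completeness of joins and meets doing the work, the same identification in (2) of the members of $\check V[\dot G\intersect\check\B]$ with (mixtures of) $\B$-names to collapse the existential to $\bigvee_{\sigma\in V^\B}\[\varphi(\sigma,\vec\tau)]^\B$, and the same appeal in (3) to the standard quotient-forcing factoring fact, which the paper simply proves in full as a \ZFC\ consequence before transferring it to Boolean value one in $V^\C$.
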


\begin{proof} This theorem can be viewed as a generalization of
lemma \ref{Lemma.VandVcheck}, since $\singleton{0,1}$ is a complete subalgebra of any Boolean algebra. We prove (1) by induction on
$\varphi$. The atomic case is proved by sub-induction on the names appearing in $\[\tau\in\sigma]$ and $\[\tau=\sigma]$. The point is that
for $\B$-names, the calculation of this Boolean value in $\B$ or $\C$ will be the same, precisely because the supremum of a set of values
in $\B$ is the same whether computed in $\B$ or in $\C$. Proceeding with the induction on $\varphi$, one easily handles Boolean
combinations. For the quantifier case, observe that $\[(\exists x\,\varphi(x,\vec\tau))^{\check V}]^\B=\bigvee_{x\in V}\[\varphi^{\check
V}(\check x,\vec\tau)]^\B=\bigvee_{x\in V}\[\varphi^{\check V}(\check x,\vec\tau)]^\C=\[(\exists x\,\varphi(x,\vec\tau))^{\check V}]^\C$,
using the induction hypothesis in the second equality and the fact that $\check x$ is the same whether computed with $\B$ or $\C$.

We prove (2) by induction on $\varphi$. The atomic case follows from the proof of (1), and Boolean combinations are handled easily as
before. Before handling quantifiers, we first observe that if $\sigma\in V^\B$, then because $\sigma$ refers only to Boolean values in
$\B$, it follows that $\[\val(\check\sigma,\dot G)=\val(\check\sigma,\dot G\intersect\check\B)]^\C=1$, and consequently\break $\[\sigma\in
{\check V[\dot G\intersect\check\B]}]^\C=1$. More generally, for any $\C$-name $\tau$ we have $\[\tau\in{\check V[\dot
G\intersect\check\B]}]^\C=\bigvee_{\sigma\in V^\B}\[\tau=\sigma]^\C$, since any such $\tau$, to the extent that it is in $\check V[\dot
G\intersect\B]$,  will be a mixture of such names $\sigma\in V^\B$. We may now observe
$$\begin{array}{rcl}
\[\exists x\,\varphi(x,\vec\tau)]^\B &=& \break\bigvee_{\sigma\in V^\B}\[\varphi(\sigma,\vec\tau)]^\B \\
  &=& \bigvee_{\sigma\in V^\B}\[\varphi^{\check V[\dot G\intersect\check\B]}(\sigma,\vec\tau)]^\C\wedge\[\sigma\in{\check V[\dot G\intersect\check\B]}]^\C \\
  &=& \[\exists x\in{\check V[\dot G\intersect\B]}\,\varphi^{\check V[\dot G\intersect\check\B]}(x,\vec\tau)]^\C \\
  &=& \[(\exists x\,\varphi(x,\vec\tau))^{\check V[\dot G\intersect\check\B]}]^\C, \\
\end{array}$$
as desired for (2).

% old case: of $V^\B$ being $\Delta_0$ elementary in $V^\C$.
%We prove (3) next, by induction on $\varphi$. The atomic case is a consequence of (1), and Boolean combinations are handled easily. For bounded
%quantifiers, observe that $\[\exists x{\in}\tau_0\,\varphi(x,\vec\tau)]^\B=\bigvee_{\<\sigma,b>\in\tau_0}\[\varphi(\sigma,\vec\tau)]^\B\wedge
%b=\bigvee_{\<\sigma,b>\in\tau_0}\[\varphi(\sigma,\vec\tau)]^\C\wedge b=\break\[\exists x\in\tau_0\,\varphi(x,\vec\tau)]^\C$, using the induction
%hypothesis in the second equality.

Statement (3) amounts to the standard fact about quotient forcing. Specifically, suppose that $\B\of\C$ is a complete subalgebra of
complete Boolean algebras and $G\of\C$ is $V$-generic. It follows easily that $G_0=G\intersect\B$ is $V$-generic for forcing over $\B$. In
$V[G_0]$, one may form the quotient partial order $\C/G_0$, consisting of all $c\in C$ that are compatible with every element of $G_0$, for
which we now argue that $G$ is $V[G_0]$-generic. Suppose that $D\of\C/G_0$ is open dense, and $D\in V[G_0]$. Choose a $\B$-name $\dot D$
such that $D=\val(\dot D,G_0)$ and $\dot D$ is forced by $1$ over $\B$ to be open dense in $\check\C/(\dot G\intersect\check\B)$. Since
$\dot D$ is a $\B$-name, it is also a $\C$-name, and one may easily establish $\val(\dot D,G_0)=\val(\dot D,G)$. It follows that $1$ forces
over $\C$ that $\dot D$ is open dense in $\check\C/(\dot G\intersect\check\B)$. In $V$, let $E=\set{c\in\C\st c\forces_\C\check c\in \dot
D}$. To see that $E$ is dense, fix any $d\in \C$. Clearly, $d$ forces $\check d\in\check\C/(\dot G\intersect\check\B)$, and so there is
some stronger $c\leq d$ forcing over $\C$ that some particular $e\leq d$ has $\check e\in\dot D$. But if $c$ forces $\check e\in\dot D$,
then it must be that $c$ and $e$ are compatible, since $\dot D$ was forced to be contained in $\C/(\dot G\intersect\check\B)$. By
strengthening $c$, we may assume without loss of generality that $c\leq e$. In this case, since $\dot D$ was forced to be open, we see that
$c$ forces $\check c\in\dot D$ also, so $c\in E$ below $d$. So $E$ is dense, and so there is $c\in E\intersect G$, meaning that $c\in
D\intersect G$, as desired. Thus, $G$ is $V[G\intersect\B]$-generic for the quotient forcing $\C/(G\intersect\B)$. Since we've established
this as a general consequence of \ZFC, it holds with Boolean value one in $V^\C$, as stated in (3).\end{proof}

One should not in general expect, of course, that $V^\B$ is a Boolean elementary submodel of $V^\C$, because forcing with $\B$ may result
in a different theory than forcing with $\C$. For example, adding one Cohen real preserves \CH, while adding many Cohen reals will force
$\neg\CH$, even though the forcing $\Add(\omega,1)$ is a complete subalgebra of $\Add(\omega,\theta)$ for any $\theta$. More generally, if
the quotient forcing $\C/(G\intersect\B)$ is nontrivial, then $V^\C$ will have a $V^\B$-generic filter for it, but $V^\B$ will not, and so
except in trivial cases, $V^\B$ is not a $\Sigma_1$-elementary Boolean submodel of $V^\C$.

Having theorem \ref{Theorem.CompleteSubalgebraElementary}, we may now take the quotient by an ultrafilter.

\begin{theorem}\label{Theorem.CompleteSubalgebraFactorEmbedding}
Suppose that $\B$ is a complete subalgebra of\/ $\C$ and $U$ is an ultrafilter on $\C$. Then $U_0=U\intersect\B$ is an ultrafilter on $\B$,
whose corresponding Boolean ultrapower $j_{U_0}:V\to V_0=\check V_{(\B)}/U_0$ is an elementary factor of the Boolean ultrapower $j_U:V\to
\Vbar=\check V_{(\C)}/U$ as in the following commutative elementary diagram.
\begin{diagram}[height=2em]
 V & & \rTo^{j_U} & & \Vbar  & \of\hskip2em & \Vbar[G\intersect j_U(\B)] & \hskip2em\of & \Vbar[G] \\
 &\rdTo_{j_{U_0}}&  & \ruTo^{k\restrict V_0}    & & \ruTo_k \\
 & & V_0 & \of & V_0[G_0]\\
\end{diagram}
The full Boolean extensions are $V_0[G_0]=V^\B/U_0$ and $\Vbar[G]=V^\C/U$, with generic filters $G_0=[\dot G]_{U_0}\of j_{U_0}(\B)$ and
$G=[\dot G]_U\of j_U(\C)$, and $k(G_0)=G\intersect j_U(\B)$.
\end{theorem}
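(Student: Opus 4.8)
The plan is to construct the factor map $k$ directly on names, exploiting the fact that once $\B\of\C$ every $\B$-name is automatically a $\C$-name, and to read off everything from Theorem \ref{Theorem.CompleteSubalgebraElementary}. First I would dispose of the claim that $U_0=U\intersect\B$ is an ultrafilter on $\B$: it is evidently a filter, and since complementation $\neg$ is computed identically in the subalgebra $\B$ and in $\C$, for each $b\in\B$ exactly one of $b,\neg b$ lies in $U$ and hence in $U_0$. Then I define $k\colon V^\B/U_0\to V^\C/U$ by $k([\tau]_{U_0})=[\tau]_U$, viewing $\tau$ as a $\C$-name. The atomic case in the proof of Theorem \ref{Theorem.CompleteSubalgebraElementary} gives $\boolval{\tau=\sigma}^\B=\boolval{\tau=\sigma}^\C$ and $\boolval{\tau\in\sigma}^\B=\boolval{\tau\in\sigma}^\C$ for $\B$-names; since any Boolean value lying in $\B$ belongs to $U_0$ exactly when it belongs to $U$, the map $k$ is well defined and $\in$-preserving. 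Likewise $\boolval{\tau\in\check V}^\B=\boolval{\tau\in\check V}^\C$, because the join $\bigvee_{x\in V}\boolval{\tau=\check x}$ is computed the same way in a complete subalgebra, so $k$ carries $V_0=\check V_{(\B)}/U_0$ into $\Vbar=\check V_{(\C)}/U$, yielding the restriction $k\restrict V_0$.

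For the elementarity of $k\restrict V_0$ I would combine Lemma \ref{Lemma.TruthInVcheck/U} with part (1) of Theorem \ref{Theorem.CompleteSubalgebraElementary}: one has $V_0\satisfies\varphi[[\tau]_{U_0}]$ iff $\boolval{\varphi^{\check V}(\tau)}^\B\in U_0$, and $\Vbar\satisfies\varphi[[\tau]_U]$ iff $\boolval{\varphi^{\check V}(\tau)}^\C\in U$; these two Boolean values coincide and lie in $\B$, so membership in $U_0$ and in $U$ are equivalent. The factor identity $k\restrict V_0\compose j_{U_0}=j_U$ is then immediate, since $k([\check x]_{U_0})=[\check x]_U=j_U(x)$.

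The subtle part, which I expect to be the main obstacle, is the elementarity of $k$ on the full extensions together with the correct identification of its image. Here one must not expect $k$ to be elementary into all of $V^\C/U$; instead, by part (2) of Theorem \ref{Theorem.CompleteSubalgebraElementary} one has $\boolval{\varphi(\vec\tau)}^\B=\boolval{\varphi^{\check V[\dot G\intersect\check\B]}(\vec\tau)}^\C$ for any $\B$-names $\vec\tau$, and crucially the right-hand value, being equal to a value in $\B$, itself lies in $\B$. Feeding this through the Boolean-valued \Los\ lemma (Lemma \ref{Lemma.BooleanValuedLos}) for both quotients, and again using that a value in $\B$ meets $U_0$ iff it meets $U$, I conclude that $k$ is an elementary embedding of $V^\B/U_0$ onto the relativized intermediate model $\Vbar[G\intersect j_U(\B)]$ sitting inside $\Vbar[G]=V^\C/U$. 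Surjectivity onto precisely this intermediate model follows from the remark in the proof of part (2) that every $\tau$ with $\boolval{\tau\in\check V[\dot G\intersect\check\B]}^\C\in U$ is $U$-equivalent to some $\sigma\in V^\B$ (the mixing argument of Lemma \ref{Lemma.Vcheck/U}).

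Finally, I would verify the generic filters and the equation $k(G_0)=G\intersect j_U(\B)$. Writing $\dot G^\B$ and $\dot G^\C$ for the canonical generic names of $\B$ and $\C$, the one-line computation $\boolval{\check b\in\dot G^\B}=b=\boolval{\check b\in\dot G^\C\intersect\check\B}$ for $b\in\B$ shows $\boolval{\dot G^\B=\dot G^\C\intersect\check\B}^\C=1$, whence $k(G_0)=k([\dot G^\B]_{U_0})=[\dot G^\B]_U=[\dot G^\C\intersect\check\B]_U=G\intersect j_U(\B)$. With $V_0[G_0]=V^\B/U_0$ and $\Vbar[G]=V^\C/U$ identified via Lemma \ref{Lemma.Vcheck/UtransitiveEtc}, the outer triangle and the two inner regions of the displayed diagram then commute by construction.
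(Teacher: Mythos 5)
Your construction of $k$ directly on names, the appeal to Theorem \ref{Theorem.CompleteSubalgebraElementary}(1) for the elementarity of $k\restrict V_0$ and to part (2) for the elementarity of $k$ into the intermediate model, and the computation $\boolval{\dot G_{(\B)}=\dot G_{(\C)}\intersect\check\B}^\C=1$ yielding $k(G_0)=G\intersect j_U(\B)$ are all exactly the paper's argument, and every claim that the theorem actually makes is correctly established in your write-up.

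The one genuine error is the surjectivity claim: $k$ is \emph{not} in general onto the intermediate model $\Vbar[G\intersect j_U(\B)]$, and the mixing argument you cite does not deliver this. From $\boolval{\tau\in\check V[\dot G\intersect\check\B]}^\C=\bigvee_{\sigma\in V^\B}\boolval{\tau=\sigma}^\C\in U$ one can mix the witnesses $\sigma$ along a maximal antichain of $\C$ refining these Boolean values, but the resulting mixture is a $\C$-name, not a $\B$-name; to conclude $[\tau]_U=[\sigma]_U$ for a single $\sigma\in V^\B$ you would need $U$ actually to meet that antichain. This is precisely the dichotomy of theorem \ref{Theorem.GenericIffOnto}: the range of $k$ consists of the values by $G\intersect j_U(\B)$ of the $j_U(\B)$-names lying in $\ran(j_U)$, whereas $\Vbar[G\intersect j_U(\B)]$ contains the values of \emph{all} $j_U(\B)$-names in $\Vbar$. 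Indeed, as the discussion following theorem \ref{Theorem.IterationsOfBooleanUltrapowers} makes explicit, $k$ is itself the Boolean ultrapower map of $V_0[G_0]$ by the quotient ultrafilter, so it is onto exactly when that ultrafilter is generic over $V_0[G_0]$. The degenerate case $\B=\singleton{0,1}$ already refutes your claim: there $V_0[G_0]=V$, the intermediate model is $\check V_U$ itself, and $k=j_U$, which is onto only for $V$-generic $U$. Since the theorem asserts only that $k$ is an elementary embedding into $\Vbar[G\intersect j_U(\B)]$ making the diagram commute, the rest of your proof stands; simply delete the assertion that the image is all of the intermediate model.
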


\begin{proof} It is clear that $U_0=U\intersect\B$ is an
ultrafilter on $\B$. So we may form the ultrapowers $j_U:V\to\Vbar$ and $j_{U_0}:V\to V_0$, and we will have $\Vbar\of\Vbar[G]$ and $V_0\of
V_0[G_0]$, using the canonical generic filters arising from $U$ and $U_0$ as in lemma \ref{Lemma.Vcheck/UtransitiveEtc}, so that
$V_0[G_0]=V^\B/U_0$ and $\Vbar [G]=V^\C/U$. To define the factor embedding, let $k:[\tau]^\B_{U_0}\mapsto [\tau]^\C_U$ for any $\B$-name
$\tau$, where the superscript indicates the Boolean algebra that is used. This is well defined since if $\tau\equiv_{U_0}\sigma$, then
$\[\tau=\sigma]^\B\in U_0=U\intersect\B$, and so $\[\tau=\sigma]^\C\in U$ by theorem \ref{Theorem.CompleteSubalgebraElementary} and
consequently $\tau=_U\sigma$ in $V^\C/U$. Note that if $\[\tau\in\check V]^\B=1$, then also $\[\tau\in\check V]^\C=1$, and so $k$ carries
$V_0$ to $\Vbar $. The resulting restriction $k\restrict V_0$ is elementary precisely because of theorem
\ref{Theorem.CompleteSubalgebraElementary}(1), since for $\B$-names $\tau$ with $\[\tau\in\check V]=1$, we have
$V_0\satisfies\varphi([\tau]_{U_0})$ if and only if $\[\varphi^{\check V}(\tau)]^\B\in U_0$, which is equivalent to $\[\varphi^{\check
V}(\tau)]^\C\in U$, since these Boolean values are the same. Similarly, the full embedding $k$ is elementary from $V_0[G_0]$ to $\Vbar
[G\intersect j_U(\B)]$ precisely because of theorem \ref{Theorem.CompleteSubalgebraElementary}(2), since
$V_0[G_0]=V^\B/U_0\satisfies\varphi([\tau]_{U_0})$ if and only if $\[\varphi(\tau)]^\B\in U_0$, which is equivalent to $\[\varphi^{\check
V[\dot G\intersect\check\B]}(\tau)]^\C\in U$, as these Boolean values are the same, and this holds if and only if $\Vbar [G\intersect
j_U(\B)]\satisfies\varphi([\tau]_U)$. Since $G_0=[\dot G_{(\B)}]_{U_0}$, using the canonical name $\dot G_{(\B)}$ for the generic filter on
$\B$, it follows that $k(G_0)$ is $[\dot G_{(\B)}]_U$. But $\[\dot G_{(\B)}=\dot G_{(\C)}\intersect\check\B]^\C=1$, so this means that
$k(G_0)=G\intersect j_U(\B)$ in $\Vbar [G]$. The diagram commutes, because $k$ takes $[\check x]_{U_0}$ to $[\check x]_U$.\end{proof}

We turn now to iterations. Suppose that $\B_0$ is a complete Boolean algebra and $\dot\B_1$ is a full $\B_0$-name of a complete Boolean
algebra. We define the iteration algebra $\B_0*\dot\B_1=\RO(\P)$ to be the regular open algebra of the partial pre-order $\P=\set{(b,\dot
c)\st b\in\B_0^\plus, \dot c\in\dom\dot\B_1,b\forces\dot c\in\dot\B_1^\plus}$, where $\B^\plus$ denotes the collection of nonzero elements in any Boolean algebra $\B$, and where as usual the order is $(b,\dot c)\leq (d,\dot e)\iff b\leq d$
and $b\forces\dot c\leq \dot e$. For convenience, we regard elements of $\P$ as being members of $\B_0*\dot\B_1$, although technically it
is the regular open set generated by the lower cone of the element that is in $\B_0*\dot\B_1$. If $U_0\of\B_0$ is an ultrafilter on $\B_0$
and $U_1\of\B_1$ is an ultrafilter on the resulting complete Boolean algebra $\B_1=[\dot\B_1]_{U_0}$ in $V^{\B_0}/U_0$, then we define the
iteration ultrafilter $U_0*U_1$ by
$$U_0*U_1\quad\text{is the filter generated by}\quad\set{(b,\dot c)\in\P\st b\in U_0, [\dot c]_{U_0}\in U_1}.$$
By the next theorem, this is an ultrafilter on $\B_0*\dot\B_1$.

\begin{theorem}\label{Theorem.IterationsOfBooleanUltrapowers}
If\/ $U_0\of\B_0$ is an ultrafilter on the complete Boolean algebra $\B_0$ and $U_1\of\B_1$ is an ultrafilter on the complete Boolean
algebra $\B_1=[\dot\B_1]_{U_0}$ in the Boolean extension $V^\B/U_0$, where $\dot\B_1$ is a full $\B_0$-name for a complete Boolean algebra,
then $U_0*U_1$ is an ultrafilter on $\B_0*\dot\B_1$, whose Boolean ultrapower obeys the following commutative diagram, where
$V_0[G_0]=V^{\B_0}/U_0$ and $\Vbar[\bar G_0*\bar G_1]=V^{B_0*\dot\B_1}/U_0*U_1$.
\begin{diagram}[height=2em]
 V & & \rTo^{j_{U_0*U_1}} & & \Vbar  & \of & \Vbar[\bar G_0] & \hskip-1em\of & \Vbar[\bar G_0*\bar G_1] \\
 &\rdTo_{j_{U_0}}&  & \ruTo^{j_{U_1}\restrict V_0}    & & \ruTo_{j_{U_1}} \\
 & & V_0 & \of & V_0[G_0]\\
\end{diagram}
Conversely, every ultrafilter $U\of\B_0*\dot\B_1$ has the form $U=U_0*U_1$ for some such ultrafilters $U_0$ and $U_1$.
\end{theorem}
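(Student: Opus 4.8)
The plan is to reduce the iteration to a two-step application of the machinery already developed, by commuting the standard two-step iteration isomorphism for Boolean-valued models with the quotient-by-ultrafilter operation. First I would invoke the standard Boolean-valued iteration theorem: the full model $V^{\B_0*\dot\B_1}$ is canonically isomorphic to the two-step iterate $(V^{\B_0})^{\dot\B_1}$, in the sense that every $(\B_0*\dot\B_1)$-name $\tau$ corresponds to a $\B_0$-name $\dot\tau$ for a $\dot\B_1$-name, and this correspondence respects Boolean values via
$$\boolval{\varphi(\tau_0,\ldots,\tau_n)}^{\B_0*\dot\B_1}=\boolval{\boolval{\varphi(\dot\tau_0,\ldots,\dot\tau_n)}^{\dot\B_1}}^{\B_0},$$
where the inner bracket is computed inside $V^{\B_0}$ relative to $\dot\B_1$ and the outer one in $\B_0$. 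The fullness of $\dot\B_1$ guarantees this correspondence is onto up to Boolean value one. Under this identification a condition $(b,\dot c)\in\P$ is the $\B_0$-name $\dot c$ for an element of $\dot\B_1$ together with the side condition $b$, and the generating set of $U_0*U_1$ is exactly $\set{(b,\dot c)\in\P\st b\in U_0,\ [\dot c]_{U_0}\in U_1}$.

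To see that $U_0*U_1$ is an ultrafilter, I would use the correspondence to identify an arbitrary element $a\in\B_0*\dot\B_1$ with a $\B_0$-name $\dot a$ such that $1\forces\dot a\in\dot\B_1$, and observe that $a\in U_0*U_1$ if and only if $[\dot a]_{U_0}\in U_1$. Since $\neg a$ corresponds to $\neg\dot a$ with $[\neg\dot a]_{U_0}=\neg[\dot a]_{U_0}$, the fact that $U_1$ is an ultrafilter on $\B_1=[\dot\B_1]_{U_0}$ immediately gives that exactly one of $a,\neg a$ lies in $U_0*U_1$; the remaining filter properties follow similarly from $U_0$ and $U_1$ being filters together with the compatibility of meets across the two levels.

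For the commutative diagram, the central claim is that $V^{\B_0*\dot\B_1}/U_0*U_1$ is isomorphic to the two-step Boolean ultrapower: first form $V_0[G_0]=V^{\B_0}/U_0$, which contains the complete Boolean algebra $\B_1=[\dot\B_1]_{U_0}$, and then form $(V_0[G_0])^{\B_1}/U_1$. I would define the map sending $[\tau]_{U_0*U_1}$ to the $U_1$-class of $[\dot\tau]_{U_0}$ and verify, using the \Los\ lemma (Lemma \ref{Lemma.BooleanValuedLos}) applied at each level together with the boxed Boolean-value identity, that it is well defined, injective, elementary and onto. The ground-model predicate tracks correctly: $[\tau]_{U_0*U_1}\in\check V_{(\B_0*\dot\B_1)}/U_0*U_1$ precisely when $[\dot\tau]_{U_0}$ names an element of the ground model of the inner ultrapower, so restricting to ground models yields $\Vbar\iso j_{U_1}(V_0)$ and hence $j_{U_0*U_1}\iso j_{U_1}\compose j_{U_0}$, with $\Vbar[\bar G_0]$ the image of $V_0[G_0]$ and $\bar G_0,\bar G_1$ the canonical generics of the two stages.

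Finally, for the converse, given any ultrafilter $U\of\B_0*\dot\B_1$, I would set $U_0=U\intersect\B_0$, an ultrafilter on the complete subalgebra $\B_0$ by Theorem \ref{Theorem.CompleteSubalgebraFactorEmbedding}. Inside $V_0[G_0]=V^{\B_0}/U_0$ I would define $U_1$ on $\B_1=[\dot\B_1]_{U_0}$ by declaring $[\dot c]_{U_0}\in U_1$ if and only if the element of $\B_0*\dot\B_1$ corresponding to $\dot c$ lies in $U$; the name correspondence shows this is well defined on $U_0$-classes and, as $U$ is an ultrafilter, that $U_1$ is an ultrafilter on $\B_1$ in $V_0[G_0]$. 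By construction the generating set of $U_0*U_1$ is contained in $U$, and since both are ultrafilters on the same algebra, $U=U_0*U_1$. The main obstacle throughout is the careful bookkeeping of the name correspondence and the Boolean-value identity for iterations---the Boolean-valued analogue of the standard two-step iteration theorem---and in particular ensuring that $U_1$ is genuinely an ultrafilter on $\B_1$ as an object of the \emph{full} extension $V^{\B_0}/U_0$ rather than of the Boolean ultrapower $\check V_{(\B_0)}/U_0$ alone, so that the inner ultrapower $j_{U_1}$ is formed in the correct model.
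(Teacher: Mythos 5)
Your proof is correct, but it takes a genuinely different route from the paper's for the main (forward) direction. You work syntactically: you invoke the two-step iteration theorem for Boolean-valued models---the correspondence $\tau\leftrightarrow\dot\tau$ between $(\B_0*\dot\B_1)$-names and $\B_0$-names for $\dot\B_1$-names, under which $\[\varphi(\vec\tau)]^{\B_0*\dot\B_1}$ is the element of the iteration determined by the inner value $\[\varphi(\vec{\dot\tau})]^{\dot\B_1}$---and then quotient both levels, reading off that $a\in U_0*U_1$ if and only if $[\dot a]_{U_0}\in U_1$ and applying the \Los\ lemma at each stage to get an explicit isomorphism of $V^{\B_0*\dot\B_1}/(U_0*U_1)$ with the two-step quotient. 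The paper instead works on the target side of the embeddings: it forms $j=j_{U_1}\compose j_{U_0}$, observes that $\bar G_0*\bar G_1$ is a $\Vbar$-generic (hence ultra) filter on $j(\B_0*\dot\B_1)$, shows $U_0*U_1=j^\inverse(\bar G_0*\bar G_1)$---which yields the ultrafilter property for free---and then proves by a density argument that every element of $\Vbar$ has the form $j(f)(b_A)$ for a spanning function $f$ on a maximal antichain $A\of\B_0*\dot\B_1$, so that theorem \ref{Theorem.FilterSeedCharacterizationOfBooleanUltrapower} identifies $j$ as the Boolean ultrapower by $U_0*U_1$. Your route buys a self-contained, name-level isomorphism and avoids the seed characterization, at the cost of importing the Boolean-valued iteration identity, which the paper never states; note that your displayed formula is written abusively (the outer $\[\,\cdot\,]^{\B_0}$ is applied to a name for a Boolean value rather than to a formula---what is meant is the canonical identification of $\B_0$-names for elements of $\dot\B_1$, modulo value-one equality, with elements of $\B_0*\dot\B_1$), and its quantifier step does require the fullness of $\dot\B_1$ as you say. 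Your closing caution that $U_1$ must live on $\B_1$ as an object of the full extension $V^{\B_0}/U_0$, not merely of $\check V_{U_0}$, is exactly right and matches the paper's setup. The converse direction is essentially identical in both treatments.
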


\begin{proof} Because $U_0$ is an ultrafilter on $\B_0$ and $U_1$
is an ultrafilter on $\B_1=[\dot\B_1]_{U_0}$ in $V_0[G_0]$, we may construct the Boolean ultrapowers $j_{U_0}:V\to V_0\of
V_0[G_0]=V^{\B_0}/U_0$ and $j_{U_1}:V_0[G_0]\to \Vbar[\bar G_0]\of\Vbar[\bar G_0][\bar G_1]=(V_0[G_0])^{B_1}/U_1$, where $\bar
G_0=j_1(G_0)$ and $\bar G_1=[\dot G]_{U_1}$, using the canonical name $\dot G$ for the generic filter for $\B_1$ over $V_0[G_0]$. To
simplify notation, we denote $j_{U_0}$ and $j_{U_1}$ simply by $j_0$ and $j_1$, respectively. Let $j=j_1\compose j_0$. Since $\bar G_0$ is
$\Vbar$-generic and $\bar G_1$ is $\Vbar[\bar G_0]$-generic, we may view the forcing extension as $\Vbar[\bar G_0][\bar G_1]=\Vbar[\bar
G_0*\bar G_1]$, arising by forcing over $\Vbar$ with $j(\B_0*\dot\B_1)$, where $\bar G_0*\bar G_1$ is the (ultra)filter generated by the
set of pairs $(x,\dot y)\in j(\B_0*\dot\B_1)$ with $x\in\bar G_0$ and $\val(\dot y,\bar G_0)\in\bar G_1$.

We claim now that $U_0*U_1=j^\inverse(\bar G_0*\bar G_1)$. Since $j_0$ is the Boolean ultrapower of $V$ by $U_0$, we know that
$U_0=j_0^\inverse G_0$. Similarly, since $j_1$ is the Boolean ultrapower of $V_0[G_0]$ by $U_1$, we know $U_1=j_1^\inverse\bar G_1$.
Combining these facts, observe that $(b,\dot c)\in U_0*U_1$ if and only if $b\in U_0$ and $[\dot c]_{U_0}\in U_1$, which holds if and only
if $j_0(b)\in G_0$ and $\val(j_0(\dot c),G_0)\in U_1$. Applying $j_1$, this is equivalent to $j(b)\in\bar G_0$ and $\val(j(\dot c),\bar
G_0)\in G_1$, which means $(j(b),j(\dot c))\in \bar G_0*\bar G_1$, establishing that $U_0*U_1=j^\inverse(\bar G_0*\bar G_1)$, as we
claimed. Since the pre-image of an ultrafilter is an ultrafilter, we conclude that $U_0*U_1$ is an ultrafilter on $\B_0*\dot \B_1$.

Since $\Vbar[\bar G_0]$ is the Boolean ultrapower of $V_0[G_0]$ by $U_1\of\B_1$, it follows that every element of $\Vbar$ has the form
$x=[\tau]_{U_1}$, where $\tau$ is a $\B_1$-name in $V_0[G_0]$ for an element of $\check V_0$. Since $V_0[G_0]$ is the Boolean extension
$V^\B/U_0$, it follows that $\tau=[\tau_0]_{U_0}=\val(j_0(\tau_0),G_0)$, where $\tau_0$ is a $\B_0$-name in $V$. Since $\tau$ is a
$\B_1$-name for an element of $V_0$, we may assume without loss of generality that the $\B_0$-Boolean value that $\tau_0$ names a
$\dot\B_1$-name for an element of the ground model is $1$. That is, $\[\tau_0\text{ is a }\dot\B_1\text{-name and
}\[\check\tau_0\in\check{\check V}]^{\dot\B_1}=\check 1]^{\B_0}=1$, where $\check{\check V}$ is the $\B_0$-name for the $\dot\B_1$-name for
the ground model $V$. In short, $x=[[\tau_0]_{U_0}]_{U_1}$. In $V$, the object $\tau_0$ is a $\B_0$-name of a $\dot\B_1$-name for an
element of $V$. So any condition $(r,\dot s)\in\P\of\B_0*\dot\B_1$ can be strengthened to a condition $(b,\dot c)\leq (r,\dot s)$ such that
for some $z\in V$ we have that $b$ forces over $\B_0$ that $\dot c$ forces over $\dot\B_1$ that the object named by $\tau_0$ is $z$.
Succinctly, $b\forces_{\B_0}( \dot c\forces_{\dot\B_1}\tau_0=\check{\check z})$,
% $b\leq\[ \dot c\leq\[\tau^*=\check{\check z}]^{\dot\B_1}]^{\B_0}$,
where $\check{\check z}$ is the $\B_0$ check name of the $\dot\B_1$ check name for $z$. Let's denote this $z$ by $z_{b,\dot c}$. We have
argued that the collection $D\of\B_0*\dot\B_1$ consisting of all $(b,\dot c)$ that decide $\tau_0$ in this way is dense. Let $A\of D$ be a
maximal antichain in $V$, and let $f:A\to V$ be the spanning function defined by $f(b,\dot c)=z_{b,\dot c}$. Since $j(A)\of
j(\B_0*\dot\B_1)$ is a maximal antichain in $\Vbar$ and $\bar G_0*\bar G_1$ is $\Vbar$-generic, there is a unique condition $b_A=(d,\dot
e)\in j(A)\intersect\bar G_0*\bar G_1$. Since this is in $j(D)$, it means that there is some $z=j(f)(d,\dot e)$ such that $d$ forces over
$j(\B_0)$ that $\dot e$ forces over $j(\dot B_1)$ that the object named by $j(\tau_0)$ is $z$. But the object named by $j(\tau_0)$ is
$\val(\val(j(\tau_0),\bar G_0),\bar G_1)=\val(\val(j_1(j_0(\tau_0)),j_1(G_0)),\bar G_1)=\val(j_1(\val(j_0(\tau_0),G_0)),\bar
G_1)=\val(j_1([\tau_0]_{U_0}),\bar G_1)=[[\tau_0]_{U_0}]_{U_1}=x$, our original arbitrary object from $\Vbar$. So we have established that
$j(f)(b_A)=x$, and so every element of $\Vbar$ has the form $j(f)(b_A)$ for some spanning function $f:A\to V$ in $V$ on some maximal
antichain $A\of\B_0*\dot\B_1$. From this, it follows by theorem \ref{Theorem.FilterSeedCharacterizationOfBooleanUltrapower} that
$j:V\to\Vbar\of\Vbar[\bar G_0*\bar G_1]$ is the Boolean ultrapower of $V$ by $U_0*U_1\of\B_0*\dot\B_1$, as desired.

Conversely, suppose that $U$ is any ultrafilter on $\B_0*\dot\B_1$. It is easy to see that the projection $U_0=\set{b\in \B_0\st (b,\dot
1)\in U}$ onto the first coordinate is an ultrafilter on $\B_0$. Thus, we may form the corresponding Boolean ultrapower $j_{U_0}:V\to
V_0\of V_0[G_0]=V^{\B_0}/U_0$, and consider the complete Boolean algebra $\B_1=[\dot\B_1]_{U_0}$ in $V_0[G_0]$. It is easy to see that
$U_1=\set{[\dot c]_{U_0}\st \exists b\, (b,\dot c)\in U}$ is an ultrafilter on $\B_1$. Note that $(b,\dot d)\in U$ if and only if $b\in
U_0$ and $[\dot c]_{U_0}\in U_1$, so $U$ and $U_0*U_1$ agree on $\P$, and consequently $U=U_0*U_1$.\end{proof}

It now follows that the factor embedding $k$ of theorem \ref{Theorem.CompleteSubalgebraFactorEmbedding} is isomorphic to the Boolean
ultrapower of $V_0[G_0]$ by the quotient forcing $\B_1=\RO(j_{U_0}(\C)/G_0)$ using the ultrafilter $U_1=k^\inverse G/(G\intersect
j_U(\B))$, and $\Vbar [G]$ there is isomorphic to $V_0[G_0]^{\B_1}/U_1$. The reason is that whenever $\B$ is a complete subalgebra of $\C$,
then $\C$ is isomorphic to $\B*\dot\B_1$, where $\dot\B_1$ is a $\B$-name for the complete Boolean algebra in $V^\B$ corresponding to the
quotient $\check\C/\dot G$. Thus, any ultrafilter $U\of\C$ is isomorphic to an ultrafilter $\tilde U\of\B*\dot\B_1$, which by theorem
\ref{Theorem.IterationsOfBooleanUltrapowers} has the form $U_0*U_1$, where $U_0=U\intersect\B$ and $U_1$ is an ultrafilter in the quotient
forcing $j_0(\C)/G_0$ in $V_0[G_0]$. The map $j_{U_1}$ sends $x\mapsto[\check x]_{U_1}$, using the $\B_1$-name $\check x$. If
$x=[\tau_0]_{U_0}$ for a $\B$-name $\tau_0$, then this $\check x$ is the $\B_1$-name for the same object named by $\tau$ in $V^\B$. The
canonical $\C$-name for this object is, of course, $\tau$ itself, which as a $\B$ name is also a $\C$-name for the same object. So
$j_{U_1}$ has mapped $[\tau_0]_{U_0}$ to (the isomorphic copy of) $[\tau_0]_U$, which is precisely how we defined $k$.

\section{Products}\label{Section.Products}

We now consider products of complete Boolean algebras $\B_0$ and $\B_1$. The first step is to dispense with an incorrect choice for the
product, namely the {\df direct sum} Boolean algebra $\B_0\oplus\B_1=\set{(b,c)\st b\in\B_0, c\in \B_1}$, with coordinate-wise operations. Although this is a complete Boolean algebra, it is not the right choice for product forcing or for product Boolean ultrapowers, because the maximal antichain $\singleton{(1,0),(0,1)}$ in $\B_0\oplus\B_1$ means that any ultrafilter concentrates in effect on only one factor; so this algebra corresponds to what is known as side-by-side forcing or the lottery sum of $\B_0$ and $\B_1$. Instead, for product forcing what one wants is
$$\B_0\times\B_1=\RO(\B_0^\plus\times\B_1^\plus).$$
It is easy to see that any ultrafilter $U\of\B_0\times\B_1$ projects
to ultrafilters $U_0\of\B_0$ and $U_1\of\B_1$ in each factor, and these give rise to the following commutative diagram.

\begin{theorem}\label{Theorem.ProductForcingBooleanUltrapower}
For any ultrafilter $U\of\B_0\times\B_1$, projecting to ultrafilters $U_0\of\B_0$ and $U_1\of\B_1$, there are elementary maps $k_0$ and
$k_1$ making the following diagram commute (dotted lines indicate forcing extensions).
\begin{diagram}[height=1.3em,width=2em,textflow]
   &  &                 &                      &        &           & V_1[G_1]                 &  &        &           &                 &             &           \\
   &  &                 &                      &        & \ruExtension &                          & \rdTo(4,4)^{k_1} &        &           &                 &             &           \\
   &  &                 &                      &   V_1  &           &                          &  &        &           &                 &             &           \\
   &  &                 & \ruTo(4,4)^{j_{U_1}} &        & \rdTo(4,4)^{k_1\restrict V_1}  &                          &  &        &           &                 &             &           \\
   &  &                 &  &                            &           &  &  &        &           & \Vbar[\bar G_1] &             &           \\
   &  &                 &  &                            &           &                          &  &        & \ruExtension &                 & \rdExtension   &           \\
 V &                      &                 &  & \rTo^{j_U}  &           &                          &  &  \Vbar &    & \rExtension      &        & \hskip2em\rlap{\hskip-2em$\Vbar[\bar G_0\times\bar G_1]$} \\
   & \rdTo(4,4)^{j_{U_0}} &                 &  &                            &           &                          & \ruTo(4,4)^{k_0\restrict V_0} &        & \rdExtension &                 & \ruExtension   &           \\
   &  &  &  &                            &           &         &  &        &              & \Vbar[\bar G_0] &             &           \\
   &  &  &  &                            &           &                          &  &        & \ruTo(4,4)^{k_0} &                 &             &           \\
   &  &                 &  &   V_0                      &           &                          &  &        &           &                 &             &           \\
   &  &                 &  &                            & \rdExtension &                          &  &        &           &                 &             &           \\
   &  &                 &  &                            &           & V_0[G_0]                 &  &        &           &                 &             &           \\
\end{diagram}
In the diagram, $V_0=\check V_{U_0}\of V_0[G_0]=V^{\B_0}/U_0$ is the Boolean ultrapower by $U_0$, with $G_0=[\dot G]_{U_0}$; $V_1=\check
V_{U_1}\of V_1[G_1]=V^{\B_1}/U_1$ is the Boolean ultrapower by $U_1$, with $G_1=[\dot G]_{U_1}$; and $\Vbar=\check V_U\of \Vbar[\bar
G_0\times\bar G_1]=V^{\B_0\times\B_1}/U$ is the ultrapower by $U$, with $\bar G_0\times\bar G_1=[\dot G]_U$.
\end{theorem}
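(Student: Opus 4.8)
The plan is to recognize this theorem as a symmetric, two-sided application of the complete-subalgebra factor theorem (Theorem~\ref{Theorem.CompleteSubalgebraFactorEmbedding}). The key structural fact is that each factor sits inside the product as a complete subalgebra: the maps $e_0\colon\B_0\to\B_0\times\B_1$ and $e_1\colon\B_1\to\B_0\times\B_1$ sending $b\mapsto b\times 1$ and $c\mapsto 1\times c$ are complete embeddings, since a maximal antichain $A\of\B_0$ maps to $\set{a\times 1\st a\in A}$, which remains maximal in $\B_0\times\B_1$ because a condition $(b,c)$ is compatible with some $a\times 1$ exactly when $b$ is compatible with some $a\in A$. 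Identifying $\B_0$ and $\B_1$ with their images, the projection ultrafilters are precisely the intersections, $U_0=\set{b\in\B_0\st(b,1)\in U}=U\intersect\B_0$ and likewise $U_1=U\intersect\B_1$, which matches the hypothesis of Theorem~\ref{Theorem.CompleteSubalgebraFactorEmbedding}.

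With this identification in hand, I would simply apply Theorem~\ref{Theorem.CompleteSubalgebraFactorEmbedding} twice, once to $\B_0\of\B_0\times\B_1$ and once to $\B_1\of\B_0\times\B_1$. The first application produces the factor embedding $k_0\colon V_0[G_0]=V^{\B_0}/U_0\to\Vbar[G\intersect j_U(\B_0)]$ together with its restriction $k_0\restrict V_0\colon V_0\to\Vbar$, satisfying $k_0\compose j_{U_0}=j_U$ and $k_0(G_0)=G\intersect j_U(\B_0)$; the second produces $k_1\colon V_1[G_1]\to\Vbar[G\intersect j_U(\B_1)]$ with $k_1\restrict V_1\colon V_1\to\Vbar$, $k_1\compose j_{U_1}=j_U$ and $k_1(G_1)=G\intersect j_U(\B_1)$. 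Writing $\bar G_0=G\intersect j_U(\B_0)$ and $\bar G_1=G\intersect j_U(\B_1)$ recovers exactly the objects appearing in the diagram, and the two commuting triangles of the subalgebra theorem are precisely the lower-left and upper-left halves of the asserted diagram.

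It remains to reconcile the full Boolean extension $\Vbar[\bar G_0\times\bar G_1]=V^{\B_0\times\B_1}/U$ with the two coordinate pieces. Here I would invoke the standard product-forcing fact: by elementarity $j_U(\B_0\times\B_1)=j_U(\B_0)\times j_U(\B_1)$, and since $G=[\dot G]_U$ is $\Vbar$-generic for this product, its coordinate projections $\bar G_0$ and $\bar G_1$ are mutually generic, with $\bar G_0$ being $\Vbar$-generic, $\bar G_1$ being $\Vbar[\bar G_0]$-generic (and symmetrically), and $G$ generated by $\bar G_0\cup\bar G_1$, so that $\Vbar[\bar G_0\times\bar G_1]=\Vbar[\bar G_0][\bar G_1]$. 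This supplies the two extension inclusions $\Vbar[\bar G_0]\of\Vbar[\bar G_0\times\bar G_1]$ and $\Vbar[\bar G_1]\of\Vbar[\bar G_0\times\bar G_1]$ and shows that the diagram closes up correctly on the right.

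The only genuine content beyond bookkeeping, and hence the step I expect to require the most care, is the verification that $e_0$ and $e_1$ really are complete embeddings and that the projection ultrafilters coincide with the intersections $U\intersect\B_0$ and $U\intersect\B_1$. Once this is secured, the two invocations of Theorem~\ref{Theorem.CompleteSubalgebraFactorEmbedding} and the product-generic factorization do all the remaining work, and the elementarity of $k_0$ and $k_1$ is inherited verbatim from that theorem.
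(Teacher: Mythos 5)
Your proposal is correct and follows exactly the route the paper takes: the paper's proof also consists of observing that $\B_0$ and $\B_1$ are completely embedded in $\B_0\times\B_1$ and then applying theorem \ref{Theorem.CompleteSubalgebraFactorEmbedding} twice, once for each factor, so that the diagram is two copies of that theorem's diagram glued along the central axis. Your additional verifications (that the coordinate embeddings are complete, that the projection ultrafilters are the intersections, and the mutual-genericity bookkeeping on the right of the diagram) are exactly the details the paper leaves implicit.
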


\begin{proof} Since $\B_0$ and $\B_1$ are each completely
embedded in $\B_0\times\B_1$, this is a simple consequence of theorem \ref{Theorem.CompleteSubalgebraFactorEmbedding}. The diagram here consists simply
of two copies of the diagram in theorem \ref{Theorem.CompleteSubalgebraFactorEmbedding}, one above and one below the central axis.\end{proof}

We now consider to what extent an ultrafilter $U$ on the product $\B_0\times\B_1$ is determined by its factors $U_0$ and $U_1$. When $U$ is
a generic filter, then a basic fact about product forcing is that the corresponding projections $U_0$ and $U_1$ are mutually generic and
the rectangular product $U_0\sqtimes U_1=\set{(b,c)\st b\in U_0, c\in U_1}$ generates $U$. Corollary \ref{Corollary.U0xU1NotAnUltrafilter}
shows, however, that when $U$ is not generic, then $U_0\sqtimes U_1$ does not always generate an ultrafilter. Indeed, even in the case of
power set algebras, one does not usually expect to measure all subsets of the plane with rectangles. Rather, one defines the product
measure, which slices every subset of the plane into vertical strips, demanding that a large number of them is large. This idea generalizes
naturally to the context of arbitrary complete Boolean algebras as follows. Since the elements of $\B_0\times\B_1$ are exactly the regular
open subsets $X\of\B_0^\plus\cross\B_1^\plus$, we use the notation $X_b=\set{c\st (b,c)\in X}$ and define the {\df product filter}
$U_0\times U_1$ by
$$X\in U_0\times U_1\qquad\text{ if and only if }\qquad\vee\set{b\in\B_0\st \vee X_b\in U_1}\in U_0$$
It is easy to see that $U_0\times U_1$ is indeed a filter on $\B_0\times\B_1$. In the classical power set context, of course, where $U_0\of
P(X)$ and $U_1\of P(Y)$, then $\B_0\times\B_1$ is isomorphic with $P(X\times Y)$ and $U_0\times U_1$ is the familiar product measure, an
ultrafilter measuring all subsets of the plane. But in the general context of complete Boolean algebras, unfortunately, $U_0\times U_1$ need
not be an ultrafilter (see corollary \ref{Corollary.U0oxU1NotAnUltrafilter}), even when $U_0$ and $U_1$ are ultrafilters. In the power set
case, the ultrapower by the product measure $U_0\times U_1$ can be expressed either as the composition of the ultrapower by $U_0$ followed
by the ultrapower by $j_0(U_1)$, or as the (external) composition $j_0\compose j_1$. That is, $j_{U_0\times U_1}=j_{j_0(U_1)}\compose
j_0=j_0\compose j_1$. In the general setting of arbitrary complete Boolean algebras, however, since $U_0\times U_1$ may not be an
ultrafilter, we have a somewhat more complex picture:

\begin{theorem} If\/ $U$ is an ultrafilter on $\B_0\times\B_1$ extending $U_0\times U_1$, then there are embeddings $k_0$ and $k^*$ making the
following diagram commute, where $j_0:V\to V_0\of V_0[G_0]$ and $j_1:V\to V_1\of V_1[G_1]$ are the Boolean ultrapowers by $U_0$ and $U_1$,
respectively, $j_1^*:V_0\to V^*\of V^*[G^*_1]$ is the ultrapower by $j_0(U_1)$ as computed in $V_0$ and $j_U:V\to\Vbar\of\Vbar[\bar
G_0\times\bar G_1]$ is the ultrapower by $U$.
\begin{diagram}[height=1.3em,width=2.3em,labelstyle=\scriptstyle]
   &  &                 &                      &        &           & V_1[G_1]                 &  &        &           &                 &             &           \\
   &  &                 &                      &        & \ruExtension &                       & \rdTo_{\!\!\!\!\!\!\scriptscriptstyle j_0\restrict V_1[G_1]} &        &           &                 &             &           \\
   &  &                 &                      &   V_1  &           &                          &  & V^*[G_1^*]       &           &                 &             &           \\
   &  &                 & \ruTo(4,4)^{j_1} &        & \rdTo_{j_0\restrict V_1}  &     & \ruExtension &        & \rdTo^{k^*}  &                 &             &           \\
   &  &                 &                      &        &           & V^*  &  &        &           & \Vbar[\bar G_1] &             &           \\
   &  &                 &                      &        &  \ruTo(2,6)^{j^*_1}  &     & \rdTo^{k^*\restrict V^*}  &        & \ruExtension &                 & \rdExtension   &           \\
 V &                     &       \hLine^{j_U}           &  &   &  \VonH\ \       &                  &\rTo    &  \Vbar &    & \rExtension      &        & \hskip2em\rlap{\hskip-2em$\Vbar[\bar G_0\times\bar G_1]$} \\
   & \rdTo(4,4)^{j_0} &                 &  &             &           &                          &\ruTo(4,4)_{k_0\restrict V_0}  &        & \rdExtension &                 & \ruExtension   &           \\
   &  &  &  &                            &           &       &   &           &              & \Vbar[\bar G_0] &             &           \\
   &  &  &  &                            &           &         &    &        & \ruTo(4,4)_{k_0} &                 &             &           \\
   &  &  &  &   V_0                      &           &    &  &        &           &                 &             &           \\
   &  &  &  &                            & \rdExtension &                          &  &        &           &                 &             &           \\
   &  &  &  &                            &           & V_0[G_0]                 &  &        &           &                 &             &           \\
\end{diagram}
\end{theorem}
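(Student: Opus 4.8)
The plan is to read off $k_0$ from the complete subalgebra factor theorem and to produce the genuinely new map $k^*$ from the factor theorem \ref{Theorem.RudinKeislerIff}, using the product-measure hypothesis to supply the genericity it needs. First I would record the parts already in hand. Since $\B_0$ and $\B_1$ are each completely embedded in $\B_0\times\B_1=\RO(\B_0^\plus\cross\B_1^\plus)$, theorem \ref{Theorem.CompleteSubalgebraFactorEmbedding}, applied exactly as in theorem \ref{Theorem.ProductForcingBooleanUltrapower}, furnishes the factor embedding $k_0:V_0[G_0]\to\Vbar[\bar G_0]$ with restriction $k_0\restrict V_0:V_0\to\Vbar$ satisfying $j_U=(k_0\restrict V_0)\compose j_0$; this is the $k_0$ of the diagram. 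Next I would build the intermediate ultrapower. Here $V^*$ is the ultrapower of $V_0$ by $j_0(U_1)$, and the map $j_0\restrict V_1:V_1\to V^*$ is simply the lift of the $U_1$-ultrapower through $j_0$, sending $[\tau]_{U_1}\mapsto[j_0(\tau)]_{j_0(U_1)}$. Its well-definedness and elementarity, together with the commutativity $j_1^*\compose j_0=(j_0\restrict V_1)\compose j_1$, are the standard fact that an ultrapower commutes with an elementary embedding, since both composites send $x$ to the class of the constant function with value $j_0(x)$.

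The heart of the argument is the construction of $k^*:V^*\to\Vbar$ with $k^*\compose j_1^*=k_0\restrict V_0$. I would apply theorem \ref{Theorem.RudinKeislerIff} inside $V_0$, taking the ultrafilter $j_0(U_1)$ on $j_0(\B_1)$ and the elementary embedding $k_0\restrict V_0:V_0\to\Vbar$, noting that $(k_0\restrict V_0)(j_0(\B_1))=j_U(\B_1)$. What the theorem requires is an $\ran(k_0\restrict V_0)$-generic filter $H\of j_U(\B_1)$ extending $(k_0\restrict V_0)\image j_0(U_1)$. The key point is that $j_0(U_1)$, being an ultrafilter on $j_0(\B_1)$ in $V_0$, already meets every maximal antichain $A$ of $j_0(\B_1)$ lying in $V_0$; applying $k_0\restrict V_0$, the pointwise image meets $(k_0\restrict V_0)(A)$, so the filter $H$ generated by $(k_0\restrict V_0)\image j_0(U_1)$ is $\ran(k_0\restrict V_0)$-generic. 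This is precisely the weaker $\ran(j)$-genericity that theorem \ref{Theorem.RudinKeislerIff} was designed to exploit, rather than the full genericity of theorem \ref{Theorem.BooleanUltrapowerWellFounded}, and it yields the elementary $k^*$. Consequently $j_U=(k^*\restrict V^*)\compose j_1^*\compose j_0=(k^*\restrict V^*)\compose(j_0\restrict V_1)\compose j_1$, so every solid triangle commutes on the ranges of the ultrapower maps, and elementarity together with the density of those ranges upgrades this to commutativity of the whole solid diagram. The hypothesis $U\supseteq U_0\times U_1$ enters through its rectangles $U_0\sqtimes U_1$: writing $U=U_0*U_1^\plus$ under $\B_0\times\B_1\iso\B_0*\check\B_1$ via theorem \ref{Theorem.IterationsOfBooleanUltrapowers}, the containment $(1,c)\in U$ for $c\in U_1$ gives $j_0\image U_1\of U_1^\plus$, hence $j_U\image U_1\of\bar G_1$, which is what pins $V^*$ to the canonical generic data of $U$.

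The step I expect to be the main obstacle is the forcing-extension (dotted) half of the diagram: extending $k^*$ to $V^*[G_1^*]\to\Vbar[\bar G_1]$ and checking that it carries $G_1^*$ to the correct generic. The delicacy is that the filter $H$ produced above is in general only $\ran(k_0\restrict V_0)$-generic and not $\Vbar$-generic, whereas $\bar G_1$, the projection of $[\dot G]_U$, is genuinely $\Vbar$-generic; because $j_0(U_1)\not\of U_1^\plus$ can occur, one cannot simply take $H=\bar G_1$. The real work, therefore, is to show, using $U\supseteq U_0\times U_1$ and the full-extension version of the factor map as in corollary \ref{Corollary.BooleanUltrapowerFactor}, that the extended $k^*$ lands inside $\Vbar[\bar G_1]$ and respects the generic filter, so that the extensions $V^*\of V^*[G_1^*]$ and $\Vbar\of\Vbar[\bar G_1]$ and their images cohere. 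Once this reconciliation of $H$ with $\bar G_1$ is established, everything else is routine name manipulation of the kind already carried out in theorems \ref{Theorem.CompleteSubalgebraFactorEmbedding} and \ref{Theorem.IterationsOfBooleanUltrapowers}.
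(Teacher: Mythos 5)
Your overall architecture---obtaining $k_0$ from theorem \ref{Theorem.CompleteSubalgebraFactorEmbedding}, identifying $j_1^*$ and $j_0\restrict V_1$ via elementarity of $j_0$, and realizing $k^*$ as a factor map---matches the paper's, but the step you call ``the key point'' is false, and it is exactly the step where the theorem's hypothesis must do its work. You assert that $j_0(U_1)$, being an ultrafilter on $j_0(\B_1)$ in $V_0$, ``already meets every maximal antichain of $j_0(\B_1)$ lying in $V_0$.'' By theorem \ref{Theorem.GenericIffOnto}, an ultrafilter meeting every maximal antichain of the ambient model is generic over that model, which would make $j_1^*$ trivial; a non-generic $j_0(U_1)$ misses some maximal antichain $A\in V_0$, and then the filter generated by $(k_0\restrict V_0)\image j_0(U_1)$ misses $(k_0\restrict V_0)(A)$ as well, since if some $c\in (k_0\restrict V_0)(A)$ lay above $(k_0\restrict V_0)(w)$ for some $w\in j_0(U_1)$, elementarity would pull this back to an element of $A$ above $w$ and hence in $j_0(U_1)$. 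So the filter you feed to theorem \ref{Theorem.RudinKeislerIff} is not $\ran(k_0\restrict V_0)$-generic in general, and $k^*$ is not produced.

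The correct choice of filter is $\bar G_1$ itself---precisely the option you dismiss. What the hypothesis $U_0\times U_1\of U$ buys is the containment $k_0\image j_0(U_1)\of\bar G_1$, and this does not ``enter through the rectangles'': the rectangles $U_0\sqtimes U_1$ give only $j_U\image U_1\of\bar G_1$, whereas a typical $x=[\dot x]_{U_0}\in j_0(U_1)$ is not of the form $j_0(c)$ for any $c\in U_1$ (indeed $j_0\image U_1$ generates $j_0(U_1)$ only under the extra hypothesis of theorem \ref{Theorem.U0xU1GeneratesU0otimesU1Iff}, which is not assumed here). The paper's argument takes such an $\dot x$ with $\boolval{\dot x\in\check U_1}^{\B_0}=1$, forms the vertical-strip set $X=\set{(b,c)\st b\forces\check c\leq\dot x}$, checks that $X$ is regular open and lies in $U_0\times U_1\of U$, and concludes that some $(b^*,c^*)\in j_U(X)\intersect(\bar G_0\times\bar G_1)$ forces $\val(j_U(\dot x),\bar G_0)=k_0(x)$ into $\bar G_1$. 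With $k_0\image j_0(U_1)\of\bar G_1$ in hand, one defines $k^*([\tau]_{j_0(U_1)})=\val(k_0(\tau),\bar G_1)$ directly; this is well defined and elementary because $\bar G_1$ is genuinely $\Vbar$-generic and decides the $k_0$-images of $j_0(\B_1)$-Boolean values in accordance with $j_0(U_1)$, and it simultaneously supplies the extension $V^*[G_1^*]\to\Vbar[\bar G_1]$ that you deferred as ``the real work.''
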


\begin{proof}
Suppose that $U$ is an ultrafilter on $\B_0\times\B_1$ projecting to $U_0\of\B_0$ and $U_1\of\B_1$ and that $U_0\times U_1\of U$. Let $j_0:V\to V_0\of V_0[G_0]$ and $j_1:V\to V_1\of V_1[G_1]$ be the corresponding Boolean ultrapowers by $U_0$ and $U_1$, and let $j_U:V\to \Vbar\of\Vbar[\bar G_0\times\bar G_1]$ be the Boolean ultrapower by $U$. By theorem \ref{Theorem.ProductForcingBooleanUltrapower}, we have the elementary embedding
$k_0:V_0[G_0]\to\Vbar[\bar G_0]$, defined by $k_0:[\tau]_{U_0}\mapsto [\tau^*]_U$, where $\tau$ is any $\B_0$-name and $\tau^*$ is the
natural translation into a $\B_0\times\B_1$ name via the complete embedding $b\mapsto (b,1)$. Furthermore, that theorem shows
$j_U=k_0\compose j_0$, and so the lower part of the diagram commutes.

Consider $j_0(U_1)$, which is an ultrafilter on $j_0(\B_1)$ in $V_0$. We claim that $k_0\image j_0(U_1)\of\bar G_1$. To see this, suppose
that $x$ is any element of $j_0(U_1)$. Thus, $x=[\dot x]_{U_0}$ for some $\B_0$-name $\dot x$ with $\[\dot x\in \check U_1]^{\B_0}=1$ and
hence $\bigvee_{c\in U_1}\[\dot x=\check c]=1$. Let $X=\set{(b,c)\in\B_0^\plus\times\B_1^\plus\st b\forces \check c\leq\dot x}$. This is
clearly open, and it is regular, because if $(b,c)\notin X$, then there is some strengthening $(b_1,c_1)\leq (b,c)$ such that $b_1\forces
\check c_1\perp\dot x$, which means the cone below $(b_1,c_1)$ avoids $X$, establishing that $X$ is regular. If $b\forces\dot x=\check c$,
then clearly $\vee X_b=c$. Since there is a maximal antichain of $b$ forcing that $\dot x=\check c$ for some $c\in U_1$, it follows that
$\vee\set{b\st\vee X_b\in U_1}\in U_0$ and so $X\in U_0\times U_1$, and consequently $X\in U$. It follows that $j(X)$ contains an element
$(b^*,c^*)\in \bar G_0\times\bar G_1$, with the consequence that $b^*$ forces that $\check c^*\leq j(\dot x)$, and so $\val(j(\dot x),\bar
G_0)\in \bar G_1$. But $\val(j(\dot x),\bar G_0)=k_0(\val(j_0(\dot x),G_0))=k_0([\dot x]_{U_0})=k_0(x)$, establishing that $k_0\image
j_0(U_1)\of\bar G_1$, as we claimed.

Let $j^*_1:V\to V^*\of V^*[G^*_1]$ be the Boolean ultrapower as computed in $V_0$ using the ultrafilter $j_0(U_1)\of j_0(\B_1)$. Every
element of $V^*[G^*]$ has the form $[\tau]_{j_0(U_1)}$ (as a set in $V_0$), where $\tau$ is a $j_0(\B_1)$ name in $V_0$. Let
$k^*:[\tau]_{j_0(U_1)}\mapsto \val(k_0(\tau),\bar G_1)$. This is well defined and elementary precisely because $k_0\image j_0(U_1)\of \bar
G_1$. Furthermore, $k^*$ carries $V^*$ to $\Vbar$, since if $\tau$ names an element of the ground model in $V_0$, then $k_0(\tau)$ has this
feature in $\Vbar$. Note that for $x\in V_0$, then $j_1^*(x)=[\check x]_{j_0(U_1)}$, using the $j_0(\B_1)$-name $\check x$ for $x$, and
this object maps by $k^*$ to $\val(k_0(\check x),\bar G_1)=k_0(x)$. This shows that $k^*\compose j_1^*=k_0\restrict V_0$, and so this part
of the diagram commutes.

Finally, observe that by applying $j_0$ to the entire embedding $j_1:V\to V_1\of V_1[G_1]$, which is the Boolean ultrapower of $V$ by
$U_1\of\B_1$, we get by elementarity the Boolean ultrapower of $V_0$ by $j_0(U_1)\of j_0(\B_1)$ as computed in $V_0$, which is exactly how
we defined $j^*:V_0\to V^*\of V^*[G^*_1]$. In short, $j^*=j_0(j_1)$, $V^*=j_0(V_1)$ and $V^*[G_1^*]=j_0(V_1[G_1])$. It follows that if
$j_1(x)=y$, then $j_0(j_1)(j_0(x))=j_0(y)$, which is to say, $j^*\compose j_0=j_0\compose j_1$, and so the entire diagram
commutes.\end{proof}

%(Idea: Perhaps we can show $k^*$ is an isomorphism if and only if $U=U_0\times U_1$. I think we should think of this as rare, in light of
%corollary \ref{Corollary.U0oxU1NotAnUltrafilter}. I think that this will show that the class of Boolean ultrapower embeddings is not closed
%under composition, since I can't imagine that $j^*_1\compose j_0$ is a Boolean ultrapower at all, if it isn't $j_U$. After all, which
%Boolean algebra would it be?)
%
Let us now turn to the possibility that either $U_0\sqtimes U_1$ or $U_0\times U_1$ does not generate an ultrafilter on $\B_0\times\B_1$. We
begin with a necessary and sufficient condition for $U_0\times U_1$ to generate an ultrafilter.

\begin{theorem}\label{Theorem.U0oxU1UltrafilterIff}
Suppose that $U_0\of\B_0$ and $U_1\of\B_1$ are ultrafilters in $V$ on complete Boolean algebras in $V$. Then the following are equivalent:
\begin{enumerate}
 \item $U_0\times U_1$ is an ultrafilter in $\B_0\times\B_1$.
 \item $\[\check U_1\text{ generates an ultrafilter in }\RO(\check\B_1)]^{\B_0}\in U_0$.
\end{enumerate}
And in this case, the Boolean ultrapower by $U_0\times U_1$ is isomorphic to the Boolean ultrapower by $U_0*\<j_{U_0}(U_1)>$ on the
iteration algebra $\B_0*\RO(\check\B_1)$.
\end{theorem}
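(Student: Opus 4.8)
The plan is to reduce the theorem to the analysis of two-step iterations already carried out in theorem \ref{Theorem.IterationsOfBooleanUltrapowers}, by exploiting the standard fact that the product $\B_0\times\B_1$ is, as a forcing notion, the same as the iteration $\B_0*\RO(\check\B_1)$. First I would record this identification precisely: the map $(b,c)\mapsto(b,\check c)$ is an order-preserving dense embedding of $\B_0^\plus\times\B_1^\plus$ into the separative pre-order underlying $\B_0*\RO(\check\B_1)$ (density holds because any $(b,\dot c)$ can be refined to a condition deciding $\dot c$ below some $\check c$ with $c\in\B_1^\plus$, using that $\check\B_1$ is dense in $\RO(\check\B_1)$), and hence induces an isomorphism $\B_0\times\B_1\iso\B_0*\RO(\check\B_1)$ of the completions. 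Under this isomorphism a regular open $X\of\B_0^\plus\times\B_1^\plus$ corresponds to the $\B_0$-name $\dot c_X$ for the element of $\RO(\check\B_1)$ whose interpretation by the generic filter is the join of those $\check c$ with $c$ in the generic vertical section of $X$, so that the value of $\dot c_X$ forced by a condition $b$ is exactly $\vee X_b$.

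Next I would translate condition (2) through \Los's lemma \ref{Lemma.BooleanValuedLos}. Writing $j_0=j_{U_0}:V\to V_0\of V_0[G_0]=V^{\B_0}/U_0$ and $\B_1'=[\RO(\check\B_1)]_{U_0}$, the Boolean value appearing in (2) lies in $U_0$ if and only if $V_0[G_0]$ believes that $[\check U_1]_{U_0}=j_0\image U_1$ generates an ultrafilter on $\B_1'$; that is, condition (2) is equivalent to the assertion that the filter $W:=\<j_{U_0}(U_1)>$ generated by $j_0\image U_1$ is an ultrafilter on $\B_1'$ in $V_0[G_0]$.

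The heart of the argument is then to identify the two filters: I claim that, under the isomorphism of the first paragraph, the product filter $U_0\times U_1$ coincides with the iteration filter $U_0*W$, namely the filter on $\B_0*\RO(\check\B_1)$ generated by $\set{(b,\dot c)\st b\in U_0,\ [\dot c]_{U_0}\in W}$ in the sense of theorem \ref{Theorem.IterationsOfBooleanUltrapowers}. On the dense set of rectangle conditions $(b,\check c)$ both filters agree, since a direct computation from the definition of the product filter gives $\vee X_{b'}=c$ for $b'\le b$ and hence $(b,\check c)\in U_0\times U_1\iff b\in U_0\text{ and }c\in U_1$, while likewise $[\check c]_{U_0}=j_0(c)\in W\iff c\in U_1$. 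To pass from the dense set to all of $\B_0*\RO(\check\B_1)$ one checks, by a Fubini-style computation on sections, that for a general name $\dot c_X$ the defining condition $\bigvee\set{b\st \vee X_b\in U_1}\in U_0$ of the product filter is exactly the condition $[\dot c_X]_{U_0}\in W$. This matching of the section-based definition of $U_0\times U_1$ with the name-based generation of $U_0*W$ is the step I expect to be the main obstacle, since it is where the asymmetry between the two coordinates---the product measures vertical sections, the iteration names fibers---must be reconciled.

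Granting the identification, the theorem falls out of theorem \ref{Theorem.IterationsOfBooleanUltrapowers}. For (2)$\Implies$(1): if $W$ is an ultrafilter, then $U_0*W$ is an ultrafilter on the iteration, hence $U_0\times U_1$ is an ultrafilter on $\B_0\times\B_1$. For (1)$\Implies$(2): if $U_0\times U_1=U_0*W$ is an ultrafilter, then by the converse half of theorem \ref{Theorem.IterationsOfBooleanUltrapowers} it decomposes as $U_0'*W'$ with $U_0'$ its $\B_0$-projection and $W'$ an ultrafilter on $\B_1'$; computing the projections shows $U_0'=U_0$ and $W'=W$, so $W$ is an ultrafilter and (2) holds by the second paragraph. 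Finally, for the closing ``in this case'' clause, once $W$ is an ultrafilter the isomorphism $\B_0\times\B_1\iso\B_0*\RO(\check\B_1)$ carries the ultrafilter $U_0\times U_1$ to $U_0*\<j_{U_0}(U_1)>$, and isomorphic complete Boolean algebras equipped with corresponding ultrafilters produce isomorphic Boolean ultrapowers; thus the Boolean ultrapower by $U_0\times U_1$ is isomorphic to the Boolean ultrapower by $U_0*\<j_{U_0}(U_1)>$ on $\B_0*\RO(\check\B_1)$, as desired.
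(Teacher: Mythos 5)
Your overall strategy for $(2\Rightarrow 1)$ and for the closing clause---transport everything across the isomorphism $\B_0\times\B_1\cong\B_0*\RO(\check\B_1)$ and invoke theorem \ref{Theorem.IterationsOfBooleanUltrapowers}---is essentially what the paper does for those parts. But the step you flag as the main obstacle is not merely an obstacle: it is false, and the failure takes the $(1\Rightarrow 2)$ direction down with it. First, a misidentification: $[\check U_1]_{U_0}$ is $j_{U_0}(U_1)$, the image of the \emph{set} $U_1$ under the embedding, not the pointwise image $j_{U_0}\image U_1$; these differ whenever $U_1$ is infinite and the ultrapower is nontrivial, so condition (2) concerns the filter generated by $j_0(U_1)$, while your $W=\<j_0\image U_1>$ is in general strictly smaller. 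Second, with your $W$, unwinding the generators of $U_0*W$ shows it is exactly the filter generated by the rectangles $U_0\sqtimes U_1$; theorem \ref{Theorem.U0xU1GeneratesU0otimesU1Iff} shows this generates $U_0\times U_1$ only under the additional hypothesis that $j_0\image U_1$ generates $j_0(U_1)$, and theorem \ref{Theorem.U0xU1UltrafilterIffDisjointDescentSpectrum} shows that hypothesis can fail.

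Even after replacing $W$ by the filter generated by $j_0(U_1)$, the Fubini-style matching of the section-based definition of $U_0\times U_1$ with the name-based filter $U_0*W$ does not hold unconditionally. A vertical section $X_b$ of a regular open $X$ is a downward-closed subset of $\B_1^\plus$, and its join $\vee X_b$ can lie in $U_1$ even though $X_b$ contains no element of $U_1$ (for instance when $X_b$ is the open set generated by a maximal antichain missed by $U_1$); such an $X$ belongs to $U_0\times U_1$ while the corresponding name need not land in the filter generated by $j_0(U_1)$. This is precisely the configuration the paper uses to prove $(1\Rightarrow 2)$ by contraposition: from a name $\dot Y$ witnessing the failure of (2), one builds $X=\set{(b,c)\st b\forces\check c\in\dot Y}$ and checks directly that neither $X$ nor $\neg X$ is in $U_0\times U_1$. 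The identification you want between the product filter and the iteration filter is available only \emph{after} hypothesis (2) is assumed---which is how the paper proves $(2\Rightarrow 1)$ and the final clause---so it cannot serve as a common platform from which both implications fall out. To repair the proposal you would need a separate argument for $(1\Rightarrow 2)$ along the lines of the paper's contrapositive.
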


\begin{proof} ($1\implies 2$) We prove the contrapositive. If
$(2)$ fails, then there is a $\B_0$-name $\dot Y$ such that $\[\dot Y\of\check\B_1\text{ is regular open, but }\dot Y,\neg\dot Y\text{
contain no elements of }\check U_1]^{\B_0}\in U_0$, where $\neg\dot Y$ denotes the negation of $\dot Y$ in the sense of $\RO(\check\B_1)$.
Let $X=\set{(b,c)\st b\forces\check c\in\dot Y}$. Since $\dot Y$ names an open set, $X$ is also open. Observe that if $(b,c)\notin X$, then
$b\not\forces\check c\in\dot Y$, and so there is some $b_0\leq b$ and $c_0\leq c$ such that $b_0\forces\check c_0\in\neg\dot Y$. It follows
that the entire cone below $(b_0,c_0)$ in $\B_0^\plus\sqtimes\B_1^\plus$ avoids $X$. So we have proved that every neighborhood of any point
not in $X$ contains a neighborhood of the complement of $X$, and so $X$ is regular open. Thus, $X\in\B_0\times\B_1$. For any $b\in\B_0$,
since $X_b$ is a regular open subset of $\B_1$, which is complete, it follows that $X_b$ is simply the cone below $\vee X_b$ in $\B_1$.
Since $b\forces \check X_b\of\dot Y$, it follows that $\vee X_b\notin U_1$, for otherwise $b$ would force that $\dot Y$ was in the filter
generated by $\check U_1$, contrary to our assumption. The set $\set{b\st \vee X_b\in U_1}$, therefore, is empty, and consequently $X\notin
U_0\times U_1$. Now consider the negation $\neg X=\set{(b,c)\st \text{the cone below }(b,c)\text{ avoids }X}=\set{(b,c)\st b\forces \check
c\in\neg\dot Y}$, which is the interior of the complement of $X$. Since $b\forces(\neg\check X)_b\of\neg\dot Y$, it follows again that
$\bigvee(\neg X)_b\notin U_1$, since otherwise $b$ would force that $\neg\dot Y$ was in the filter generated by $\check U_1$, contrary to
our assumption. The set $\set{b\in\B_0\st \bigvee(\neg X)_b\in U_1}$ is therefore also empty, and consequently $\neg X\notin U_0\times U_1$.
So $U_0\times U_1$ is not an ultrafilter.

($2\implies 1$) Suppose that $2$ holds. Let $j_0:V\to V_0\of V_0[G_0]$ be the Boolean ultrapower by $U_0$. Applying $j_0$ to statement $2$,
it follows that $j_0(U_1)$ generates an ultrafilter $U_1^*\of\RO(j_0(\B_1))$ in $V_0[G_0]$, with corresponding Boolean ultrapower $j_1$,
giving rise to the following commutative diagram, as in theorem \ref{Theorem.IterationsOfBooleanUltrapowers}, where $j$ is the ultrapower
by the ultrafilter $U_0*U_1^*$ on the iteration algebra $\B_0*\RO(\check\B_1)$.
\begin{diagram}[height=2em,textflow]
 V & & \rTo^{j} & & \Vbar  & \of & \Vbar[\bar G_0] & \hskip-1em\of & \Vbar[\bar G_0*\bar G_1] \\
 &\rdTo_{j_0}&  & \ruTo^{j_1\restrict V_0}    & & \ruTo_{j_1} \\
 & & V_0 & \of & V_0[G_0]\\
\end{diagram}
The filters $\bar G_0*\bar G_1$ are $\Vbar$-generic for the forcing $j(\B_0*\RO(\check\B_1))$. But this forcing is equivalent to the
product forcing $j(\B_0^\plus\sqtimes\B_1^\plus)$, so by genericity $\Vbar[\bar G_0*\bar G_1]$ is the same as $\Vbar[\bar G_0\times\bar G_1]$, with $\bar G_0\times\bar G_1\of j(\B_0^\plus\times\B_1^\plus)$. Suppose that $X\in B_0\times\B_1$, a regular open subset of
$\B_0^\plus\sqtimes\B_1^\plus$. We now observe that
$$\begin{array}{rcl}
 X\in U_0\times U_1 &\iff& \vee\set{b\st \vee X_b\in U_1}\in U_0            \\
                     &\iff& j_0(\vee\set{b\st \vee X_b\in U_1})\in G_0       \\
                     &\iff& \exists b^*\in G_0\intersect j_0(\set{b\st \vee X_b\in U_1})        \\
                     &\iff& \exists b^*\in G_0\text{ such that }\vee j_0(X)_{b^*}\in j_0(U_1)    \\
                     &\iff& \exists b^*\in G_0\text{ such that }j_1(\vee j_0(X)_{b^*})=\vee j(X)_{j_1(b^*)}\in\bar G_1  \\
                     &\iff& \exists b^*\in G_0, c^*\in\bar G_1\text{ such that }c^*\in j(X)_{j_1(b^*)}        \\
                     &\iff& \exists b^*\in G_0, c^*\in\bar G_1\text{ such that }(j_1(b^*),c^*)\in j(X)       \\
%                     &\iff& j(X)\intersect\bar G_0\sqtimes\bar G_1\neq\emptyset\\
\end{array}$$
Since $j_1(G_0)=\bar G_0$, this means that $U_0\times U_1$ is precisely the pre-image by $j$ of the filter generated by $\bar G_0\times\bar
G_1=\bar G_0\sqtimes\bar G_1$ in $j(\B_0\times\B_1)$. Since this latter filter is $\Vbar$-generic, it is an ultrafilter, and so $U_0\times U_1$ is the pre-image of an
ultrafilter, and hence also an ultrafilter, as desired.

Because we know that the product forcing $\B_0\times\B_1$ is forcing equivalent to the iteration $\B_0*\RO(\check\B_1)$, it follows that
these complete Boolean algebras are canonically isomorphic. What we have argued above in essence is that hypothesis $2$ ensures that the
image of $U_0\times U_1$ under this isomorphism is the same as $U_0*U_1^*$. Thus, the Boolean ultrapower by $U_0\times U_1$ is isomorphic to
the Boolean ultrapower by $U_0*U_1^*$.\end{proof}

%Idea: I think this will also be equivalent to:  with a Boolean value in $U_0$, forcing with $\B_0$ adds a new descent through $U_1$. The
%idea is that this would produce a new Boolean value not measured by $\check U_1$. Is this right?
%
It follows quite generally that $U_0\times U_1$ need not be an ultrafilter.

\begin{corollary}\label{Corollary.U0oxU1NotAnUltrafilter}
For any atomless complete Boolean algebra $\B_1$ there is a complete Boolean algebra $\B_0$ such that for any ultrafilters $U_0\of\B_0$ and
$U_1\of\B_1$ in $V$, the product filter $U_0\times U_1$ on $\B_0\times\B_1$ is not an ultrafilter.
\end{corollary}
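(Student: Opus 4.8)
The plan is to invoke Theorem \ref{Theorem.U0oxU1UltrafilterIff}, which reduces the claim to producing a single complete Boolean algebra $\B_0$ forcing, with Boolean value one, that \emph{no} ground model ultrafilter on $\B_1$ generates an ultrafilter in $\RO(\check\B_1)$. Indeed, if $\[\check U_1\text{ generates an ultrafilter in }\RO(\check\B_1)]^{\B_0}=0$ for every ultrafilter $U_1\of\B_1$ in $V$, then this Boolean value lies in no ultrafilter $U_0\of\B_0$, and so by Theorem \ref{Theorem.U0oxU1UltrafilterIff} the product filter $U_0\times U_1$ is never an ultrafilter. Thus the whole problem becomes: arrange a forcing $\B_0$ that adds, over each ground model ultrafilter $U_1$, a new element of the completion of $\B_1$ that $U_1$ fails to decide on either side.

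The key idea is that for such a $U_1$ I would not use a ``branch'' element (a meet of a decreasing chain converging to $U_1$), since a ground model ultrafilter always detects the bounded stage at which such a chain diverges from it and thereby decides the element; instead I would \emph{randomize} the element across a maximal antichain that $U_1$ misses. Since no ground model ultrafilter on an atomless algebra is principal or $V$-generic, the corollary following Theorem \ref{Theorem.CriticalPoint} gives that the least size $\kappa$ of a maximal antichain $A=\set{a_\xi\st\xi<\kappa}\of\B_1$ not met by $U_1$ is either $\aleph_0$ or a measurable cardinal, and that $U_1$ is $\kappa$-complete. Using that $\B_1$ is atomless, split each $a_\xi=a_\xi^0\vee a_\xi^1$ into two disjoint nonzero pieces, and for $g\colon\kappa\to 2$ that is $\Add(\kappa,1)$-generic over $V$ set $a=\bigvee_{\xi<\kappa}a_\xi^{g(\xi)}\in\RO(\check\B_1)$; since $A$ is maximal this gives $\neg a=\bigvee_{\xi<\kappa}a_\xi^{1-g(\xi)}$.

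I would then show by a density argument that $U_1$ decides neither $a$ nor $\neg a$. Suppose some condition $p\in\Add(\kappa,1)$, of domain of size less than $\kappa$, forced $\check u\le\dot a$ for some $u\in U_1$. For each coordinate $\xi\notin\dom(p)$ the condition $p$ has extensions forcing $g(\xi)=0$ and forcing $g(\xi)=1$, so $p$ must force both $u\wedge a_\xi^1=0$ and $u\wedge a_\xi^0=0$; as these are statements about ground model objects they hold outright, giving $u\wedge a_\xi=0$. Hence $u\le\bigvee_{\xi\in\dom(p)}a_\xi$, a join of fewer than $\kappa$ members of $A$, each outside $U_1$; by $\kappa$-completeness this join is not in $U_1$, contradicting $u\in U_1$. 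The symmetric computation (with $1-g$ in place of $g$) rules out $u\le\neg a$. Thus neither $a$ nor $\neg a$ lies in the filter generated by $U_1$, so $\check U_1$ does not generate an ultrafilter in $\RO(\check\B_1)$.

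To make a single $\B_0$ serve all $U_1$ at once, I would take $\B_0$ to be the product $\prod_\kappa\Add(\kappa,1)$ over $\kappa$ ranging through $\aleph_0$ and the measurable cardinals $\kappa\le|\B_1|$. Each ground model ultrafilter attains its minimal missed antichain at exactly one such $\kappa$, and the corresponding factor supplies a generic of the right type; since the witnessing element $a$ depends only on that factor's generic together with ground model data, and the relation $\check u\le a$ for $u\in U_1\of V$ is absolute to $V^{\B_0}$, the density argument is settled inside the factor and is unaffected by the other coordinates. The main obstacle is precisely this uniformity across completeness degrees: a single Cohen real handles every $U_1$ of degree $\aleph_0$, but a $\kappa$-complete ultrafilter of measurable degree is immune to $\Add(\omega,1)$, since $\Add(\kappa,1)$ is $\ltkappa$-closed and adds no new small sequences, so one genuinely needs an $\Add(\kappa,1)$ factor at each possible degree. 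Checking that the product does no harm---that each relevant factor remains a genuine factor and that $\kappa$-completeness survives in the density step---is the one point requiring care.
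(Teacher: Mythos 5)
Your argument is correct, but it takes a genuinely different route from the paper's. Both proofs funnel through theorem \ref{Theorem.U0oxU1UltrafilterIff}, but the paper makes the hypothesis of that theorem fail by brute force: it takes $\B_0$ to be any forcing that collapses $|\B_1|$ to $\aleph_0$, so that in the extension the filter generated by $\check U_1$ in the recomputed completion $\RO(\check\B_1)$ is countably generated, and then cites lemma \ref{Lemma.NoUltrafilterIsLinearlyGenerated} (no nonprincipal ultrafilter on a complete Boolean algebra is countably generated; nonprincipality is automatic since $\B_1$ is atomless) to conclude it is not an ultrafilter---that is essentially the whole proof. You instead keep $\B_0$ mild and directly manufacture an element of the new completion that $U_1$ cannot decide, by splitting a missed maximal antichain and randomizing the halves with an $\Add(\kappa,1)$-generic; the corollary following theorem \ref{Theorem.CriticalPoint} (the degree of genericity is $\aleph_0$ or measurable) together with the resulting $\kappa$-completeness of $U_1$ is what lets a set-sized product over the possible degrees work uniformly. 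The points you flag as needing care do check out: rectangles are dense in the product, so the density computation takes place in a single factor and needs no mutual genericity; the statements $u\wedge a_\xi^i=0$ concern only ground-model elements of $\B_1$ and finite meets, which are preserved in the new completion; and the $\kappa$-completeness of $U_1$ follows from its meeting every maximal antichain of size less than $\kappa$ by the difference-antichain argument from theorem \ref{Theorem.WellFoundedEquivalents}. What your route buys is that no collapsing is needed---a single Cohen real already suffices whenever $|\B_1|$ lies below the least measurable cardinal---at the cost of a longer argument and an appeal to the measurability dichotomy; the paper's route buys brevity by reusing the linear-generation lemma.
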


\begin{proof} Suppose that $\B_1$ is a given atomless complete
Boolean algebra. Let $\B_0$ be any complete Boolean algebra necessarily forcing that $\B_1$ becomes countable (this is much more than
required). Suppose that $U_0\of\B_0$ and $U_1\of\B_1$ are ultrafilters in $V$. In any forcing extension $V[H]$ by a $V$-generic filter
$H\of\B_0$, the filter $U_1$ becomes countable. In particular, the filter in $\RO(\B_1)^{V[G_0]}$ generated by $U_1$ is generated by a
countable set, and so by lemma \ref{Lemma.NoUltrafilterIsLinearlyGenerated}, it is not an ultrafilter. In short, $U_1$ does not generate an
ultrafilter in $\RO(\B_1)$ in $V[H]$. By theorem \ref{Theorem.U0oxU1UltrafilterIff}, it follows that $U_0\times U_1$ is not an ultrafilter.
%
%Perhaps it would be better to state a better version of $\B_0$ for this argument, since I think it is sufficient that $\B_0$ adds a new
%subset to an antichain in $\B_1$, which is much easier than collapsing all of $\B_1$...
\end{proof}

\begin{theorem}\label{Theorem.U0xU1GeneratesU0otimesU1Iff}
Suppose that $U_0\of\B_0$ and $U_1\of\B_1$ are ultrafilters in $V$ on complete Boolean algebras in $V$, and that $j_0:V\to V_0$ is the
Boolean ultrapower by $U_0$. Then the following are equivalent:
\begin{enumerate}
 \item $U_0\sqtimes U_1$ generates the product filter $U_0\times U_1$ in $\B_0\times\B_1$.
 \item $j_0\image U_1$ generates the ultrafilter $j_0(U_1)$ in $V_0$.
\end{enumerate}
\end{theorem}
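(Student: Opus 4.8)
The plan is to build an order-preserving surjection $X\mapsto y_X$ from the product algebra $\B_0\times\B_1=\RO(\B_0^\plus\times\B_1^\plus)$ onto $j_0(\B_1)$ that carries the product filter $U_0\times U_1$ exactly onto $j_0(U_1)$ and carries the rectangles generating $U_0\sqtimes U_1$ exactly onto $j_0\image U_1$. Once this single map is in place, the equivalence falls out by chasing generators through it.

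To set it up, for a regular open $X\of\B_0^\plus\times\B_1^\plus$ recall from the proof of theorem \ref{Theorem.U0oxU1UltrafilterIff} that each row fiber $X_b=\set{c\st (b,c)\in X}$ is a regular open subset of the complete algebra $\B_1$, hence the cone below $\vee X_b$; the symmetric argument shows each column fiber $\set{b\st (b,c)\in X}$ is likewise the cone below its join in $\B_0$. The map $b\mapsto\vee X_b$ is antitone, so it determines a $\B_0$-name $\dot e_X$ for an element of $\check\B_1$ with $\[\check c\leq\dot e_X]^{\B_0}=\vee\set{b\st (b,c)\in X}$, and I set $y_X=[\dot e_X]_{U_0}\in j_0(\B_1)$. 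This assignment is order preserving (if $X\of X'$ then $\vee X_b\leq\vee X'_b$ for all $b$, hence $\dot e_X\leq\dot e_{X'}$) and surjective onto $j_0(\B_1)$ (given $y=[\dot e]_{U_0}$, the set $X=\set{(b,c)\st b\forces\check c\leq\dot e}$ is regular open with $y_X=y$). Two computations via \Los's lemma (lemma \ref{Lemma.BooleanValuedLos}) and the elementarity of $j_0$ anchor everything. First, $\[\dot e_X\in\check U_1]^{\B_0}=\vee\set{b\st \vee X_b\in U_1}$, so by \Los\ $y_X\in j_0(U_1)\iff \vee\set{b\st \vee X_b\in U_1}\in U_0\iff X\in U_0\times U_1$. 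Second, for $b\in U_0$ the rectangle $(b,c)$ has $\[\dot e_{(b,c)}=\check c]^{\B_0}\geq b\in U_0$, so $y_{(b,c)}=[\check c]_{U_0}=j_0(c)$.

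Now I run the two directions. For $(1)\implies(2)$, take any $y\in j_0(U_1)$; by surjectivity write $y=y_X$, so $X\in U_0\times U_1$, and by (1) there is a rectangle $(b,c)$ with $b\in U_0$, $c\in U_1$, and $(b,c)\leq X$. Applying the order-preserving map gives $j_0(c)=y_{(b,c)}\leq y_X=y$, so $j_0\image U_1$ generates $j_0(U_1)$. For $(2)\implies(1)$, take $X\in U_0\times U_1$, so $y_X\in j_0(U_1)$, and by (2) fix $c\in U_1$ with $j_0(c)\leq y_X$. By \Los, $b:=\[\check c\leq\dot e_X]^{\B_0}\in U_0$, and by the definition of $\dot e_X$ this $b$ equals $\vee\set{b'\st (b',c)\in X}$. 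Since the column fiber is the cone below $b$, we get $(b',c)\in X$ for every $b'\leq b$, hence $(b',c')\in X$ for all $b'\leq b$ and $c'\leq c$ by openness. Thus the rectangle $(b,c)$ lies below $X$ with $b\in U_0$ and $c\in U_1$, so $U_0\sqtimes U_1$ generates $U_0\times U_1$.

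The main obstacle I anticipate is the back-translation in $(2)\implies(1)$: turning the algebraic inequality $j_0(c)\leq y_X$ in $j_0(\B_1)$ into an honest rectangle sitting below $X$ in the product algebra. This is precisely where I must use regular-openness of the fibers in both coordinates, so that $\[\check c\leq\dot e_X]^{\B_0}$ is genuinely the join of a cone of conditions each putting $(\cdot,c)$ into $X$ rather than merely a supremum not attained below, together with a careful check that $\dot e_X$ really reads off the fiber sups $\vee X_b$. The other routine-but-fiddly point is confirming $X\mapsto y_X$ is well defined and onto $j_0(\B_1)$ itself (rather than some completion in $V_0[G_0]$); this holds because each $\vee X_b$ is computed in $V$, where $\B_1$ is complete, so $\dot e_X$ names an element of $\check\B_1$ and $y_X$ lives in $j_0(\B_1)$ inside $V_0$.
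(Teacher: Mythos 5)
Your direction $(1\implies 2)$ is fine and is essentially the paper's argument: given $y=[\dot e]_{U_0}\in j_0(U_1)$ you form $X=\set{(b,c)\st b\forces\check c\leq\dot e}$, check $X\in U_0\times U_1$, and extract a rectangle. The gap is in $(2\implies 1)$, and it sits exactly where you flagged your ``routine-but-fiddly point'': the map $X\mapsto y_X$ is \emph{not} well defined on all of $\B_0\times\B_1$ with values in $j_0(\B_1)$. For a general regular open $X$, the antitone family $b\mapsto\vee X_b$ does not cohere into a $\B_0$-name for an element of $\check\B_1$. Under a generic $G_0\of\B_0$, the value of your $\dot e_X$ would have to be a single $e\in\B_1$ whose lower cone is $\bigcup_{b\in G_0}\downarrow(\vee X_b)$; this is a directed union of cones, i.e.\ an ideal of $\B_1$, and it is principal only when the supremum $\bigvee_{b\in G_0}\vee X_b$ is attained at some $b\in G_0$. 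What the data $b\mapsto\vee X_b$ really determines is a name for a regular open \emph{subset} of $\check\B_1$, that is, an element of the recompletion $\RO(\check\B_1)$ computed in $V^{\B_0}$, which properly contains $\check\B_1$ in general. Indeed, if every $X$ admitted your $\dot e_X$ with $\[\check c\leq\dot e_X]=\vee X^c$, then every regular open subset of $\check\B_1$ named in $V^{\B_0}$ would be forced to be the cone below an element of $\check\B_1$; since $\check U_1$ remains an ultrafilter on $\check\B_1$, condition (2) of theorem \ref{Theorem.U0oxU1UltrafilterIff} would then hold for every $\B_0$, contradicting corollary \ref{Corollary.U0oxU1NotAnUltrafilter} (take $\B_0$ forcing $\B_1$ to become countable). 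So your first key computation, $y_X\in j_0(U_1)\iff X\in U_0\times U_1$, has no meaning for precisely the $X$'s you must handle in $(2\implies 1)$; note also that "$\dot e_X\in\check U_1$" only has the Boolean value you compute when $\dot e_X$ genuinely names a member of $\check\B_1$.

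The repair is to stop packaging all the fibers into one element of $j_0(\B_1)$ and instead evaluate at a single generic condition, which is what the paper does. Given $X\in U_0\times U_1$, the set $S=\set{b\st\vee X_b\in U_1}$ is open with $\vee S\in U_0$, so $j_0(\vee S)\in G_0$, and by $V_0$-genericity of $G_0$ there is $b^*\in G_0\intersect j_0(S)$; now $\vee j_0(X)_{b^*}$ \emph{is} an honest element of $j_0(\B_1)$ lying in $j_0(U_1)$. Hypothesis (2) yields $c\in U_1$ with $j_0(c)\leq\vee j_0(X)_{b^*}$, hence $(b^*,j_0(c))\in j_0(X)$, hence $j_0(\vee X^c)\geq b^*$ lies in $G_0$ and so $b=\vee X^c\in U_0$; regularity of the column fiber then gives $(b,c)\in X$, which is exactly the back-translation you were after.
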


\begin{proof} $(1\implies 2)$ Suppose that $U_0\sqtimes U_1$
generates $U_0\times U_1$, and consider any $x\in j_0(U_1)$. This object is represented by a $\B_0$-name $\dot x$ such that $x=[\dot
x]_{U_0}$ and $\[\dot x\in\check U_1]^{\B_0}=1$. Let $X=\set{(b,c)\in\B_0^\plus\times\B_1^\plus\st b\forces \check c\leq\dot x}$. This is
clearly open, and it is regular because if $(b,c)\notin X$, then $b\not\forces\check c\leq\dot x$, so there are $b'\leq b$ and $c'\leq c$
such that $b'\forces \check c\perp\dot x$, which means the cone below $(b',c')$ avoids $X$. For each $c\in U_1$, let $b_c=\[\dot x=\check
c]^{\B_0}$, and observe that $\bigvee_{c\in U_1}b_c=\[\dot x\in \check U_1]=1$. Note that $(b_c,c)\in X$ for every $c$ for which $b_c\neq
0$. In particular, $c\leq \vee X_{b_c}\in U_1$ in this case, and so $\vee\set{b\st \vee X_b\in U_1}\geq\bigvee_{c\in U_1}b_c=1$. Thus,
$X\in U_0\times U_1$. By $1$, this means that this is some $b\in U_0$ and $c\in U_1$ with $(b,c)\in X$, and so $b\forces \check c\leq\dot
x$. This implies $[\check c]_{U_0}\leq [\dot x]_{U_0}$ in $[\check\B_1]_{U_0}$, or in other words, $j_0(c)\leq x$ in $j_0(\B_1)$. So
$j_0\image U_1$ generates $j_0(U_1)$, as desired.

$(2\implies 1)$ Conversely, suppose that $j_0\image U_1$ generates $j_0(U_1)$ in $j_0(\B_1)$ in $V_0$. Consider any $X\in U_0\times U_1$, so
that $X$ is a regular open subset of $\B_0^\plus\times\B_1^\plus$ and $\vee\set{b\st \vee X_b\in U_1}\in U_0$. Since this outer join is in
$U_0$, it follows that $j_0(\vee\set{b\st \vee X_b\in U_1})\in G_0$. Since $G_0$ is $V_0$-generic for $j_0(\B_0)$, there is some $b^*\in
G_0$ with $\vee j_0(X)_{b^*}\in j_0(U_1)$. By $2$, there is some $c\in U_1$ with $j_0(c)\leq\vee j_0(X)_{b^*}$. Thus, $(b^*,j_0(c))\in
j_0(X)$, and so $b^*\in j_0(X^c)$, slicing now horizontally. Thus, $j_0(\vee X^c)\in G_0$ and so $\vee X^c\in U_0$. Let $b=\vee X^c$. Since
$X$ is regular, it follows that $b\in X^c$ and so $(b,c)\in X$. So $X$ contains an element of $U_0\sqtimes U_1$, as desired.\end{proof}

The following theorem is an analogue of mutual genericity of generic filters, although the statements in the theorem can hold for
non-generic filters. Note that although the previous theorem only required $j_0\image U_1$ to generate an ultrafilter in $V_0$, here we ask
that it generate an ultrafilter on the completion of $j_0(\B_1)$ in $V_0[G_0]$. In the below, $U_0\rtimes U_1$ refers to the dual product,
defined by $X\in U_0\rtimes U_1$ if and only if $\vee\set{c\st \vee X^c\in U_0}\in U_1$, where $X^c=\set{b\st (b,c)\in X}$. This amounts
merely to taking the factors in the other order, and so $U_0\rtimes U_1\cong U_1\times U_0$.

\begin{theorem} Suppose that $U_0\of\B_0$ and $U_1\of\B_1$ are ultrafilters in $V$ on complete Boolean algebras in $V$, with corresponding
Boolean ultrapowers $j_0:V\to V_0\of V_0[G_0]=V^{\B_0}/U_0$ and $j_1:V\to V_1\of V_1[G_1]=V^{\B_1}/U_1$. Then the following are equivalent:
\begin{enumerate}
 \item $U_0\cross U_1$ is an ultrafilter in $\B_0\times\B_1$.
 \item $j_0\image U_1$ generates an ultrafilter on $\RO(j_0(\B_1))$ in $V_0[G_0]$.
 \item $j_1\image U_0$ generates an ultrafilter on $\RO(j_1(\B_0))$ in $V_1[G_1]$.
\end{enumerate}
And in this case, the ultrafilter generated by $U_0\sqtimes U_1$ is precisely $U_0\times U_1$, which in this case is equal to $U_0\rtimes U_1$.
\end{theorem}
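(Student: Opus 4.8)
The plan is to derive the three‑way equivalence by combining the two preceding theorems, after first unpacking condition~(2) into two separate demands. Note that $j_0(\B_1)$ is complete in $V_0$ and sits as a dense subalgebra of its completion $\RO(j_0(\B_1))$ as computed in $V_0[G_0]$, so finite meets, negations, and the order among its elements are absolute between $V_0$ and $V_0[G_0]$. Hence the filter $W_0$ generated by $j_0\image U_1$ on $\RO(j_0(\B_1))$ meets $j_0(\B_1)$ exactly in the filter that $j_0\image U_1$ generates inside $V_0$; and if $W_0$ is an ultrafilter, then $W_0\intersect j_0(\B_1)$ is decisive and therefore already an ultrafilter on $j_0(\B_1)$ in $V_0$ containing $j_0\image U_1$, which forces it to coincide with $j_0(U_1)$. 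Running this observation in both directions, I would show that condition~(2) is equivalent to the conjunction of \emph{(a)}~$j_0\image U_1$ generates $j_0(U_1)$ in $V_0$, and \emph{(b)}~$j_0(U_1)$ generates an ultrafilter on $\RO(j_0(\B_1))$ in $V_0[G_0]$. By Theorem~\ref{Theorem.U0xU1GeneratesU0otimesU1Iff}, statement~(a) is exactly the assertion that $U_0\sqtimes U_1$ generates the product filter $U_0\times U_1$; and by Theorem~\ref{Theorem.U0oxU1UltrafilterIff}, statement~(b), being the $U_0$‑translation via \Los's lemma of ``$\check U_1$ generates an ultrafilter in $\RO(\check\B_1)$'', is exactly the assertion that $U_0\times U_1$ is an ultrafilter. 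Thus $(2)$ is equivalent to $(1)$ together with the claim that $U_0\sqtimes U_1$ generates $U_0\times U_1$, and in particular $(2)\implies(1)$ is immediate.

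The substantive content, and indeed the final displayed claim of the theorem, is therefore the reverse implication $(1)\implies(2)$: whenever the product filter $U_0\times U_1$ is an ultrafilter it is already generated by the rectangles $U_0\sqtimes U_1$. To attack this I would pass to the Boolean ultrapower $j_U:V\to\Vbar\of\Vbar[\bar G]$ by $U=U_0\times U_1$. Since $\B_0\times\B_1=\RO(\B_0^\plus\cross\B_1^\plus)$ is genuine product forcing, $j_U(\B_0\times\B_1)$ is the product forcing $\RO(j_U(\B_0)^\plus\cross j_U(\B_1)^\plus)$ over $\Vbar$, and the $\Vbar$‑generic filter $\bar G=\bar G_0\cross\bar G_1$ furnished by Theorem~\ref{Theorem.ExtenderRepresentation} splits into mutually $\Vbar$‑generic factors; by the standard mutual‑genericity fact for product forcing, $\bar G$ is generated by its rectangles $\set{(\bar b,\bar c)\st \bar b\in\bar G_0,\ \bar c\in\bar G_1}$. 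Given any $X\in U$ we have $j_U(X)\in\bar G$, so some such rectangle lies below $j_U(X)$; and using the seed/direct‑limit representation, where $\bar b=b_{A_0}$ and $\bar c=b_{A_1}$ are the unique generic selections from $j_U(A_0)$ and $j_U(A_1)$ for maximal antichains $A_0\of\B_0$, $A_1\of\B_1$, I would aim to pull this containment back through \Los's theorem for the induced product measure $U_{A_0\times A_1}$ and extract an honest ground‑model rectangle $(b,c)$ with $b\in U_0$, $c\in U_1$, and $(b,c)\leq X$. This would show $U_0\sqtimes U_1$ generates $U_0\times U_1$, completing $(1)\iff(2)$.

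The equivalence $(1)\iff(3)$ then follows by the left--right symmetric form of the same argument, with $\B_1,U_1$ in the role of $\B_0,U_0$ and the dual product $U_0\rtimes U_1\iso U_1\times U_0$ in place of $U_0\times U_1$. Once both symmetric forms of ``$(1)\implies$ rectangles generate'' are in hand, the common rectangle base $U_0\sqtimes U_1=U_1\sqtimes U_0$ generates a single ultrafilter contained in each of the proper product filters $U_0\times U_1$ and $U_0\rtimes U_1$, which forces all of these to coincide and yields the concluding equalities $U_0\times U_1=U_0\rtimes U_1$, both equal to the filter generated by $U_0\sqtimes U_1$. I expect the genuine obstacle to be precisely the descent in the middle paragraph: pulling back the rectangle lands me only on a $U$‑large \emph{set} of sub‑rectangles of $X$ indexed by $A_0\times A_1$, and the real work is to compress this, via the product‑measure definition of $U_0\times U_1$ and the fact that the seeds are the unique generic representatives of the identity on $A_0$ and $A_1$, into a \emph{single} rectangle drawn from $U_0\sqtimes U_1$. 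This compression is exactly the one nontrivial implication $(b)\implies(a)$, and it is the step where the hypothesis that $U_0\times U_1$ is an honest ultrafilter, and not merely a filter, must be used in full strength.
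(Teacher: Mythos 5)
Your first paragraph is essentially the paper's own proof, and it already finishes the job: you correctly reduce condition (2) to the conjunction of \emph{(a)} ``$j_0\image U_1$ generates $j_0(U_1)$ in $V_0$'' and \emph{(b)} ``$j_0(U_1)$ generates an ultrafilter on $\RO(j_0(\B_1))$ in $V_0[G_0]$'' (your trace-to-the-dense-subalgebra argument for this is sound), and you correctly translate these via theorems \ref{Theorem.U0xU1GeneratesU0otimesU1Iff} and \ref{Theorem.U0oxU1UltrafilterIff} into ``$U_0\sqtimes U_1$ generates the product filter $U_0\times U_1$'' and ``$U_0\times U_1$ is an ultrafilter.'' The conjunction of those two statements is exactly the assertion that $U_0\sqtimes U_1$ generates an ultrafilter on $\B_0\times\B_1$, and \emph{that} is what condition (1) means: the paper writes $U_0\cross U_1$ for the set of rectangles (as it does again in the proof of theorem \ref{Theorem.U0xU1UltrafilterIffDisjointDescentSpectrum}), which is confirmed both by the final displayed claim (``the ultrafilter generated by $U_0\sqtimes U_1$\dots'') and by the advertised analogy with mutual genericity. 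Under that reading, ($1\iff 2$) is complete after your first paragraph; ($1\iff 3$) is immediate because (1) is symmetric under swapping the factors; and the final claim follows because the ultrafilter generated by $U_0\sqtimes U_1$ is sandwiched inside each of the proper filters $U_0\times U_1$ and $U_0\rtimes U_1$, forcing all three to coincide.

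The gap is that you instead read (1) as ``the product filter $U_0\times U_1$ is an ultrafilter'' and thereby take on the extra burden of proving that this alone implies the rectangles generate it. That implication is not what the theorem asserts, is nowhere proved in the paper, and your argument for it is not a proof: the seed/pullback sketch in your middle paragraph stops exactly at the step you yourself flag as ``the genuine obstacle,'' namely compressing a $U$-large family of sub-rectangles of $X$ into a single rectangle from $U_0\sqtimes U_1$. There is no reason to expect this to go through: theorem \ref{Theorem.U0xU1UltrafilterIffDisjointDescentSpectrum} shows that $U_0\sqtimes U_1$ fails to generate $U_0\times U_1$ whenever the descent spectra share an order type (the ``triangle'' element lies in $U_0\times U_1$ but above no rectangle), and nothing in your argument shows that ultrafilterhood of $U_0\times U_1$ excludes that situation. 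Your derivation of ($1\iff 3$) inherits the same problem, since under your reading condition (1) is not manifestly symmetric---$U_0\times U_1$ and $U_0\rtimes U_1$ are different filters in general, and their equality is part of what is being proved. The fix is simple: keep your first paragraph, identify its conclusion with condition (1) as intended, and discard the middle paragraph entirely.
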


\begin{proof} We argue first that ($1\iff 2$). Since $U_0\sqtimes
U_1$ is contained within the filter $U_0\times U_1$, it is clear that $U_0\sqtimes U_1$ generates an ultrafilter on $\B_0\times\B_1$ if and
only if $U_0\sqtimes U_1$ generates $U_0\times U_1$ and $U_0\times U_1$ generates an ultrafilter on $\B_0\times\B_1$. By theorem
\ref{Theorem.U0xU1GeneratesU0otimesU1Iff}, the former requirement is equivalent to the assertion that $j_0\image U_1$ generates $j_0(U_1)$
on $j_0(\B_1)$ in $V_0$. By theorem \ref{Theorem.U0oxU1UltrafilterIff}, the latter requirement is equivalent to the assertion that
$j_0(U_1)$ in turn generates an ultrafilter on $\RO(j_0(\B_1))$ in $V_0[G_0]$. Altogether, therefore, we see that $U_0\sqtimes U_1$ generates
an ultrafilter if and only if $j_0\image U_1$ generates an ultrafilter on $\RO(j_0(\B_1))$ in $V_0[G_0]$, establishing ($1\iff 2$). Because
statement $1$ is respected by the natural automorphism of $\B_0\times\B_1$ with $\B_1\times\B_0$, the dual equivalence ($1\iff 3$) follows
simply by swapping the order of the factors. Finally, if $U_0\sqtimes U_1$ generates an ultrafilter on $\B_0\times\B_1$, then because it is
included in the filters $U_0\times U_1$ and $U_0\rtimes U_1$, these must all be the same ultrafilter.\end{proof}

An equivalent formulation of (2) asserts that whenever $\dot X$ is a $\B_0$-name for which $\[\dot X\of\check\B_1\text{ is regular
open}]^{\B_0}=1$, then there is some $c\in U_1$ with $\[\check c\in\dot X]^{\B_0}\in U_0$ or $\[\check c\in\neg\dot X]^{\B_0}\in U_0$.

Let us now give an analysis of whether $U_0\sqtimes U_1$ generates an ultrafilter in terms of the descent spectrums of $U_0$ and $U_1$.

\begin{theorem}\label{Theorem.U0xU1UltrafilterIffDisjointDescentSpectrum}
If\/ $U_0\of\B_0$ and $U_1\of\B_1$ are ultrafilters on complete Boolean algebras whose descent spectrums have a common order type $\kappa$,
then $U_0\sqtimes U_1$ does not generate an ultrafilter on $\B_0\cross\B_1$. Indeed, it does not even generate $U_0\times U_1$.
\end{theorem}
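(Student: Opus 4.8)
The plan is to generalize the classical counterexample from the power-set case: if $U_0,U_1$ are nonprincipal ultrafilters on $\omega$, the ``upper triangle'' $\set{(m,n)\st m<n}$ lies in the product $U_0\times U_1$ but is refined by no rectangle $A\cross B$ with $A\in U_0$, $B\in U_1$, since any such rectangle meets both the triangle and its complement. A common descent order type $\kappa$ for $U_0$ and $U_1$ is exactly what lets us build the analogous diagonal obstruction, by running one descent against the other along the shared index $\kappa$.

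Concretely, I would fix a descent $\<b_\alpha\st\alpha<\kappa>$ through $U_0$ with difference antichain $\set{d_\alpha\st\alpha<\kappa}$, $d_\alpha=b_\alpha-b_{\alpha+1}$, and a descent $\<c_\alpha\st\alpha<\kappa>$ through $U_1$. (Since a descent has meet $0$ while all its terms lie in the ultrafilter, the order type $\kappa$ is necessarily a limit ordinal, so $\alpha+1<\kappa$ throughout.) Working in $\B_0\times\B_1=\RO(\B_0^\plus\times\B_1^\plus)$, I would define $X=\bigvee_{\alpha<\kappa}(d_\alpha,c_{\alpha+1})$, the join of the rectangles whose first coordinate is the $\alpha$-th difference and whose second coordinate is the $U_1$-tail past $\alpha$; this is the regular-open rendering of the upper triangle. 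The first thing to verify is $X\in U_0\times U_1$: for any $b\le d_\alpha$ the slice $X_b=\set{c\st(b,c)\in X}$ contains the whole cone below $c_{\alpha+1}$, so $\vee X_b\geq c_{\alpha+1}\in U_1$; hence $\set{b\st\vee X_b\in U_1}$ has join $\bigvee_\alpha d_\alpha=1\in U_0$, giving $X\in U_0\times U_1$ by definition of the product filter.

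The heart of the argument is the claim that \emph{every} rectangle $(b,c)$ with $b\in U_0$, $c\in U_1$ meets both $X$ and $\neg X$, so that no rectangle lies below either. For $(b,c)\wedge X\neq 0$: since $c,c_{\alpha+1}\in U_1$ we have $c\wedge c_{\alpha+1}\neq 0$ for every $\alpha$, and since $b\in U_0$ and $\bigvee_\alpha d_\alpha=1$ some $b\wedge d_\alpha\neq 0$; then $(b\wedge d_\alpha,\,c\wedge c_{\alpha+1})\neq 0$ sits below both $(b,c)$ and $(d_\alpha,c_{\alpha+1})\le X$. For $(b,c)\wedge\neg X\neq 0$ I first record that $(d_\alpha,\neg c_{\alpha+1})\le\neg X$: using the infinite distributive law $a\wedge\bigvee_i x_i=\bigvee_i(a\wedge x_i)$ valid in any complete Boolean algebra, the meet $(d_\alpha,\neg c_{\alpha+1})\wedge(d_\gamma,c_{\gamma+1})$ vanishes for $\gamma\neq\alpha$ (disjoint first coordinates) and for $\gamma=\alpha$ (disjoint second coordinates), so the meet with $X$ is $0$. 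Now since $\bigwedge_\alpha c_\alpha=0<c$, there is $\gamma_0$ with $c\wedge\neg c_{\gamma_0}\neq 0$, whence $c\wedge\neg c_{\alpha+1}\neq 0$ for all $\alpha\geq\gamma_0$; and because $b\wedge b_{\gamma_0}\in U_0$ is nonzero and $b_{\gamma_0}=\bigvee_{\beta\geq\gamma_0}d_\beta$, some $b\wedge d_\alpha\neq 0$ with $\alpha\geq\gamma_0$. For that $\alpha$, the element $(b\wedge d_\alpha,\,c\wedge\neg c_{\alpha+1})\neq 0$ lies below $(b,c)$ and below $(d_\alpha,\neg c_{\alpha+1})\le\neg X$.

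With the claim in hand the conclusions are immediate. Any finite meet of elements of $U_0\sqtimes U_1$ is again a rectangle $(b,c)$ with $b\in U_0$, $c\in U_1$, so the filter generated by $U_0\sqtimes U_1$ consists of the elements lying above some such rectangle; since $X\in U_0\times U_1$ lies above no rectangle, $U_0\sqtimes U_1$ does not generate $U_0\times U_1$. Moreover neither $X$ nor $\neg X$ lies above a rectangle, so neither belongs to the generated filter, which therefore cannot be an ultrafilter. I expect the only delicate point to be the bookkeeping in the regular-open algebra $\RO(\B_0^\plus\times\B_1^\plus)$---checking that the slices of the regularized join $X$ are as described and that the distributivity computation legitimately yields $(d_\alpha,\neg c_{\alpha+1})\wedge X=0$---together with the observation that the shared order type $\kappa$ is exactly what aligns the two difference/tail families so that the diagonalization goes through.
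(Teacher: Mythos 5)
Your proof is correct and follows essentially the same route as the paper's: run the two descents against each other to form the "upper triangle" element of $\B_0\times\B_1$, check that its vertical slices are $U_1$-large so it lies in $U_0\times U_1$, and show every rectangle from $U_0\sqtimes U_1$ meets both it and its complement. The only differences are cosmetic — the paper normalizes to strict descents of regular length and works with the product difference antichain $D\cross E$, while you work directly with the tails $c_{\alpha+1}$ and verify $(d_\alpha,\neg c_{\alpha+1})\leq\neg X$ by distributivity — and your version of the "meets both sides" step is if anything more carefully spelled out.
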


\begin{proof} Suppose that $U_0$ and $U_1$ admit descents
$\<b_\alpha\st\alpha<\kappa>$ and $\<c_\alpha\st\alpha<\kappa>$, respectively, of common order type $\kappa$. By moving to cofinal
subsequences, we may assume that $\kappa$ is regular and that these are strict descents. As in theorem
\ref{Theorem.DescentIffDiscontinuityPoint}, the difference elements $d_\alpha=b_\alpha-b_{\alpha+1}$ and $e_\alpha=c_\alpha-c_{\alpha+1}$
form maximal antichains $D=\set{d_\alpha\st\alpha<\kappa}\of\B_0$ and $E=\set{e_\alpha\st\alpha<\kappa}\of\B_1$. It is easy to see that
$D\cross E=\set{(d_\alpha,e_\beta)\st \alpha,\beta<\kappa}$ is a maximal antichain in $\B_0^\plus\cross\B_1^\plus$. In the completion
$\B_0\times\B_1$, let $x=\vee\set{(d_\alpha,e_\beta)\st \alpha<\beta}$, which corresponds to the upper left triangle. Since $D\cross E$ was
a maximal antichain, it follows that $\neg x=\vee\set{(d_\alpha,e_\beta)\st \alpha\geq\beta}$, the lower right triangle. We claim that
neither $x$ nor $\neg x$ are in the filter generated by $U_0\sqtimes U_1$. Note that every $b\in U_0$ has nonzero meet with every
$b_\beta=\bigvee_{\alpha\geq\beta}d_\alpha$, and consequently has nonzero meet with unboundedly many difference elements $d_\alpha$.
Similarly, every $c\in U_1$ has nonzero meet with unboundedly many difference elements $e_\beta$. Thus, every pair $(b,c)\in U_0\cross U_1$
has nonzero meet with both $x$ and $\neg x$. Consequently, the filter generated by $U_0\cross U_1$ in $\B_0\times\B_1$ does not decide
$\singleton{x,\neg x}$, and so it is not an ultrafilter. To prove the final claim of the theorem, we observe that for every $\alpha$, the
join $\vee\set{e_\beta\st\alpha<\beta}$ is precisely $c_{\alpha+1}$, which is $U_1$, meaning that all the vertical slices in the set
representing $x$ are large, and so $x\in U_0\times U_1$.\end{proof}

\begin{corollary}\label{Corollary.U0xU1NotAnUltrafilter}
For any pair $\B_0$ and $\B_1$ of infinite complete Boolean algebras, there are ultrafilters $U_0\of\B_0$ and $U_1\of\B_1$ such that
$U_0\sqtimes U_1$ does not generate an ultrafilter on $\B_0\times\B_1$.
\end{corollary}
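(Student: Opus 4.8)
The plan is to invoke Theorem \ref{Theorem.U0xU1UltrafilterIffDisjointDescentSpectrum}, which reduces the problem to finding ultrafilters $U_0\of\B_0$ and $U_1\of\B_1$ whose descent spectrums share a common order type. The natural candidate for that common type is $\kappa=\omega$, and so the whole corollary will follow once I establish that every infinite complete Boolean algebra admits an ultrafilter having a descent of order type $\omega$.

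To produce such an ultrafilter on an infinite complete Boolean algebra $\B$, I would first fix a countably infinite maximal antichain $A=\set{a_n\st n<\omega}\of\B$, which exists because every infinite Boolean algebra contains one (this is the same fact used earlier, in the proof that every infinite complete Boolean algebra admits non-well-founded ultrafilters). Set $b_n=\bigvee_{m\geq n}a_m$, the join of the $n$-th tail of $A$. Then $b_0=1$, the sequence is strictly descending since $b_n-b_{n+1}=a_n\neq 0$ (the $a_m$ being pairwise disjoint), and continuity at limits is vacuous below $\omega$. The meet $\bigwedge_n b_n$ is zero: any $x$ lying below every $b_n$ is disjoint from each $a_k$, because $x\leq b_{k+1}$ and $b_{k+1}\wedge a_k=0$, hence $x$ is disjoint from $\bigvee_k a_k=1$, forcing $x=0$. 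Because the $b_n$ are decreasing and nonzero, they have the finite intersection property and so extend to an ultrafilter $U$ on $\B$ with $b_n\in U$ for all $n$. The sequence $\<b_n\st n<\omega>$ is then a strict descent through $U$ of order type $\omega$, placing $\omega$ in the descent spectrum of $U$.

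Applying this construction separately to $\B_0$ and to $\B_1$ yields ultrafilters $U_0\of\B_0$ and $U_1\of\B_1$ whose descent spectrums both contain $\omega$. Theorem \ref{Theorem.U0xU1UltrafilterIffDisjointDescentSpectrum}, taken with the common order type $\kappa=\omega$, then gives at once that $U_0\sqtimes U_1$ does not generate an ultrafilter on $\B_0\times\B_1$, which is exactly the assertion of the corollary.

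I do not anticipate a genuine obstacle here: essentially all of the difficulty has already been absorbed into Theorem \ref{Theorem.U0xU1UltrafilterIffDisjointDescentSpectrum}, and the remaining content lies entirely in the elementary descent construction. The only two points requiring any care---that $\bigwedge_n b_n=0$ and that the tails extend to an ultrafilter---are routine once the countable maximal antichain is in hand.
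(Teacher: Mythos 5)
Your proposal is correct and follows essentially the same route as the paper: take a countable maximal antichain in each algebra, form the descending sequence of tails with meet zero, extend to ultrafilters containing all the tails, and invoke Theorem~\ref{Theorem.U0xU1UltrafilterIffDisjointDescentSpectrum} with common order type $\omega$. You supply a few more of the routine verifications (the meet being zero, the finite intersection property) than the paper does, but the argument is the same.
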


\begin{proof} If $\B_0$ and $\B_1$ are infinite, then there are
countable maximal antichains $\set{a_n\st n<\omega}\of\B_0$ and $\set{b_n\st n<\omega}\of\B_1$, from which it is easy to construct
countable descending sequences $c_m=\bigvee_{n\geq m}a_n$ and $d_m=\bigvee_{n\geq m}b_n$ with $\bigwedge_m c_m=0$ and $\bigwedge_m d_m=0$.
Any ultrafilters $U_0$ and $U_1$ with $c_m\in U_0$ and $d_m\in U_1$ for all $m<\omega$ will therefore have descents of common order type
$\omega$. By theorem \ref{Theorem.U0xU1UltrafilterIffDisjointDescentSpectrum}, therefore, $U_0\sqtimes U_1$ does not generate an ultrafilter
in $\B_0\times \B_1$.\end{proof}

We have a partial converse:

\begin{theorem} If the cardinals in the descent spectrum of\/ $U_0$ all lie completely below the descent spectrum of\/ $U_1$, then $U_0\sqtimes
U_1$ generates $U_0\times U_1$. % and an ultrafilter?
\end{theorem}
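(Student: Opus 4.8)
The plan is to invoke Theorem \ref{Theorem.U0xU1GeneratesU0otimesU1Iff}, which says that $U_0\sqtimes U_1$ generates the product filter $U_0\times U_1$ if and only if $j_0\image U_1$ generates $j_0(U_1)$ in $V_0$, where $j_0:V\to V_0$ is the Boolean ultrapower by $U_0$. This reduces the theorem to the latter statement. Since $j_0(U_1)$ is an ultrafilter on $j_0(\B_1)$ and $j_0$ preserves finite meets, the filter generated by $j_0\image U_1$ is exactly $\set{y\st \exists c\in U_1\ j_0(c)\leq y}$, and this equals $j_0(U_1)$ precisely when every $x\in j_0(U_1)$ lies above some $j_0(c)$ with $c\in U_1$ (the reverse inclusion is automatic). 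So it suffices to produce, for each $x\in j_0(U_1)$, a single $c\in U_1$ with $j_0(c)\leq x$.

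To find such a $c$, I would first control the size of the antichain used to represent $x$. By Lemma \ref{Lemma.RepresentationOnDifferenceAntichain} (transported to $\check V_U=V_0$ via Theorem \ref{Theorem.TwoPresentationsIsomorphic}), either $x\in\ran(j_0)$---in which case $x=j_0(c)$ for some $c$, which lies in $U_1$ by elementarity since $x\in j_0(U_1)$, and we are done---or $x=[f]_{U_0}$ for a spanning function $f:A\to V$ whose domain $A$ is the difference antichain of a strict descent through $U_0$ of order type $\kappa=|A|$, a cardinal lying in the descent spectrum of $U_0$. Writing $A'=\set{a\in A\st f(a)\in U_1}$, the assumption $x\in j_0(U_1)$ translates under \Los's theorem into $\vee A'\in U_0$. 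The candidate witness is then the meet $c=\bigwedge_{a\in A'}f(a)$, a meet of at most $\kappa$ elements of $U_1$.

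The key step is to verify $c\in U_1$; granting this, $c\leq f(a)$ for every $a\in A'$ yields $\vee\set{a\in A\st c\leq f(a)}\geq\vee A'\in U_0$, which by \Los's theorem for the functional presentation says exactly that $j_0(c)\leq x$ in $V_0$, as required. To see $c\in U_1$, suppose toward a contradiction that $c\notin U_1$, enumerate $A'=\set{a_i\st i<\lambda}$ with $\lambda=|A'|\leq\kappa$, and form the continuous descending sequence of partial meets $y_\eta=\bigwedge_{i<\eta}f(a_i)$. Let $\eta^*$ be least with $y_{\eta^*}\notin U_1$; a successor value is impossible since $U_1$ is closed under binary meets, so $\eta^*$ is a limit ordinal, and continuity gives $y_{\eta^*}=\bigwedge_{\eta<\eta^*}y_\eta\notin U_1$. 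Relativizing below $\neg y_{\eta^*}\in U_1$---that is, setting $z_\eta=y_\eta\wedge\neg y_{\eta^*}$ and prepending $1$---produces a genuine descent through $U_1$ with meet $0$ and order type $\eta^*\leq\kappa$. But $\kappa$ lies in the descent spectrum of $U_0$, and by hypothesis that spectrum lies completely below the descent spectrum of $U_1$, so $\kappa$ is strictly below every order type of a descent through $U_1$; this contradicts the existence of a descent of order type $\eta^*\leq\kappa$.

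The main obstacle is precisely this last step: converting the failure $c\notin U_1$ into a short descent through $U_1$. The delicate points are that the partial-meet sequence must be truncated at the first stage leaving $U_1$ (necessarily a limit, by closure under finite meets), and that the relativization below $\neg y_{\eta^*}$ together with prepending $1$ yields a continuous descending sequence genuinely \emph{through} $U_1$ that begins at $1$ and reaches meet $0$, with order type still at most $\kappa$ (using $1+\eta^*=\eta^*$ for limit $\eta^*$). Once the descent is in hand, the hypothesis on the relative position of the two descent spectra closes the argument at once. This is the natural converse to the obstruction of Theorem \ref{Theorem.U0xU1UltrafilterIffDisjointDescentSpectrum}, where descents of a \emph{common} order type in $U_0$ and $U_1$ destroy the ultrafilter property: here the strict separation of the spectra guarantees that the meet defining $c$ never escapes $U_1$.
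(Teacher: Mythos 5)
Your proof is correct and follows essentially the same route as the paper: reduce via Theorem \ref{Theorem.U0xU1GeneratesU0otimesU1Iff} to showing $j_0\image U_1$ generates $j_0(U_1)$, represent $x=[f]_{U_0}$ on a difference antichain of size $\kappa$ in the descent spectrum of $U_0$ using Lemma \ref{Lemma.RepresentationOnDifferenceAntichain}, and take $c=\bigwedge_a f(a)$. The only difference is that where the paper simply asserts that $U_1$ is ``sufficiently complete'' for this meet to remain in $U_1$, you supply the justification explicitly, by converting a hypothetical failure of $\kappa$-completeness into a descent through $U_1$ of order type at most $\kappa$, contradicting the separation of the spectra.
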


\begin{proof} Suppose that the cardinals in the descent spectrum
of $U_0$ all lie completely below the descent spectrum of $U_1$, and let $j_0:V\to V_0$ be the Boolean ultrapower by $U_0$. Consider any
element $x$ in $j_0(U_1)\of j_0(\B_1)$. By lemma \ref{Lemma.RepresentationOnDifferenceAntichain}, we may represent $x=[f]_{U_0}$ using a
spanning function $f:A\to V$ on a maximal antichain $A$, whose cardinality is in the descent spectrum of $U_0$. We may assume $f(a)\in U_1$
for all $a\in A$. Since $|A|$ lies below any member of the descent spectrum of $U_1$, it follows that $U_1$ is sufficiently complete that
$c=\bigwedge_{a\in A}f(a)$ is in $U_1$. Thus, $j_0(c)\leq [f]_{U_0}=x$, and so we have established that $j_0\image U_1$ generates
$j_0(U_1)$ in $V_0$. By theorem \ref{Theorem.U0xU1GeneratesU0otimesU1Iff}, this means that $U_0\sqtimes U_1$ generates $U_0\times
U_1$.\end{proof}

The question of whether or not $U_0\sqtimes U_1$ generates an ultrafilter, or whether or not it generates $U_0\times
U_1$, appears to be related to the descent spectrums.

\section{Ideals}\label{Section.Ideals}

In this section, we generalize the concept of precipitous ideal and other notions from the case of power sets to arbitrary complete Boolean
algebras. Suppose that $\B$ is a complete Boolean algebra. An {\df ideal} on $\B$ is a subset $I\subset\B$ such that $0\in I$, $1\notin I$,
closed under join $a, b\in I\implies a\vee b \in I$ and lower cones $a\leq b\in I \implies a\in I$. To each ideal there is an associated
equivalence relation on $\B$, where $a =_I b$ if and only if $a\vartriangle b\in I$, where $a\vartriangle b = (a-b)+(b-a)$ is the symmetric
difference. The induced order $a \leq_I b \iff a-b\in I$ is well defined on the equivalence classes $[a]_I=\set{b\st a=_I b}$, and the
quotient $\B/I=\set{[a]_I\st a\in \B}$ is a Boolean algebra, although it is not necessarily complete. If $F$ is a filter on $\B/I$, then
the corresponding {\df induced filter} on $\B$ is $\union F =\set{ b\in B\st [b]_I\in F }$ on $\B$. It is easy to the see that the induced
filter of an ultrafilter is also an ultrafilter.

\begin{definition}\rm
An ideal $I$ on a complete Boolean algebra $\B$ is {\df precipitous} if whenever $G\subset \B/I$ is a $V$-generic filter, then the Boolean
ultrapower $\check V_U$ by the induced ultrafilter $U=\union G$ on $\B$ is well-founded.
\end{definition}

This definition directly generalizes the usual Jech-Prikry definition of precipitous ideal in the case that $\B$ is a power set Boolean
algebra. (The word {\df precipitous} was chosen by Jech, in honor of Prikry, whose name reportedly has that meaning in Hungarian.)
Technically, the trivial ideal $I=\singleton{0}$ is precipitous, since if $G\of\B/I$ is $V$-generic, then $U=\union G\of\B$ is also
$V$-generic, and so $\check V_U$ is isomorphic to $V$, which is well-founded. Consequently, we are primarily interested in the case of {\df
nontrivial} ideals $I$, meaning that $I$ contains a maximal antichain from $\B$, or in other words, that $\vee I=1$, since it is exactly
this condition which ensures that the resulting filter $U=\Union G$ is not $V$-generic and consequently has a nontrivial Boolean ultrapower
$j_U:V\to\check V_U$. This requirement therefore generalizes the concept of a {\df non-principal} ideal from the power set context.

If $\B$ is a complete Boolean algebra and $A\of\B$ is a maximal antichain of size $\kappa$, then the {\df small ideal} relative to $A$ is
the ideal of all elements of $\B$ below the join of a small subset of $A$, that is, the ideal $I=\set{b\in\B\st\exists A_0\of A\, |A_0|<\kappa\text{
and }b\leq\vee A_0}$.

\begin{theorem}\label{Theorem.CriticalPointKappaIllfounded}
The small ideal, relative to any maximal antichain $A\of\B$ of infinite regular size $\kappa$, is not precipitous. If\/ $G\of\B/I$ is
$V$-generic, then the Boolean ultrapower $j_U$ by the induced ultrafilter $U=\union G$ is well-founded up to $\kappa$, but ill-founded
below $j_U(\kappa)$.
\end{theorem}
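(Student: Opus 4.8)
The plan is to prove the two clauses of the statement separately and read off non-precipitousness from the second. Fix a $V$-generic $G\of\B/I$ and let $U=\Union G$. The clause that $j_U$ is \emph{well-founded up to} $\kappa$ is precisely the assertion $\cp(j_U)=\kappa$, and this is essentially already in hand: by construction $A\of I$, so $U$ misses $A$, while the argument in the proof of Theorem~\ref{Theorem.CriticalPointKappa} (using that $I$ is $\ltkappa$-complete and that $G$ is generic) shows $U$ meets every maximal antichain of size less than $\kappa$. Hence the smallest maximal antichain missed by $U$ has size exactly $\kappa$, and Theorem~\ref{Theorem.CriticalPoint} gives $\cp(j_U)=\kappa$. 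Granting the ill-foundedness clause, non-precipitousness is immediate: precipitousness would require $\check V_U$ to be well-founded for \emph{every} generic $G$, whereas the construction below produces ill-foundedness for every such $G$.

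For the ill-foundedness clause I would first reduce to a classical ultrapower. Let $U_A=\set{X\of A\st \vee X\in U}$ be the induced ultrafilter on $P(A)$, so that by Lemma~\ref{Lemma.UltrapowerFactor} the map $\pi_A\colon V^A/U_A\to\check V_U$ is elementary and satisfies $\pi_A\compose j_{U_A}=j_U$. The key preliminary is that $U_A$ is in fact $V$-generic for the quotient $P(A)/[A]^{\ltkappa}$ by the small subsets of $A$: the map $[X]\mapsto[\vee X]_I$ is a Boolean embedding of $P(A)/[A]^{\ltkappa}$ into $\B/I$, and a short computation (if $b\notin I$ then $\set{a\in A\st b\wedge a\neq 0}$ has size $\kappa$, from which one checks that maximal antichains map to maximal antichains) shows it is a \emph{complete} embedding, so $U_A$ is the pullback of $G$ and hence generic. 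Identifying $A$ with $\kappa$ through its enumeration, it then suffices to show that the generic ultrapower of $V$ by a $V$-generic ultrafilter on $P(\kappa)/[\kappa]^{\ltkappa}$ is ill-founded below the image of $\kappa$; the resulting $\in$-descending sequence transfers through $\pi_A$ to a sequence below $\pi_A(j_{U_A}(\kappa))=j_U(\kappa)$.

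The heart of the argument is this classical construction, carried out in $V[U_A]$. Elements below $j_{U_A}(\kappa)$ are represented by functions $f\colon\kappa\to\kappa$ in $V$, with $[f]<[g]$ iff $\set{\alpha\st f(\alpha)<g(\alpha)}\in U_A$. Call $[f]$ \emph{large} if $j_{U_A}(\beta)<[f]<j_{U_A}(\kappa)$ for all $\beta<\kappa$; equivalently, $f$ \emph{diverges} on some $S\in U_A$, meaning $\set{\alpha\in S\st f(\alpha)\le\beta}$ is bounded for every $\beta$. The identity is large, and I claim that below any large $[f]$ lies another large $[g]$. Given a condition $S$ forcing that $f$ diverges on $S$, thin $S$ to an increasing sequence $S'=\set{t_\xi\st\xi<\kappa}$ with $f(t_\xi)>\xi$ (possible since $\set{\alpha\in S\st f(\alpha)>\xi}$ is cobounded in $S$), and set $g(t_\xi)=\xi$. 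Then $g$ diverges on $S'$ and $g<f$ throughout $S'$, so $S'$ forces both that $[g]$ is large and that $[g]<[f]$; thus the set of such conditions is dense below $S$. Since some condition in the generic forces $[f]$ large, genericity supplies such a $g$. Starting from the identity and iterating, using dependent choice in $V[U_A]$ over the set of functions $\kappa\to\kappa$, yields an infinite $\in$-descending sequence of large elements, all below $j_{U_A}(\kappa)$.

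The step I expect to require the most care is reconciling this descent with the fact that, for $\kappa>\omega$, the generic $U_A$ is \emph{countably complete over} $V$: given $\<Y_n\st n<\omega>\in V$ in $U_A$, one reduces to a descending sequence with empty intersection, partitions $\kappa$ by the differences $Y_n\minus Y_{n+1}$, and uses that $\kappa$ is regular and uncountable (so $\omega$ bounded blocks cannot cover an unbounded set) to see that the unbounded blocks form a maximal antichain, which the generic meets—forcing $\Intersect_n Y_n\in U_A$ after all. By Theorem~\ref{Theorem.WellFoundedEquivalents} this is exactly why $\check V_U$ is nonetheless an $\omega$-model, consistent with well-foundedness up to $\kappa$; and it shows the descending sequence above can involve no family of functions lying in $V$, so the ill-foundedness is genuinely a phenomenon of $V[U_A]$ and genericity is essential precisely to locate the descent \emph{inside} $U_A$ (for a measure on a measurable $\kappa$ no such descent exists). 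The case $\kappa=\omega$ is easier: there $U_A$ is an ordinary nonprincipal ultrafilter on $\omega$, never countably complete, and the shifted functions $\alpha\mapsto\alpha-n$ (for $\alpha\ge n$) already give a descending sequence with no appeal to genericity. In all cases, transferring through $\pi_A$ yields an infinite $\in_U$-descending sequence below $j_U(\kappa)$, establishing ill-foundedness and hence non-precipitousness.
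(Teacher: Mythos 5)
Your proof is correct, and it reaches the conclusion by a genuinely different decomposition than the paper's. The paper stays inside $\B$ throughout: it fixes the concrete functions $f_X(a_\alpha)=\ot(X\intersect\alpha)$ on the antichain $A$ itself, proves in $V$ that the ``good'' sets $Y$ are predense, and assembles in $V$ an $\omega$-tree of maximal $I$-partitions $\mathcal{A}_n$ with mixed names $\dot f_n$, so that any generic $G$ selects a branch and hence a descending sequence. You instead reduce wholesale to the classical power-set case first: since the small ideal is local to $A$, the map $[X]\mapsto[\vee X]_I$ completely embeds $P(A)/[A]^{\ltkappa}$ into $\B/I$ (a fact the paper only isolates as a lemma later, in \S\ref{Section.BooleanUltrapowersVsClassicalUltrapowers}), so $U_A$ arises from a $V$-generic filter on the quotient of $P(A)$ by its own small ideal, and lemma \ref{Lemma.UltrapowerFactor} transports any descent in $V^A/U_A$ below $j_{U_A}(\kappa)$ to one in $\check V_U$ below $j_U(\kappa)$. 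You then run the Jech--Prikry argument semantically, iterating a density argument $\omega$ times inside $V[G]$, rather than pre-building the tree of names in $V$. Your route makes transparent that the phenomenon is exactly the classical non-precipitousness of the bounded ideal on $\kappa$ and anticipates corollary \ref{Corollary.GenericEmbeddingBooleanUltrapowerCorrespondence}; the paper's route needs no complete-embedding lemma at this stage and exhibits the ill-foundedness witnesses uniformly from data in $V$. Two points of polish, neither a gap: the phrase ``a condition $S$ forcing that $f$ diverges on $S$'' should read ``a positive $S$ on which $f$ diverges,'' since divergence of a ground-model $f$ on a ground-model $S$ is absolute --- the correct forcing statement is that $[S]$ forces $[f]$ to be large if and only if $\set{\alpha\in S\st f(\alpha)\leq\beta}$ is small for every $\beta<\kappa$, which is what your density argument actually uses; and the $\omega$-step recursion in $V[G]$ should fix in $V$ a well-ordering of the relevant pairs (condition, witness function) so the choice at each stage is canonical, noting that at each step the truth lemma supplies a condition in $G$ forcing ``$[f_n]$ is large,'' below which your dense set lives, so that $G$ meets it.
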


\begin{proof} In theorem \ref{Theorem.CriticalPointKappa}, we
already established that the induced ultrafilter $U=\union G$ meets all small maximal antichains in $\B$ in $V$, but not $A$, and so the
Boolean ultrapower map $j_U:V\to \check V_U$ has critical point $\kappa$. The ultrapower $\check V_U$ is consequently well-founded up to
$\kappa$.

We now show that $\check V_U$ is necessarily ill-founded below $j_U(\kappa)$. To do so, we adapt the proof from the classical power set
context. Enumerate $A=\set{a_\alpha\st\alpha<\kappa}$ in a one-to-one manner. For every $X\of\kappa$ of size $\kappa$, let
$a_X=\bigvee_{\alpha\in X}a_\alpha$ and define $f_X:A\to\kappa$ by $f_X(a_\alpha)=\ot(X\intersect\alpha)$. Observe that $f_X$ is a spanning
function on domain $A$, with value less than $\kappa$ at every point, but constant only on small sets. Thus, $[f_X]_U$ will represent an
ordinal less than $j_U(\kappa)$ in the (functional presentation of the) Boolean ultrapower, but not any ordinal less than $\kappa$. If $Y$
consists of the successor elements of $X$, in the canonical enumeration of $X$, then it is easy to see that $f_Y(a_\alpha)<f_X(a_\alpha)$
for any $\alpha\in Y$. Thus, provided that $a_Y\in U$, we will have $[f_Y]_U<[f_X]_U$. This is one step of how we shall build our
descending sequence in the Boolean ultrapower. Specifically, we build a sequence of $\B/I$-names $\dot f_n$ for spanning functions
$f_n:A\to \kappa$ in $V$. We will specify a sequence of maximal $I$-partitions $\mathcal{A}\of I^\plus$, with every element of
$\mathcal{A}$ having the form $a_X$ for some $X\of\kappa$ of size $\kappa$. Associated to each such $a_X$ have the spanning function
$f_X:A\to\kappa$, and by mixing these objects along the partition, we arrive at a $\B/I$-name $\dot f$ for a spanning function on $A$ in
$V$, with $a_X$ forcing $\dot f=\check f_X$. We begin with $\mathcal{A}_0=\singleton{1}$, resulting in the name $\dot f_0$ for the spanning
function $f_0:a_\alpha\mapsto\alpha$. Given $\dot f_n$ and $\mathcal{A}_n$, let us say that $Y\of\kappa$ of size $\kappa$ is {\df good}, if
there is some $X\of\kappa$ with $a_X\in \mathcal{A}_n$ and for all $\alpha\in Y$, $f_Y(a_\alpha)<f_X(a_\alpha)$. The argument we gave above
established that the collection of good $Y\of\kappa$ is dense in $P(\kappa)$ modulo the ideal of small sets. By using the canonical
complete embedding of $P(\kappa)$ into $\B$ via $P(A)$, this means that the set $D=\set{a_Y\st Y\of\kappa\text{ is good}}$ is predense in
$I^\plus$. Thus, we may select a maximal $I$-partition $\mathcal{A}_{n+1}\of D$, each having the form $a_Y$ for a good $Y\of\kappa$. By
mixing the corresponding $\check f_Y$, this gives rise to the desired name $\dot f_{n+1}$, with $a_Y$ forcing $\dot f_{n+1}=\check f_Y$ for
each $a_Y\in \mathcal{A}_{n+1}$.

Assume now that $G\of\B/I$ is $V$-generic, and let $U=\Union G\of\B$ be the corresponding ultrafilter on $\B$. For each $n<\omega$, the
filter $G$ selects a unique element $a_{X_n}$ from $\mathcal{A}_n$, which therefore decides $f_n=\val(\dot f_n,G)=f_{X_n}$. By
construction, consequently, $[f_{n+1}]_U<[f_n]_U$ in the Boolean ultrapower, and so the Boolean ultrapower is ill-founded below
$j_U(\kappa)$, as desired.\end{proof}

\begin{corollary}If one allows $U\notin V$, then statement $1$
of theorem \ref{Theorem.WellFoundedEquivalents} is no longer necessarily equivalent to statements $2$ through $5$.
\end{corollary}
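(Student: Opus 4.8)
The plan is to produce a single ultrafilter $U\notin V$ witnessing that the implication $2\Rightarrow 1$ of theorem \ref{Theorem.WellFoundedEquivalents} fails, and the construction of theorem \ref{Theorem.CriticalPointKappaIllfounded} supplies exactly such a $U$. First I would fix an uncountable regular cardinal $\kappa$, say $\kappa=\omega_1$, and a complete Boolean algebra $\B$ carrying a maximal antichain $A$ of size $\kappa$; such algebras plainly exist. Form the small ideal $I$ relative to $A$, let $G\of\B/I$ be $V$-generic, and set $U=\Union G\of\B$. Since $I$ is nontrivial (it contains the maximal antichain $A$), the quotient forcing $\B/I$ is nontrivial, so $G\notin V$ and hence $U\notin V$ as well; this places us precisely in the regime the corollary concerns.

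Next I would read off the relevant facts about this $U$ directly from theorem \ref{Theorem.CriticalPointKappaIllfounded}, which tells us that the Boolean ultrapower $j_U:V\to\check V_U$ is well-founded up to $\kappa$ but ill-founded below $j_U(\kappa)$. The ill-foundedness gives at once the failure of statement $1$: the ultrapower $\check V_U$ is not well-founded. On the other hand, the proof of that theorem already records (via theorem \ref{Theorem.CriticalPointKappa}) that $U$ meets every small, i.e. countable, maximal antichain of $\B$ in $V$ while missing $A$, so $j_U$ has critical point $\kappa=\omega_1>\omega$. In particular, $U$ meets all countable maximal antichains, which is statement $3$; equivalently, well-foundedness up to $\kappa>\omega$ makes $\check V_U$ an $\omega$-model, which is statement $2$.

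Finally, I would invoke the clause of theorem \ref{Theorem.WellFoundedEquivalents} guaranteeing that statements $2$ through $5$ remain equivalent even when $U\notin V$. Since statement $2$ (equivalently $3$) holds for our $U$, all of statements $2$ through $5$ hold, while statement $1$ fails, establishing that for $U\notin V$ statement $1$ is no longer equivalent to statements $2$ through $5$. I do not expect any genuine obstacle here, since the substantive work is carried by theorem \ref{Theorem.CriticalPointKappaIllfounded}; the only points needing care are verifying $U\notin V$ from the nontriviality of the quotient forcing, and observing that well-foundedness past $\omega$ is precisely the $\omega$-model property, both of which are routine.
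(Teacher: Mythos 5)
Your argument is correct and is precisely the one the paper intends: the corollary is stated without proof immediately after theorem \ref{Theorem.CriticalPointKappaIllfounded}, which supplies exactly the witnessing ultrafilter you use, and the $U\notin V$ clause of theorem \ref{Theorem.WellFoundedEquivalents} does the rest. One small caution: ``$I$ is a nontrivial ideal'' does not by itself imply that $\B/I$ is nontrivial as a forcing notion (a maximal nonprincipal ideal has quotient $2$ yet contains a maximal antichain), so you should either note that the small ideal's quotient is atomless (any $I$-positive $b$ meets $\kappa$ many elements of $A$, and splitting these into two pieces of size $\kappa$ splits $[b]_I$), or simply observe that $U\notin V$ follows a posteriori, since any $U\in V$ satisfying statement $3$ would have a well-founded ultrapower by the $U\in V$ case of theorem \ref{Theorem.WellFoundedEquivalents}.
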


What we did in the previous argument essentially was to adapt the classical proof that the ideal $I_0$ of small subsets of $\kappa$ is not
precipitous. In corollary \ref{Corollary.GenericEmbeddingBooleanUltrapowerCorrespondence}, we will show that all the classical generic
ultrapowers obtained by forcing over the quotient of a power set $P(\kappa)/I$ can also be obtained as generic Boolean ultrapowers obtained
by forcing over the quotient of $\B/I^*$, for any complete Boolean algebra having an antichain of size $\kappa$. Because of this, the
generic Boolean ultrapowers we consider in this section generalize the classical generic embeddings.

We define that a quotient $\B/I$ of a complete Boolean algebra $\B$ by an ideal $I$ has the {\df disjointifying property} if for every
maximal antichain $A\of\B/I$, there is a choice of representatives $b_a\in\B$ such that $a=[b_a]_I$ for each $a\in A$ and $\set{b_a\st a\in
A}$ forms a maximal antichain in $\B$. For such a disjointification, it is not difficult to see that if $A_0\of A$, then $\vee
A_0=[\bigvee_{a\in A_0} b_a]_I$. That is, the join in $\B/I$ of a subset of $A$ is simply the equivalence class of the join in $\B$ of the
corresponding collection of representatives (without disjointification and maximality, this is not typically the case). Consequently, the
quotient $\B/I$ is a complete Boolean algebra. In the classical power set context, the disjointifying property is a central concept, a
powerful intermediary between saturation and precipitousness. Here, we prove the analogous relations in the general context of complete
Boolean algebras.

\begin{theorem}\label{Theorem.QuotientByDisjointifyingIdealsFactor}
Suppose that $U_0$ is an ultrafilter on the quotient $\B/I$ of a complete Boolean algebra $\B$ by an ideal $I$ with the disjointifying
property. Then the Boolean ultrapower by the induced ultrafilter $U=\union U_0$ on $\B$ factors through $U_0$ as follows:
\begin{diagram}[height=2em]
 V & & \rTo^{j_U} & & \check V_U  \\
 &\rdTo_{j_{U_0}}&  & \ruTo^{k} \\
 & & \check V_{U_0} & \\
\end{diagram}
\end{theorem}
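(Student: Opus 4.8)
The plan is to work in the functional presentations of the two Boolean ultrapowers, which by Theorem \ref{Theorem.TwoPresentationsIsomorphic} are isomorphic to $\check V_{U_0}$ and $\check V_U$ respectively, and to define $k$ by \emph{lifting} spanning functions across a disjointification. Recall that the disjointifying property assigns to each maximal antichain $\bar A\of\B/I$ a choice of representatives $b_a\in\B$ (for $a\in\bar A$) whose collection $A=\set{b_a\st a\in\bar A}$ is a maximal antichain in $\B$, and that for any subset $\bar A_0\of\bar A$ one has the join-compatibility identity
\[
 \vee\bar A_0 = \Bigl[\bigvee\set{b_a\st a\in\bar A_0}\Bigr]_I
\]
(the join on the left computed in $\B/I$, that on the right in $\B$). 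Given a spanning function $f\colon\bar A\to V$, its lift is the spanning function $\tilde f\colon A\to V$ defined by $\tilde f(b_a)=f(a)$. I would then set $k([f]_{U_0})=[\tilde f]_U$, recalling that $U=\union U_0$ is already known to be an ultrafilter.

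The heart of the argument is a single transfer lemma: for spanning functions $f_0,\ldots,f_n$ which, after passing to a common refinement $\bar C$ in $\B/I$ and lifting through one fixed disjointification $C\of\B$ of $\bar C$, yield lifts $\tilde f_0,\ldots,\tilde f_n\colon C\to V$, the equivalence
\[
 V^{\downarrow(\B/I)}_{U_0}\satisfies\varphi\bigl([f_0]_{U_0},\ldots,[f_n]_{U_0}\bigr)
 \quad\Iff\quad
 V^{\downarrow\B}_U\satisfies\varphi\bigl([\tilde f_0]_U,\ldots,[\tilde f_n]_U\bigr)
\]
holds for every formula $\varphi$. To see this, apply the \Los\ theorem for the functional presentation on each side. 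The left side is equivalent to $\vee\bar C_0\in U_0$, where $\bar C_0=\set{\bar c\in\bar C\st V\satisfies\varphi[f_0(\bar c),\ldots,f_n(\bar c)]}$, and the right side is equivalent to $\vee\set{b_{\bar c}\st \bar c\in\bar C_0}\in U$, since $\tilde f_i(b_{\bar c})=f_i(\bar c)$ picks out exactly the lifted members. By the join-compatibility identity these two joins are $=_I$-related, and since $U=\union U_0$ consists precisely of those $b$ with $[b]_I\in U_0$, the first lies in $U_0$ if and only if the second lies in $U$. This proves the biconditional, which simultaneously delivers that $k$ is well defined on classes (taking $\varphi$ to be equality), that it is injective and $\in$-preserving (equality and membership), and that it is fully elementary.

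Two pieces of bookkeeping remain, and this is where the disjointifying property must be handled with care. First, I must check that $[\tilde f]_U$ does not depend on the auxiliary choices, namely passing from $f$ to a reduction $f\downarrow\bar C$ and selecting a different disjointification. Both reduce to the transfer lemma applied to the equality formula, once one verifies that lifting commutes with refinement up to $U$-equivalence; this uses that if $\bar C$ refines $\bar A$ in $\B/I$, then a disjointification of $\bar C$ refines, below the corresponding representatives, a disjointification of $\bar A$, again via the join-compatibility identity. This refinement-compatibility of disjointifications is the main technical obstacle: the join identity makes it routine, but it is the step that genuinely requires the hypothesis on $I$. Finally, commutativity $k\compose j_{U_0}=j_U$ is immediate, since $j_{U_0}(x)=[c_x]_{U_0}$ for the constant function $c_x$ on the trivial antichain $\set{1}\of\B/I$, whose only disjointification is $\set{1}\of\B$, so that $\tilde c_x=c_x$ and $k([c_x]_{U_0})=[c_x]_U=j_U(x)$.
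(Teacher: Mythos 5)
Your overall strategy is the same as the paper's: work in the functional presentations, lift a spanning function $f\colon\bar A\to V$ on a maximal antichain $\bar A\of\B/I$ to $\tilde f$ on a disjointification $A\of\B$, and exploit the identity $\vee\bar A_0=[\vee\set{b_a\st a\in\bar A_0}]_I$ together with $U=\union U_0$. Where you differ is in how elementarity is obtained: the paper runs an induction on formulas whose only nontrivial step is the existential quantifier (a witness function is chosen on the lifted antichain $A^*$ and pulled back to $\bar A$), whereas you apply the \Los\ theorem for the functional presentation on both sides at once and match the two joins. Your packaging is cleaner and handles well-definedness-on-a-fixed-disjointification, $\in$-preservation, and elementarity uniformly; the cost is that it leans on the \Los\ theorem for the functional presentation of $\check V_{U_0}$ over the (possibly not obviously complete) algebra $\B/I$ --- though completeness of $\B/I$ does follow from the disjointifying property, as noted in the paper, so this is fine.

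There is, however, one step that your transfer lemma cannot deliver and that you dispose of too quickly: independence of $[\tilde f]_U$ from the \emph{choice} of disjointification. Your lemma is stated for lifts through ``one fixed disjointification $C$ of $\bar C$,'' so taking $\varphi$ to be equality only compares two functions lifted through the \emph{same} system of representatives; it says nothing about $\tilde f_1$ and $\tilde f_2$ obtained from the same $f$ via two different disjointifications $\set{b_a}$ and $\set{c_a}$ of $\bar A$. Likewise, ``lifting commutes with refinement'' is immediate only when the disjointification of the finer antichain is chosen to refine that of the coarser one (which one can always arrange by setting $y_{\bar a}=\vee\set{c_{\bar c}\st \bar c\leq\bar a}$); reducing the general case to this one again requires independence of the choice. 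The missing argument is short but genuinely separate: set $d_a=b_a\wedge c_a$, note $[d_a]_I=a$, observe that any $x$ disjoint from every $d_a$ has $[x]_I$ incompatible with every member of the maximal antichain $\bar A$ and hence $x\in I$, so that $\bigvee_a d_a\in U$; then any common refinement of the two lifted antichains containing the $d_a$'s witnesses $[\tilde f_1]_U=[\tilde f_2]_U$. You should supply this explicitly rather than claiming it reduces to the transfer lemma.
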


\begin{proof} We use the functional representation of $\check
V_{U_0}$ as $V^{\downarrow(\B/I)}_{U_0}$. The typical element of $V^{\downarrow(\B/I)}_{U_0}$ is $[f]_{U_0}$, where $f:A\to V$ is a
spanning function on a maximal antichain $A\of\B/I$. By the disjointifying property, we may select representatives $b_a\in\B$ for each
$a=[b_a]_I\in A$ such that $A^*=\set{b_a\st a\in A}$ forms a maximal antichain in $\B$. Let $f^*:A^*\to V$ be the corresponding spanning
function on $A^*$, with $f^*(b_a)=f(a)$. We define $k:[f]_{U_0}\mapsto [f^*]_U$.

We argue first that $k$ is well defined. To begin, we check that the particular choice of disjoint representatives $b_a$ does not matter.
If $c_a$ is some other way of choosing disjoint representatives, leading to the maximal antichain $A^{**}=\set{c_a\st a\in A}$ in $\B$,
then we compare the function $f^*:b_a\mapsto f(a)$ with $f^{**}:c_a\mapsto f(a)$. Let $d_a=b_a\wedge c_a$, and observe that
$[d_a]_I=[b_a]_I=[c_a]_I$ and also that the $d_a$ are disjoint for different $a$. If $x$ is disjoint from $d_a$ for all $a\in A$, then
$[x]_I$ would be incompatible in $\B/I$ with all $[d_a]_I$, which is to say, with every element of $A$, and so $x\in I$. It follows that
$\bigvee_{a\in A}d_a\in U$. Extend $\set{d_a\st a\in A}$ to a maximal antichain $D\of\B$ refining both $A^*$ and $A^{**}$. Note that
$(f^*\downarrow D)(d_a)=f(a)=(f^{**}\downarrow D)(d_a)$, and so $\bigvee\set{d\in D\st (f^*\downarrow D)(d)=(f^{**}\downarrow D)(d)}\geq
\bigvee_{a\in A}d_a\in U$, and so $[f^*]_U=[f^{**}]_U$, as desired. Next, we show that $k$ is well defined for spanning functions with the
same domain. Suppose $[f]_{U_0}=[g]_{U_0}$ for two spanning functions $f:A\to V$ and $g:A\to V$ in $\B/I$, both with domain $A$. Thus,
$\vee\set{a\in A\st f(a)=g(a)}\in U_0$. By the observation just before the theorem, this means that $[\vee\set{b_a\st a\in A,
f(a)=g(a)}]_I\in U_0$ and so $\vee\set{b_a\st a\in A, f(a)=g(a)}=\vee\set{b\in A^*\st f^*(b)=g^*(b)}\in U$, which means $[f^*]_U=[g^*]_U$.
Finally, we show that $k$ is well defined for the reduction of a spanning function $f:A\to V$ to a finer domain $B\leq A$. Fix any disjoint
selection of representatives $x_b\in\B$ for $b\in B$ such that $[x_b]=b$, and let $B^*=\set{x_b\st b\in B}$ be the corresponding antichain.
Observe that $y_a=\vee\set{x_b\st b\leq a, b\in B}$ is a disjoint selection of representatives for $A$, with corresponding $A^*=\set{y_a\st
a\in A}$ and corresponding spanning functions $f^*:A^*\to V$ and $f^{**}:B^*\to V$ defined by $f^*(y_a)=f(a)$ and $f^{**}(x_b)=(f\downarrow
B)(b)$. Since $(f\downarrow b)=f(a)$ for the unique element $a\in A$ above $b$, it follows that $(f^*\downarrow B^*)(x_b)=f^{**}(x_b)$ for
every $x_b\in B^*$, which implies that $[f^*]_U=[f^{**}]_U$, as desired. So altogether, $k$ is well defined.

We now show that $k$ is fully elementary by induction on formulas. The atomic case and Boolean connectives are straightforward, as is the
upward direction of the quantifier case. Suppose now that $V^{\downarrow\B}_U\satisfies\exists x\,\varphi(x,[f^*]_U)$, where $f^*:A^*\to V$
arises from the spanning function $f:A\to V$ for a maximal antichain $A\of\B/I$. Define a spanning function $g:A^*\to V$ by $g(b)$ is a
witness such that $V\satisfies\varphi(g(b),f^*(b))$, if there is any such witness. By the \L o\'s Theorem, it follows that
$V^{\downarrow\B}_U\satisfies\varphi([g]_U,[f^*]_U)$, and we have found a witness using the very same antichain $A^*$. Define $g_0:A\to V$
by $g_0(a)=g(b_a)$, where $b_a$ is the unique representative of $a$ in $A^*$. It follows that $g=g_0^*$ and so $[g]_U=k([g_0]_{U_0})$.
Thus, $V^{\downarrow(\B/I)}_{U_0}\satisfies\varphi([g_0]_{U_0},[f]_{U_0})$, as desired. So $k$ is fully elementary.\end{proof}

%(Q: Can we somehow extend $k$ to the forcing extension, and make a larger commutative diagram?  Q: Can we improve this theorem to
%precipitous ideals?)
%
If $I\of\B$ is an ideal, then we say that a subset $A\of I^\plus$ is an {\df (maximal) antichain modulo $I$} if distinct elements of $A$
are not equivalent modulo $I$ and $\set{[a]_I\st a\in A}$ is a (maximal) antichain in the quotient $\B/I$. A {\df tree of maximal
antichains} modulo $I$ in $\B$ is a sequence $\<A_n\st n<\omega>$ of maximal antichains modulo $I$ such that $A_{n+1}$ refines $A_n$ modulo
$I$, meaning that for every element $a\in A_{n+1}$ there is $b\in A_n$ such that $[a]_I\leq [b]_I$ in $\B/I$.
% (Idea---do we want $a\leq b$? Is it equivalent in the following?)
A Boolean algebra $\B$ is {\df countably distributive} if as a forcing notion it adds no new
$\omega$-sequences of ordinals. This is equivalent to the countable distributivity rule: $\bigwedge_n \vee A_n=\vee\set{\bigwedge_n f(n)\st
f:\omega\to\B, f(n)\in A_n}$.

\begin{theorem}\label{Theorem.PreciptiousIFF}
Suppose that $\B$ is a countably distributive complete Boolean algebra and $I\of\B$ is an ideal. Then the following are equivalent:
 \begin{enumerate}
  \item $I$ is precipitous.
  \item For every tree of maximal antichains $A_n\of \B$, every $a_0\in A_0$ can be continued to a sequence $\<a_n\st n<\omega>$ with
      $a_n\in A_n$, $a_{n+1}\leq_I a_n$ and $\bigwedge_n a_n\neq 0$.
 \end{enumerate}
\end{theorem}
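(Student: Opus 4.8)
The plan is to establish this as the complete-Boolean-algebra analogue of the classical tree/branch characterization of precipitousness, proving $(2)\implies(1)$ directly and $(1)\implies(2)$ by contraposition. Throughout I would exploit the fact, stressed in Section~\ref{Section.TransformingIntoActualModels}, that the elements of the generic Boolean ultrapower $\check V_U$ are classes $[\tau]_U$ of \emph{ground-model} names $\tau\in V^\B$, even though $U=\Union G$ and the class structure live only in $V[G]$; this is what lets a putative descending sequence be reflected back into a tree of antichains lying in $V$.

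For $(2)\implies(1)$, let $G\of\B/I$ be $V$-generic and $U=\Union G$, and suppose toward a contradiction that $\check V_U$ is ill-founded. Passing to ranks I get, in $V[G]$, names $\sigma_n\in V^\B$ with $\[\sigma_n\in\check V]\in U$ and $\[\sigma_{n+1}\in\sigma_n]\in U$ for all $n$; fix a condition $p\in G$ and a $\B/I$-name forcing this. Working in $V$, I would recursively build a tree of maximal antichains modulo $I$, $\<A_n\st n<\omega>$ below $p$, with $A_{n+1}$ refining $A_n$ and each $a\in A_n$ deciding the relevant names $\dot\sigma_k$ ($k\le n$) as fixed ground-model names and forcing the memberships among them; replacing each representative $a$ by $a\wedge\bigwedge_{1\le k\le n}\[\sigma_k^a\in\sigma_{k-1}^a]$ (an $I$-equivalent element, since $a\le_I\[\sigma_k^a\in\sigma_{k-1}^a]$) upgrades the forced memberships to \emph{genuine} inequalities $a\le\[\sigma_k^a\in\sigma_{k-1}^a]$ in $\B$. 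Now I apply hypothesis $(2)$ to this tree to obtain a branch $\<a_n\st n<\omega>$ with $\bigwedge_n a_n=q\ne0$. Along the branch the decided names cohere, and every $a_n$ sits genuinely below the Boolean values of the membership assertions; choosing any ultrafilter $W$ on $\B$ through $q$ and evaluating, the sets $\val(\sigma_n,W)$ form an infinite $\in$-descending chain, contradicting foundation in $V$. Hence $\check V_U$ is well-founded and $I$ is precipitous.

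For $(1)\implies(2)$ I argue the contrapositive: from a tree $\<A_n\st n<\omega>$ of maximal antichains modulo $I$ and a node $a_0$ admitting no branch $\<a_n\st n<\omega>$ with $a_{n+1}\le_I a_n$ and $\bigwedge_n a_n\ne0$, I construct, below $a_0$, a sequence of $\B/I$-names $\dot f_n$ for a strictly $\in_U$-descending $\omega$-sequence of ordinals in the generic Boolean ultrapower, exactly in the style of the construction in Theorem~\ref{Theorem.CriticalPointKappaIllfounded}. The key point is that the absence of a positive-meet branch is precisely what guarantees that at every stage the set of refinements strictly lowering the ordinal value is predense, so the descent can always be continued; mixing the associated ground-model spanning functions along a maximal $I$-partition produces $\dot f_{n+1}$ from $\dot f_n$. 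For $V$-generic $G$ the filter selects coherent values and yields an honest strictly descending sequence of ordinals in $\check V_U$, so that ultrapower is ill-founded and $I$ is not precipitous.

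The main obstacle, and the sole place the hypothesis of countable distributivity is essential, is the mismatch between the \emph{countable} length of the data (the tree has height $\omega$) and the ideal $I$, which is not assumed countably complete: a countable meet of $I$-positive elements may be absorbed into $I$, and a new $\omega$-branch through the tree could in principle be added by the quotient forcing. Countable distributivity of $\B$ is what tames both phenomena---on the $(2)\implies(1)$ side it keeps the branch-meet $q$ genuinely positive and lets the countably many name-shrinkings cohere rather than collapse, so that $W$ sees a true descending chain; on the $(1)\implies(2)$ side it certifies that the predense sets of ``good'' refinements assemble into a single name and that no positive-meet branch is secretly created, so the generically built descent really is bottomless. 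The remaining work is bookkeeping: carefully translating between $\le$ in $\B$ and $\le_I$ in the quotient $\B/I$, and between maximal antichains modulo $I$ in $\B$ and genuine maximal antichains in $\B/I$.
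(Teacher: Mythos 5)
Your direction $(2)\implies(1)$ is in substance the paper's argument (the paper phrases it as the contrapositive $\neg(1)\implies\neg(2)$, you phrase it as a proof by contradiction, but the content is the same: extract a tree of antichains from an ill-foundedness witness and observe that a positive-meet branch would yield an infinite descent in $V$). However, your final step fails as written: from $q=\bigwedge_n a_n\neq 0$ and $q\leq\[\sigma_k\in\sigma_{k-1}]$ you pass to an arbitrary ultrafilter $W$ containing $q$ and claim the sets $\val(\sigma_k,W)$ form an $\in$-descending chain in $V$. Boolean values do not control $\val(\cdot,W)$ for a non-generic $W$; all you actually get is an $\in_W$-descending chain in the quotient $V^\B/W$, which is merely an ill-founded model of \ZFC\ and contradicts nothing. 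Deciding the $\sigma_k$ ``as fixed ground-model names'' does not help, since a name still has to be interpreted. The repair is exactly what the paper does: pass to ordinal ranks represented by spanning functions, use Lemma \ref{Lemma.NameForSpanningFunction} to decide each $\dot f_n$ on a cone as an \emph{actual ground-model function} $f^n_a:\B_a\to\ORD$ with $f^{n+1}_a(b)<f^n_{a'}(b)$ throughout the cone, and then evaluate these honest functions at the honest nonzero element $z=\bigwedge_n a_n$ to obtain a genuine descending sequence of ordinals in $V$.

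The more serious problem is your direction $(1)\implies(2)$. You propose the contrapositive: from a tree with no positive-meet branch below $a_0$, manufacture $\B/I$-names $\dot f_n$ for a strictly descending sequence of ordinals ``in the style of Theorem \ref{Theorem.CriticalPointKappaIllfounded}.'' But that construction lives on the small ideal and gets its ordinals from the explicit functions $f_X(a_\alpha)=\ot(X\intersect\alpha)$; a bare tree of antichains carries no ordinal data, and your ``key point'' about predensity of refinements that ``strictly lower the ordinal value'' presupposes ordinal values you never define. The classical device for producing them---for each point $z$ of the underlying set, the tree of branch-initial-segments whose nodes all contain $z$ is well-founded, so it has a rank function---depends on the meet of a branch being detected pointwise by atoms, and does not transfer to an atomless complete Boolean algebra, where $\bigwedge_n a_n\neq 0$ has no pointwise witness. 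The paper sidesteps this entirely by proving $(1)\implies(2)$ \emph{directly}: collapse the well-founded ultrapower to a transitive $M$, use Theorem \ref{Theorem.BooleanUltrapowerWellFounded} to obtain an $M$-generic $H\of j(\B)$ with $j\image U\of H$, let $H$ select $b_n\in j(A_n)$, invoke countable distributivity of $j(\B)$ in $M$ to see that $\<b_n\st n<\omega>\in M$, conclude $\bigwedge_n b_n\in H$ (hence nonzero) from the $M$-completeness of $H$, and pull the branch back to $V$ by elementarity. That is also the only place distributivity is used; your account of its role (keeping $q$ positive, making predense sets ``assemble'') does not correspond to any step that needs it, and the paper's $\neg(1)\implies\neg(2)$ argument uses no distributivity at all---indeed the authors explicitly leave open whether it is needed even for the direction where they do use it.
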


\begin{proof} Suppose that $I$ is precipitous. Fix the tree of
maximal antichains $A_n\of\B$, for $n<\omega$, and $a_0\in A_0$. Let $G\of\B/I$ be $V$-generic below $a_0$. Since $I$ is precipitous, this
means that the Boolean ultrapower of $V$ by the corresponding filter $U=\union G\of\B$ is well-founded. Let $j:V\to M=\check V_U$ be the
corresponding ultrapower map into the transitive class $M$. By theorem \ref{Theorem.BooleanUltrapowerWellFounded}, there is in $V[G]$ an
$M$-generic ultrafilter $H\of j(\B)$ with $j\image U\of H$. Since $H$ is $M$-generic, there is a unique condition $b_n\in j(A_n)\intersect
H$. In fact, $b_n$ is exactly the element of $j(\B)$ corresponding to the name $\set{\<\check a,a>\st a\in A_n}$. Since $A_{n+1}$ refines
$A_n$ modulo $I$, it follows that $b_{n+1}\leq_{j(I)} b_n$. The sequence $\<b_n\st n<\omega>$ exists in $M[H]$, and consequently, by
distributivity, in $M$. It follows that $\bigwedge_n b_n\in H$, since $H$ is $M$-generic and hence $M$-complete by the remarks before
theorem \ref{Theorem.WellFoundedEquivalents}. Consequently, $\bigwedge_n b_n\neq 0$. Thus, in $M$ we have found a path as desired through
the tree of maximal antichains $\<j(A_n)\st n<\omega>=j(\<A_n\st n<\omega>)$ in $j(\B)$ with respect to $j(I)$. By elementarity, it follows
that there is such a sequence in $V$ for $\<A_n\st n<\omega>$ in $\B$ with respect to $I$, as desired.

Conversely, suppose that $I$ is not precipitous. Thus, there is a condition $a_0\in I^\plus$ forcing via $\B/I$ that $\check V_{\union G}$
is ill-founded. Thus, there are $\B/I$-names $\dot f_n$ and $\Ddot_n$ such that $a_0$ forces via $\B/I$ that $\dot f_n:\Ddot_n\to\ORD$ are
open dense spanning functions in $V$ and that $\dot f_{n+1} <_{\union \dot G} \dot f_n$. We shall make use of the following lemma.

\begin{sublemma}\label{Lemma.NameForSpanningFunction}
If\/ $I$ is an ideal on a complete Boolean algebra $\B$ and $[b]_I\forces_{\B/I}``\dot f:\dot D\to\check V$ is an open dense spanning
function in $\check V$'', then there is $A\of\B$, a maximal antichain modulo $I$ below $b$, and for each $a\in A$ a function $f_a:\B_a\to
V$, where $\B_a$ is the cone below $a$, such that $[a]_I\forces_{\B/I}\dot f\restrict\check \B_a=\check f_a$.
\end{sublemma}

\begin{proof} Since $b$ forces that $\dot D$ is an open dense
subset of $\B$ in $\check V$, there is a maximal antichain $A$ of conditions $a$ deciding exactly which open dense subset in $V$ it is. By
further refining inside these dense sets, we obtain a maximal antichain $A$ of conditions $a$ forcing that the cone $\B_a$ below $a$ is
contained in $\dot D$. By the same reasoning, since $b$ forces that $\dot f$ is a function on $\dot D$ in $\check V$, we may also assume
that the conditions in $A$ also decide which function from $V$ $\dot f$ is. Thus, we obtain functions $f_a:\B_a\to V$ such that
$[a]_I\forces_{\B/I}\dot f\restrict\check \B_a=\check f_a$, as desired.\end{proof}

We now iteratively apply the lemma to build a tree of maximal antichains $A_n\of\B$ modulo $I$ such that for every $a\in A_n$ there are
functions $f_a^n:\B_a\to\ORD$ on the cone $\B_a$ below $a$, such that $[a]_I\forces_{\B/I}\dot f_n\restrict\check\B_a^n=\check f_a^n$.
Furthermore, we can arrange that every element of $A_{n+1}$ lies below an element of $A_n$, and lastly, since the sequence was forced to be
descending, that $f_a^{n+1}(b)< f_{a'}^n(b)$ for all $b\leq a$, whenever $a\leq a'$ with $a\in A_{n+1}$ and $a'\in A_n$. Suppose now that
there is a path $\<a_n\st n<\omega>$ through this tree of antichains in the sense of (2), so that $a_n\in A_n$ and $z=\wedge_n a_n\neq 0$.
Since $z\leq a_n$ for all $n$, it follows that $z\in \B_{a_n}$ for all $n$, and so our observations ensure
$f_{a_{n+1}}^{n+1}(z)<f_{a_n}^n(z)$. This is an infinite descending sequence of ordinals in $V$, a contradiction. So there can be no such
path as in (2), and the theorem is proved.\end{proof}

We emphasize that we have {\it not} assumed in the theorem above that the quotient forcing $\B/I$ is  countably distributive, but only that
the underlying Boolean algebra $\B$ itself is. This generalizes the classical power set situation, where one forces with the quotient
$P(Z)/I$ of a power set, since the underlying Boolean algebra there is the power set $P(Z)$ itself, which is trivial as a forcing notion,
of course, and consequently fully distributive. Thus, the previous theorem generalizes the classical characterization of precipitous
ideals. The question remains whether we actually need distributivity in theorem \ref{Theorem.PreciptiousIFF}.

%(I am confused: if we can actually assume that elements of $A_{n+1}$ lie below elements of $A_n$, and not merely just below modulo $I$, then doesn't
%this imply that if $J\of I$ and $I$ is precipitous, then $J$ is also precipitous? After all, in this case any tree of antichains modulo $J$ is also a
%tree of antichains modulo $I$, and so there would be a descending sequence as desired. No! Maximal antichains modulo $J$ may not be maximal modulo
%$I$, since disjointness becomes somewhat easier. Also, some (or all) elements of $A_n$ could actually be elements of $I$, even if they are in
%$J^\plus$, so you don't even have the right set-up here.)
%
%(Vague idea:  $\leqomega$-distributivity of $\B$ seems to suggest that $I$ is precipitous iff it forces that $\check V/\breve G$ is an omega model.
%This would be an interesting extension of theorem \ref{Theorem.WellFoundedEquivalents}. Is it true? That is, if $\B$ is distributive, then the
%ultrafilters in $\B$ arising from generic quotients also have (1) iff (2) through (5) in that theorem. Is it true? I don't think it is true, since
%there are normal ideals $I$ on $P(\omega_1)$ that are not precipitous, so the generic ultrapower is ill-founded, but since the critical point is
%$\omega_1$, they will be $\omega$-models.)

In the classical power set context, every saturated ideal is disjointifying, and every disjointifying ideal is precipitous. We can easily
generalize these implications to the case of countably distributive complete Boolean algebras.

\begin{theorem} If\/ $I$ is a $\ltkappa$-complete ideal in a complete Boolean algebra $\B$ and $\B/I$ is $\kappa^\plus$-c.c., then $\B/I$ is
a complete Boolean algebra with the disjointifying property.
\end{theorem}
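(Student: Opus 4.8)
The plan is to prove the disjointifying property directly, by a transfinite disjointification of a maximal antichain, and then to obtain completeness exactly as in the remark preceding the theorem. So fix a maximal antichain $A\of\B/I$. The $\kappa^\plus$-c.c. hypothesis gives $|A|\leq\kappa$, so I enumerate $A=\set{[c_\alpha]_I\st\alpha<\lambda}$ with $\lambda\leq\kappa$ and chosen representatives $c_\alpha\in\B$; since distinct members of $A$ are incompatible in $\B/I$, we have $c_\alpha\wedge c_\beta\in I$ whenever $\alpha\neq\beta$.

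First I would disjointify by setting $b_\alpha=c_\alpha-\bigvee_{\beta<\alpha}c_\beta$. These are pairwise disjoint, since for $\gamma<\alpha$ we have $b_\alpha\leq\neg c_\gamma$ while $b_\gamma\leq c_\gamma$. The essential point is that $[b_\alpha]_I=[c_\alpha]_I$: indeed $c_\alpha-b_\alpha=\bigvee_{\beta<\alpha}(c_\alpha\wedge c_\beta)$ is a join of the elements $c_\alpha\wedge c_\beta\in I$, and there are only $|\alpha|<\kappa$ of them, because $\alpha<\lambda\leq\kappa$. Thus by the $\ltkappa$-completeness of $I$ this join lies in $I$, giving $c_\alpha =_I b_\alpha$. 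This step is exactly where both hypotheses are used in tandem, and it is the main obstacle: the chain condition caps the antichain at size $\kappa$, which guarantees that every index $\alpha$ has strictly fewer than $\kappa$ predecessors, so that the completeness of $I$ applies to the truncating joins. Were the antichain allowed to have size $\kappa^\plus$, one would be forced to take joins of $\kappa$-many elements of $I$, which $\ltkappa$-completeness does not control.

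Next I would arrange maximality in $\B$. A routine induction on $\alpha$ shows $\bigvee_{\beta\leq\alpha}b_\beta=\bigvee_{\beta\leq\alpha}c_\beta$ at every stage, and hence $\bigvee_\alpha b_\alpha=\bigvee_\alpha c_\alpha=:t$. Since $A$ is maximal in $\B/I$, the complement $\neg t$ must lie in $I$, for otherwise $[\neg t]_I$ would be a nonzero element of $\B/I$ incompatible with every $[c_\alpha]_I$, contradicting maximality. I then absorb this remainder into a single block, replacing $b_0$ by $b_0\vee\neg t$; this changes nothing modulo $I$ since $\neg t\in I$, and keeps the family pairwise disjoint since $\neg t$ is disjoint from every $b_\alpha$. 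The resulting family now has join $1$, so $\set{b_\alpha\st\alpha<\lambda}$ is a maximal antichain in $\B$ with $[b_\alpha]_I=[c_\alpha]_I$ for each $\alpha$, which is the disjointifying property.

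Finally, completeness follows as indicated before the theorem. Given an arbitrary subset $S\of\B/I$, I would first pass to a maximal antichain $A\of\B/I$ each of whose elements is either below some member of $S$ or incompatible with every member of $S$ (such an $A$ exists by Zorn's lemma applied to the dense collection of elements of this form), observe $|A|\leq\kappa$ by the chain condition, and disjointify it as above. Writing $A'$ for the members of $A$ lying below some element of $S$ and $w=\bigvee_{a\in A'}b_a$ (a join that exists because $\B$ is complete), I claim $[w]_I=\sup S$ in $\B/I$. That $[w]_I$ bounds $S$ from above and that it is the least such bound are both verified by exploiting the maximality of $A$ together with the defining dichotomy of its elements; the one structural fact doing the real work---and again replacing any appeal to a $\kappa$-sized completeness of $I$---is that any element of $\B$ incompatible modulo $I$ with every member of the maximal antichain $A$ must itself lie in $I$. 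Thus every subset of $\B/I$ has a supremum, so $\B/I$ is a complete Boolean algebra with the disjointifying property, as desired.
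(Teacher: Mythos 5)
Your proof is correct and follows essentially the same route as the paper: enumerate the antichain using the $\kappa^\plus$-c.c., disjointify by setting $b_\alpha=c_\alpha-\bigvee_{\beta<\alpha}c_\beta$, use the $\ltkappa$-completeness of $I$ on the resulting small join to see that each $b_\alpha$ is still a representative, and then derive completeness of the quotient from the fact that joins of subsets of a disjointified maximal antichain are computed by the join of the representatives. If anything, you are slightly more careful than the paper's own proof, which does not explicitly verify that the disjoint representatives form a \emph{maximal} antichain in $\B$ (your absorption of the remainder $\neg t\in I$ into one block) nor spell out the supremum argument for an arbitrary subset of $\B/I$.
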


\begin{proof} This can be proved just as in the classical power
set context. If $A\of\B/I$ is any maximal antichain, then by $\kappa^\plus$-saturation, we know $A$ has size at most $\kappa$, so we may
enumerate $A=\set{a_\alpha\st \alpha<|A|}$. Choose any representatives $x_\alpha$ such that $a_\alpha=[x_\alpha]_I$, and then disjointify
them: let $b_\alpha=x_\alpha-\bigvee_{\beta<\alpha}x_\beta$. Clearly, $b_\alpha\leq x_\alpha$ and $b_\alpha\wedge b_\beta=0$ if
$\alpha\neq\beta$. Observe that $x_\alpha-b_\alpha=\bigvee_{\beta<\alpha}(x_\alpha-x_\beta)$, which is a small join of elements of $I$ and
hence in $I$, by the $\ltkappa$-completeness of $I$, and so $[b_\alpha]_I=[x_\alpha]_I=a_\alpha$. Thus, we have found a disjoint selection
of representatives. Since the join of a subset of a maximal antichain in $\B/I$ is the equivalence class of the join of these disjoint
representatives, it follows that $\B/I$ is a complete Boolean algebra.\end{proof}

\begin{theorem} If\/ $I$ is a countably complete ideal with the disjointifying property on a countably distributive complete Boolean algebra
$\B$, then $I$ is precipitous.
\end{theorem}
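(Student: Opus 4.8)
The plan is to verify condition (2) of Theorem~\ref{Theorem.PreciptiousIFF}, which applies since $\B$ is countably distributive. So I would fix a tree of maximal antichains $\langle A_n\st n<\omega\rangle$ modulo $I$ together with an element $a_0\in A_0$, and produce a path $\langle a_n\st n<\omega\rangle$ with $a_n\in A_n$, $a_{n+1}\leq_I a_n$, and $\bigwedge_n a_n\neq 0$. Working below $a_0$, I would first pass to the antichains $B_n=\set{a\in A_n\st [a]_I\leq [a_0]_I}$, each of which projects to a maximal antichain below $[a_0]_I$ in $\B/I$, with $B_0=\singleton{a_0}$ and $B_{n+1}$ refining $B_n$ modulo $I$.

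Next I would use the disjointifying property to replace this tree by an \emph{honest} refining tree of genuine maximal antichains in $\B\restrict a_0$, built by recursion on $n$. Each \emph{real} node $b$ carries a label $\phi(b)=a\in B_n$ with $b\leq a$ and $[b]_I=[a]_I$; it is split into disjoint real children $b'\leq b\wedge a'$ with $[b']_I=[a']_I$, one for each tree-child $a'$ of $a$, obtained by intersecting disjointifying representatives of the $[a']_I$ with $b$. These real children join to $b$ modulo $I$, so the leftover $b-\bigvee b'$ lies in $I$ and is retained as a \emph{remainder} node (labelled $0$ modulo $I$), as are all remainder nodes carried down from earlier levels. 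This yields genuine maximal antichains $\hat B_n$ below $a_0$, each with join exactly $a_0$, refining one another honestly. The crucial bookkeeping point is that, although the remainder produced at a single node is only in $I$, the level-$n$ real mass $\hat s_n=\bigvee\set{b\in\hat B_n\st b\text{ real}}$ satisfies $[\hat s_n]_I=[a_0]_I$, since the real labels at level $n$ project to a maximal antichain summing to $[a_0]_I$; hence the \emph{cumulative} remainder $a_0-\hat s_n$ is a single element of $I$ at each level.

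Finally I would apply the countable distributive law for $\B$ to the honest refining tree $\langle \hat B_n\rangle$, relativized below $a_0$. Since each $\bigvee\hat B_n=a_0$, the law gives $a_0=\bigvee\set{\bigwedge_n b_n\st \langle b_n\rangle\text{ an honestly descending branch}}$, the non-descending threads contributing meet $0$. Setting $R=\bigvee_n (a_0-\hat s_n)$, which lies in $I$ by countable completeness as a countable join of the level remainders, every branch that ever meets a remainder node has meet below $R$; hence the \emph{real} branches, staying among real nodes, already have meets whose join is $\geq a_0-R\neq 0$. So some real branch $\langle b_n\rangle$ has $\bigwedge_n b_n\neq 0$. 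Taking $a_n=\phi(b_n)\in A_n$, the relation $b_n\leq a_n$ gives $\bigwedge_n a_n\geq\bigwedge_n b_n\neq 0$, while $b_{n+1}\leq b_n$ forces $[a_{n+1}]_I\leq[a_n]_I$, i.e.\ $a_{n+1}\leq_I a_n$, and $\phi(b_0)=a_0$; this is the required path.

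The main obstacle is exactly the control of the disjointification remainders. A thread with nonzero \emph{meet} need not descend modulo $I$ unless its meet avoids $I$, and one cannot in general locate a single thread whose meet avoids $I$ (the join over threads can escape $I$ even when every individual thread lies inside it, because $I$ is only countably complete). The resolution I would emphasize is to arrange that the remainder at each of the \emph{countably many} levels is a single element of $I$, so that countable completeness confines all remainder mass to one set $R\in I$, leaving the real branches with nonzero total meet; the disjointifying property is what makes the honest refining tree available in the first place, and countable distributivity of $\B$ is what converts ``nonzero join of branch-meets'' into the existence of an actual branch.
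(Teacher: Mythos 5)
Your proof is correct and takes essentially the same route as the paper's: reduce to criterion (2) of theorem \ref{Theorem.PreciptiousIFF}, disjointify the antichains, and combine the countable completeness of $I$ with the countable distributivity of $\B$ to extract a thread with nonzero meet. The paper dispenses with your remainder-node bookkeeping by applying the distributive law directly to the (non-maximal) disjointified antichains and observing that $\bigwedge_n(\vee A_n)$ lies in the dual filter of $I$ --- exactly the dual of your computation with $R=\bigvee_n(a_0-\hat s_n)\in I$ --- though your relativization below $a_0$ does address the ``every $a_0\in A_0$'' clause of the criterion more explicitly than the paper does.
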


\begin{proof} We use theorem \ref{Theorem.PreciptiousIFF}.
Suppose that $\<A_n\st n<\omega>$ is a tree of maximal antichains. This means that each $A_n\of\B$ is a maximal antichain modulo $I$, and
every element of $A_{n+1}$ is below an element of $A_n$ modulo $I$. Because $I$ has the disjointifying property, we may systematically
disjointify these antichains, replacing each element with a slightly smaller element but $I$-equivalent member, in such a way that each
$A_n$ becomes an antichain in $\B$, not merely an antichain in $\B$ modulo $I$. Let $F$ be the dual filter to $I$. Since $A_n$ is a maximal
antichain modulo $I$, it follows that $\vee A_n\in F$. Since $I$ is countably complete, it follows that $\bigwedge_n(\vee A_n)\in F$ as
well. By the countable distribution law in $\B$, it follows that $\bigwedge_n(\vee A_n)=\bigvee\set{\bigwedge_n a_n\st
\<a_0,a_1,\ldots>\in\B^\omega, a_n\in A_n}$. In particular, there is some such sequence $\<a_0,a_1,\ldots>$ with $a_n\in A_n$ and $\wedge_n
a_n\neq 0$. Since every element of $A_{n+1}$ lies below an element of $A_n$, and is incompatible with the other elements of $A_n$, it
follows that this sequence is descending, and fulfills the criterion of theorem \ref{Theorem.PreciptiousIFF}, as desired.\end{proof}

\begin{corollary}
If\/ $I$ is a $\ltkappa$-complete ideal in a complete, countably distributive Boolean algebra $\B$ and $\B/I$ is $\kappa^\plus$-c.c., for
some uncountable cardinal $\kappa$, then $I$ is precipitous.
\end{corollary}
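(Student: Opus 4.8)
The plan is to obtain this corollary as a direct composition of the two immediately preceding theorems, the only genuine observation being that the uncountability of $\kappa$ upgrades $\ltkappa$-completeness to countable completeness. First I would note that since $\kappa$ is an uncountable cardinal, we have $\omega<\kappa$, so any countable subset of $I$ is in particular a subset of size less than $\kappa$; hence the $\ltkappa$-completeness of $I$ immediately yields that $I$ is countably complete, that is, closed under countable joins. This is the single step where the hypothesis that $\kappa$ is uncountable is actually consumed.

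Next I would invoke the theorem asserting that a $\ltkappa$-complete ideal $I$ in a complete Boolean algebra $\B$ for which $\B/I$ is $\kappa^\plus$-c.c.\ produces a quotient $\B/I$ that is a complete Boolean algebra possessing the disjointifying property. Both hypotheses---the $\ltkappa$-completeness of $I$ and the $\kappa^\plus$-chain condition on $\B/I$---are exactly what the present corollary assumes, so that theorem applies verbatim and hands us the disjointifying property for $I$ (and, as a byproduct, the completeness of $\B/I$, which we do not even need here).

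Finally, I would feed these facts into the theorem stating that a countably complete ideal with the disjointifying property on a countably distributive complete Boolean algebra is precipitous. The algebra $\B$ is assumed complete and countably distributive; the ideal $I$ has been shown countably complete in the first step and disjointifying in the second step. The conclusion of that theorem is precisely that $I$ is precipitous, which is what we want.

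I do not anticipate any real obstacle, as the corollary merely packages the implications \emph{saturation} $\Rightarrow$ \emph{disjointifying} $\Rightarrow$ \emph{precipitous}, exactly mirroring the classical power-set chain. The only point requiring care---and the one I would state explicitly rather than leave implicit---is the passage from $\ltkappa$-completeness to countable completeness, since that is the sole place the uncountability of $\kappa$ enters; without it the second theorem's countable-completeness hypothesis could fail and the argument would break down.
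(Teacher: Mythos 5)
Your proof is correct and is exactly the intended argument: the paper states this corollary without proof precisely because it is the composition of the two preceding theorems, with the uncountability of $\kappa$ used only to pass from $\ltkappa$-completeness to countable completeness. Nothing further is needed.
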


Next, we generalize Solovay's observation from the power set context that disjointification allows for a convenient anticipation of names
for spanning functions in the ground model.

\begin{lemma}\label{Lemma.DisjointifyingNameForSpanningFunction} 
Suppose that $\B/I$ has the disjointifying property and that $\dot f$ is a $\B/I$-name for which $b=\[\dot f\text{ is a spanning function in }\check
V]^{\B/I}\neq 0$. Then there is an actual spanning function $g:A\to V$ in $V$ such that $b$ forces that $\dot f$ and $\check g$ are
equivalent modulo $\union\dot G$.
\end{lemma}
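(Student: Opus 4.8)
The plan is to reduce to the local description furnished by Sublemma \ref{Lemma.NameForSpanningFunction} and then to amalgamate the local pieces into a single ground-model function, the one essential use of the disjointifying property being the amalgamation. First, passing if necessary to the associated open dense spanning function (the two presentations being interchangeable, as noted after Lemma \ref{Lemma.RepresentationOnDifferenceAntichain}), I would apply Sublemma \ref{Lemma.NameForSpanningFunction} to the condition $b$ and the name $\dot f$. This yields a maximal antichain $A\of\B$ modulo $I$ below $b$ together with functions $f_a\colon\B_a\to V$ on the cones $\B_a$ below each $a\in A$, such that $[a]_I\forces_{\B/I}\dot f\restrict\check\B_a=\check f_a$. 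So below each $[a]_I$ the name $\dot f$ is pinned to an actual ground-model function; the task is to glue these into one function that works uniformly below $b$.

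Next I would invoke the disjointifying property to replace the antichain $A$, which is only an antichain modulo $I$, by genuinely disjoint data. Extending $\set{[a]_I\st a\in A}$ to a full maximal antichain of $\B/I$ and disjointifying, I obtain representatives $b_a\in\B$ with $[b_a]_I=[a]_I$ and the $b_a$ pairwise disjoint in $\B$. The obstacle at this point is that $f_a$ lives on the cone below $a$, not below $b_a$, so I would instead work with $c_a=a\wedge b_a$: these remain pairwise disjoint (being below the disjoint $b_a$), still satisfy $[c_a]_I=[a]_I$, and now lie below $a$, so each $f_a$ is defined on the whole sub-cone $\B_{c_a}$. I then define an open dense spanning function $g$ on $D=\Union_{a\in A}\B_{c_a}$ by setting $g\restrict\B_{c_a}=f_a\restrict\B_{c_a}$; this is well defined precisely because the cones $\B_{c_a}$ are disjoint, which is exactly what disjointification bought us. Note that $\set{[c_a]_I\st a\in A}=\set{[a]_I\st a\in A}$ is still a maximal antichain below $[b]_I$, so I need not control the absolute size of $\bigvee_a c_a$.

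Finally I would verify that $b\forces_{\B/I}\dot f\equiv_{\union\dot G}\check g$. Working below any generic $G$ with $[b]_I\in G$, genericity selects a unique $[c_{a_0}]_I=[a_0]_I$ from this maximal antichain below $[b]_I$, so $c_{a_0}\in U=\union G$. Below $[a_0]_I$ we have $\dot f\restrict\check\B_{a_0}=\check f_{a_0}$, while by construction $g$ agrees with $f_{a_0}$ throughout the sub-cone $\B_{c_{a_0}}$; hence $\dot f$ and $\check g$ agree on the whole cone below $\check c_{a_0}$, whose join is $c_{a_0}\in U$. By the definition of equivalence of open dense spanning functions modulo $U$, namely $\dot f=_U\check g$ iff $\vee\set{x\st \dot f(x)=\check g(x)}\in U$, the agreement set has join $\geq c_{a_0}\in U$, giving $\dot f\equiv_U\check g$ as forced by $b$. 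To deliver a genuine spanning function on a maximal antichain as stated, I would thin $D$ to a maximal antichain and extend $g$ by dummy values (say $\emptyset$) on a maximal antichain below $\neg b$; since this modification is confined below $\neg b$, it leaves the forced equivalence below $b$ untouched. The main work, and the only place the hypothesis enters, is the amalgamation step, where both the domain-matching of the $f_a$ and the well-definedness of $g$ rest on the disjointifying property.
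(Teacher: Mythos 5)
Your proof is correct and follows essentially the same route as the paper's: pass to open dense spanning functions, apply Lemma \ref{Lemma.NameForSpanningFunction} to get the local functions $f_a$ on the cones $\B_a$, use the disjointifying property to shrink the $a$'s to genuinely disjoint $I$-equivalent representatives, and glue the $f_a$ into a single ground-model function whose forced agreement with $\dot f$ below the selected $c_{a_0}\in\union\dot G$ gives the equivalence. Your extra care about taking $c_a=a\wedge b_a$ so the representatives lie below the original cones, and about padding $g$ out to a genuine (open dense) spanning function, only makes explicit what the paper's proof elides.
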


\begin{proof} It is somewhat more convenient to use open dense
spanning functions, so let us assume that $\dot f$ is defined not merely on an antichain, but is extended to the open dense set of
conditions below elements of an antichain, by copying the value on the antichain to the entire lower cone of that element. By lemma
\ref{Lemma.NameForSpanningFunction}, we may find a maximal antichain $A\of\B$ modulo $I$ below $b$ such that for each $a\in A$ there is a
function $f_a:\B_a\to V$, where $\B_a$ is the cone of nonzero elements below $a$, such that $[a]_I$ forces that $\dot
f\restrict\check\B_a=f_a$. Since $I$ has the disjointifying property, we may replace the elements of $a$ with smaller elements but still
equivalent modulo $I$, so that $A$ is an antichain in $\B$. In this case, the cones $\B_a$ and $\B_b$ for $a\neq b$ in $A$ do not overlap,
and so $g=\union\set{f_a\st a\in A}$ is a function. Every $a\in A$ has $[a]_I$ forcing that $\dot f\restrict\B_a=\check g\restrict\B_a$,
and since $A$ is a maximal antichain modulo $I$ below $b$, this means that $b$ forces that $\dot f$ is equivalent to $\check g$ modulo
$\union\dot G$, as desired.\end{proof}

The next theorem generalizes theorem \ref{Theorem.ClosureOfBooleanUltrapowers}, by obtaining closure of the Boolean ultrapower in the
quotient forcing extension $V[G]$, rather than merely in $V$ as before.

\begin{theorem}\label{Theorem.ClosureWithDisjointification}
Suppose that $j:V\to M\of M[H]\of V[G]$ is the generic Boolean ultrapower arising from the induced filter $U=\union G$ by a $V$-generic
filter $G\of\B/I$, where $\B/I$ has the disjointifying property. Then $({}^\theta M[H])^{V[G]}\of M[H]$ if and only if $j\image\theta\in
M[H]$. In particular, if $\kappa=\cp(j)$, then $({}^\kappa M[H])^{V[G]}\of M[H]$.
\end{theorem}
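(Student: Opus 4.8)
The plan is to follow the skeleton of the proof of Theorem~\ref{Theorem.ClosureOfBooleanUltrapowers}, amalgamating the given sequence into a single object of length $j(\theta)$ and then collapsing it back down using $j\image\theta$, but to overcome the fact that the representing data now lives in the larger universe $V[G]$ rather than in $V$ by invoking the Solovay-style anticipation provided by the disjointifying property. The forward direction is immediate: if $({}^\theta M[H])^{V[G]}\of M[H]$, then in particular the $\theta$-sequence $\<j(\alpha)\st\alpha<\theta>$, which is an element of $V[G]$ all of whose entries lie in $M[H]$, belongs to $M[H]$, and hence so does its range $j\image\theta$.

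For the converse, assume $j\image\theta\in M[H]$ and fix $\vec z=\<z_\alpha\st\alpha<\theta>\in ({}^\theta M[H])^{V[G]}$. First I would represent the entries by names: since every element of $M[H]$ has the form $\val(j(\tau),H)$ for some $\B$-name $\tau\in V$ (the normal-form theorem for the full Boolean extension), choose for each $\alpha$ a $\B$-name $\tau_\alpha\in V$ with $z_\alpha=[\tau_\alpha]_U=\val(j(\tau_\alpha),H)$. The resulting assignment $\alpha\mapsto\tau_\alpha$ is a sequence of names, but it lies in $V[G]$, not in $V$, so unlike in Theorem~\ref{Theorem.ClosureOfBooleanUltrapowers} I cannot simply amalgamate it into a ground-model $\B$-name by the mixing lemma.

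The key step is to recover a genuine ground-model name anyway. I would pass to a $\B/I$-name $\dot\sigma$ for the amalgamation of $\<\tau_\alpha\st\alpha<\theta>$ into a $\check\theta$-indexed sequence of names, and then apply the disjointification machinery of Lemma~\ref{Lemma.DisjointifyingNameForSpanningFunction}: using the disjointifying property, the relevant maximal antichains modulo $I$ that decide the coordinates of $\dot\sigma$ can be replaced by genuinely disjoint representatives in $\B$, and mixing the decided ground-model data along these honest antichains yields an actual $\B$-name $\sigma\in V$ that is $U$-equivalent to the generic amalgamation coordinatewise, so that $\sigma(\check\alpha)=_U\tau_\alpha$ for every $\alpha<\theta$. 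Granting this, $s:=[\sigma]_U$ is then an element of $V^\B/U=M[H]$ which $M[H]$ sees as a sequence of length $[\check\theta]_U=j(\theta)$ with $s(j(\alpha))=[\tau_\alpha]_U=z_\alpha$. Finally, since $j\image\theta\in M[H]$ and $M[H]\satisfies\ZFC$, the order isomorphism collapsing $j\image\theta$ onto $\theta$ belongs to $M[H]$, and composing it with $s$ recovers $\vec z=\<s(j(\alpha))\st\alpha<\theta>$ inside $M[H]$, as desired. For the last assertion, when $\kappa=\cp(j)$ we have $j\image\kappa=\kappa$, which is an ordinal and so lies in $M\of M[H]$; thus the hypothesis $j\image\kappa\in M[H]$ is automatic and the closure $({}^\kappa M[H])^{V[G]}\of M[H]$ follows.

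The main obstacle is precisely the anticipation step: the sequence of representing names lives in $V[G]$, and if $\B/I$ adds new $\theta$-sequences then there is in general no ground-model amalgamation at all. This is exactly where the disjointifying property is indispensable---it is the abstract analogue of the saturation hypothesis in Solovay's classical treatment---and it is what separates this theorem from the earlier Theorem~\ref{Theorem.ClosureOfBooleanUltrapowers}, where the sequence already lived in $V$ and the mixing lemma alone sufficed. I expect the delicate part to be arranging that a \emph{single} ground-model name $\sigma$ works simultaneously for all $\theta$ coordinates (rather than a separate antichain per coordinate), which I would handle by encoding the entire assignment $\alpha\mapsto\tau_\alpha$ as the values of one spanning function before disjointifying, so that a single application of Lemma~\ref{Lemma.DisjointifyingNameForSpanningFunction} produces $\sigma$.
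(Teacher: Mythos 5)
Your overall strategy matches the paper's: anticipate the representing data in $V$ via the disjointifying property, amalgamate into a single $\B$-name $\sigma$, and use $j\image\theta\in M[H]$ to collapse $s=[\sigma]_U\restrict j\image\theta$ back onto $\vec z$. The forward direction and the final claim about $\kappa=\cp(j)$ are exactly as in the paper, and your coordinate-wise description of the anticipation step (``the relevant maximal antichains modulo $I$ that decide the coordinates\ldots can be replaced by genuinely disjoint representatives\ldots and mixing the decided ground-model data along these honest antichains'') is the right argument. The genuine problem is the ``fix'' you propose in your final paragraph for what you call the delicate part: encoding the entire assignment $\alpha\mapsto\tau_\alpha$ as the values of a \emph{single} spanning function and applying Lemma~\ref{Lemma.DisjointifyingNameForSpanningFunction} \emph{once}. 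That cannot work. A spanning function, or a maximal antichain of conditions deciding a name, can only anticipate an object that is \emph{forced to lie in} $\check V$; the assignment $\alpha\mapsto\tau_\alpha$, taken as a whole, is a $\theta$-sequence that lives in $V[G]$ and may well have been added by the forcing $\B/I$, so it is not forced to equal any ground-model object and no single maximal antichain modulo $I$ decides it. Indeed, if it were so decidable, the argument would deliver $\vec z\in M[H]$ without ever using the hypothesis $j\image\theta\in M[H]$, yielding an unconditional closure that is false in general.

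The correct resolution is the opposite of what you fear: a separate antichain per coordinate is exactly right and causes no assembly problem, because the whole construction can be carried out uniformly in $V$. Fix in $V$ a $\B/I$-name $\dot z$ for the sequence with $\[\dot z\in{}^\theta\check\ORD]=1$ (the paper first reduces to sequences of ordinals, legitimate since these are $\ZFC$ models; this matters because Lemma~\ref{Lemma.DisjointifyingNameForSpanningFunction} literally concerns spanning functions, which represent elements of $M$ rather than of $M[H]$ --- your version with arbitrary names $\tau_\alpha$ can be made to work by deciding the name itself along an antichain modulo $I$, but you should either say so explicitly or perform the reduction). From $\dot z$ construct in $V$ the coordinate names $\dot z_\alpha$; each one individually \emph{is} forced to be represented by a ground-model spanning function, so the lemma applies to each, and choosing witnesses in $V$ produces a sequence $\<g_\alpha\st\alpha<\theta>\in V$ of actual spanning functions. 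Mixing each $g_\alpha$ along its (disjointified) antichain gives $\<\tau_\alpha\st\alpha<\theta>\in V$ with $z_\alpha=[\tau_\alpha]_U$, and the amalgamation into a single $\sigma$ is then just the ordinary mixing of theorem~\ref{Theorem.ClosureOfBooleanUltrapowers}; the ``simultaneity'' you worried about is automatic because every step takes place in $V$. With that correction your proof is the paper's proof.
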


\begin{proof} To be clear, we assume here that $M=\check V_U$ and
$M[H]=V^\B/U$, where $H=[\dot H]_U$, using the canonical $\B$-name $\dot H$ for the generic filter. Since $I$ was disjointifying, the
ultrapower is well-founded and we identify the elements of the ultrapower with their Mostowksi collapse. The forward implication is
immediate, since $j$ is definable in $V[G]$. Conversely, suppose that $j\image\theta\in M[H]$ and consider any $\<z_\alpha\st
\alpha<\theta>\in ({}^\theta M[H])^{V[G]}$. Since these are \ZFC\ models, it suffices to consider sequences of ordinals, so we may assume
$z_\alpha\in\ORD$. Let $\dot z$ be a $\B/I$ name for this sequence, and we may assume that $\[\dot z\in {}^\theta\check\ORD]=1$. From $\dot
z$, we may construct names $\dot z_\alpha$ such that $\[\dot z_\alpha=\dot z(\check\alpha)]=1$. Since each $\dot z_\alpha$ is forced to be
an ordinal, there is a $\B/I$-name $\dot f_\alpha$ for the spanning function representing $\dot z_\alpha$. That is, $1$ forces that $\dot
z_\alpha=[\dot f_\alpha]_{\union\dot G}$. By lemma \ref{Lemma.DisjointifyingNameForSpanningFunction}, we may find an actual spanning
function $g_\alpha:A_\alpha\to\ORD$ in $V$ such that $1$ forces that $\dot z_\alpha$ is represented by $[\check g_\alpha]_{\union\dot G}$.
Let $\tau_\alpha$ be the $\B$-name arising by mixing the values of $g_\alpha$ along $A_\alpha$. Thus, in the Boolean ultrapower we have
$z_\alpha=[\tau_\alpha]_U$, and the sequence of names $\<\tau_\alpha\st\alpha<\theta>$ is in $V$. As in theorem
\ref{Theorem.ClosureOfBooleanUltrapowers}, construct a $\B$-name $\sigma$ for the $\theta$-sequence, so that $\[\sigma\text{ is a
}\check\theta\text{-sequence}]^\B= 1$ and for each $\alpha <\theta$ we have $\[\sigma(\check\alpha)=\tau_\alpha]^\B=1$. Let $s=[\sigma]_U$,
which is a $j(\theta)$-sequence in $M[H]$, with $s(j(\alpha))=[\tau_\alpha]_U=z_\alpha$. Since $j\image\theta\in M[H]$, we may restrict $s$
to $j\image\theta$ and collapse to the domain to see that $\<z_\alpha\st\alpha<\theta>\in M[H]$, as desired. The final claim, about
$\kappa=\cp(j)$, is a special case since $j\image\kappa=\kappa\in M$.\end{proof}

\begin{corollary}In the context of the previous theorem, if $\B$
is $\ltdelta$-distributive, then for any $\theta<j(\delta)$ we have $({}^\theta M)^{V[G]}\of M$ if and only if $j\image\theta\in M[H]$. In
particular, if $\kappa=\cp(j)$ and $\B$ is $\ltkappa$-distributive, then $({}^\kappa M)^{V[G]}\of M$.
\end{corollary}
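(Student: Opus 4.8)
The plan is to follow the proof of Corollary \ref{Corollary.ClosureOfBooleanUltrapowers} almost verbatim, substituting the generic closure Theorem \ref{Theorem.ClosureWithDisjointification} for its non-generic counterpart Theorem \ref{Theorem.ClosureOfBooleanUltrapowers}, while keeping careful track of which ambient model each $\theta$-sequence lives in. The forward direction I expect to be immediate: if $({}^\theta M)^{V[G]}\of M$, then since $j$ is definable in $V[G]$ from the parameter $U=\union G$, the sequence $\<j(\alpha)\st\alpha<\theta>$ is a $\theta$-sequence of elements of $M$ lying in $V[G]$, hence in $M$, and so $j\image\theta\in M\of M[H]$, which is the stronger of the two memberships and certainly gives $j\image\theta\in M[H]$.

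For the converse, I would assume $j\image\theta\in M[H]$ and first invoke Theorem \ref{Theorem.ClosureWithDisjointification} directly to obtain $({}^\theta M[H])^{V[G]}\of M[H]$. Since $M\of M[H]$, any $\theta$-sequence of elements of $M$ that lies in $V[G]$ is in particular a $\theta$-sequence of elements of $M[H]$ lying in $V[G]$, so this containment already yields $({}^\theta M)^{V[G]}\of M[H]$.

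The remaining and principal step is to push these sequences from $M[H]$ down into $M$ via distributivity. Here I would recall from lemma \ref{Lemma.Vcheck/UtransitiveEtc}, read off through the Mostowski collapse as in Theorem \ref{Theorem.ClosureWithDisjointification}, that $M[H]$ is precisely the forcing extension of $M$ by the $M$-generic filter $H\of j(\B)$. By the elementarity of $j$, $M$ believes that $j(\B)$ adds no new sequences of length less than $j(\delta)$, this being the image under $j$ of the $\ltdelta$-distributivity of $\B$; since $\theta<j(\delta)$, it follows that forcing with $j(\B)$ over $M$ adds no new $\theta$-sequences of elements of $M$, i.e.\ $({}^\theta M)^{M[H]}\of M$. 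Chaining this with the previous containment gives $({}^\theta M)^{V[G]}\of M[H]$ and therefore $({}^\theta M)^{V[G]}\of M$, as desired. The delicate point I expect to be the main obstacle is the three-model bookkeeping: the sequences are produced in $V[G]$, certified to lie in $M[H]$ by Theorem \ref{Theorem.ClosureWithDisjointification}, and only then collapsed into $M$ by distributivity, so one must confirm that the distributivity hypothesis transfers correctly under $j$ and that $\theta<j(\delta)$ is exactly the bound that makes the collapse step apply. The final claim is then the special case $\delta=\theta=\kappa=\cp(j)$: here $\kappa<j(\kappa)$ secures $\theta<j(\delta)$, and $j\image\kappa=\kappa\in M\of M[H]$ supplies the hypothesis of the biconditional, giving $({}^\kappa M)^{V[G]}\of M$.
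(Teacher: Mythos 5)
Your proposal is correct and follows essentially the same route as the paper's proof: the converse direction first applies theorem \ref{Theorem.ClosureWithDisjointification} to land the $\theta$-sequences in $M[H]$, and then uses the $\ltdelta$-distributivity of $\B$, transferred by elementarity to the ${\smalllt}j(\delta)$-distributivity of $j(\B)$ over $M$, to conclude ${}^\theta M\of M$, with the final claim as the special case $\theta=\delta=\kappa$ via $j\image\kappa=\kappa\in M$. Your more explicit three-model bookkeeping is merely a careful unpacking of what the paper states tersely.
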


\begin{proof} This follows from theorem \ref{Theorem.ClosureWithDisjointification} just as corollary \ref{Corollary.ClosureOfBooleanUltrapowers} followed from theorem \ref{Theorem.ClosureOfBooleanUltrapowers}. Namely, if $j\image\theta\in M[H]$, then we know ${}^\theta M\of {}^\theta M[H]\of M[H]$ by
theorem \ref{Theorem.ClosureWithDisjointification}. But since the forcing $H\of j(\B)$ is $\theta$-distributive, this means that ${}^\theta
M\of M$, as desired.\end{proof}

We turn now to what we find to be an exciting new possibility in the general setting of complete Boolean algebras, namely, the notion of
{\it relative} precipitousness and disjointification.

\begin{definition}\rm
Suppose that $I$ and $J$ are ideals in a complete Boolean algebra $\B$, the quotients $\B/I$ and $\B/J$ are also complete, and $J\of I$. We
define that $I$ is {\df precipitous over} $J$ if whenever $G\of\B/I$ is $V$-generic, then the Boolean ultrapower by the induced ultrafilter
$U=\set{[a]_J\st [a]_I\in G}$ on $\B/J$ is well-founded. We say that $I$ is {\df disjointifying over} $J$, if for every maximal antichain
modulo $I$, there is a choice of representatives that form an antichain modulo $J$.
\end{definition}

It is clear that every ideal is precipitous over itself, and an ideal is precipitous if and only if it is precipitous over the zero ideal
$\singleton{0}$. Similarly, every ideal is disjointifying over itself, and an ideal is disjointifying if and only if it is disjointifying
over the zero ideal. If $J\of I\of\B$ are ideals, then $I/J=\set{[a]_J\st a\in I}$ is an ideal in $\B/J$. Furthermore, $(\B/J)/(I/J)$ is
isomorphic to $\B/I$ by the canonical map $[[a]_J]_{I/J}\mapsto[a]_I$.

\begin{theorem}\label{Theorem.RelativeQuotients}
Suppose that $\B$ is a complete Boolean algebra, $J\of I$ are ideals and $\B/I$ and $\B/J$ are both complete. Then:
\begin{enumerate}
 \item $I$ is precipitous over $J$ in $\B$ if and only if\/ $I/J$ is precipitous in $\B/J$.
 \item $I$ is disjointifying over $J$ if and only if\/ $I/J$ is disjointifying in $\B/J$.
\end{enumerate}
\end{theorem}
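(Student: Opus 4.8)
The plan is to reduce both equivalences to the corresponding \emph{absolute} notions for the algebra $\B/J$ and its ideal $I/J$, using the two facts recorded just before the statement: that $I/J$ is an ideal on $\B/J$, and that the canonical map $[[a]_J]_{I/J}\mapsto[a]_I$ is an isomorphism $(\B/J)/(I/J)\iso\B/I$. Write $q:\B/J\to(\B/J)/(I/J)$ for the quotient homomorphism and $\phi:(\B/J)/(I/J)\iso\B/I$ for this isomorphism. The whole proof is then essentially a bookkeeping exercise transporting maximal antichains and representatives across $\phi$, with one genuinely non-formal point in part (2).

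For (1), I would simply unwind the two definitions of precipitousness through $\phi$. A filter $G\of\B/I$ is $V$-generic exactly when $\phi^{-1}G\of(\B/J)/(I/J)$ is $V$-generic, since $\phi$ is an isomorphism. The induced ultrafilter on $\B/J$ used in the definition of ``$I$ precipitous over $J$'' is $U=\set{[a]_J\st [a]_I\in G}$, while the induced ultrafilter used in the (absolute) definition of ``$I/J$ precipitous,'' applied to the base algebra $\B/J$, is $\union(\phi^{-1}G)=\set{[a]_J\st [[a]_J]_{I/J}\in\phi^{-1}G}$. Because $\phi([[a]_J]_{I/J})=[a]_I$, these two subsets of $\B/J$ coincide. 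Hence both definitions ask for well-foundedness of the Boolean ultrapower by one and the same ultrafilter on the complete algebra $\B/J$, and since $\phi$ sets up a bijection between the relevant generic filters, the equivalence is immediate.

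For (2), the same translation matches a maximal antichain $\alpha=\set{\alpha_i}$ in $(\B/J)/(I/J)$ with a maximal antichain modulo $I$ in $\B$ (choose $\B$-representatives $a_i$ of the $I$-classes $\phi(\alpha_i)$), and it matches a family of $\B/J$-representatives $\beta_i$ of $\alpha_i$ with a family of $\B$-elements $b_i$ (any $[\,\cdot\,]_J$-preimages), which are then $I$-representatives of $a_i$; moreover $\set{\beta_i}$ is an antichain in $\B/J$ iff $\set{b_i}$ is an antichain modulo $J$. The reverse implication of (2) is then immediate: if $I/J$ is disjointifying, the $\beta_i$ form a \emph{maximal} antichain in $\B/J$, so the $b_i$ form a (maximal, hence in particular an) antichain modulo $J$, giving ``$I$ disjointifying over $J$.''

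The one step that is not pure bookkeeping is the forward direction of (2), where the hypothesis supplies only an antichain modulo $J$ but the disjointifying property for $(\B/J)/(I/J)$ demands a \emph{maximal} antichain in $\B/J$; I expect this to be the main obstacle. I would close the gap by absorbing the missing mass. Given $I$-representatives $b_i$ forming an antichain modulo $J$ with $q([b_i]_J)=\alpha_i$, monotonicity of the homomorphism $q$ gives $q(\bigvee_i[b_i]_J)\geq\bigvee_i\alpha_i=1$, the last equality using that $\alpha$ is a maximal antichain in the complete algebra $(\B/J)/(I/J)\iso\B/I$; hence the complement $\mu=\neg\bigvee_i[b_i]_J$ lies in $\ker q=I/J$. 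Since $\mu$ is disjoint from every $[b_i]_J$, replacing a single $[b_{i_0}]_J$ by $[b_{i_0}]_J\vee\mu$ keeps the family pairwise disjoint, leaves its $(I/J)$-class unchanged (as $q(\mu)=0$), and produces join $1$, i.e.\ a maximal antichain in $\B/J$ of representatives of $\alpha$. This establishes the disjointifying property for $(\B/J)/(I/J)$, and everything else is transport of structure along $\phi$.
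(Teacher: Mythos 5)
Your proof is correct and follows essentially the same route as the paper: transport everything along the canonical isomorphism $(\B/J)/(I/J)\iso\B/I$, observing for (1) that the two induced ultrafilters on $\B/J$ coincide, and for (2) that antichains and representatives correspond under the translation. Your absorption step (adding the $I/J$-small complement $\neg\bigvee_i[b_i]_J$ to one representative to restore maximality) is a welcome extra degree of care, since the paper's own forward argument for (2) stops at producing an antichain of representatives in $\B/J$ rather than a maximal one as the definition of the disjointifying property literally requires.
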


\begin{proof} (1) The point here is that $\B/I$ is canonically
isomorphic to $(\B/J)/(I/J)$ by the map $[a]_I\mapsto [[a]_J]_{I/J}$. Therefore, forcing over either of these Boolean algebras results in
$V$-generic filters $G\of\B/I$ and $\tilde G=\set{[[a]_J]_{I/J}\st [a]_I\in G}\of (\B/J)/(I/J)$. The filter induced on $\B/J$ in either
case is $U=\set{[a]_J\st [a]_I\in G}=\set{[a]_J\st [[a]_J]_{I/J}\in\tilde G}$. The ideal $I$ is precipitous over $J$ if and only if the
Boolean ultrapower by $U$ is necessarily well-founded, since $U$ is the filter on $\B/J$ induced by $G\of\B/I$. At the same time, $I/J$ is
precipitous in $\B/J$ if and only if the Boolean ultrapower by $U$ is necessarily well-founded, since $U$ is the filter on $\B/J$ induced
by $\tilde G$ on $(\B/J)/(I/J)$. Thus, $I$ is precipitous over $J$ if and only if $I/J$ is precipitous in $\B/J$, as desired.

%Suppose that $I$ is precipitous relative to $J$ in $\B$ and that $G\of (\B/J)/(I/J)$ is $V$-generic. We want to show that the Boolean ultrapower by
%the induced ultrafilter $U=\union G\of\B/J$ is well-founded. The Boolean algebra $(\B/J)/(I/J)$ is canonically isomorphic to $\B/I$, and so $\tilde
%G=\set{[a]_I\st [[a]_J]_{I/J}\in G}$ is also $V$-generic on $\B/I$. Since $I$ is precipitous relative to $J$, this means that the Boolean ultrapower
%by the induced ultrafilter $\tilde U=\set{[a]_J\st [a]_I\in\tilde G}$ on $\B/J$ is well-founded. But $U=\tilde U$, and so the Boolean ultrapower by
%$U$ is well-founded, as desired.
%
%Conversely, suppose that $I/J$ is precipitous in $\B/J$ and that $G\of\B/I$ is $V$-generic. We want to show that the Boolean ultrapower by $G/J$ on
%$\B/J$ is well-founded. Since $\B/I$ is canonically isomorphic to $(\B/J)/(I/J)$ by the map $[a]_I\mapsto [[a]_J]_{I/J}$, it follows that $\tilde
%G=\set{[[a]_J]_{I/J}\st [a]_I\in G}$ is $V$-generic for $(\B/J)/(I/j)$. Since we assumed that $I/J$ is precipitous, this means that the ultrapower by
%the induced ultrafilter $\tilde U=\union \tilde G=\set{[a]_J\st [[a]_J]_{I/J}\in \tilde G}$ on $\B/J$ is well-founded. But $\tilde U=G/J$, and so
%this is just what we wanted to show. (Note: I think we can combine these two arguments into one simpler presentation.)

(2) Suppose that $I$ is disjointifying over $J$. This means that every antichain modulo $I$ in $\B$ has a choice of representatives that
are disjoint modulo $J$. Suppose now that $\set{[a_\alpha]_J\st \alpha<\gamma}$ is an antichain modulo $I/J$ in $\B/J$. Thus, if
$\alpha\neq\beta$, we have that $[a_\alpha]_J\wedge [a_\beta]_J \in I/J$. This means $a_\alpha\wedge a_\beta\in I$, and so
$\set{a_\alpha\st \alpha<\gamma}$ is an antichain modulo $I$. Thus, by our assumption, we can find representatives $b_\alpha$ such that
$a_\alpha=_Ib_\alpha$ and $b_\alpha\wedge b_\beta\in J$ for $\alpha\neq\beta$. It follows that $[a_\alpha]_J$ is equivalent to
$[b_\alpha]_J$ modulo $I/J$ in $\B/J$, and the $[b_\alpha]_J$ form an actual antichain in $\B/J$. So $I/J$ is disjointifying in $\B/J$, as
desired.

Conversely, suppose that $I/J$ is disjointifying in $\B/J$ and $\set{a_\alpha\st\alpha<\gamma}$ is an antichain modulo $I$ in $\B$. Thus,
$\set{[a_\alpha]_J\st\alpha<\gamma}$ is an antichain modulo $I/J$ in $\B/J$, and so by our assumption we may find $[b_\alpha]_J$ equivalent
to $[a_\alpha]_J$ modulo $I/J$ such that $[b_\alpha]_J\wedge[b_\beta]_J=0$ for $\alpha\neq\beta$. This means exactly that $a_\alpha$ is
equivalent to $b_\alpha$ modulo $I$ and $b_\alpha\wedge b_\beta\in J$. Thus, we have found a choice of representatives that are disjoint
modulo $J$, as desired.\end{proof}

It is easy to see that relative disjointification is a transitive relation.

\begin{lemma} If\/ $I$ is disjointifying over $J$ and $J$ is
disjointifying over $K$, then $I$ is disjointifying over $K$.
\end{lemma}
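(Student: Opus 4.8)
The plan is to obtain the desired representatives by composing the two given disjointifications. Fix a maximal antichain $A=\set{a_\alpha\st\alpha<\gamma}$ modulo $I$. Since $I$ is disjointifying over $J$, first I would choose representatives $b_\alpha$, with $[b_\alpha]_I=[a_\alpha]_I$, such that $\set{b_\alpha\st\alpha<\gamma}$ is an antichain modulo $J$, i.e.\ $b_\alpha\wedge b_\beta\in J$ for $\alpha\neq\beta$. The idea is then to apply the hypothesis that $J$ is disjointifying over $K$ to this new antichain, producing representatives $c_\alpha$ with $[c_\alpha]_J=[b_\alpha]_J$ and $\set{c_\alpha\st\alpha<\gamma}$ an antichain modulo $K$, and to check that the $c_\alpha$ remain legitimate representatives of the original classes $[a_\alpha]_I$.

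The one point requiring care is that $\set{b_\alpha\st\alpha<\gamma}$, while an antichain modulo $J$, need not be a \emph{maximal} antichain modulo $J$: the original $A$ satisfies only $\bigvee[a_\alpha]_I=1$ in $\B/I$, and the residue $1-\bigvee b_\alpha$ may lie in $I$ without lying in $J$. Since the relative disjointifying property is phrased for maximal antichains, I would first record the routine observation that it automatically applies to \emph{every} antichain: given any antichain $X$ modulo $L$, extend $\set{[x]_L\st x\in X}$ to a maximal antichain in the Boolean algebra $\B/L$ (by Zorn's lemma), disjointify the resulting maximal antichain over $M$ using the hypothesis, and restrict the chosen representatives back to $X$; a subcollection of an antichain modulo $M$ is again an antichain modulo $M$, so this yields representatives of the members of $X$ that are disjoint modulo $M$. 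Applying this with $L=J$ and $M=K$ legitimizes the second step above.

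With both steps in hand, it remains only to verify the representative condition, and here is where $J\of I$ enters. From $[c_\alpha]_J=[b_\alpha]_J$ we get $c_\alpha=_J b_\alpha$, hence $c_\alpha=_I b_\alpha$ because $J\of I$, and therefore $[c_\alpha]_I=[b_\alpha]_I=[a_\alpha]_I$. Thus each $c_\alpha$ represents $[a_\alpha]_I$, while $\set{c_\alpha\st\alpha<\gamma}$ is an antichain modulo $K$, exactly witnessing that $I$ is disjointifying over $K$. (That distinct $c_\alpha$ are non-$K$-equivalent and nonzero modulo $K$ is already built into the notion of an antichain modulo $K$.)

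I expect the main obstacle to be precisely the maximality mismatch noted in the second paragraph; everything else is bookkeeping with the three equivalence relations $=_K\,\of\,=_J\,\of\,=_I$. No distributivity or completeness of any quotient forcing is needed, only that $\B/K$, $\B/J$ and $\B/I$ are Boolean algebras in which antichains extend to maximal ones.
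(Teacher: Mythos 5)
Your proof is correct and follows the same route as the paper: compose the two disjointifications, choosing $J$-disjoint representatives of the $I$-classes and then $K$-disjoint representatives of those. The paper's one-line argument silently applies the relative disjointifying property to a possibly non-maximal antichain modulo $J$; your extend-then-restrict observation patches exactly that point, and your check that $=_J$ implies $=_I$ (so the final representatives still represent the original $I$-classes) is the right bookkeeping.
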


\begin{proof} Suppose that $I$ is disjointifying over $J$ and $J$
is disjointifying over $K$. Any antichain modulo $I$ in $\B$ has a choice of representatives that forms an antichain modulo $J$, and this
has a choice of representatives forming an antichain modulo $K$, as desired.\end{proof}

It is not clear to what extent this holds for relative precipitousness. With a bit of disjointification, we have the following transitivity
converse:

\begin{theorem} Suppose that $K\of J\of I$ are ideals in a complete Boolean algebra $\B$ and all the quotients are complete. If\/ $I$ is
precipitous over $K$ and $J$ is disjointifying over $K$, then $I$ is precipitous over $J$.
\end{theorem}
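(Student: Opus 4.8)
The plan is to reduce everything to the relative quotient $\B/K$ and then invoke the factoring theorem for disjointifying ideals. Fix a $V$-generic filter $G\of\B/I$; I must show that the Boolean ultrapower by the induced ultrafilter $U_J=\set{[a]_J\st [a]_I\in G}$ on $\B/J$ is well-founded. Alongside $U_J$, I consider the induced ultrafilter $U_K=\set{[a]_K\st [a]_I\in G}$ on $\B/K$. Since $I$ is precipitous over $K$ and $G$ is $V$-generic, the Boolean ultrapower $\check V_{U_K}$ is already well-founded; the whole strategy is to factor $\check V_{U_J}$ elementarily into $\check V_{U_K}$, so that well-foundedness of the latter transfers to the former.

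To set up the factoring, recall first that since $K\of J$, the collection $J/K=\set{[a]_K\st a\in J}$ is an ideal in $\B/K$, and the canonical map $[[a]_K]_{J/K}\mapsto[a]_J$ is an isomorphism of $(\B/K)/(J/K)$ with $\B/J$. By theorem \ref{Theorem.RelativeQuotients}(2), the hypothesis that $J$ is disjointifying over $K$ is exactly the assertion that $J/K$ is a disjointifying ideal in $\B/K$. Transporting $U_J$ across the isomorphism yields the ultrafilter $\tilde U_J=\set{[[a]_K]_{J/K}\st [a]_I\in G}$ on $(\B/K)/(J/K)$, whose Boolean ultrapower is isomorphic, via the induced isomorphism commuting with the ultrapower maps, to the Boolean ultrapower by $U_J$.

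The key bookkeeping step, which I expect to require the most care, is to verify that $U_K$ is precisely the ultrafilter on $\B/K$ induced by $\tilde U_J$ through the ideal $J/K$, namely $\union\tilde U_J=\set{[a]_K\st [[a]_K]_{J/K}\in\tilde U_J}$. Unwinding the definitions, $[[a]_K]_{J/K}\in\tilde U_J\Iff [a]_I\in G\Iff [a]_K\in U_K$, so indeed $U_K=\union\tilde U_J$. Granting this, theorem \ref{Theorem.QuotientByDisjointifyingIdealsFactor}, applied with base algebra $\B/K$, disjointifying ideal $J/K$, and ultrafilter $\tilde U_J$ on the quotient, produces an elementary embedding $k\colon\check V_{\tilde U_J}\to\check V_{U_K}$ with $k\compose j_{\tilde U_J}=j_{U_K}$. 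Composing with the isomorphism $\check V_{U_J}\iso\check V_{\tilde U_J}$ gives an elementary, hence $\in$-preserving, embedding of $\check V_{U_J}$ into $\check V_{U_K}$.

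Finally, well-foundedness transfers along this embedding: any infinite $\in_{U_J}$-descending sequence in $\check V_{U_J}$ would map to an infinite $\in_{U_K}$-descending sequence in $\check V_{U_K}$, contradicting the well-foundedness of $\check V_{U_K}$ established above. Hence $\check V_{U_J}$ is well-founded. Since $G$ was an arbitrary $V$-generic filter on $\B/I$, this shows that $I$ is precipitous over $J$, as desired.
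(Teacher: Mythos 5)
Your proposal is correct and follows essentially the same route as the paper's own proof: pass to the canonical isomorphism $\B/J\iso(\B/K)/(J/K)$, use theorem \ref{Theorem.RelativeQuotients} to see that $J/K$ is disjointifying in $\B/K$, apply theorem \ref{Theorem.QuotientByDisjointifyingIdealsFactor} to factor the ultrapower by $\tilde U_J$ elementarily into the ultrapower by the induced ultrafilter $U_K$ on $\B/K$, and transfer well-foundedness back. Your explicit verification that $\union\tilde U_J=U_K$ is exactly the bookkeeping identity the paper records, so nothing is missing.
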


\begin{proof} Suppose that $I$ is precipitous over $K$ and $J$ is
disjointifying over $K$ in $\B$. We want to show that $I$ is precipitous over $J$. Suppose that $G\of\B/I$ is $V$-generic, and consider the
induced ultrafilter $U=\set{[a]_J\st [a]_I\in G}$ on $\B/J$. Since $\B/J$ is canonically isomorphic to $(\B/K)/(J/K)$, the ultrafilter $U$
is canonically isomorphic to $\tilde U=\set{[[a]_K]_{J/K}\st [a]_J\in U}$ on $(\B/K)/(J/K)$. Since $J$ is disjointifying over $K$, it
follows by theorem \ref{Theorem.RelativeQuotients} that $J/K$ is disjointifying in $\B/K$. Consequently, by theorem
\ref{Theorem.QuotientByDisjointifyingIdealsFactor}, the Boolean ultrapower by $\tilde U$ is a factor of the Boolean ultrapower by the
induced ultrafilter $U^*=\union \tilde U=\set{[a]_K\st [[a]_K]_{J/K}\in \tilde U}=\set{[a]_K\st [a]_I\in G}$ on $\B/K$.
\begin{diagram}[height=2em,textflow]
 V & & \rTo^{j_{U^*}} & & \check V_{U^*}  \\
 &\rdTo_{j_{\tilde U}}&  & \ruTo^{k} \\
 & & \check V_{\tilde U} & \\
\end{diagram}
But this $U^*$ is precisely the ultrafilter induced by $G$ on $\B/K$. Since $I$ is precipitous over $K$, the ultrapower by $U^*$ is
consequently well-founded. Since the ultrapower by $\tilde U$ is a factor of it, the ultrapower by $\tilde U$ is also well-founded. And
since $\tilde U$ is isomorphic to $U$, we conclude that the ultrapower by $U$ is also well-founded. So $I$ is precipitous over $J$, as
desired.\end{proof}

\begin{corollary}Suppose that $J\of I$ are ideals in a complete
Boolean algebra $\B$, whose quotients are complete. If\/ $I$ is precipitous and $J$ is disjointifying, then $I$ is precipitous over $J$.
\end{corollary}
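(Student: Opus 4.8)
The plan is to obtain this corollary as the special case $K=\singleton{0}$ of the immediately preceding theorem, which asserts that if $K\of J\of I$ are ideals with all quotients complete, $I$ is precipitous over $K$, and $J$ is disjointifying over $K$, then $I$ is precipitous over $J$. So the whole task reduces to checking that the zero ideal is a legitimate choice of $K$ and that the absolute hypotheses of the corollary are exactly the relative hypotheses of the theorem when $K=\singleton{0}$.

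First I would verify the structural hypotheses. The chain condition $\singleton{0}\of J\of I$ holds automatically, since every ideal contains $0$ and we are already given $J\of I$. For the completeness requirements, the quotient $\B/\singleton{0}$ is canonically isomorphic to $\B$ itself, which is complete by hypothesis, while $\B/J$ and $\B/I$ are complete by the assumptions of the corollary. Thus all three quotients $\B/K$, $\B/J$, $\B/I$ are complete, as the theorem demands.

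Next I would translate the two absolute hypotheses into their relative-over-$\singleton{0}$ forms. By the remarks recorded just after the definition of relative precipitousness and disjointification, an ideal is precipitous if and only if it is precipitous over the zero ideal, and an ideal is disjointifying if and only if it is disjointifying over the zero ideal. Hence the assumption that $I$ is precipitous is literally the statement that $I$ is precipitous over $\singleton{0}$, and the assumption that $J$ is disjointifying is literally the statement that $J$ is disjointifying over $\singleton{0}$. With $K=\singleton{0}$, these are precisely the two substantive hypotheses of the preceding theorem, so applying that theorem yields that $I$ is precipitous over $J$, as desired.

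I do not expect a genuine obstacle here: the corollary is a direct instantiation, and the only points requiring care are the two identifications (absolute precipitousness equals precipitousness over $\singleton{0}$, and likewise for disjointification) together with the observation $\B/\singleton{0}\iso\B$ that secures completeness of the base quotient. All three are already established in the surrounding discussion, so no new argument is needed beyond invoking the preceding theorem.
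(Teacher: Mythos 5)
Your proof is correct and is precisely the argument the paper intends: the corollary is the instantiation $K=\singleton{0}$ of the preceding theorem, using the identifications (noted right after the definition of the relative notions) that precipitous means precipitous over $\singleton{0}$ and disjointifying means disjointifying over $\singleton{0}$, together with $\B/\singleton{0}\iso\B$. No gap; your extra care in checking the completeness of all three quotients is exactly the right bookkeeping.
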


We have a number of questions to ask about the nature of relative precipitousness and disjointification, such as the following:

\begin{questions}\
\begin{enumerate}
 \item If\/ $I$ is precipitous over a precipitous ideal $J$, must $I$ be precipitous?
 \item If\/ $I$ is precipitous over $J$, must $I$ be precipitous over all the relevant intermediate ideals $K$, with $J\of K\of I$?
 \item To what extent can it happen that an ideal $I$ is precipitous over a strictly smaller ideal $J$, but is not precipitous?
 \item What are the large cardinal strengths of the hypotheses that various particular ideals on particular Boolean algebras are
     disjointifying or precipitous?
 \item What are the large cardinal strengths of the hypotheses that various particular ideals on particular Boolean algebras are
     disjointifying or precipitous over a strictly smaller ideal?
 \item For example, what is the large cardinal strength of the hypothesis that the nonstationary ideal ${\rm NS}$ on $P(\omega_1)$ is
     precipitous or disjointifying over a strictly smaller ideal $J\ofnoteq {\rm NS}$?
\end{enumerate}
\end{questions}

We shall leave these questions for a subsequent project.

\section{Boolean ultrapowers versus classical ultrapowers}\label{Section.BooleanUltrapowersVsClassicalUltrapowers}

In this section, we use the concept of relative genericity to characterize exactly when the Boolean ultrapower by an ultrafilter $U$ on a
complete Boolean algebra $\B$ is isomorphic to a classical ultrapower by an ultrafilter on a power set algebra. Suppose that $\B$ is a
complete Boolean algebra, $U$ is an ultrafilter on $\B$ and $A\of\B$ is a maximal antichain. If $C$ is a maximal antichain refining $A$,
then we define that {\df $U$ meets $C$ relative to $A$}, if for each $a\in A$ there is a smaller element $c_a\in C$ with $c_a\leq a$ such
that $\bigvee_{a\in A}c_a\in U$. We define that $U$ is {\df generic relative to $A$} if $U$ meets all such $C$ relative to $A$.  The idea
is that while generic ultrafilters select a single point from every maximal antichain, here we select single points below every element of
$A$, whose aggregate sum is in $U$. It is easy to see, for example, that if $U$ meets $C$ relative to $A$, then $U$ meets $C$ if and only
if $U$ meets $A$. Consequently, if $U$ is $V$-generic relative to $A$, then $U$ is $V$-generic if and only if $U$ meets $A$. Also, a filter
$U$ is $V$-generic if and only if it is $V$-generic relative to the trivial $\singleton{1}$, since in this case only one choice is made;
and a filter $U$ is $V$-generic if and only if it is generic relative to every maximal antichain.

\begin{theorem}\label{Theorem.BooleanUltrapowerIsClassicalIff}
Suppose that $U$ is an ultrafilter on the complete Boolean algebra $\B$. Then the following are equivalent:
\begin{enumerate}
 \item The Boolean ultrapower $j_U:V\to \check V_U$ is isomorphic to a classical power set ultrapower by a measure $U^*$ on some set
     $Z$.
 \item The ultrafilter $U$ is $V$-generic relative to some maximal antichain $A\of\B$.
\end{enumerate}
In this case, one can take $Z=A$ and the Boolean ultrapower $j_U$ is isomorphic to the power set ultrapower $j_{U_A}$.
\end{theorem}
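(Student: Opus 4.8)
The plan is to reduce the entire theorem to a single surjectivity criterion for the canonical factor map $\pi_A\colon V^A/U_A\to\check V_U$ of Lemma~\ref{Lemma.UltrapowerFactor}, working throughout in the functional presentation $V^{\downarrow\B}_U$, which is isomorphic to $\check V_U$ by Theorem~\ref{Theorem.TwoPresentationsIsomorphic}. Since $\pi_A$ is always elementary, hence injective, and since $j_U=\pi_A\compose j_{U_A}$, the map $\pi_A$ is an isomorphism exactly when it is onto; and in that case $j_U$ is (isomorphic to) the classical power set ultrapower $j_{U_A}$ by the ultrafilter $U_A$ on the set $Z=A$. So the heart of the matter is the claim that \emph{$\pi_A$ is surjective if and only if $U$ is $V$-generic relative to $A$}. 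Granting this, $(2)\Rightarrow(1)$ and the final clause of the theorem are immediate: relative genericity at $A$ makes $\pi_A$ an isomorphism, so $j_U\iso j_{U_A}$, the classical ultrapower by $U^*=U_A$ on $Z=A$.

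I would prove the surjectivity criterion directly from the functional presentation. Every element of $V^{\downarrow\B}_U$ can be represented as $[h]_U$ for a spanning function $h\colon C\to V$ whose domain $C$ refines $A$ (replace any representative by its reduction to a common refinement of its domain with $A$), while the range of $\pi_A$ consists exactly of the classes $[f]_U$ with $f\colon A\to V$. For the direction from relative genericity to surjectivity, given such an $h\colon C\to V$ I would apply relative genericity to $C$ to obtain a selection $c_a\le a$ with $c_a\in C$ and $\bigvee_{a\in A}c_a\in U$, and then set $f(a)=h(c_a)$; since $(f\downarrow C)(c_a)=f(a)=h(c_a)$, the agreement set of $f\downarrow C$ and $h$ contains every $c_a$, so its join lies in $U$ and $[h]_U=[f]_U$ is in the range of $\pi_A$. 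For the converse, given a refinement $C$ of $A$, I would feed the injective spanning function $h\colon c\mapsto c$ on $C$ to surjectivity: a witnessing $f\colon A\to V$ with $[f\downarrow C]_U=[h]_U$ forces the agreement set $\set{c\in C\st (f\downarrow C)(c)=c}$ to contain at most one element below each $a\in A$ (since $(f\downarrow C)(c)=f(a)$ is constant on the block below $a$ while $h$ is injective), and these elements assemble into a selection $c_a\le a$ whose join dominates a member of $U$, showing $U$ meets $C$ relative to $A$.

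The remaining and genuinely harder direction is $(1)\Rightarrow(2)$, where I must manufacture a suitable antichain out of an abstract isomorphism; the main obstacle is controlling where the generating seed lives. Suppose $\sigma\colon\check V_U\to V^Z/U^*$ is an isomorphism with $\sigma\compose j_U=j_{U^*}$, and let $s=\sigma^{\inverse}([\id]_{U^*})$ be the image of the seed. By the standard seed representation of classical ultrapowers recalled in \S\ref{Section.TheBooleanUltrapower} (every element of $V^Z/U^*$ has the form $j_{U^*}(g)([\id]_{U^*})$ for some $g\colon Z\to V$), transporting through $\sigma^{\inverse}$ shows that every element of $\check V_U$ has the form $j_U(g)(s)$ for some $g\colon Z\to V$. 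Now represent $s$ in the functional presentation as $s=\pi_A([\phi]_{U_A})$ for some maximal antichain $A$ and some $[\phi]_{U_A}\in V^A/U_A$; since $[\id]_{U^*}\in j_{U^*}(Z)$ we also get $s\in j_U(Z)$, whence $[\phi]_{U_A}\in j_{U_A}(Z)$ by injectivity of $\pi_A$. Using $j_U=\pi_A\compose j_{U_A}$ and the elementarity of $\pi_A$ to commute function application past $\pi_A$, I then compute $j_U(g)(s)=\pi_A\bigl(j_{U_A}(g)([\phi]_{U_A})\bigr)$, which lies in the range of $\pi_A$. As every element of $\check V_U$ has this form, $\pi_A$ is onto, and the surjectivity criterion yields that $U$ is $V$-generic relative to this $A$.

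The delicate points I expect to be the crux of $(1)\Rightarrow(2)$ are exactly the two internal bookkeeping steps just flagged: verifying that $[\phi]_{U_A}$ really lands in the domain $j_{U_A}(Z)$ of $j_{U_A}(g)$, so that the value $j_{U_A}(g)([\phi]_{U_A})$ is a legitimate element of $V^A/U_A$ rather than a formal expression, and checking that the elementarity of $\pi_A$ genuinely licenses pushing the application through $\pi_A$ (i.e.\ that $\pi_A(F)(\pi_A(x))=\pi_A(F(x))$ for the relevant internal function $F=j_{U_A}(g)$ and argument $x=[\phi]_{U_A}$). Everything else is routine name and antichain manipulation of the kind already carried out for Lemmas~\ref{Lemma.UltrapowerFactor} and~\ref{Theorem.TwoPresentationsIsomorphic}.
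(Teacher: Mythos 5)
Your proposal is correct and follows essentially the same route as the paper: both work in the functional presentation, reduce the theorem to surjectivity of the factor maps from $V^A/U_A$ into the direct limit, prove $(1)\Rightarrow(2)$ by locating the (transported) seed $[\id]_{U^*}$ at some antichain $A$ and using the normal form $j_{U^*}(g)([\id]_{U^*})$ to conclude that everything lies in the range of $\pi_A$, and extract relative genericity by feeding the identity spanning function on a refinement $C$ into that surjectivity. Your only departure is organizational---you isolate ``$\pi_A$ surjective iff $U$ is generic relative to $A$'' as a standalone criterion rather than interleaving the two directions---and the two bookkeeping points you flag ($[\phi]_{U_A}\in j_{U_A}(Z)$ and commuting application past $\pi_A$) are indeed handled exactly by the elementarity of $\pi_A$, as you indicate.
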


\begin{proof} Suppose that $j_U:V\to \check V_U$ is isomorphic to
$j_{U^*}:V\to V^Z/U^*$. Thus, using the functional presentation of the Boolean ultrapower, there is an isomorphism $\pi$ making the
following diagram commute.
\begin{diagram}[width=2em]
 & & V & & \\
 & \ldTo^{j_U} & & \rdTo^{j_{U^*}} \\
V^{\downarrow\B}_U & &\rTo^\pi_{\iso} & &V^Z/U^* \\
\end{diagram}
Since $U^*$ is an ultrafilter on a power set Boolean algebra $P(Z)$, it follows that every element of $V^Z/U^*$ has the form
$[f]_{U^*}=j_{U^*}(f)(s^*)$, where $s^*=[\id]_{U^*}$ and $f:Z\to V$ with $f\in V$. Applying the isomorphism, it follows that every element
of $V^{\downarrow\B}_U$ has the form $j_U(f)(s)$, where $\pi(s)=s^*$. Since $V^{\downarrow\B}_U$ is the direct limit of $V^A/U_A$ for
maximal antichains $A$, it follows that $s=\pi_{A,\infty}(s_A)$ for some maximal antichain $A\of\B$ and some $s_A\in V^A/U_A$. If $C\of\B$
is any maximal antichain refining $A$ and $x\in V^C/U_C$, then $\pi_{C,\infty}(x)\in V^{\downarrow\B}_U$, and so
$\pi_{C,\infty}(x)=j_U(f)(s)$ for some function $f:Z\to V$ in $V$. Unwrapping this by the commutativity of the directed system, we observe
$\pi_{C,\infty}(x)=j_U(f)(s)=\pi_{A,\infty}(j_{U_A}(f)(s_A))=\pi_{C,\infty}(\pi_{A,C}(j_{U_A}(f)(s_A)))$. By peeling off the outer
$\pi_{C,\infty}$, we conclude that $x=\pi_{A,C}(j_{U_A}(f)(s_A))$. Since $x$ was arbitrary, it follows that $\pi_{A,C}$ is surjective and
hence an isomorphism. (From this, it follows that $j_U$ is isomorphic to $j_{U_A}$, since the direct limit system has only isomorphisms
below the antichain $A$.) We now argue that $U$ is generic relative to $A$. Consider the identity function $\id\restrict C : C\to C$. Since
$\pi_{A,C}$ is surjective, it follows that $[\id\restrict C]_{U_C} = \pi_{A,C}([f]_{U_A})$ for some spanning function $f:A\to V$. This
means that $\id\restrict C \equiv_{U_C} f\downarrow C$, and so $\vee Y \in U$, where $Y = \set{ c\in C \st c = (f\downarrow C)(c)}$. Note
that for each $a\in A$, there is at most one $c\in Y$ below it, since $c = (f\downarrow C)(c) = f(a)$. Let $c_a$ be this value of $c$ below
$a$, if $c\leq a$ and $c\in Y$, otherwise $c_a$ is arbitrary. Thus, $\bigvee_{a\in A} c_a\geq \vee Y\in U$. We have therefore proved that
$U$ is generic relative to $A$, as desired.

Conversely, suppose that $U$ is generic relative to $A$. We will show that $j_U : V\to \check V_U$ is isomorphic to the classical
ultrapower $j_{U_A} : V\to  V^A/U_A$ by the induced measure $U_A$, defined by $X \in U_A$ if and only if $X\subset A$ and $\vee X \in U$.
Since the Boolean ultrapower is the direct limit of the classical ultrapowers $V^C/U_C$ for maximal antichains $C$ refining $A$ as in lemma
\ref{Lemma.RefinementDiagram} and theorem \ref{Theorem.BooleanUltrapowerAsDirectLimit}, it suffices for us to show that $\pi_{A,C}$ is an
isomorphism for all such $C$. For this, it suffices to argue that $\pi_{A,C}$ is surjective. Consider any spanning function $g : C\to V$.
By genericity below $A$, there is for each $a\in A$ a selection $c_a\in C$ with $c_a\leq a$ such that $\bigvee_{a\in A}c_a\in U$. Define $f
: A\to V$ by $f(a) = g(c_a)$, and observe that $(f\downarrow C)\equiv_U g$, precisely because $\bigvee_{a\in A}c_a\in U$. Thus,
$\pi_{A,C}([f]_{U_A}) = [g]_{U_C}$ , and so $\pi_{A,C}$ is surjective. Thus, the direct limit presentation of the Boolean ultrapower is the
the identity on the antichains refining $A$, and so $j_U$ is isomorphic to $j_{U_A}$.\end{proof}

%\begin{theorem} If $\B$ is a complete Boolean algebra and $A\of\B$ is a maximal antichain of size $\kappa$, a regular cardinal in $V$, then if $I$ is an
%ideal containing the suprema of small subsets of $A$ and $G\of\B/I$ is $V$-generic, then the induced ultrafilter $\breve G=\Union G$ on $\B$ misses
%$A$, but is almost everywhere generic below $A$. Consequently, the Boolean ultrapower $j_{\breve G}$ is isomorphic to a power set ultrapower by a
%generic ultrapower on $P(\kappa)/J$.
%
%\begin{proof} Suppose that $G\of\B/I$ is $V$-generic for the quotient forcing, and let $\breve G=\Union G$ be the induced ultrafilter on $\B$. Since $A\of
%I$, it follows that $\breve G$ misses $A$. Suppose that $B$ is a maximal antichain refining $A$. Let us say that a {\df choice function} from $B$
%below $A$ is a function $f:A\to B$ such that $f(a)\leq a$. Let $\mathcal{F$ be the collection of all such choice functions. For any $f\in\mathcal{F$, let
%$b_f=\vee \ran(f)$ be the supremum of the choices that $f$ makes. Notice that $\bigvee_{f\in\mathcal{F} b_f=1$. (is this right?)
%
%What we want is that there is some $f$ for which $b_f/I\in G$. For this, we want the collection $\set{b_f\st f\in\mathcal{F}$ to be predense in $\B/I$.
%Fix any $d/I$ in $\B/I$. Look at $E=\set{a\in A\st d\wedge a\neq 0}$. For each $a\in E$, there are many $b\in B$ below $a$ with $d\wedge b\neq 0$. In
%fact, $d\wedge a=\bigvee_{b\in B, b\leq a}d\wedge b$.
%
%So suppose some $b/I$ is disjoint from every $b_f/I$. For each $a\in A$, consider $b\wedge a$.
%
%
%I don't know if it is true.
%
%\end{proof}
%

Canjar \cite{Canjar1987:CompleteBooleanUltraproducts} proved that not every Boolean ultrapower is isomorphic to a power set ultrapower.

Suppose that $A$ is an infinite maximal antichain in a complete Boolean algebra $\B$. Recall that the small ideal relative to $A$ in $\B$ is the ideal consisting of all elements $b\in\B$ that are below the join of a small subset of $A$, namely $b\leq \vee A_0$ for some $A_0\of A$ and $|A_0|<\kappa=|A|$. If $\kappa$ is regular, this ideal is $\kappa$-complete.

\begin{theorem} Suppose that $I$ is the small ideal relative to an infinite maximal antichain $A$ in a complete Boolean algebra $\B$. If\/ $G\of\B/I$ is $V$-generic, then the induced ultrafilter $U=\Union G$ on $\B$ does not meet $A$, but is $V$-generic relative to $A$.
\end{theorem}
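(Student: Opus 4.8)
The first assertion is immediate and essentially already recorded in the proof of Theorem~\ref{Theorem.CriticalPointKappa}: each $a\in A$ lies below the join of the singleton $\set{a}$, a small subset of $A$, so $a\in I$ and hence $[a]_I=0\notin G$; thus $a\notin\Union G=U$ and $U\intersect A=\emptyset$. The substance is the second assertion, that $U$ is $V$-generic relative to $A$. The plan is to fix an arbitrary maximal antichain $C\of\B$ refining $A$ and to produce, using the genericity of $G$, a selection $a\mapsto c_a$ in $V$ with $c_a\in C$, $c_a\leq a$, and $\bigvee_{a\in A}c_a\in U$. For each $a\in A$ write $C_a=\set{c\in C\st c\leq a}$; since $C$ refines the maximal antichain $A$, each $C_a$ is a maximal antichain below $a$ with $\vee C_a=a$. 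In $V$, form the set
$$E=\Set{\Bigl[\bigvee_{a\in A}c_a\Bigr]_I\st a\mapsto c_a\text{ is a function with }c_a\in C_a},$$
and I claim $E$ is predense in $\B/I$. This suffices, because the downward closure of $E$ is then dense in $\B/I$, so $G$ meets it, and since $G$ is upward closed $G$ must contain an actual member of $E$, yielding the desired selection with $\bigvee_a c_a\in U$.

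To verify predensity, fix any nonzero $[b]_I$, so $b\notin I$, and set $A_b=\set{a\in A\st a\wedge b\neq 0}$. If $A_b$ were small, then $b=\bigvee_{a\in A}(a\wedge b)=\bigvee_{a\in A_b}(a\wedge b)\leq\vee A_b$ would exhibit $b\in I$, a contradiction; hence $|A_b|=\kappa$. For each $a\in A_b$ we have $a\wedge b=\bigvee_{c\in C_a}(c\wedge b)\neq 0$, so we may choose $c_a\in C_a$ with $c_a\wedge b\neq 0$; for $a\notin A_b$ choose $c_a\in C_a$ arbitrarily (all these choices lie in $V$). Let $d=\bigvee_{a\in A}c_a$. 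Since $c_a\wedge b\leq a\wedge b=0$ for $a\notin A_b$, we get $d\wedge b=\bigvee_{a\in A_b}(c_a\wedge b)$, a join of $\kappa$-many pairwise disjoint nonzero elements, disjoint because each $c_a\leq a$ inherits the disjointness of $A$.

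The crux is to see that $d\wedge b\notin I$, which gives $[b]_I\wedge[d]_I\neq 0$ and hence predensity. Suppose toward a contradiction that $d\wedge b\leq\vee A_0$ for some small $A_0\of A$. Then for each $a\in A_b$ we have $c_a\wedge b\leq(d\wedge b)\wedge a\leq a\wedge\vee A_0=\bigvee_{a'\in A_0}(a\wedge a')$, which is $0$ unless $a\in A_0$, since $A$ is an antichain. As $c_a\wedge b\neq 0$, this forces $a\in A_0$, so $A_b\of A_0$; but $|A_b|=\kappa>|A_0|$, a contradiction. Thus $d\wedge b\notin I$, completing the verification of predensity and with it the whole argument. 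The main obstacle is exactly this last step: one must exploit that passage to $\B/I$ is insensitive to small subsets of $A$ while $A_b$ is necessarily large, so that a single representative chosen below each $a$ already aggregates to something $I$-positive; and the choice function must be assembled \emph{in} $V$ so that the genericity of $G$ can be applied to the dense set it generates.
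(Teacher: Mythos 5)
Your proof is correct and takes essentially the same route as the paper's: both establish predensity in $\B/I$ of the set of classes $[\bigvee_{a\in A}c_a]_I$ ranging over regressive selections $c_a\in C$ with $c_a\leq a$, by observing that any $I$-positive $b$ meets $\kappa$-many elements of $A$ and that the resulting join then cannot lie below the join of a small subset of $A$. The only difference is that you spell out that last $I$-positivity computation in more detail than the paper does.
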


\begin{proof}
It follows from the conclusion, of course, that the corresponding Boolean ultrapower $j_U$ is isomorphic to the induced power set ultrapower by $U_A$ on $P(A)$, which has critical point $\kappa$.

Since singletons are small, it follows that $A\of I$ and hence $U$ contains no elements from $A$. If $C$ is any maximal antichain refining
$A$, then let us write $f:A\searrow C$ to indicate that $f$ is a {\df regressive} function from $A$ to $C$, meaning that $f(a)\leq a$ and
$f(a)\in C$ for all $a\in A$. Such a function chooses, for each element of $A$, an element of $C$ below it. For any such regressive
function, let $b_f=\bigvee_{a\in A} f(a)$ be the join of the choices made by $f$. For $U$ to be $V$-generic relative to $A$, we must find
some $f$ such that $b_f\in U$. Let $D=\set{b_f\st f:A\searrow C}$. We claim that this is predense in $I^\plus$. To see this, suppose that
$b\in I^\plus$. Thus, $b\wedge a\neq 0$ for $\kappa$ many $a\in A$. Since each $a\in A$ has $a=\bigvee_{c\in C, c\leq a}c$, this means that
for $\kappa$ many $a$, there is $c_a\in C$ with $c_a\leq a$ and $b\wedge c_a\neq 0$. Let $f:A\searrow C$ have $f(a)=c_a$ for these $a$.
Thus, $b_f$ is above all the $c_a$, for $\kappa$ many $a$, and consequently, $b\wedge b_f$ is not below the join of any small subset of
$A$. Thus, $b$ is not incompatible with $b_f$ modulo $I$, and so $D$ is predense in $I^\plus$. It follows that $\set{[b_f]_I\st f:A\searrow
C}$ is predense in $\B/I$, and so there is some $f$ with $[b_f]_I\in G$, and consequently $b_f\in U$, as desired.\end{proof}

This argument can be somewhat generalized beyond the small ideal. Suppose that $A\of \B$ is a maximal antichain. If $J\of P(A)$ is an ideal
on the power set, then $J$ naturally induces an ideal $I=\set{b\in\B\st \exists A_0{\in}J\, b\leq\vee A_0}$ on $\B$. Let us say that such
ideals $I$ are {\df local to $A$}, being generated by an ideal on the power set of $A$. Of course, if $A\of\B$ is a maximal antichain, then
the power set $P(A)$ embeds completely into $\B$ via the map $X\mapsto \vee X$ for $X\of A$. As a result, we may regard $P(A)$ as a
complete subalgebra of $\B$. This relationship extends to the quotients:

\begin{lemma} If\/ $I$ is the local ideal on $\B$ induced by $J$ on
$P(A)$, then $P(A)/J$ complete embeds into $\B/I$ by the natural map $\pi:[X]_J\mapsto [\vee X]_I$.
\end{lemma}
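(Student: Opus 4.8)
The plan is to realize $\pi$ as the map induced on quotients by the complete Boolean embedding $e\colon P(A)\to\B$ given by $e(X)=\vee X$, which was noted just above to be complete precisely because $A$ is a maximal antichain. First I would record the elementary consequences of $A$ being an antichain with $\vee A=1$: the map $e$ is an injective complete homomorphism (it preserves $\cup$, $\cap$, relative complement, and arbitrary joins), and crucially it \emph{reflects} order on subsets of $A$, in the sense that for $Z,A_0\of A$ one has $\vee Z\leq\vee A_0$ if and only if $Z\of A_0$; indeed any $a\in Z\minus A_0$ would be a nonzero member of $A$ disjoint from every element of $A_0$, hence disjoint from $\vee A_0$, contradicting $a\leq\vee Z\leq\vee A_0$. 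With these facts in hand, observe that $I$ is exactly the ideal generated in $\B$ by $e\image J$ (its downward closure, since $J$ is closed under unions and $e$ converts unions to joins). Therefore $e$ followed by the quotient map $\B\to\B/I$ annihilates $J$ and so factors through $P(A)/J$, yielding $\pi$ as a well-defined Boolean homomorphism $[X]_J\mapsto[\vee X]_I$.

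Next I would check injectivity by computing the kernel: $[\vee X]_I=0$ means $\vee X\in I$, i.e.\ $\vee X\leq\vee A_0$ for some $A_0\in J$, which by the reflection property forces $X\of A_0$ and hence $X\in J$ since $J$ is downward closed; so $\pi([X]_J)=0$ only when $[X]_J=0$. It then remains to show $\pi$ is \emph{complete}, and for this I would use the criterion recorded after the definition of complete subalgebra, namely that it suffices to verify $\pi$ carries maximal antichains of $P(A)/J$ to maximal antichains of $\B/I$. That $\pi$ sends antichains to antichains is routine: $[X]_J\wedge[Y]_J=0$ gives $X\cap Y\in J$, whence $[\vee X]_I\wedge[\vee Y]_I=[\vee(X\cap Y)]_I=0$, and distinctness of the images is guaranteed by the injectivity just established.

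The hard part will be the maximality direction. Given a maximal antichain $\set{[X_i]_J}$ in $P(A)/J$ and any $b\notin I$, I must produce an index $i$ with $b\wedge\vee X_i\notin I$. The device is the \emph{support} $S=\set{a\in A\st a\wedge b\neq 0}$: since $\vee A=1$ we have $b=\bigvee_{a\in S}(a\wedge b)\leq\vee S$, so $b\notin I$ forces $S\notin J$, i.e.\ $[S]_J\neq 0$. By maximality of the antichain there is an $i$ with $S\cap X_i\notin J$. I then claim $b\wedge\vee X_i\notin I$: were $b\wedge\vee X_i\leq\vee A_0$ for some $A_0\in J$, then for each $a\in S\cap X_i$ the element $a\wedge b$ is nonzero and lies below $\vee A_0$, so $a$ meets some member of $A_0$ and, by disjointness of $A$, actually lies in $A_0$; this would give $S\cap X_i\of A_0\in J$, a contradiction. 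Hence $[b]_I\wedge[\vee X_i]_I\neq 0$, which establishes maximality.

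Taken together, these steps show $\pi$ is an injective homomorphism carrying maximal antichains to maximal antichains, which is exactly what it means to be a complete embedding of $P(A)/J$ into $\B/I$. The one subtlety to keep in view throughout is that every compatibility computation is powered by the pairwise disjointness of $A$ together with $\vee A=1$; these are what convert assertions about elements of $\B/I$ back into the combinatorics of subsets of $A$ modulo $J$, and the maximality argument above is where that translation does the essential work.
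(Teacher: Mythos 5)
Your proof is correct and follows essentially the same route as the paper's: the decisive step in both is the support set $S=\set{a\in A\st a\wedge b\neq 0}$, together with the observation that $b\notin I$ forces $S\notin J$ while $b\wedge\vee X\in I$ forces $S\cap X\in J$, which transfers maximality of antichains from $P(A)/J$ to $\B/I$. You merely phrase the maximality step in direct rather than contrapositive form and spell out the homomorphism, injectivity, and order-reflection facts that the paper dismisses as clear.
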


\begin{proof} This is clearly a Boolean algebra homomorphism. For
completeness, suppose that $[X_\alpha]_J$ is a maximal antichain in $P(A)/J$, for $\alpha<\gamma$, and consider $[\vee X_\alpha]_I$.
Suppose that $[b]_I\neq 0$ is incompatible with all of them, so that $b\notin I$ and $b\wedge\vee X_\alpha\in I$ for all $\alpha$. Thus,
for each $\alpha$, there is $Y_\alpha\in J$ with $b\wedge \vee X_\alpha\leq \vee Y_\alpha$. Let $X=\set{a\in A\st b\wedge a\neq 0}$. Since
$b\notin I$, it follows that $X\notin J$. Since $b\wedge\vee X_\alpha\leq \vee Y_\alpha$, it follows that $X\intersect X_\alpha\of
Y_\alpha\in J$. Thus, $[X]_J$ is incompatible with every $[X_\alpha]_J$, contradicting our assumption that this was a maximal antichain in
$P(A)/J$.\end{proof}

\begin{theorem}\label{Theorem.LocalIdealsForceRelativeGenericity}
Suppose that $I$ is an ideal in $\B$ local to a maximal antichain $A$, and that $I$ contains the small ideal. If\/ $G\of\B/I$ is
$V$-generic, then the induced ultrafilter $U=\Union G$ on $\B$ is $V$-generic relative to $A$, but does not meet $A$.
\end{theorem}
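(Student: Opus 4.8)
The plan is to run the argument of the preceding theorem (the small ideal case) essentially verbatim, with the role of the small ideal taken over by the ideal $J$ on $P(A)$ that generates $I$. The engine of the whole proof is a translation dictionary: for any $b\in\B$, set $X_b=\set{a\in A\st b\wedge a\neq 0}$; then $b\in I$ if and only if $X_b\in J$. One direction holds because $A$ is maximal, so $b=\bigvee_{a\in A}(b\wedge a)\leq\vee X_b$, whence $X_b\in J$ forces $b\in I$. For the converse, if $b\leq\vee A_0$ with $A_0\in J$, then any $a\in X_b$ is compatible with $\vee A_0$ and hence, since $A$ is an antichain, lies in $A_0$; thus $X_b\of A_0\in J$. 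This is just the computational content of the preceding lemma exhibiting the complete embedding $P(A)/J\into\B/I$.

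The claim that $U$ does not meet $A$ is immediate from the hypothesis that $I$ contains the small ideal: each singleton is small, so $A\of I$, and therefore no $a\in A$ has $[a]_I\in G$, i.e.\ $U=\Union G$ misses $A$.

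For relative genericity, I fix a maximal antichain $C$ refining $A$ and seek a regressive function $f:A\searrow C$ (so $f(a)\in C$ and $f(a)\leq a$ for all $a$) with $b_f=\bigvee_{a\in A}f(a)\in U$; granting this, the choices $c_a=f(a)$ witness that $U$ meets $C$ relative to $A$. As in the small ideal case, it suffices to show that $\set{[b_f]_I\st f:A\searrow C}$ is predense in $\B/I$, for then genericity of $G$ delivers some $[b_f]_I\in G$ and hence $b_f\in U$. So fix $b\in I^\plus$, i.e.\ $X_b\notin J$ by the dictionary. Using that $C$ refines $A$, each $a$ satisfies $a=\bigvee_{c\in C,\,c\leq a}c$, so for every $a\in X_b$ there is $c_a\in C$ with $c_a\leq a$ and $b\wedge c_a\neq 0$; choose $f(a)=c_a$ for $a\in X_b$ and $f(a)$ arbitrary below $a$ otherwise. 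Then $b\wedge b_f\wedge a\geq b\wedge c_a\neq 0$ for every $a\in X_b$, so $X_{b\wedge b_f}\fo X_b\notin J$, giving $b\wedge b_f\in I^\plus$; thus $[b]_I$ and $[b_f]_I$ are compatible, establishing predensity.

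There is no serious obstacle here: the only points needing attention are the dictionary $b\in I\iff X_b\in J$ (whose two directions use maximality of $A$ and the antichain property, respectively) and the monotonicity $X_b\of X_{b\wedge b_f}$, which transfers non-membership in $J$ from $b$ to the meet $b\wedge b_f$; everything else is parallel to the small ideal argument. One should also note in passing that each $b_f$ is a genuinely nonzero condition, since $X_{b_f}=A\notin J$ (as $A\in J$ would give $1\in I$), but this requires only that $I$ is a proper ideal.
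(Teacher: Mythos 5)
Your proof is correct and follows essentially the same route as the paper's: both establish that $U$ misses $A$ because $A\of I$, and both prove relative genericity by showing the set $\set{b_f\st f:A\searrow C}$ is predense in $I^\plus$, using the fact that $b\in I^\plus$ forces $\set{a\in A\st b\wedge a\neq 0}\notin J$ and that the chosen $c_a$ keep this set large after meeting with $b_f$. Your explicit dictionary $b\in I\iff X_b\in J$ is just a cleaner packaging of the same computation the paper performs inline.
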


\begin{proof} Suppose that $I$ is induced by the ideal $J\of
P(A)$ on the power set of $A$. Since $I$ contains the small ideal, it follows that $A\of I$ and consequently $U$ misses $A$. If $C$ is a
maximal antichain refining $A$, then consider the collection $D=\set{b_f\st f:A\searrow C}$ as before. We claim this is predense in
$I^\plus$. If $b\in I^\plus$, then since $b=\bigvee_{a\in A}b\wedge a$, it follows that $E=\set{a\in A\st b\wedge a\neq 0}\in J^\plus$. If
$b\wedge a\neq 0$, then there is some $c_a\leq a$ with $c_a\in C$ and $b\wedge c_a\neq 0$. Let $f:A\searrow C$ be such that $f(a)=c_a$ for
these choices. Now observe that $b\wedge b_f$ is at least as big as $\bigvee_{a\in E} c_a$. If this were below $\vee F$ for some $F\in J$,
then it would have to be that $E\of F$, since $c_a$ is incompatible with all elements of $A$ except $a$. This contradicts our earlier
observation that $E\in J^\plus$. Thus, $b$ is not incompatible with $b_f$ modulo $I$, and so $D$ is predense. Thus, there is some
$[b_f]_I\in G$ and consequently $b_f\in U$. So $U$ is $V$-generic relative to $A$, as desired.\end{proof}

A {\df generic Boolean ultrapower} by an ideal $I$ on a complete Boolean algebra $\B$ is the Boolean ultrapower by the filter $\union G$ on
$\B$ induced by a $V$-generic filter $G\of\B/I$ on the quotient. The classical situation occurs when $\B=P(Z)$ is a power set, and in this
case we refer to the resulting embedding as a {\df generic power set ultrapower}.

\begin{corollary}\label{Corollary.GenericEmbeddingBooleanUltrapowerCorrespondence}
Every generic Boolean ultrapower by a local ideal containing the small ideal is isomorphic to a generic power set ultrapower. Specifically,
if\/ $\B$ is a complete Boolean algebra with an antichain $A$ and $I$ is the local ideal generated by an ideal $J$ on the power set $P(A)$,
then the generic ultrapowers arising from $\B/I$ are isomorphic to those arising from $P(A)/J$. In particular, $J$ is precipitous on $P(A)$
if and only if\/ $I$ is precipitous on $\B$.
\end{corollary}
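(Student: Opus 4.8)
The plan is to chain together the two structural results that immediately precede the corollary, Theorem \ref{Theorem.LocalIdealsForceRelativeGenericity} and Theorem \ref{Theorem.BooleanUltrapowerIsClassicalIff}, together with the complete embedding $\pi:[X]_J\mapsto[\vee X]_I$ of $P(A)/J$ into $\B/I$ furnished by the lemma just above. First I would fix a $V$-generic filter $G\of\B/I$ and set $U=\Union G$. Since $I$ is local to $A$ and contains the small ideal, Theorem \ref{Theorem.LocalIdealsForceRelativeGenericity} tells us that $U$ misses $A$ yet is $V$-generic relative to $A$, so Theorem \ref{Theorem.BooleanUltrapowerIsClassicalIff} applies and yields $j_U\iso j_{U_A}$, where $U_A=\set{X\of A\st \vee X\in U}$ is the induced measure on $P(A)$.

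The crux is the identification of $U_A$ with the filter coming from the power set quotient. Let $G_0=\pi^{\inverse}G=\set{[X]_J\st [\vee X]_I\in G}$, which is $V$-generic for $P(A)/J$ because $\pi$ is a complete embedding, and put $U^*=\Union G_0$. Unwinding definitions, $X\in U^*$ iff $[X]_J\in G_0$ iff $[\vee X]_I\in G$ iff $\vee X\in U$ iff $X\in U_A$; hence $U^*=U_A$. Consequently the generic power set ultrapower by $P(A)/J$ determined by $G_0$ is exactly $j_{U^*}=j_{U_A}\iso j_U$ (using Theorem \ref{Theorem.BooleanUltrapowersGeneralizeUsualUltrapowers} to read the power set Boolean ultrapower as the classical one), establishing that every generic Boolean ultrapower by $\B/I$ is isomorphic to a generic power set ultrapower by $P(A)/J$. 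For the reverse correspondence I would start instead from a $V$-generic $G_0\of P(A)/J$ and use $\pi$ to extend it, in a further forcing extension, to a $V$-generic $G\of\B/I$ with $\pi^{\inverse}G=G_0$; the same computation then exhibits the power set ultrapower $j_{U^*}$ as isomorphic to the Boolean ultrapower $j_U$.

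For the precipitousness equivalence both directions ride on this isomorphism. If $J$ is precipitous, then for any $V$-generic $G\of\B/I$ the induced $G_0=\pi^{\inverse}G$ is $V$-generic for $P(A)/J$, so $j_{U^*}$ is well-founded, whence $j_U\iso j_{U^*}$ is well-founded and $I$ is precipitous. If $I$ is precipitous, then given $V$-generic $G_0\of P(A)/J$ I extend to $G\of\B/I$ as above; precipitousness of $I$ makes $j_U$ well-founded, and $j_{U^*}\iso j_U$ then is well-founded as computed in $V[G]$.

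The step that needs the most care---and which I expect to be the main obstacle---is the last inference in the backward direction: precipitousness of $J$ asks for well-foundedness of $j_{U^*}$ as computed in $V[G_0]$, whereas the argument produces well-foundedness only in the larger model $V[G]$. I would close this gap by the downward absoluteness of well-foundedness: any infinite $\in_{U^*}$-descending sequence living in $V[G_0]$ already lies in $V[G]\fo V[G_0]$, so well-foundedness in $V[G]$ forces well-foundedness in $V[G_0]$. One must also confirm that extending a $V$-generic $G_0$ to a $V$-generic $G\of\B/I$ is always possible, which is precisely the factorization of forcing with $\B/I$ through the complete embedding $\pi$ of $P(A)/J$; everything else is routine name-and-filter bookkeeping.
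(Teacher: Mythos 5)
Your proposal is correct and follows essentially the same route as the paper: the paper's proof consists precisely of your first paragraph, citing Theorem \ref{Theorem.LocalIdealsForceRelativeGenericity} to get that $U$ is $V$-generic relative to $A$ and Theorem \ref{Theorem.BooleanUltrapowerIsClassicalIff} to conclude $j_U\iso j_{U_A}$. The remaining bookkeeping you supply---identifying $U_A$ with $\Union\pi^{\inverse}G$ via the complete embedding $\pi$, extending a generic $G_0\of P(A)/J$ to a generic $G\of\B/I$, and the absoluteness of well-foundedness needed for the precipitousness equivalence---is left implicit in the paper and is handled correctly in your write-up.
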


\begin{proof} In the situation of the previous theorem, the
Boolean ultrapower by $U$ will be isomorphic to the ultrapower by the ultrafilter $U_A=\set{X\of A\st \vee X\in U}$ on the power set
$P(A)$. This is because theorem \ref{Theorem.LocalIdealsForceRelativeGenericity} shows that the ultrafilter $U$ is $V$-generic relative to
$A$, fulfilling the condition of theorem \ref{Theorem.BooleanUltrapowerIsClassicalIff} that the Boolean ultrapower is classical.\end{proof}

% Is $\B/I$ forcing equivalent to $P(A)/J$?

\begin{corollary}If there is a precipitous ideal on a power set
$P(\kappa)$, then every Boolean algebra $\B$ with an antichain of size $\kappa$ has a precipitous ideal.
\end{corollary}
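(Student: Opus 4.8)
The plan is to reduce the statement directly to Corollary \ref{Corollary.GenericEmbeddingBooleanUltrapowerCorrespondence}, which already transfers precipitousness between a power set quotient $P(A)/J$ and the quotient $\B/I$ by the local ideal $I$ generated by $J$, provided $I$ contains the small ideal relative to $A$. Thus the whole task reduces to manufacturing, from the given data, a maximal antichain $A\of\B$ of size exactly $\kappa$ together with a precipitous ideal $J$ on $P(A)$ whose local ideal absorbs the small ideal. (If $\B$ is not complete one first passes to its completion, in which the given antichain survives with the same cardinality, so I may assume $\B$ is complete.)

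First I would upgrade the given antichain to a maximal one of the same size. Let $A_0=\set{a_\alpha\st\alpha<\kappa}$ be the antichain of size $\kappa$ and set $r=\neg\bigvee_{\alpha<\kappa}a_\alpha$. If $r=0$ then $A_0$ is already maximal; otherwise $A=A_0\union\singleton{r}$ is a maximal antichain. Since $\kappa$ is infinite (any $\kappa$ carrying a precipitous, hence nonprincipal $\kappa$-complete, ideal is infinite), in either case I obtain a maximal antichain $A\of\B$ with $|A|=\kappa$. Fixing a bijection between $A$ and $\kappa$ identifies the power set algebras $P(A)\cong P(\kappa)$, and under this identification the hypothesized precipitous ideal on $P(\kappa)$ becomes a precipitous ideal $J$ on $P(A)$; precipitousness is a purely structural property (the generic ultrapower, which for power sets coincides with the classical ultrapower by theorem \ref{Theorem.BooleanUltrapowersGeneralizeUsualUltrapowers}, is well-founded) and so is preserved by the isomorphism.

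Next I would let $I=\set{b\in\B\st\exists A_0\in J\ b\leq\vee A_0}$ be the local ideal on $\B$ generated by $J$, as in the discussion preceding theorem \ref{Theorem.LocalIdealsForceRelativeGenericity}. The one point requiring verification is that $I$ contains the small ideal relative to $A$, and this is exactly where the $\kappa$-completeness of the classical precipitous ideal enters: a $\kappa$-complete nonprincipal ideal contains every set of size less than $\kappa$, so every $A_0\of A$ with $|A_0|<\kappa$ lies in $J$, whence every element below $\vee A_0$ lies in $I$. Thus $I$ contains the small ideal, and $I$ is proper since $A\notin J$. Having arranged this, I would invoke Corollary \ref{Corollary.GenericEmbeddingBooleanUltrapowerCorrespondence} directly: because $J$ is precipitous on $P(A)$ and $I$ is the local ideal it generates (containing the small ideal), the ideal $I$ is precipitous on $\B$, which is the desired precipitous ideal (nontrivial, since $A\of I$ gives $\vee I=1$).

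The main obstacle is the bookkeeping ensuring the small ideal is absorbed into $I$, which hinges on reading ``precipitous ideal on $P(\kappa)$'' in the standard Jech--Prikry sense of a $\kappa$-complete nonprincipal ideal: without this completeness the local ideal need not contain the small ideal relative to the size-$\kappa$ antichain, and the hypothesis of Corollary \ref{Corollary.GenericEmbeddingBooleanUltrapowerCorrespondence} could fail. Everything else is routine transfer across the isomorphism $P(A)\cong P(\kappa)$ and the extension of the antichain to a maximal one.
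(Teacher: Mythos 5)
Your reduction to Corollary \ref{Corollary.GenericEmbeddingBooleanUltrapowerCorrespondence} via the local ideal generated by $J$ on $P(A)$ is exactly the route the paper takes, and your extra care in first extending the given antichain to a maximal one of the same size is a point the paper glosses over (theorem \ref{Theorem.LocalIdealsForceRelativeGenericity}, which underlies that corollary, does need maximality). The one place you diverge is the step you yourself flag: arranging that the local ideal contains the small ideal relative to $A$. You secure this by reading ``precipitous ideal on $P(\kappa)$'' in the Jech--Prikry sense, importing $\kappa$-completeness and nonprincipality so that every subset of $A$ of size less than $\kappa$ lies in $J$; but the paper's own definition of precipitousness (well-foundedness of the generic Boolean ultrapower) carries no completeness requirement, so as written your argument establishes a formally more restrictive statement than the corollary as the paper intends it. The paper's fix costs nothing and needs no completeness: replace $J$ by its restriction to a $J$-positive set $X$ of least cardinality $\lambda\leq\kappa$. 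Minimality alone then puts every subset of $X$ of size less than $\lambda$ into the restricted ideal, which is still precipitous because its quotient is just a cone in $P(\kappa)/J$, and one runs your argument verbatim with a (maximal) antichain of size $\lambda$ in $\B$, which exists since $\B$ has one of size $\kappa$. With that one substitution your proof coincides with the paper's.
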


\begin{proof} If there is a precipitous ideal $J$ on $P(\kappa)$,
then by restricting to the smallest $J$-positive set, we may assume that $J$ contains the small ideal. If $A\of\B$ has size $\kappa$, we
may view $J$ as being a precipitous ideal on $P(A)$. Corollary \ref{Corollary.GenericEmbeddingBooleanUltrapowerCorrespondence} shows that
the induced ideal $I$ on $\B$ gives rise to exactly the same generic embeddings, and so $I$ also is precipitous.\end{proof}

%
%
%
%
%
%Next goal: we want to be able to ensure that a filter $U$ is not generic relative to any $A$. Perhaps we can find an ideal $J$ such that
%any $V$-generic $G\of\B/J$ yields a filter $\breve G\of\B$ which is not generic relative to any $A$. Perhaps some other method of producing
%the filters will be called for. This is clearly related to the Canjar paper.
%
%
%Idea: suppose that $I$ is not local to any maximal antichain. Perhaps we can build such an ideal somehow. Now, let's argue that such an
%ideal will give rise to $U$ that are not generic relative to any maximal antichain?
%

\section{Boolean ultrapowers as large cardinal embeddings}\label{Section.BooleanUltrapowersAsLargeCardinalEmbeddings}

We will now investigate several instances in which various large cardinal embeddings can be realized as well-founded Boolean ultrapowers. By
theorem \ref{Theorem.BooleanUltrapowersGeneralizeUsualUltrapowers}, we know that every large cardinal embedding that is the ultrapower by a
measure on a set can also be viewed as a Boolean ultrapower by an ultrafilter on the corresponding power set Boolean algebra. Thus, the
large cardinal embeddings witnessing that a cardinal $\kappa$ is measurable, strongly compact, supercompact, huge and so on, are all
instances of well-founded Boolean ultrapowers.

Let us mention briefly how this meshes with theorem \ref{Theorem.BooleanUltrapowerWellFounded}. If $j : V\to M$ is the ultrapower by a
measure $\mu$ on the set $Z$, then the corresponding Boolean algebra is $\B = P(Z)$, and $j(\B)$ is the power set of $j(Z)$ in $M$. We know
that $X\in \mu \Iff [\id]_\mu\in j(X)$, and so $j\image \mu\subset F$, where $F$ is the principal ultrafilter generated by $[\id]_\mu\in
j(\B)$. Since $j(\B)$ is an atomic Boolean algebra, the principal ultrafilter $F$ generated by $[\id]_\mu$ is indeed $M$-generic, and
$j\image\mu\subset F$, fulfilling theorem \ref{Theorem.BooleanUltrapowerWellFounded}. Note that if $\mu$ is not principal, then $\mu$
itself is definitely not $V$-generic for $\B$, since it misses the maximal antichain of atoms $\set{ \singleton{z} \st z\in Z }$.

Let us now investigate a few circumstances under which other kinds of Boolean algebras can give rise to large cardinal embeddings. Theorem
\ref{Theorem.BooleanUltrapowerWellFounded} will allow us to construct examples of well-founded ultrafilters that are not generic.

\begin{theorem}\label{Theorem.MeasurableCardinalsHaveWellfoundedUlts}
If $\kappa$ is a measurable cardinal and $2^\kappa  =\kappa^\plus$  in $V$, then there are many well-founded ultrafilters in $V$ on the
Boolean algebra of the forcing $\Add(\kappa,1)$.
\end{theorem}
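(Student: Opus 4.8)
The plan is to realize the desired ultrafilters as pullbacks, along the ultrapower by a normal measure, of a filter that is merely $\ran(j)$-generic, invoking Theorem~\ref{Theorem.RudinKeislerIff} rather than full $M$-genericity. Fix a normal measure $\mu$ on $\kappa$ with ultrapower $j:V\to M=\Ult(V,\mu)$, so that $M$ is transitive, $\cp(j)=\kappa$, and ${}^\kappa M\of M$; write $\B=\RO(\Add(\kappa,1))$. Since $\kappa$ is measurable and hence inaccessible, $2^{\ltkappa}=\kappa$, so $\Add(\kappa,1)$ has size $\kappa$; consequently $\B$ has the $\kappa^\plus$-c.c.\ (every maximal antichain has size at most $\kappa$) and $|\B|\leq 2^\kappa=\kappa^\plus$. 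The one place the hypothesis $2^\kappa=\kappa^\plus$ enters is the counting: a maximal antichain is a subset of $\B$ of size at most $\kappa$, so there are at most $(\kappa^\plus)^\kappa=2^\kappa=\kappa^\plus$ maximal antichains of $\B$ in $V$. By Theorem~\ref{Theorem.RudinKeislerIff} it therefore suffices to build, inside $V$, a filter $H\of j(\B)$ meeting $j(A)$ for each of these $\kappa^\plus$ antichains $A$, and then set $U=j^\inverse H$: for then $j\image U\of H$ and $H$ is $\ran(j)$-generic, so $j_U$ factors elementarily into the transitive class $M$ and hence $\check V_U$ is well-founded.

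To build $H$, enumerate the maximal antichains of $\B$ in $V$ as $\<A_\xi\st\xi<\kappa^\plus>$ and recursively construct a descending sequence $\<p_\xi\st\xi<\kappa^\plus>$ in $j(\B)$ with $p_\xi$ below some element of $j(A_\xi)$. One works in $M$, where $j(\B)=\Add(j(\kappa),1)^M$ is ${<}j(\kappa)$-closed and $j(\kappa)$ is inaccessible, hence regular, in $M$; note also $j(\kappa)>\kappa^\plus$, since $j(\kappa)$ is a limit cardinal of $M$ above $(\kappa^\plus)^M=\kappa^\plus$. At a successor stage we extend the current condition into the maximal antichain $j(A_\xi)$ using its maximality. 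The delicate stages are the limits $\xi<\kappa^\plus$, which all have cofinality at most $\kappa$: choosing a cofinal subsequence of length ${\leq}\kappa$, the corresponding sequence of conditions is a $\kappa$-sequence of elements of $M$ and so lies in $M$ by ${}^\kappa M\of M$, its union is computed in $M$, and that union is a genuine condition because its domain is a supremum of ${\leq}\kappa$ ordinals below $j(\kappa)$, which remains below $j(\kappa)$ by the regularity of $j(\kappa)$ in $M$. Thus the recursion goes through and yields $\<p_\xi\st\xi<\kappa^\plus>\in V$; let $H$ be the filter it generates.

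Then $H\in V$ meets every $j(A_\xi)$, so $H$ is $\ran(j)$-generic, and $U=j^\inverse H\in V$ is an ultrafilter on $\B$, deciding each pair $\singleton{b,\neg b}$ because $H$ meets $j(\singleton{b,\neg b})=\singleton{j(b),\neg j(b)}$. By Theorem~\ref{Theorem.RudinKeislerIff}, $j_U$ is an elementary factor of $j$ into the transitive $M$, so $\check V_U$ is well-founded. For the word \emph{many}: since the choices made at successor stage $\xi$ lie below images $j(a)$ of distinct elements $a\in A_\xi\in V$, selecting different such $a$ places different ground-model antichain elements into $U$; exploiting this independently at the $\kappa^\plus$ stages produces $2^{\kappa^\plus}$ distinct well-founded ultrafilters $U$ in $V$ (one may also vary $\mu$). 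The main obstacle is precisely the limit-stage bookkeeping—securing a lower bound in $j(\B)$ that is correctly computed inside $M$—which is exactly what ${}^\kappa M\of M$ together with the regularity of $j(\kappa)$ in $M$ supplies, and which is also the reason one settles for $\ran(j)$-genericity: meeting the $\kappa^\plus$ antichains of $V$ is feasible under the available ${<}j(\kappa)$-closure, whereas full $M$-genericity would demand meeting up to $2^{\kappa^\plus}$ dense subsets of $j(\B)$ in $M$ and exceed that closure.
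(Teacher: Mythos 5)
Your construction is correct, and at its core it is the same diagonalization as the paper's: fix the normal ultrapower $j:V\to M$, use the $\leqkappa$-closure of $j(\B)=\RO(\Add(j(\kappa),1))^M$ in $M$ together with ${}^\kappa M\of M$ to run a descending recursion of length $\kappa^\plus$ inside $V$, and pull the resulting filter back along $j$. Where you differ is in the target of the diagonalization and the theorem you cite: you meet only the images $j(A)$ of the $\kappa^\plus$ many maximal antichains $A\of\B$ lying in $V$, obtaining a merely $\ran(j)$-generic filter $H$ and concluding well-foundedness from theorem \ref{Theorem.RudinKeislerIff}, whereas the paper enumerates the dense subsets of $j(\B)$ belonging to $M$ and produces a fully $M$-generic filter $G$, invoking theorem \ref{Theorem.BooleanUltrapowerWellFounded}. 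Your route suffices for well-foundedness, but your closing justification for settling for less is mistaken: full $M$-genericity does \emph{not} require meeting $2^{\kappa^\plus}$ dense sets. It is enough to meet the dense subsets of the dense subposet $\Add(j(\kappa),1)^M$, of which $M$ has only $j(\kappa^\plus)=(2^{j(\kappa)})^M$ many, a set of $V$-cardinality at most $(\kappa^\plus)^\kappa=2^\kappa=\kappa^\plus$; so exactly the same bookkeeping yields the stronger conclusion that $G$ is $M$-generic, which is what the paper does and what makes $M[G]$ an honest forcing extension. Two small further remarks: since the antichains $A_\xi$ may have size $\kappa=\cp(j)$, the set $j(A_\xi)$ properly contains $j\image A_\xi$, so at a successor stage your condition may land below an element of $j(A_\xi)\setminus j\image A_\xi$ (harmless, since $\ran(j)$-genericity only asks that $H$ meet $j(A_\xi)$, but then $U$ simply misses $A_\xi$); and your count of ``many'' ultrafilters should accordingly be anchored at stages where at least two elements of $j\image A_\xi$ remain compatible with the current condition, as happens already at stage $0$, so that conclusion stands.
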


\begin{proof} Suppose that $\kappa$ is a measurable cardinal,
witnessed by the normal ultrapower embedding $j : V\to  M$, and suppose $2^\kappa =\kappa^\plus$. Let $\B$ be the regular open algebra of
$\Add(\kappa, 1)$, and consider $j(\B)$, which corresponds to the forcing $\Add(j(\kappa), 1)^M$. This forcing is $\leqkappa$-closed in
$M$. The number of subsets of $\Add(\kappa, 1)$ is $\kappa^\plus$, and so the number of subsets of $\Add(j(\kappa), 1)^M$ in $M$ is
$j(\kappa^\plus)$, which has size $\kappa^\plus$  in $V$ . We may therefore assemble the dense subsets for this forcing from $M$ into a
$\kappa^\plus$-sequence in $V$, and proceed to diagonalize against this list, using the closure of the forcing and the fact that
$M^\kappa\of M$ at limit stages, to produce in $V$ an $M$-generic filter $G\subset j(\B)$. Let $U = j^\inverse G$. Thus, $U$ is an
ultrafilter on $\B$ and $j\image U\subset G$, where $G$ is $M$-generic for $j(\B)$. By theorem \ref{Theorem.BooleanUltrapowerWellFounded},
it follows that $U$ is a well-founded ultrafilter on $\B$ in $V$. Further, the corresponding Boolean ultrapower embedding $j_U$ is a factor
of $j$.\end{proof}

\begin{corollary}
If $\kappa$ is a measurable cardinal with $2^\kappa=\kappa^\plus$  and $\mu$ is a normal measure on $\kappa$, then there is a well-founded
ultrafilter $U$ on the Boolean algebra corresponding to $\Add(\kappa, 1)$ such that the Boolean ultrapower by $U$ is the same as the
ultrapower by $\mu$.
\end{corollary}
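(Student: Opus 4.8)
The plan is to run the construction of theorem \ref{Theorem.MeasurableCardinalsHaveWellfoundedUlts}, but to make one targeted choice during the diagonalization so that the resulting factor embedding is not merely a factor of $j$ but all of $j$. Let $j\colon V\to M$ be the ultrapower by the normal measure $\mu$, so $\cp(j)=\kappa$, and recall the two standard facts that $M=\set{j(f)(\kappa)\st f\colon\kappa\to V}$ and that, by normality of $\mu$, the seed $[\id]_\mu$ is exactly $\kappa$. As in the theorem, $j(\B)$ is the forcing $\Add(j(\kappa),1)^M$, which is $\leqkappa$-closed in $M$, and since $2^\kappa=\kappa^\plus$ we may build in $V$ an $M$-generic filter $G\of j(\B)$ by diagonalizing against a $\kappa^\plus$-enumeration of the dense sets of $j(\B)$ coming from $M$, using $M^\kappa\of M$ at limit stages.

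The single new ingredient is to choose $G$ to contain a prescribed seed. Fix a maximal antichain $A=\set{a_\alpha\st\alpha<\kappa}\of\B$ of size $\kappa$, with its enumeration $e\colon\kappa\to\B$, $e(\alpha)=a_\alpha$. Then $j(e)$ enumerates the maximal antichain $j(A)$ of $j(\B)$, and the element $a^*=j(e)(\kappa)$ is a genuine, nonzero member of $j(A)$, since $\kappa<j(\kappa)$. I would now carry out the diagonalization \emph{below} $a^*$, beginning the descending sequence of conditions with $a^*$ itself; since $j(\B)$ is separative and $a^*\neq 0$, this is harmless and produces an $M$-generic $G$ with $a^*\in G$. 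Set $U=j^\inverse G$. By theorem \ref{Theorem.BooleanUltrapowerWellFounded} this is a well-founded ultrafilter on $\B$, and by corollary \ref{Corollary.BooleanUltrapowerFactor} the Boolean ultrapower $j_U$ is a factor of $j$ via the elementary map $k\colon\check V_U\to M$ given by $k([\tau]_U)=\val(j(\tau),G)$, with $k\compose j_U=j$.

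It remains to see that $k$ is surjective, which I would obtain from the seed argument: it suffices to put $\kappa$ into the range of $k$. Let $g\colon A\to\kappa$ be the spanning function $g(a_\alpha)=\alpha$, and let $\tau_g$ be the corresponding mixing name, so that $s=[\tau_g]_U\in\check V_U$ (indeed $s<_U j_U(\kappa)$). Since $g\compose e=\id_\kappa$, elementarity gives $j(g)\compose j(e)=\id_{j(\kappa)}$, and hence
$$k(s)=\val(j(\tau_g),G)=j(g)(b_A)=j(g)(a^*)=j(g)(j(e)(\kappa))=\kappa,$$
where $b_A$ is the unique member of $G\intersect j(A)$, which is $a^*$ by construction. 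Thus $\kappa=k(s)\in\ran(k)$. Now for any $f\colon\kappa\to V$ in $V$ we have $j(f)(\kappa)=j(f)(k(s))=k(j_U(f))(k(s))=k\bigl(j_U(f)(s)\bigr)$, using $k\compose j_U=j$ and the elementarity of $k$; since $M=\set{j(f)(\kappa)\st f\colon\kappa\to V}$, this shows $\ran(k)=M$. Hence $k$ is a surjective elementary embedding, so it is an isomorphism, $\check V_U$ is (isomorphic to) $M$, and $j_U$ is (isomorphic to) $j$, as desired. Equivalently, one may verify directly that $M=\set{j(f)(b_A)\st A\text{ a maximal antichain},\ f\colon A\to V}$ and quote theorem \ref{Theorem.FilterSeedCharacterizationOfBooleanUltrapower}. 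The one delicate point, and the whole reason for the targeted choice of $G$, is exactly that $\kappa=[\id]_\mu$ is the critical seed of the $\mu$-ultrapower and is not in $\ran(j)$; arranging $a^*\in G$ is precisely what forces this seed to be represented in the Boolean ultrapower, thereby preventing $j_U$ from being a proper factor of $j$.
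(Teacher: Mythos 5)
Your proof is correct, but it takes a genuinely different route from the paper's. The paper's own proof is a two-line argument from minimality: it takes \emph{any} ultrafilter $U$ produced by theorem \ref{Theorem.MeasurableCardinalsHaveWellfoundedUlts} from the normal ultrapower $j_\mu$, notes via corollary \ref{Corollary.BooleanUltrapowerFactor} that $j_U$ is a factor of $j_\mu$, and then invokes the Rudin--Keisler minimality of normal measures (no nontrivial proper factors) together with the fact that $j_U$ cannot be trivial since $U\in V$ is not $V$-generic, to conclude $j_U=j_\mu$ outright. You instead make a targeted choice of the $M$-generic filter $G$, arranging that it contains the seed $a^*=j(e)(\kappa)$ over a size-$\kappa$ maximal antichain $A$, and then verify directly that the factor map $k$ hits $\kappa=[\id]_\mu$ and hence, by the normal form $M=\set{j(f)(\kappa)\st f:\kappa\to V}$, is surjective. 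Both arguments are sound; your seed computation $k([\tau_g]_U)=j(g)(a^*)=\kappa$ is exactly right, and the passage from $\kappa\in\ran(k)$ to surjectivity is the standard seed-hull argument. What each buys: the paper's proof is shorter and shows that \emph{every} $U$ arising from the construction already gives all of $j_\mu$, with no care needed in the diagonalization, but it leans on the external fact that normal ultrapowers are RK-minimal. Your proof is self-contained and, more importantly, generalizes: the seed-capturing technique would let one realize ultrapowers by non-normal measures (or other embeddings with nontrivial proper factors) as Boolean ultrapowers, by catching enough seeds in $G$ to generate $M$ over $\ran(j)$, in the spirit of theorem \ref{Theorem.FilterSeedCharacterizationOfBooleanUltrapower}. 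The only cosmetic remark is that separativity of $j(\B)$ is automatic (it is a complete Boolean algebra in $M$); all you need is that $a^*\neq 0$, which holds because $j(A)$ is an antichain of nonzero elements.
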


\begin{proof} We saw above that if $j_\mu:V\to M$ is the
ultrapower by $\mu$, then we may find an ultrafilter $U$ on the Boolean algebra $\B$ corresponding to $\Add(\kappa, 1)$ such that $j\image
U\subset G$ for some $M$-generic filter $G\subset j(\B)$. By corollary \ref{Corollary.BooleanUltrapowerFactor}, it follows that $j_U$ is a
factor of $j_\mu$. Since $\mu$ was a normal measure, it is minimal in the Rudin-Kiesler order on embeddings, so it has no proper factors.
Consequently, $j_U = j_\mu$, as desired.\end{proof}

%...Another goal I have is to see to what extent we can get supercompactness embeddings, etc. as Boolean ultrapowers on natural forcing
%notions (i.e. not atomic Boolean algebras).
%

The proof of theorem \ref{Theorem.MeasurableCardinalsHaveWellfoundedUlts} used the closure of the forcing notion, in a manner similar to
many lifting arguments in large cardinal set theory. But we now show that if $\kappa$ is strongly compact, then we can produce well-founded
Boolean ultrafilters on any $\ltkappa$-distributive complete Boolean algebra.

\begin{theorem} For any infinite regular cardinal $\kappa$, the following are equivalent:
\begin{enumerate}
 \item Every $\kappa$-complete filter $F$ on a $\ltkappa$-distributive complete Boolean algebra $\B$ is contained in a $\kappa$-complete ultrafilter $U\of\B$.
 \item $\kappa$ is strongly compact.
%
% \item Every $\ltkappa$-distributive complete Boolean algebra $\B$ has a $\kappa$-complete ultrafilter.
% \item Every $\kappa$-complete filter $F$ on a complete Boolean algebra $\B$ has a $\kappa$-complete ultrafilter $U$ on $\B$ with $F\of
%     U$.
% \item $\kappa$ is strongly compact.
\end{enumerate}
\end{theorem}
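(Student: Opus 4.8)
The plan is to prove the two implications separately, with $(2)\Rightarrow(1)$ carrying essentially all of the content.

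For $(1)\Rightarrow(2)$ I would simply specialize $(1)$ to power set algebras. Any power set $P(S)$ is $\delta$-distributive for every $\delta$ (as the paper already notes for trivial forcing), so in particular it is $\ltkappa$-distributive, and a ``$\kappa$-complete filter on $P(S)$'' in the Boolean sense is exactly a $\kappa$-complete filter of subsets of $S$ in the usual sense. Thus $(1)$ delivers the classical filter extension property: every $\kappa$-complete filter on any set extends to a $\kappa$-complete ultrafilter. To read off strong compactness concretely, I would take $S=P_\kappa(\lambda)$ and the $\kappa$-complete proper filter generated by the cones $\hat x=\set{P\in P_\kappa(\lambda)\st x\of P}$; extending it to a $\kappa$-complete ultrafilter gives a fine $\kappa$-complete measure on $P_\kappa(\lambda)$, witnessing $\lambda$-strong compactness. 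As $\lambda$ is arbitrary, $\kappa$ is strongly compact.

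For $(2)\Rightarrow(1)$, suppose $\B$ is a $\ltkappa$-distributive complete Boolean algebra and $F\of\B$ is a $\kappa$-complete filter. Let $\lambda$ be large enough to bound $|\B|$, $|F|$, and the number of maximal antichains of $\B$ (e.g. $\lambda=2^{|\B|}$). Using $\lambda$-strong compactness, fix an elementary $j:V\to M$ with $\cp(j)=\kappa$ and a set $s\in M$ such that $j\image\lambda\of s$ and $M\satisfies|s|<j(\kappa)$; then $j\image X$ is covered by a set of $M$-size below $j(\kappa)$ whenever $|X|\leq\lambda$. By elementarity $j(F)$ is a $j(\kappa)$-complete filter on $j(\B)$ in $M$ with $j\image F\of j(F)$, so covering $j\image F$ by a subset of $j(F)$ of $M$-size $<j(\kappa)$ and taking its meet yields a \emph{seed} $b^*\in j(F)$ with $b^*\neq 0$ and $b^*\leq j(a)$ for every $a\in F$. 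Next I would decide every image antichain below this seed: the images $j(A)$, as $A$ ranges over the maximal antichains of $\B$, assemble inside $M$ into a family of fewer than $j(\kappa)$ many maximal antichains of $j(\B)$; since $j(\B)$ is $\ltkappa$-distributive's image, namely ${<}j(\kappa)$-distributive in $M$, the distributive law applied in $j(\B)\restrict b^*$ (where each such antichain has join $b^*$) produces $b^{**}\leq b^*$, $b^{**}\neq 0$, lying below a unique element of each $j(A)$, so that $b^{**}$ decides every $j(A)$.

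Finally I would set $U=\set{a\in\B\st b^{**}\leq j(a)}$. Applying the decision property to $j(\singleton{a,\neg a})=\singleton{j(a),\neg j(a)}$ shows $U$ is an ultrafilter; $b^{**}\leq b^*\leq j(a)$ for $a\in F$ gives $F\of U$; and since $\cp(j)=\kappa$, the map $j$ commutes with meets of fewer than $\kappa$ elements of $\B$, so $U$ is $\kappa$-complete. (Equivalently, the filter generated by $b^{**}$ is $\ran(j)$-generic, and theorem \ref{Theorem.FilterSeedCharacterizationOfBooleanUltrapower} then identifies $U=j\inverse$ of it as an ultrafilter with $j_U$ a factor of $j$.) The main obstacle is exactly the step from ``filter'' to ``ultrafilter'': unlike the power set case, where a single point of $j(S)$ decides membership and hence yields an ultrafilter for free, a single Boolean seed need not decide an arbitrary element of $j\image\B$. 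Strong compactness is what reduces the problem of deciding all of the (possibly $2^{|\B|}$-many) antichains of $\B$ to deciding only $<j(\kappa)$ of them inside $M$, and it is precisely in closing that gap that the $\ltkappa$-distributivity hypothesis is consumed, through its image ${<}j(\kappa)$-distributivity.
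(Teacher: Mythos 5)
Your proof is correct and follows essentially the same route as the paper's: the fineness filter on $P_\kappa\lambda$ for $(1)\Rightarrow(2)$, and for $(2)\Rightarrow(1)$ a strong compactness cover applied twice---once to $j\image F$ to obtain a seed $b^*\in j(F)$ below every $j(a)$ for $a\in F$, once to the family of antichains (the paper phrases this equivalently via open dense subsets of $\B$)---combined with the ${<}j(\kappa)$-distributivity of $j(\B)$ in $M$ to refine the seed so that it decides every $j(A)$, and then pulling back along $j$. Your explicit check that the resulting $U$ is $\kappa$-complete, using that $j$ commutes with meets of fewer than $\cp(j)=\kappa$ elements, is a detail the paper leaves implicit but which the statement of $(1)$ requires.
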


\begin{proof} Assume (1). Fix any ordinal $\theta$ and let $\B$ be the power set of $P_\kappa\theta$, which is trivial as a notion of forcing and is consequently $\ltkappa$-distributive. Let $F$ be the fineness filter on $P_\kappa\theta$, generated by the sets $A_\alpha=\set{\sigma\in P_\kappa\theta\st \alpha\in\sigma}$. This filter is easily seen to be $\kappa$-complete in $\B$. By (2), there is a $\kappa$-complete ultrafilter $U\of\B$ with $F\of U$. Thus, there is a $\kappa$-complete fineness measure on $P_\kappa\theta$, and so $\kappa$ is strongly compact.

Conversely, suppose $\kappa$ is strongly compact and $F$ is a $\kappa$-compete filter on $\B$, a $\ltkappa$-distributive complete Boolean algebra. Let $\mathcal{D}$ be the collection of open dense subsets of $\B$ and let $\theta=|\mathcal{D}|$. Let $j:V\to M$ be a $\theta$-strong compactness embedding for $\kappa$, so $\cp(j)=\kappa$ and $j(\kappa)>\theta$, and every size $\theta$ subset of $M$ is
covered by an element of $M$ of size less than $j(\kappa)$ in $M$. Thus, there is a set $\mathcal{E}\in M$ with
$j\image\mathcal{D}\of\mathcal{E}$ and $|\mathcal{E}|^M<j(\kappa)$. Thus, $\mathcal{E}$ consists of fewer than $j(\kappa)$ many open dense
subsets of $j(\B)$ in $M$. Since $j(\B)$ is ${\smalllt}j(\kappa)$-distributive in $M$, the intersection of these open dense sets is still open and dense. By the strong compactness cover property again, there is a set $s$ with $j\image F\of s\in M$ and $|s|^M<j(\kappa)$. We may assume $s\of j(F)$, and so by the completeness of $j(F)$ in $M$, it follows that $\wedge s\in j(F)$. Putting the two facts together, we may find $p^*\in j(\B)$ such that $p^*\in j(D)$ for all $D\in\mathcal{D}$ and $p^*\leq j(a)$ for all $a\in F$. If $H\of j(\B)$ is any filter
containing $p^*$, it follows that $H$ is $\ran(j)$-generic. Let $U=j^\inverse H$ be the inverse image, which is an ultrafilter on $\B$,
because $H$ selects an element of $j(\singleton{b,\neg b})$. Since $p^*\leq j(a)$ for all $a\in F$, it follows that $F\of U$. And since $j\image U\of H$, it follows by theorem \ref{Theorem.RudinKeislerIff} that the Boolean ultrapower $j_U$ is a factor of $j$. In particular, the Boolean ultrapower by $U$ is well-founded, as desired.\end{proof}

Let us now sharpen this result in a way that will assist in applications, such as those in \cite{ApterGitmanHamkins2012:InnerModelsWithLargeCardinals}. We define that a forcing notion $\P$ is {\df $\ltkappa$-friendly} if for every $\gamma<\kappa$, there is a nonzero condition $p\in\P$ below which the restricted forcing $\P\restrict p$ adds no subsets to $\gamma$.

\begin{theorem}\label{Theorem.FriendlyStrC}
If $\kappa$ is a strongly compact cardinal and $\P$ is a $\ltkappa$-friendly notion of forcing, then there is a well-founded ultrafilter $U$ on the Boolean algebra completion $\B(\P)$. In this case, there is an inner model $W$ satisfying every sentence forced by $\P$ over $V$. Indeed, there is an elementary embedding of the universe $j:V\to \Vbar\of \Vbar[G]=W$ into a transitive class $\Vbar$, such that in $V$ there is a $\Vbar$-generic ultrafilter $G\of j(\P)$.
\end{theorem}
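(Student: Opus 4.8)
The plan is to run the same master-condition argument as in the preceding theorem characterizing strong compactness, but to replace global $\ltkappa$-distributivity by the friendliness of $\P$, localizing the distributivity below a suitable condition. Write $\B=\B(\P)$ for the regular open completion, let $\mathcal{D}$ be the collection of all open dense subsets of $\B$ lying in $V$ (so $\mathcal D$ includes $D_A=\set{c\st\exists a\in A\; c\leq a}$ for every maximal antichain $A\of\B$), and set $\theta=|\mathcal{D}|$. First I would fix a $\theta$-strong compactness embedding $j:V\to M$ for $\kappa$, so that $\cp(j)=\kappa$, $j(\kappa)>\theta$, and every subset of $M$ of size at most $\theta$ is covered by an element of $M$ of $M$-size less than $j(\kappa)$. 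By the covering property, $j\image\mathcal{D}$ is contained in some $\mathcal{E}\in M$ consisting of open dense subsets of $j(\B)$ in $M$ with $\delta=|\mathcal{E}|^M<j(\kappa)$.

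The heart of the argument is to produce a single master condition $p^*\in j(\B)$ lying in every member of $\mathcal{E}$, hence in $j(D)$ for every $D\in\mathcal{D}$. This is where friendliness enters: by elementarity $j(\P)$ is $\lt j(\kappa)$-friendly in $M$, so applying this at a cardinal $\gamma$ with $\delta\leq\gamma<j(\kappa)$ yields a nonzero $q\in j(\B)$ below which $j(\P)$ is distributive enough in $M$ that the intersection of the $\delta$-many open dense sets of $\mathcal{E}$ remains dense below $q$. Choosing $p^*\leq q$ in $\bigcap\mathcal{E}$, let $H\of j(\B)$ be the filter generated by $p^*$; since $p^*\in j(D_A)$ for every maximal antichain $A$, the filter $H$ meets $j(A)$ for all such $A$ and is thus $\ran(j)$-generic. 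Set $U=j^\inverse H\in V$, which is an ultrafilter on $\B$ because $H$ meets each $j(\set{b,\neg b})$, and observe $j\image U\of H$. By theorem \ref{Theorem.RudinKeislerIff} the Boolean ultrapower $j_U$ is then an elementary factor of $j$, via some elementary $k:\check V_U\to M$; as $M$ is transitive and hence well-founded, $\check V_U$ is well-founded (this also follows from theorem \ref{Theorem.BooleanUltrapowerWellFounded}).

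Finally, for the inner-model conclusion I would invoke the structure theory of well-founded Boolean ultrapowers. Identifying $\check V_U$ and $V^\B/U$ with their transitive collapses, we obtain $j:V\to\Vbar\of\Vbar[G]=W$, where $\Vbar$ is transitive, $G=[\dot G]_U$ is $\Vbar$-generic for $j(\B)=\B(j(\P))$ by lemma \ref{Lemma.Vcheck/UtransitiveEtc}, and $W=\Vbar[G]$ is the forcing extension of $\Vbar$ by the image forcing; this $G$ lives in $V$ and $W$ is a definable transitive class containing all ordinals and modeling $\ZFC$, hence an inner model. Since $j:V\to\Vbar$ is elementary, for any sentence $\sigma$ forced by $\P$ over $V$ we have $\boolval{\sigma}^\B=1$, so in $\Vbar$ we get $\boolval{\sigma}^{j(\B)}=j(\boolval{\sigma}^\B)=1$, and therefore $W=\Vbar[G]\satisfies\sigma$, as desired.

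The main obstacle is the distributivity-below-a-condition step. Unlike the $\ltkappa$-distributive case, one cannot intersect the covering family $\mathcal{E}$ outright, and friendliness supplies only ``no new subsets of $\gamma$'' below $q$; the delicate bookkeeping is to choose $\gamma<j(\kappa)$ large enough (using that $j(\kappa)$ is inaccessible in $M$, so there is ample room above $\delta$) that this upgrades to the full distributivity needed to keep $\bigcap\mathcal{E}$ dense below the master condition $q$.
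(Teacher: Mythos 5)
Your overall architecture (strong compactness embedding, covering, a single condition in $j(\B)$ serving as a seed, then factoring via theorem \ref{Theorem.RudinKeislerIff} or \ref{Theorem.WellFoundedEquivalents}) matches the paper's, and your treatment of the final inner-model conclusion is fine. But the step you yourself flag as ``the main obstacle'' is a genuine gap, and it cannot be repaired: a master condition $p^*\in\bigcap\mathcal{E}$ does not exist in general. Friendliness gives only that $j(\B)\restrict q$ adds no new subsets of $\gamma$ for some $\gamma<j(\kappa)$, i.e.\ $(\gamma,2)$-distributivity below $q$; what you need in order to make $\bigcap\mathcal{E}$ dense below $q$ is full $(\delta,\infty)$-distributivity (no new $\delta$-sequences of ordinals), and no choice of $\gamma<j(\kappa)$ upgrades the former to the latter. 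The paradigm example of a $\ltkappa$-friendly forcing---Prikry forcing, which the paper explicitly cites as the motivating case---kills the approach outright: letting $D_n$ be the open dense set of conditions deciding the $n$th point of the generic sequence, any $p^*\in\bigcap_{n<\omega}j(D_n)$ would decide the entire generic $\omega$-sequence for $j(\P)$, which no nonzero condition of Prikry forcing does. So $\bigcap\mathcal{E}$ is empty already for a countable subfamily, even though $j(\P)$ is ${\smalllt}j(\kappa)$-friendly.

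The fix is to decide \emph{elements of the Boolean algebra} rather than \emph{dense sets}, which is exactly a $2$-valued decision and hence exactly what friendliness supplies. Cover $j\image\B$ (not $j\image\mathcal{D}$) by a set $s\in M$ with $\lambda=|s|^M<j(\kappa)$; friendliness gives a nonzero $p$ below which $j(\B)$ is $(\lambda,2)$-distributive in $M$, so
$p=\bigvee_{f\in 2^{s}}\bigwedge_{b\in s}\bigl(p\wedge(\neg)^{f(b)}b\bigr)$,
and some disjunct $q$ is nonzero. Then $U=\set{a\in\B\st q\leq j(a)}$ is an ultrafilter (since $s$ covers $j\image\B$) and is $\kappa$-complete (since $\cp(j)=\kappa$), so well-foundedness follows from theorem \ref{Theorem.WellFoundedEquivalents} via countable completeness---no $\ran(j)$-generic filter over the dense sets is needed. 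Note that such a $q$ generates a filter that need not meet $j(A)$ for every maximal antichain $A\of\B$ in the literal sense your argument requires; the well-foundedness in this theorem genuinely comes from completeness of $U$, not from relative genericity.
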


\begin{proof}
Let $\B$ be the Boolean algebra completion of $\P$, and observe that $\B$ also is $\ltkappa$-friendly. We will find an ultrafilter $U\of\B$ for which the Boolean ultrapower $V^\B/U$ is well-founded. Consider any $\theta\geq |\B|$ and let $j:V\to M$ be a $\theta$-strong compactness embedding, so that $j\image\B\of s\in M$ for some $s\in M$ with $|s|^M<j(\kappa)$. Since $j(\B)$ is ${\smalllt}j(\kappa)$-friendly in $M$, there is a nonzero condition $p\in j(\B)$ such that forcing with $j(\B)\restrict p$ over $M$ adds no new subsets to $\lambda=|s|^M$. Thus, $j(\B)\restrict p$ is $(\lambda,2)$-distributive in $M$. Applying this, it follows in $M$ that $$p=p\wedge 1=p\wedge\bigwedge_{b\in s}(b\vee\neg b)=\bigwedge_{b\in s}(p\wedge b)\vee(p\wedge\neg b)=\bigvee_{f\in2^s}\bigwedge_{b\in s}(p\wedge(\neg)^{f(b)} b),$$ where $(\neg)^0b=b$ and $(\neg)^1b=\neg b$, and where we use distributivity to deduce the final equality. Since $p$ is not $0$, it follows that there must be some $f$ with $q=\bigwedge_{b\in s}p\wedge(\neg)^{f(b)}b\neq 0$. Note that $f(b)$ and $f(\neg b)$ must have opposite values. Using $q$ as a seed, define the ultrafilter $U=\set{a\in \B\st q\leq j(a)}$, which is the same as $\set{a\in\B\st f(j(a))=0}$. This is easily seen to be a $\kappa$-complete filter using the fact that $\cp(j)=\kappa$ (just as in the powerset ultrafilter cases known classically). It is an ultrafilter precisely because $s$ covers $j\image\B$, so either $f(j(a))=0$ or $f(\neg j(a))=0$, and so either $a\in U$ or $\neg a\in U$, as desired. So by theorem \ref{Theorem.WellFoundedEquivalents} the Boolean ultrapower $V^\B/U$ is well-founded, and the Mostowski collapse of this model is an inner model satisfying any sentence forced by $\B$ over $V$, as desired. The Boolean ultrapower embedding $j:V\to \Vbar\of \Vbar[G]$ has the properties stated in the theorem.
\end{proof}

Theorem \ref{Theorem.FriendlyStrC} is technically a theorem scheme, asserting separately for each sentence forced by $\P$ that $V^\B/U$ satisfies it. It follows that when there are sufficient large cardinals, then one may construct transitive inner models of \ZFC\ satisfying a variety of statements ordinarily obtained by forcing. For example, \cite{ApterGitmanHamkins2012:InnerModelsWithLargeCardinals} uses this method to show that if there is a supercompact cardinal, then there is a transitive inner model of \ZFC\ with a Laver indestructible supercompact cardinal, and furthermore one can ensure for this cardinal $\kappa$ either that $2^\kappa=\kappa^\plus$ or that $2^\kappa=\kappa^{++}$, or any of a number of other properties; the method is really quite flexible.

Certain instances of the phenomenon of theorem \ref{Theorem.FriendlyStrC} are familiar in large cardinal set theory. For example, consider Prikry forcing with respect to a normal measure $\mu$ on a measurable cardinal $\kappa$, which is $\ltkappa$-friendly because it adds no bounded subsets to $\kappa$. If $V\to M_1\to M_2\to\cdots$ is the usual iteration of $\mu$, with a direct limit to $j_\omega:V\to M_\omega$, then the critical sequence $\kappa_0,\kappa_1,\kappa_2,\ldots$ is well known to be $M_\omega$-generic for the corresponding Prikry forcing at $j_\omega(\kappa)$ using $j_\omega(\mu)$. This is precisely the situation occurring in theorem \ref{Theorem.FriendlyStrC}, where we have an embedding $j:V\to\Vbar$ and a $\Vbar$-generic filter $G\of j(\P)$ all inside $V$. Thus, theorem \ref{Theorem.FriendlyStrC} generalizes this classical aspect about Prikry forcing to all friendly forcing under the stronger assumption of strong compactness.

\begin{question}
 Which kinds of large cardinal embeddings are fruitfully realized as Boolean ultrapower embbeddings? 
\end{question}

While preparing earlier drafts of this article, we had asked whether the elementary embedding $j_\omega:V\to M_\omega$ arising from the $\omega$-iteration of a normal measure $\mu$ on a measurable cardinal $\kappa$ is a Boolean ultrapower. This question is now answered by
Fuchs and Hamkins \cite{FuchsHamkins:TheBukovskyDehornoyPhenomenonForBooleanUltrapowers}, who proved that indeed it is the Boolean ultrapower for Prikry forcing, and similar facts are true much more generally for other iterations, using generalized Prikry forcing. This case offers many very suggestive features, including the \Bukovsky-Dehornoy phenomenon, for which the forcing extension $M_\omega[s]$, where $s$ is the critical sequence that is $M_\omega$-generic for Prikry forcing, is the same as the intersection $M_\omega[s]=\Intersect_n M_n$ of the finite iterates of $\mu$. Another way to express this is that the full Boolean ultrapower $V^\B/U$ is the intersection of the induced power set ultrapowers $V^A/U_A$ for maximal antichains $A$.
\begin{question}
 To what extent does the \Bukovsky-Dehornoy phenomenon hold for Boolean ultrapowers generally?
\end{question}

To our way of thinking, the intriguing situation here is that in the case of an iterated normal measure, the critical sequence was independently known to be a highly canonical useful object, and it also happens to be identical to the canonical generic object of the Boolean ultrapower $\check V_U=M_\omega$. If we should be able to realize other large cardinal embeddings as Boolean ultrapowers, such as many of the extender embeddings that are used, then the hope is that their associated canonical generic objects, perhaps currently hidden, would prove to be similarly fruitful.

\bibliographystyle{alpha}
\bibliography{MathBiblio,HamkinsBiblio}

\begin{thebibliography}{AGH12}

\bibitem[AGH12]{ApterGitmanHamkins2012:InnerModelsWithLargeCardinals}
Arthur Apter, Victoria Gitman, and Joel~David Hamkins.
\newblock Inner models with large cardinal features usually obtained by
  forcing.
\newblock {\em Archive for Mathematical Logic}, 51:257--283, 2012.
\newblock 10.1007/s00153-011-0264-5.

\bibitem[Bel85]{Bell1985:BooleanValuedModelsAndIndependenceProofs}
J.~L. Bell.
\newblock {\em Boolean-valued models and independence proofs in set theory},
  volume~12 of {\em Oxford Logic Guides}.
\newblock The Clarendon Press Oxford University Press, New York, second
  edition, 1985.
\newblock With a foreword by Dana Scott, first edition 1977.

\bibitem[Can87]{Canjar1987:CompleteBooleanUltraproducts}
R.~Michael Canjar.
\newblock Complete {B}oolean ultraproducts.
\newblock {\em J. Symbolic Logic}, 52(2):530--542, 1987.

\bibitem[FH]{FuchsHamkins:TheBukovskyDehornoyPhenomenonForBooleanUltrapowers}
Gunter Fuchs and Joel~David Hamkins.
\newblock The {Bukovsk\'y-Dehornoy} phenomenon for boolean ultrapowers.
\newblock manuscript in preparation.

\bibitem[Man71]{Mansfield1970:TheoryOfBooleanUltrapowers}
Richard Mansfield.
\newblock The theory of {B}oolean ultrapowers.
\newblock {\em Ann. Math. Logic}, 2(3):297--323, 1970/71.

\bibitem[OR98]{OuwehandRose1998:FiltralPowersOfStructures}
P.~Ouwehand and H.~Rose.
\newblock Filtral powers of structures.
\newblock {\em J. Symbolic Logic}, 63(4):1239--1254, 1998.

\bibitem[Vop65]{Vopenka1965:OnNablaModelOfSetTheory}
Petr Vop{\v{e}}nka.
\newblock On {$\nabla$}-model of set theory.
\newblock {\em Bull. Acad. Polon. Sci. S\'er. Sci. Math. Astronom. Phys.},
  13:267--272, 1965.

\end{thebibliography}

\end{document}